\documentclass[11pt]{amsart}

\usepackage[a4paper,  left=25mm, right=25mm, top=25mm, bottom=24mm, marginparwidth= 20mm]{geometry}

\usepackage{amscd}
\usepackage{amsfonts}
\usepackage{amsmath}
\usepackage{amssymb}
\usepackage{amsthm}
\usepackage{blkarray}
\usepackage{bm}
\usepackage{bbm}
\usepackage{comment}
\usepackage{epsfig}
\usepackage{enumerate}
\usepackage{graphicx}
\usepackage{mathrsfs}
\usepackage{multicol}
\usepackage{multirow}
\usepackage{nccmath}
\usepackage{rotating}
\usepackage{smfthm}
\usepackage{stmaryrd}
\usepackage[normalem]{ulem}
\usepackage[dvipsnames]{xcolor}
\usepackage[all]{xy}
\usepackage[colorlinks=true,linkcolor=NavyBlue,urlcolor=RoyalBlue,citecolor=PineGreen,bookmarks=true,bookmarksopen=true,bookmarksopenlevel=2,unicode=true,linktocpage]{hyperref}

\newtheorem{theorem}{Theorem}[section]
\newtheorem{definition}[theorem]{Definition}
\newtheorem{remark}[theorem]{Remark}
\newtheorem{example}[theorem]{Example}
\newtheorem{proposition}[theorem]{Proposition}
\newtheorem{lemma}[theorem]{Lemma}
\newtheorem{corollary}[theorem]{Corollary}

\usepackage[]{todonotes}

\newcommand{\C}{\mathbb{C}}

\newcommand{\U}{\mathrm{U}}
\renewcommand{\1}{\mathbbm{1}}
\newcommand{\A}{\mathcal A}

\newcommand{\cA}{{\mathscr A}}

\newcommand{\B}{\mathcal{B}}

\newcommand{\bH}{\mathbb{H}}
\newcommand{\build}[3]{\mathrel{\mathop{\kern 0pt#1}\limits_{#2}^{#3}}}
\newcommand{\cC}{\mathcal{C}}

\renewcommand{\d}{{\; {\rm d}}}
\newcommand{\nsd}{{\rm d}}

\newcommand{\D}{\mathscr{D}}

\newcommand{\DLP}{{\sf DLP}}
\newcommand{\DLSF}{{\sf DLSF}}
\newcommand{\WDLSF}{{\sf WDLSF}}
\newcommand{\FDLSF}{{\sf FDLSF}}
\newcommand{\E}{\mathbb{E}}

\newcommand{\rE}{{\sf E}}

\newcommand{\gE}{[\rE]}

\newcommand{\End}{\mathrm{End}}
\renewcommand{\epsilon}{\varepsilon}
\newcommand{\ev}{{\rm ev}}
\newcommand{\ext}{\mathchoice
	    {\raisebox{1pt}{$\bigwedge$}}
             {\raisebox{1pt}{$\bigwedge$}}
             {\raisebox{0.5pt}{$\scriptstyle\bigwedge$}}
             {\raisebox{0.2pt}{$\scriptscriptstyle\bigwedge$}}}
\newcommand{\F}{{\sf F}}

\newcommand{\geom}[1]{[#1]}

\newcommand{\rG}{\mathsf{G}}
\newcommand{\Gr}{\mathsf{Gr}}
\renewcommand{\H}{\mathscr{H}}
\newcommand{\Hol}{\mathrm{H}}

\newcommand{\hol}{\mathrm{hol}}

\newcommand{\cI}{\mathcal{I}}

\newcommand{\id}{\mathrm{id}}

\newcommand{\im}{\mathop{\mathrm{im}}}

\newcommand{\cJ}{{\mathcal J}}

\newcommand{\K}{\mathbb{K}}

\newcommand{\rL}{{\sf L}}

\newcommand{\lapp}{\Delta} 		
	
\newcommand{\Loop}{\mathsf{L}}

\newcommand{\mM}{M}
\newcommand{\N}{\mathbb{N}}

\newcommand{\new}[1]{{\textcolor{black}{\em #1}}}
\renewcommand{\o}{\circ}
\newcommand{\oo}{\circ\hspace{-0.45mm}\circ}
\newcommand{\ol}{\overline}

\newcommand{\mP}{P}
\newcommand{\pP}{\mathsf{P}}
\newcommand{\pPind}{\pP_{\text{\sf i}}}
\newcommand{\pPbas}{\pP_{\text{\sf b}}}
\newcommand{\nPind}{\pP_{\text{\sf i}}^{\N}}
\newcommand{\nPbas}{\pP_{\text{\sf b}}^{\N}}

\newcommand{\proj}[1]{{\sf\Pi}^{#1}}

\newcommand{\Q}{{\mathsf Q}}

\newcommand{\R}{\mathbb{R}}
\newcommand{\sR}{\mathsf{R}}

\newcommand{\rank}{\mathop{\mathrm{rank}}}
\newcommand\res[2]{#1\raisebox{-.25ex}{$|$}_{#2}}
\renewcommand{\S}{{\mathsf S}}

\newcommand{\SU}{\mathrm{SU}}
\newcommand{\SO}{\mathrm{SO}}
\newcommand{\submod}{\varrho}
\newcommand{\Or}{\mathrm{O}}
\newcommand{\rB}{\mathsf{B}}

\newcommand{\rH}{\mathsf{H}}

\newcommand{\rk}{{\mathrm{rk}}}

\newcommand{\Tr}{\mathrm{Tr}}
\newcommand{\tr}{\mathop{\rm tr}}
\newcommand{\cT}{\mathcal{T}}

\def\set#1{{\llbracket #1 \rrbracket}} 

\newcommand{\Sp}{\mathrm{Sp}}

\newcommand{\T}{\mathsf{T}}
\renewcommand{\u}{\mathfrak{u}}

\newcommand{\ul}{\underline}
\newcommand{\Un}{{\mathrm{U}}}
\newcommand{\rV}{{\sf V}}

\newcommand{\Vect}{{\mathrm{Vect}}}

\newcommand{\X}{{\mathsf X}}
\newcommand{\x}{x}
\renewcommand{\P}{\mathbb{P}}

\newcommand{\Z}{\mathbb{Z}}

\newcommand{\sJ}{{\sf J}}
\newcommand{\sC}{{\sf C}}
\newcommand{\sP}{{\sf P}}
\newcommand{\sT}{{\sf T}}
\renewcommand{\le}{\leqslant}
\renewcommand{\ge}{\geqslant}
\renewcommand{\leq}{\leqslant}
\renewcommand{\geq}{\geqslant}
\newcommand{\sbullet}{\raisebox{0.5pt}{$\scriptstyle\bullet$}}
\newcommand{\vsbullet}{\raisebox{0.5pt}{$\scriptscriptstyle\bullet$}}

\newcommand{\lanx}{\langle} 
\newcommand{\ranx}{\rangle}

\newcommand\sPhi{{\mathsf \Phi}}

\title{Determinantal linear spanning forests}

\author{Adrien Kassel}
\address{Adrien Kassel -- CNRS \& UMPA, ENS de Lyon}
\email{adrien.kassel@ens-lyon.fr}

\author{Thierry L\'evy}
\address{Thierry L\'evy -- LPSM, Sorbonne Universit\'e, Paris}
\email{thierry.levy@sorbonne-universite.fr}

\date{\today}

\keywords{Determinantal measures, covariant Laplacian, matrix-tree theorem, polymatroid}

\subjclass[2020]{82B20, 60G55, 52B40, 14M15}

\begin{document}

\begin{abstract}
We introduce a determinantal linear process on the space of $1$-forms over a weighted graph endowed with a unitary local system, which we call the determinantal linear spanning forest (DLSF). This process is the analog of the uniform spanning tree in the presence of an arbitrary rank $n$ quenched gauge field in $\U(n)$ (or $\Or(n)$, $\SO(n)$, $\SU(n)$, $\Sp(n)$), and generalizes the determinantal random cycle-rooted spanning forest which corresponds to the case of $\U(1)$ and $\SU(2)$.

We describe the support of this process by means of a polymatroid which is described in terms of the circular and bicircular matroids. We also explain how the DLSF relates to the computation of currents in an electrical network in the presence of a gauge field.

The partition function of the DLSF is shown to be the determinant of the twisted Laplacian, for which we also give a matrix-tree type theorem that computes its characteristic polynomial. We initiate the study of the twisted Laplacian determinant as a functional on the space of connections, and give a first order expansion of it in a neighborhood of the trivial connection, for which we provide a probabilistic interpretation. 

We extend the construction to infinite graphs, exhibiting the analogs of the classical wired and free distributions on spanning forests. We give a sufficient geometric condition on the local spectral gap for the existence of a Poincaré inequality as well as the uniqueness of the infinite volume measure, and the exponential decay of correlations. We show that on any recurrent graph, the wired and free DLSF coincide. We construct examples where they do not, for example, unlike in the classical case, on $\Z^d$ for $d\geq 3$.

\end{abstract}

\maketitle

\setcounter{tocdepth}{1}
{\small 
\tableofcontents
}

%%%%%%%%%%%%%%%%%%%%%%%%%%%%%%%%%%%%%%%%

\section{Introduction}\label{sec:intro}

In this paper, we propose an interpretation of the determinant of the covariant Laplacian on the space of $0$-forms with values in a Hermitian vector bundle of arbitrary rank with unitary connection over a graph, as the partition function of a geometrically natural probabilistic model. By geometrically natural, we mean that the definition of the probabilistic model does not require more structure than the definition of the Laplacian itself. 

The probabilistic model that we propose is continuous in nature, and takes the form of a random linear subspace $\Q$ of the space of $1$-forms $\Omega^{1}(\F)$ over the graph, that is the direct sum over the set $\gE$ of unoriented edges of the graph of a random linear subspace $\Q_{e}$ of the fibre $\F_{e}$ over each edge $e$. Thus, this random linear subspace $\Q$, that we call the determinantal linear spanning forest (DLSF), takes the form 
\[\Q=\bigoplus_{e\in [\rE]} \Q_e \subseteq \Omega^1(\F).\]
By remembering only the dimension of the linear subspace $\Q_{e}$ of each fibre, we obtain a slightly less detailed, but still very interesting probabilistic model, that is now a random vector of integers indexed by the edges of the graph, and that we call the trace of the DLSF:
\[\tr \Q = (\dim \Q_{e})_{e\in \gE} \in \N^{\gE}.\]

For the DLSF as well as for its trace, each possible configuration is affected with a probability that depends  in a simple way, at least in principle, on the connection $h$. In the case of the DLSF itself, the weight of a linear subspace $Q$ is the determinant
\[w_h(Q)=\det\big(d_{h}^{*} \proj{Q}d_{h}\big),\]
where $d_{h}$ is the discrete covariant differential, $d_{h}^{*}$ its adjoint, and $\proj{Q}$ the orthogonal projection on $Q$. This weight has to be understood with reference to an appropriately normalized rotation invariant measure $\nu$ on the Grassmannian of $\Omega^{1}(\F)$, and the corresponding probability is given by the square of the cosine of the angle between $Q$ and the space of exact $1$-forms: 
\[\d\P(\Q=Q) = \cos^{2}(Q,\im d_{h}) \, \d\nu (Q).\]

For the trace of the DLSF, the weight of a vector of integers $\ul{m}=(m_{e})_{e\in \gE}$ is the square norm of the $\ul{m}$-th component of the image by $d_{h}$ of the exterior product $\iota_{\Omega^{0}(\F)}$ of the elements of an orthonormal basis of $\Omega^{0}(\F)$
in the exterior algebra of $\Omega^{1}(\F)$, relatively to the splitting
\[\ext^{\sbullet} \Omega^{1}(\F)=\bigoplus_{\ul{m}\in \gE^{\N}}\bigg( \bigotimes_{e\in \gE} \ext^{m_{e}} \F_{e}\bigg).\]
The corresponding probability is the norm of the $\ul{m}$-th component, relatively to the same splitting, of a unit Pl\"ucker embedding of $\im d_{h}$:
\[\P(\tr \Q=\ul{m})=\big\| (\iota_{\im d_{h}})_{\ul{m}} \big\|^{2}.\]

We propose to think of the support of the distribution of the trace of the DLSF as a generalization of the sets of spanning trees or cycle-rooted spanning forests which are known to appear in this situation when the vector bundle has rank $1$ \cite{Burton-Pemantle, Forman}, and for a special unitary connection when it has rank $2$ \cite{Kenyon}. This support, as we shall explain, is the set of integer points of a certain convex polytope, the basis polytope of a polymatroid over the set $\gE$, and one of our main results states that the determinant of the covariant Laplacian is the partition function of the probabilistic model on this finite set that we just briefly described.
\medskip

The construction of DLSF is an instance of the more general theory of determinantal linear processes (DLP), that we developed with this purpose in mind in \cite{KL3}. The DLSF itself is one of the probabilistic manifestations of a connection, or gauge field, on a vector bundle over a graph. The adjective {\em deteminantal}, and the presence of an exterior algebra a few lines above, indicate that it would qualify as a ferminonic object. In \cite{KL1}, we studied a bosonic counterpart of the DLSF, the Gaussian free vector field (GFVF). These references to our work are here to emphasize the fact that the present paper is a place where several threads of our reflection come together. However, we consider that there is still much to be understood about the DLSF, and plan to pursue this line of investigation. 

To put the present work in a broader context, we believe that it fits into the program of constructing meaningful probabilistic couplings of matter fields and gauge fields over various kinds of space or space-time. Of course, the gauge field is deterministic, or quenched, in the present paper. However, it is one of our goals in the long run to study DLSF with respect to random, possibly Yang--Mills connections.
\smallskip

This paper was announced several times, in talks and in papers, over several years. For instance, some of the properties of the DLSF were announced in~\cite[Chapter 6, Section~5]{Kassel}. Over time, we changed our mind about the name to be given to the main object that we introduce, the DLSF. It was initially briefly introduced in~\cite[Section 1.5]{KL3} under the name {\em quantum spanning forest} (QSF), which we chose to reflect two of its main properties. Firstly, it is a determinantal linear processes (DLP), with an interpretation as a projector operator-valued measure (POVM, see \cite[Section 5.8]{KL3}), and it has a fermionic interpretation as all determinantal processes. Secondly, it can be thought of as an intricated superposition of spanning trees or cycle-rooted spanning forests (see Theorem \ref{thm:superposition-intro}). Nevertheless, we came up more recently with the more down-to-earth, and more informative, name of determinantal linear spanning forest, which we now prefer. 
\smallskip 

In the rest of this introduction, we describe DLSF and discuss the main mathematical results of this paper.

\subsection{Local systems on weighted graphs}

The geometric setting that we consider is that of a finite\footnote{We discuss infinite graphs below.} connected graph $\rG=(\rV,\rE)$, endowed with positive edge weights $\ul{x}=(x_e)$, and a \emph{local system}~$(\F,h)$ consisting in a \emph{vector bundle} $\F$ over $\rG$ and a \emph{connection} $h$, connecting the fibres of $\F$ over adjacent vertices and edges. We think of the graph as being bidirected, of $\rE$ as being the set of directed edges, and we denote by $\gE$ the set of undirected edges, which are pairs of inverse directed edges.

The direct sum of the fibres of $\F$ over the vertices, resp. over the edges, of $\rG$ is called the space of $0$-forms, denoted by~$\Omega^{0}(\F)$, resp. the space of $1$-forms, denoted by $\Omega^{1}(\F)$. 

The topology of the graph and the connection $h$ together determine a linear operator of discrete differentiation $d_{h}:\Omega^{0}(\F) \to \Omega^{1}(\F)$, see \eqref{eq:defdh}. The spaces of forms are endowed with inner products, in the case of $\Omega^{1}(\F)$ depending on the edge weights $\ul{x}$, see \eqref{eq:SPOmega0} and \eqref{eq:SPOmega1}, and this allows one to define the adjoint operator $d_{h}^{*}:\Omega^{1}(\F)\to \Omega^{0}(\F)$. The action of the operator~$d_{h}$ and its adjoint~$d_{h}^{*}$ on the spaces of $0$- and $1$-forms split by the fibred structure of $\F$ (see \eqref{eq:graphicalsplitting}) is one of our fundamental objects of study.

A local system is said to be {\em flat} if it is isomorphic, in a natural sense, to a trivial vector bundle, with constant fibres and connection identically equal to the identity. This is equivalent to the space of {\em horizontal sections}, that is, the kernel of $d_{h}$, having maximal dimension, equal to the rank of the bundle.  The case of flat local systems is, so to say, the classical case, where the connection plays no role whatsoever.

Any local system splits canonically into the direct sum of a flat local system and a local system on which $d_{h}$ is injective, that we call {\em fully twisted} (see Proposition~\ref{prop:canosplit}). We use this \emph{canonical splitting} throughout the paper to simplify our analysis and help us focus on the more interesting case of fully twisted local systems.

\subsection{Determinantal linear spanning forests}

On the weighted graph $(\rG,\ul{x})$ endowed with a local system $(\F,h)$, the determinantal linear spanning forest (DLSF) is an instance of the {\em determinantal linear processes} (DLP) introduced in \cite{KL3}. The DLSF is built to capture two aspects of the geometry: the geometry of the weighted graph, and the twist in the discrete differential introduced by the connection. 

The three ingredients needed to define a DLP are an inner product space, a splitting of this space, and a linear subspace of this space. For the DLSF, these are
\begin{itemize}
\item the space $\Omega^{1}(\F)$ of $1$-forms with its inner product structure \eqref{eq:SPOmega1},
\item  the splitting \eqref{eq:graphicalsplitting} of $\Omega^{1}(\F)$ into fibres, 
\item the subspace $\im d_{h}\subseteq \Omega^{1}(\F)$ of exact $1$-forms.
\end{itemize}

The DLSF is a random linear subspace $\Q$ of $\Omega^{1}(\F)$ such that almost surely the equality
\begin{equation}\label{eq:maxacyc}
\Omega^1(\F)=\Q\oplus\ker d_{h}^{*}
\end{equation}
holds. This equality is also equivalent to the fact that~$\proj{\Q}d_h:\Omega^{0}(\F)\to \Q$ is injective, where $\proj{\Q}$ denotes the orthogonal projection on $\Q$.

Here is a concrete procedure for sampling the DLSF. Pick an orientation $\rE^+\subset \rE$ of the set of edges. For each oriented edge $e$, pick uniformy at random an orthonormal basis $(\omega^{1}_{e},\ldots,\omega^{n}_{e})$ of the fibre $\F_e$ over $e$, where $n$ denotes the rank of the local system. Merge these bases to form an orthonormal basis $(\omega_e^i: e\in \rE^+, i\in \set{n})$ of $\Omega^1(\F)$ indexed by $\rE^{+}\times \set{n}$. Let $K$ be the matrix in this basis of the orthogonal projection on $\im d_{h}$. Sample a determinantal point process with kernel $K$ on the finite set $\rE^{+}\times \set{n}$, and denote it by $\X=\bigsqcup_{e\in \rE^{+}} \X_e$, where for each $e\in \rE^{+}$, we have set $\X_{e}=\X \cap (\{e\}\times \set{n})$. Now, for each oriented edge $e$, set $\Q_e=\bigoplus_{i\in \X_e} \C \omega_e^i\subseteq \F_e$, and finally
\[\Q=\bigoplus_{e\in \rE^+} \Q_e \subseteq \Omega^1(\F).\]
This is the determinantal linear spanning forest.

The equality \eqref{eq:maxacyc} can be regarded as a condition of maximal acyclicity for $\Q$, correctly suggesting that the DLSF is a generalization of a random spanning tree. As another manifestation of this analogy, the DLSF yields a probabilistic solution to the computation of currents in an electrical network in the presence of holonomy, see \eqref{eq:kir-solution-proba}. The common mechanism to the case of spanning trees and DLSF is the mean projection theorem proved for determinantal processes in~\cite{KL4}.

\subsection{Polymatroids and the trace of the DLSF} The {\em trace} of the DLSF is the random vector of integers
\[\tr \Q = (\dim \Q_e : e \in \gE),\]
which contains a lot of information about the DLSF. For instance, the support of the distribution of the DLSF, which is a closed subset of the Grassmannian of $\Omega^{1}(\F)$, is completely determined by the support of the distribution of its trace, which is a subset of $\N^{\gE}$ that we denote by $\Sigma_{h}$.

The subset $\Sigma_{h}$ of $\N^{\gE}$ turns out to be the set of integer points of a polytope in $\R_+^{\gE}$, which is the basis polytope of a polymatroid associated to the local system $(\F,h)$. This polymatroid is defined by its rank function $\submod_{h}$, which has the following expression (see Theorem~\ref{theo:computerank}).

\begin{theorem}\label{thm:computerank-intro}
For every subset $A$ of $\gE$, we have
\begin{equation*}
\submod_{h}(A)=\rk (d_{h}^{A})=N |\rV(A)|- \dim \ker (d_{h}^{A}).
\end{equation*}
where $d_h^A$ is the covariant derivative on the restriction of $(\F,h)$ to the subgraph $(\rV(A),A)$ induced by $A$.
\end{theorem}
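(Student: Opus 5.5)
We first need the definition of the rank function $\submod_h$. The support $\Sigma_h$ is the support of the trace of a DLP with kernel the projection onto $\im d_h$. Following the general theory of \cite{KL3}, its rank function is
\[
\submod_h(A)=\dim \proj{\Omega^1_A}(\im d_h), \qquad \Omega^1_A=\bigoplus_{e\in A}\F_e .
\]
Here $\proj{\Omega^1_A}$ is the orthogonal projection onto the $1$-forms supported on $A$. Equivalently, $\submod_h(A)$ is the maximal total dimension of a component of $\iota_{\im d_h}$ supported on $A$. The proof then has two steps: identify $\proj{\Omega^1_A}\circ d_h$ with an operator built from $d_h^A$, and do the rank--nullity bookkeeping.

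\textbf{Step 1: identification.} For $f\in\Omega^0(\F)$ and $e\in A$ with endpoints $\underline e,\overline e$, the value $(d_hf)(e)=h_{\overline e,e}f(\overline e)-h_{\underline e,e}f(\underline e)$ (up to the paper's conventions in \eqref{eq:defdh}) depends only on $f$ at vertices of $\rV(A)$. Hence
\[
\proj{\Omega^1_A}\circ d_h = d_h^A\circ r_A,
\]
where $r_A:\Omega^0(\F)\to\Omega^0(\F|_{\rV(A)})$ is restriction and $d_h^A$ is the covariant derivative of the restricted local system on $(\rV(A),A)$. The edge weights change the inner product but not images or ranks. Since $r_A$ is surjective, we get $\dim\proj{\Omega^1_A}(\im d_h)=\rk(d_h^A)$. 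This is the first equality.

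\textbf{Step 2: rank--nullity.} The domain of $d_h^A$ is $\bigoplus_{v\in\rV(A)}\F_v$, which has dimension $N|\rV(A)|$. Rank--nullity then gives $\rk(d_h^A)=N|\rV(A)|-\dim\ker d_h^A$.

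\textbf{Main obstacle.} The only substantive point is matching the paper's definition of $\submod_h$ with $A\mapsto\dim\proj{\Omega^1_A}(\im d_h)$.

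If $\submod_h$ is instead defined as $\max\{\sum_{e\in A}m_e:\ul m\in\Sigma_h\}$, one more argument is needed. Write $\iota_{\im d_h}$ in the splitting $\bigoplus_{\ul m}\bigotimes_e\ext^{m_e}\F_e$. Restricting to the edges of $A$ corresponds to applying $\ext\proj{\Omega^1_A}$ after contracting out the complementary factor. A standard fact about Plücker coordinates then applies: the maximal degree carried on $A$ by nonzero components equals $\dim \proj{\Omega^1_A}(V)$ for $V=\im d_h$.

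To prove it, choose a basis of $V$ adapted to $\ker\proj{\Omega^1_A}|_V$. The wedge $\iota_V$ then factors as a wedge of vectors whose $A$-parts are independent with a wedge of vectors supported on $A^c$. The top $A$-degree component is therefore nonzero, and no component has higher $A$-degree. With this fact, Steps 1 and 2 complete the proof.
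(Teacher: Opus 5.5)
Your proof is correct and is essentially the paper's argument in transposed form: the paper takes $\submod_h(A)=\dim d_h^*(\F_A)=\dim\proj{\im d_h}(\F_A)$, notes that $d_h^*$ restricted to $\F_A$ is $(d_h^A)^*$, and uses $\rk((d_h^A)^*)=\rk(d_h^A)$, while your identity ${\sf 1}^{\F_A}d_h=d_h^A r_A$ is exactly the adjoint of that observation. Your $\dim\proj{\Omega^1_A}(\im d_h)$ agrees with the paper's $\dim\proj{\im d_h}(\F_A)$ because $\proj{\im d_h}{\sf 1}_{\F_A}$ and ${\sf 1}^{\F_A}\proj{\im d_h}$ are adjoint and hence have equal rank, and since the paper defines $\submod_h$ directly by this dimension, your Pl\"ucker-coordinate detour is not needed.
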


The support of the distribution of $\tr \Q$ is then
 \[\Sigma_{h}=\bigg\{(m_{e}) \in \N^{\gE}: \sum_{e\in \gE} m_{e}=\submod_{h}(\gE) \text{ and } \forall A\subseteq \gE,  \sum_{e\in A} m_{e} \le \submod_{h}(A)\bigg\}.\]

For a generic connection $h$, the set $\Sigma_{h}$ admits an alternative description in terms of two subsets of $\{0,1\}^{\gE}$, namely the set of indicator functions of spanning trees of $\rG$, which are the integer points of the basis polytope $\pPbas(\submod_{\o})$ of the circular matroid, and the set of indicator functions of cycle rooted spanning forests (CRSF)  of $\rG$, which are the integer points of the basis polytope~$\pPbas(\submod_{\oo})$ of the bicircular matroid.

%{\color{red}Moreover, we show that $\Sigma_h$ is always included in this polytope, as summarized in the next restul.}

\begin{theorem}\label{thm:superposition-intro}
Set $k=\dim \ker d_{h}$. The support of the distribution of the trace of the DLSF is included in the Minkowski sum of $k$ times the basis polytope of the circular matroid, and $(n-k)$ times the basis polytope of the bicircular matroid:
\[\Sigma_{h}\subseteq \underbrace{\pPbas(\submod_{\o})+\ldots+\pPbas(\submod_{\o})}_{k}+\underbrace{\pPbas(\submod_{\oo})+\ldots+\pPbas(\submod_{\oo})}_{n-k}.\]
Moreover, for all $h$ in an open subset with negligible complement in the set of connections on $\F$ such that $\dim \ker d_{h}=k$, this inclusion is an equality.
\end{theorem}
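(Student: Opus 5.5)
We prove both assertions by comparing the polymatroid rank function $\submod_h$ of Theorem~\ref{thm:computerank-intro} with the rank function of the Minkowski sum. Two standard polymatroid facts do most of the work. First, the basis polytope of a sum of polymatroid rank functions is the Minkowski sum of the basis polytopes; for integral polymatroids the integer points of the sum are also the sums of integer points, by the integer decomposition property. Second, if $f\le g$ on all subsets of $\gE$ and $f(\gE)=g(\gE)$, then $\pPbas(f)\subseteq \pPbas(g)$. It is therefore enough to show
\[\submod_h(A)\le k\,\submod_{\o}(A)+(n-k)\,\submod_{\oo}(A)\quad\text{for all }A\subseteq\gE,\]
with equality at $A=\gE$. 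Here $\submod_{\o}(A)=|\rV(A)|-c(A)$, where $c(A)$ is the number of connected components of $(\rV(A),A)$. Also $\submod_{\oo}(A)=|\rV(A)|-t(A)$, where $t(A)$ is the number of acyclic (tree) components.

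\textbf{Step 1: reduction by the canonical splitting.} By Proposition~\ref{prop:canosplit}, write $\F=\F^{\rm flat}\oplus\F^{\rm tw}$, where the flat part has rank $k$ and the fully twisted part has rank $n-k$. Restricting this splitting to $A$ gives $d_h^A=d^A_{\rm flat}\oplus d^A_{\rm tw}$, so by Theorem~\ref{thm:computerank-intro} the rank is additive: $\submod_h=\submod_{\rm flat}+\submod_{\rm tw}$. The flat part is a trivial bundle of rank $k$. On each component of $A$ its kernel is the space of constant sections, of dimension $k$, so $\submod_{\rm flat}(A)=k(|\rV(A)|-c(A))=k\,\submod_{\o}(A)$, with equality everywhere.

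\textbf{Step 2: the twisted part.} On each connected component $C$ of $(\rV(A),A)$, a horizontal section is determined by its value at one vertex, so $\dim\ker d^C_{\rm tw}\le n-k$. If $C$ is a tree, the restricted bundle is flat, so this dimension equals $n-k$. Summing over components gives $\dim\ker d^A_{\rm tw}\ge (n-k)t(A)$, and hence $\submod_{\rm tw}(A)\le (n-k)(|\rV(A)|-t(A))=(n-k)\,\submod_{\oo}(A)$. At $A=\gE$ the twisted part is fully twisted, so its kernel is $0$. Since $\rG$ is connected and fully twisted forces a cycle, $t(\gE)=0$, and equality holds at $\gE$. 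Combining with Step~1 proves the inclusion.

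\textbf{Step 3: genericity.} For equality we need $\submod_{\rm tw}(A)=(n-k)\submod_{\oo}(A)$ for every $A$. This holds exactly when the twisted restriction has no horizontal section on any component of $A$ that contains a cycle. Consider a component $C$ containing a cycle based at a vertex $v$. A nonzero horizontal section on $C$ forces the holonomy of $h$ around that cycle to have eigenvalue $1$ on the twisted fibre. The condition "the holonomy of some cycle of $\rG$, restricted to the twisted part, has eigenvalue $1$" is a finite union of proper real-algebraic conditions on the connection. Its complement is therefore open with negligible complement, which gives the required equality of rank functions and hence of polytopes and of their integer points. The integer-point statement uses the integer decomposition property of polymatroid sums.

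\textbf{Main obstacle.} The delicate point is to make Step~3 rigorous in the stratum $\{\dim\ker d_h=k\}$. The flat/twisted splitting varies with $h$, so one must parametrize this stratum, for instance as a fixed flat summand times the connections on the complement, up to gauge. One must then check that the bad set, where some cycle holonomy on the complement has eigenvalue $1$, is a proper analytic subset there. For instance, a cycle holonomy on the complement can be perturbed away from having eigenvalue $1$ while keeping the complement fully twisted.
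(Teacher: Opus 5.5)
Your proof of the inclusion is correct and is essentially the paper's argument. The paper decomposes $(\F,h)$ into irreducible summands and applies Proposition~\ref{prop:polydirred} to each: flat summands have rank one and give $\submod_{\o}$, while fully twisted ones satisfy $\submod_{h^i}\le N_i\submod_{\oo}$ with equality on $\gE$. It then adds the pieces via Proposition~\ref{prop:Schrijver}. Your two-piece canonical splitting, with the bounds of Proposition~\ref{prop:rhocircrho} applied to each piece, is the same argument; irreducibility plays no real role.

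For the equality, your Step~3 is incomplete, as you say, and the paper does not fill this in either. Its only argument is Proposition~\ref{prop:negligible}, which concerns Haar-generic connections and so covers only the stratum $\dim\ker d_h=0$; the flat stratum is trivial. Every stratum with $0<\dim\ker d_h<n$ is Haar-null, so the paper says nothing there.

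Here is one way to finish, for $\K=\C$.
\begin{itemize}
\item \emph{Openness.} Openness inside the stratum follows as in Proposition~\ref{prop:negligible}. Your Step~2 bound $\submod_h\le k\submod_{\o}+(n-k)\submod_{\oo}$ holds on the whole stratum, and $h\mapsto\dim d_h^*(\F_A)$ is lower semicontinuous.
\item \emph{Parametrizing the stratum.} Gauge-fix along a spanning tree $T$, so that $h$ is encoded by matrices $g_e\in\U(n)$ for $e\notin T$. By \eqref{eq:evkerdtree}, the stratum is the set of tuples whose common fixed space $W$ has dimension exactly $k$. This is a bundle over $\Gr_k(\C^n)$ whose fibre over $W$ is an open dense subset of $\U(W^\perp)^r$, which gives ``negligible'' a meaning.
\item \emph{Simple cycles.} In this gauge a simple cycle $\gamma$ has holonomy $\prod g_e^{\pm1}$ over the non-tree edges it crosses. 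It crosses each such edge at most once, and at least one of them.
\item \emph{Negligibility.} So $\det\big(\hol_h(\gamma)_{|W^\perp}-\id\big)$ is a real-analytic function on the connected manifold $\U(W^\perp)^r$, and it is not identically zero. Indeed, at $(g_e)_{|W^\perp}=e^{i\theta_e}\id$ it equals $(e^{i\sum_e c_e\theta_e}-1)^{n-k}$, where $c\neq 0$ records the signed crossings. Its zero set is therefore null in each fibre, and Fubini concludes.
\item \emph{Conclusion.} Outside these finitely many zero sets, no component of any $A$ containing a cycle carries a twisted horizontal section, which is exactly your criterion for equality.
\end{itemize}

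This, like your step ``proper real-algebraic, hence negligible'' and the proof of Proposition~\ref{prop:negligible}, uses connectedness of the unitary group. For $\K=\R$ the claim is actually false. Take the theta graph with edges $a,b,c$ and real rank-one connections. Two of the three edge holonomies $s_a,s_b,s_c\in\{\pm1\}$ coincide, so the simple cycle they form has holonomy $+1$. Hence equality fails on the whole fully twisted stratum.
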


In concrete terms, the first assertion says that almost surely, $\tr \Q$ is the sum of the indicator functions of $k$ spanning trees and $(n-k)$ CRSF of $\rG$.

The notation $\Sigma_{h}$ is not used in the course of the paper, where this set is denoted by $\nPbas(\submod_{h})$.

\subsection{The spectrum of the twisted Laplacian}

Pursuing the analogy between spanning trees and DLSF, we give an expression for the characteristic polynomial of the twisted Laplacian $\Delta_{h}=d_{h}^{*}d_{h}$, in which the combinatorial and holonomic contributions are well separated. In the following result, $\T(\ul{x})$ denotes the generating polynomial of spanning trees of $\rG$.

\begin{theorem}\label{thm:mtt-intro}
Set $k=\dim \ker d_{h}$. The product of non-zero eigenvalues of $\Delta_h$ is 
\[{\det}^{*}(\Delta_h) = \vert \rV \vert^k \T(\ul{x})^k \sum_{\ul{m}\in (n-k) \pPbas(\submod_{\oo})} \ul{x}^{\ul{m}} w_h(\ul{m})\,,\]
where $w_h(\ul{m})$ is defined in \eqref{eq:wh}.
\end{theorem}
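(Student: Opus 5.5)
We will prove the statement by splitting off the flat part with the canonical splitting, and then treating the flat and fully twisted pieces separately.

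\emph{Canonical splitting.} By Proposition~\ref{prop:canosplit}, $(\F,h)$ decomposes orthogonally as $(\F^{\rm fl},h^{\rm fl})\oplus(\F^{\rm tw},h^{\rm tw})$. Here $\F^{\rm fl}$ is flat of rank $k=\dim\ker d_h$, and $\F^{\rm tw}$ is fully twisted of rank $n-k$. The operators $d_h$, $d_h^*$ and $\Delta_h$ respect this decomposition, since it is compatible with the fibrewise splitting and with the inner products. Hence
\[{\det}^*(\Delta_h)={\det}^*(\Delta_{h^{\rm fl}})\,\det(\Delta_{h^{\rm tw}}),\]
where the second factor is a genuine determinant because $d_{h^{\rm tw}}$ is injective.

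\emph{The flat factor.} A flat local system of rank $k$ is isomorphic to the trivial one. In that case $\Delta_{h^{\rm fl}}$ is unitarily equivalent to $k$ copies of the scalar weighted graph Laplacian. The classical matrix-tree theorem gives that the product of the nonzero eigenvalues of the scalar Laplacian is $|\rV|\,\T(\ul{x})$. This yields the factor $|\rV|^k\T(\ul{x})^k$.

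\emph{The fully twisted factor.} We expand $\det(d^*d)$ by Cauchy--Binet. First choose orthonormal bases of each edge fibre $\F_e$, with the weights $x_e$ absorbed into the inner product \eqref{eq:SPOmega1}. Then Cauchy--Binet expresses $\det(d_{h^{\rm tw}}^*d_{h^{\rm tw}})$ as the sum, over subsets of rows of total size $(n-k)|\rV|$, of $|\det|^2$ of the corresponding minors of $d_{h^{\rm tw}}$. Group these subsets by the number $m_e$ of rows taken in each fibre $\F_e$. Equivalently, take the norm squared of the $\ul{m}$-component of $\ext^{\rm top} d_{h^{\rm tw}}$ under the splitting $\ext^\sbullet\Omega^1=\bigoplus_{\ul m}\bigotimes_e\ext^{m_e}\F_e$. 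Each component is homogeneous of degree $\ul m$ in the weights, so it contributes $\ul{x}^{\ul m}w_h(\ul m)$; this is presumably exactly how $w_h$ is defined in \eqref{eq:wh}, and the definition does not depend on the chosen bases.

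It then remains to identify the set of $\ul m$ with nonzero contribution, which is the support $\nPbas(\submod_{h^{\rm tw}})$. By the inclusion in Theorem~\ref{thm:superposition-intro} applied with $k=0$ to the fully twisted part, this support lies in $(n-k)\pPbas(\submod_{\oo})$. Summing over the integer points of that polytope therefore only adds zero terms, which gives the formula.

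\emph{Main obstacle.} The delicate point is bookkeeping. We must check that the weights $w_h(\ul m)$ of \eqref{eq:wh} are defined with respect to the twisted part, or at least agree with the Cauchy--Binet coefficients computed there. We must also check that the flat part does not contaminate the sum, i.e.\ that no cross terms arise. Orthogonality of the splitting settles the latter, but it has to be verified that Proposition~\ref{prop:canosplit} gives a splitting that is orthogonal fibrewise, so that the edge-wise decomposition of $\ext^\sbullet$ is respected.
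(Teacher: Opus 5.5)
Your architecture is the paper's. You use the canonical splitting, get the flat factor $(|\rV|\T(\ul{x}))^{k}$ from the classical matrix-tree theorem, expand $\det\Delta_{h^{\mathrm{tw}}}$ over $\ul{m}$, and control the support through $\submod_{h^{\mathrm{tw}}}\le (n-k)\submod_{\oo}$ with equal ranks. Your Cauchy--Binet expansion is the basis version of the top coefficient of $\det(1+m_{\ul{t}}\,dd^*)$, which is what the paper reads off Theorem~\ref{thm:carlap}. Two steps, however, are not right as written.

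First, \eqref{eq:wh} is not defined as a Cauchy--Binet block sum, so your ``presumably'' hides a real step. By definition, $w_h(\ul{m})$ is an integral over $\Gr_{\ul{m}}(\Omega_1(\F),\gamma_1)$, against the invariant measure $\nu_{\ul{m}}$ of mass $\prod_e\binom{n}{m_e}$. The integrand is the determinant of the compression of $\partial_h^*\partial_h$ to $U$, computed on \emph{unweighted} chains. To connect this with your expansion, you need the following chain of facts.
\begin{enumerate}
\item Rows of $d$ in the weighted orthonormal basis carry a factor $x_e^{1/2}$. So your type-$\ul{m}$ block equals $\ul{x}^{\ul{m}}$ times the sum of the principal minors of $\partial_h^*\partial_h$ over coordinate subsets with $m_e$ elements in each edge fibre.
\item That sum is the coefficient of $\ul{t}^{\ul{m}}$ in $\det(1+m_{\ul{t}}\partial_h^*\partial_h)$. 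This is what actually gives basis independence.
\item Lemma~\ref{lem:det1+ta} identifies that coefficient with the Grassmannian integral.
\end{enumerate}
This chain is exactly the content of Proposition~\ref{prop:det1+tdd*}; without it, the identification is only an assertion.

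Second, orthogonality does not remove the cross terms; it produces them. The fibrewise orthogonality you worry about is automatic, since Proposition~\ref{prop:canosplit} gives an orthogonal splitting. Both $m_{\ul{t}}$ and $\partial_h^*\partial_h$ respect the flat/twisted splitting, so $\det(1+m_{\ul{t}}\partial_h^*\partial_h)$ factorizes and
\[
w_h(\ul{m})=\sum_{\ul{m}'+\ul{m}''=\ul{m}} w_{h^{\mathrm{fl}}}(\ul{m}')\,w_{h^{\mathrm{tw}}}(\ul{m}'').
\]
At level $|\ul{m}|=(n-k)|\rV|$, the terms with $\ul{m}'\neq 0$ need not vanish.

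Here is an example. Take vertices $u,v$, an edge $e$ from $u$ to $v$, and a self-loop $\ell$ at $v$. Let $n=2$, with trivial connection on $e$ and holonomy $\mathrm{diag}(1,z)$ along $\ell$, where $z\neq 1$. Then $k=1$, the only CRSF is $\{e,\ell\}$, and ${\det}^*(\Delta_h)=2x_e\cdot x_ex_\ell|1-z|^2$. So the formula needs $w(1,1)=|1-z|^2$, which is $w_{h^{\mathrm{tw}}}(1,1)$. But \eqref{eq:wh} on the whole rank-$2$ bundle gives
\[
w_h(1,1)=w_{h^{\mathrm{tw}}}(1,1)+w_{h^{\mathrm{fl}}}(1,0)\,w_{h^{\mathrm{tw}}}(0,1)=|1-z|^2+2|1-z|^2=3|1-z|^2 .
\]

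So your Cauchy--Binet argument proves the identity with the weights $w_{h^{\mathrm{tw}}}$ of the fully twisted summand. The paper's proof proves the same thing, since it applies Theorem~\ref{thm:carlap} to ``the other subsystem''. The formula is only correct with $w_{h^{\mathrm{tw}}}$, so state it that way rather than claiming agreement with the full-bundle $w_h$ by orthogonality.
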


With the notation $w_h(Q)$ introduced at the beginning of this introduction, the number $w_h(\ul{m})$ can be written as
\[w_h(\ul{m})=\int_{\prod_{e\in \gE} \Gr_{m_e}(\F_e)} w_h\Big(\bigoplus_{e\in \gE} Q_e \Big) \; \bigotimes_{e\in \gE} {\rm d}\nu_{m_e}(Q_e)\,.\] 
where the measure $\nu_{m_e}$ on $\Gr_{m_e}(\F_e)$ is defined in Section \ref{sec:dls}.

The result above is stated as Theorem \ref{thm:mtt}. It is extended by Theorem \ref{thm:carlap} into an expression for the full characteristic polynomial of $\Delta_{h}$. This is a generalization of the classical matrix-tree theorems (see \cite{KL2,KL2b} for an overview and unified proof of classical MTTs, and \cite{KL6} for some new expressions).

In the case of a fully twisted local system, that is, in the case where $k=0$, the Laplacian is invertible, and this theorem expresses its determinant as a sum over the integer points of the polymatroid with rank function $n\submod_{\oo}$, the $n$-fold multiple of the bicircular matroid. The corresponding subset of the Grassmannian of $\Omega^{1}(\F)$ consists in subspaces which we may call \emph{linear spanning forests}. 

As explained at the beginning of the introduction, Theorem \ref{thm:mtt-intro} provides a possible answer to \cite[Section 14, Open question 2]{Kenyon}.

\subsection{A functional on the moduli space of connections}

The function $\ul{m}\mapsto w_h(\ul{m})$ defined on the set of integer points of the convex polytope $(n-k)\pPbas(\submod_{\oo})$ involves integrals over the Grassmannian and is somewhat complicated. One of the motivations of the formulas obtained in~\cite{KL6} is to explain this quantity. 

Viewing $\det(\Delta_h)$ as a function of $h$, we observe that things simplify nicely when $h$ tends to a trivial connection $h_0$. First of all when $h=h_0$, $\det \Delta_h=0$, and the expression from Theorem~\ref{thm:mtt-intro} above simplifies to ${\det}^{*} (\Delta_h)=\vert\rV\vert^n \T(\ul{x})^n$. 

Moreover, we can  differentiate with respect to $h$ at $h_0$ and obtain a simpler result. This is the content of Theorem~\ref{thm:partition-limit} which we informally reproduce here. We denote by $\u(n)$ the space of skew-Hermitian matrices of size $n$.

\begin{theorem}\label{thm:infinitesimal-intro}
Let $A\in \Omega^1(\u(n))$ be a $\u(n)$-valued $1$-form and define $h_t=e^{tA}$ to be the associated unitary connection.
As $t$ tends to $0$, we have
\begin{equation*}
\det \Delta_{h_t}=t^{2n} \; \sum_{\underline{K}\in \cC(\rG,n)} |\det M_{A}(\ul{K})|^{2} \; \ul{x}^{\underline K}+O(t^{2n+1})
\,,
\end{equation*}
where $\cC(\rG,n)$ is the set of $n$-tuples of connected spanning subgraphs of $\rG$ with total first Betti number equal to $n$, and $M_A(\ul{K})$ is an $n\times n$ matrix built from the integration of $A$ along an integer basis of cycles of $\ul{K}$, see \eqref{eq:matriceA}.
\end{theorem}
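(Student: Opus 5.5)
We compute $\det\Delta_{h_t}$ directly by expanding in minors of $d_{h_t}$, then take the first nonvanishing term in $t$. Since $\Delta_{h}=d_h^*d_h$ with $d_h:\Omega^0(\F)\to\Omega^1(\F)$, the Cauchy--Binet formula gives
\[\det\Delta_h=\sum_{S}\ul{x}^{S}\,|\det d_h^{S}|^2,\]
where $S$ ranges over $n|\rV|$-element sets of pairs (edge, fibre coordinate) in $\rE^+\times\set{n}$, $d_h^S$ is the corresponding square submatrix in an orthonormal frame, and $\ul{x}^S$ is the product of the weights of the edges involved, with multiplicity. Equivalently, we work with the Plücker coordinates of $\iota_{\im d_h}$ used in the introduction. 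We will choose the frame so that $h_t(e)=e^{tA_e}$ in trivialized fibres, so that every entry of $d_{h_t}$ is analytic in $t$. The task is then to identify, for each $S$, the order of vanishing of $\det d_{h_t}^S$ at $t=0$ and its leading coefficient.

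The key step is to reorganize the sum by coordinates. For each coordinate $i\in\set{n}$, let $K_i$ be the set of edges $e$ with $(e,i)\in S$. At $t=0$, $d_{h_0}$ is block diagonal, consisting of $n$ copies of the ordinary incidence matrix. Since $h_t=\mathrm{I}+tA+O(t^2)$, we have $d_{h_t}=d_{h_0}+t\,D_A+O(t^2)$, where $D_A$ couples the coordinates through the entries of $A_e$ at the head of each edge. This gives the following necessary conditions for a nonzero minor.
\begin{itemize}
\item Each block $K_i$ must span $\rV$; otherwise some vertex column vanishes to all orders in coordinate $i$.
\item The total number of edges is $n|\rV|$, so $\sum_i(|K_i|-|\rV|)=n$. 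Hence each $K_i$ is a connected spanning subgraph with Betti number $b_1(K_i)=|K_i|-|\rV|+1$, and $\sum_i b_1(K_i)=n+n-\ldots$
\end{itemize}
The precise bookkeeping is the following. Each block, viewed as an $|K_i|\times|\rV|$ incidence matrix, has rank $|\rV|-1$, with cokernel spanned by the cycle space of $K_i$ and kernel spanned by the constants. The order of vanishing is at least $n$: the $n$ constant sections $(v\mapsto \epsilon_j)$, one for each $j$, lie in the kernel of $d_{h_0}^S$. This accounts for the factor $t^{2n}$ once the minor is squared. I would then apply the standard Schur-complement / Lyapunov–Schmidt perturbation formula for determinants:
\[\det(M_0+tM_1+\dots)=t^{n}\,\det\!\big(P_{\mathrm{coker}}M_1|_{\ker}\big)\cdot {\det}'(M_0)+O(t^{n+1}).\]
Here ${\det}'(M_0)$ is the product of the nonsingular parts, which equals $\pm1$ per block only when the $K_i$ are chosen appropriately. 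The compressed matrix pairs the kernel basis $\epsilon_j$ with a basis of cycles of $\ul{K}$; its $(j,\text{cycle }c)$ entry is the $j$-th component of $\oint_c A$. This is exactly $M_A(\ul{K})$ of \eqref{eq:matriceA}, provided the cycle basis is integral. The condition that $\ker$ and $\mathrm{coker}$ both have dimension $n$ forces $\sum_i b_1(K_i)=n$ together with connectedness, which identifies the index set $\cC(\rG,n)$.

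The main obstacle is the bookkeeping: the generalized minor of $d_h^S$ does not literally split by coordinate, because $S$ chooses coordinates in an orthonormal frame. I would fix the trivializing frame (constant frame) and check three things. First, $S$ decomposes as $\bigsqcup_i K_i\times\{i\}$, so that $\ul{x}^S=\ul{x}^{\ul{K}}$. Second, the nonsingular part ${\det}'$ of each block contributes $\pm1$; this holds because the incidence matrix of a connected graph, after removing one column and choosing a spanning tree's worth of rows, is unimodular. Third, the Schur complement uses an integer cycle basis, so that $|\det M_A(\ul{K})|^2$ is basis-independent. Finally, I would sum the squared leading coefficients over $\ul{K}\in\cC(\rG,n)$ and collect all other contributions into $O(t^{2n+1})$. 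Those contributions either vanish identically (non-spanning or disconnected blocks) or have higher order, since the kernel then exceeds dimension $n$ and forces extra powers of $t$.
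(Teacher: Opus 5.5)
Your argument is correct in its main line, but it takes a different route from the paper.

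\textbf{The paper's route.} The paper never expands into minors. It applies a Schur complement to $\Delta_{h_t}$ relative to the space $H_0=\ker d_{h_0}$ of constant sections, and shows that the projection onto $\ker(1^{H_0^\perp}d_{h_t}^*)$ is $\proj{\ker d_{h_0}^*}+O(t)$. At leading order this gives $t^{2n}$ times ${\det}^*\Delta_{h_0}$ times the determinant of the endomorphism $\omega_A^*\proj{\ker d_{h_0}^*}\omega_A$ of $H_0$. The latter is a Gram determinant of the projections of the columns $A_1,\dots,A_n$ on divergence-free forms (Proposition~\ref{prop:Schur-expansion}). The passage to the sum over $\cC(\rG,n)$ is then imported from \cite{KL5} (Proposition~\ref{prop:KL5-13-14}).

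\textbf{Your route.} You apply Cauchy--Binet first and do a degenerate perturbation minor by minor. This amounts to proving the imported identity by hand. The selection rule comes from $\dim\ker d^S_{h_0}=\sum_i c(K_i)$, where $c(K_i)$ is the number of components of $(\rV,K_i)$. A Schur complement over a spanning tree $T_i\subseteq K_i$, which is a unimodular block, turns the first-order columns on the constants into their pairings with fundamental cycles, an integral basis of $Z_1(K_i,\Z)$.

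\textbf{Comparison.} Your route has three advantages:
\begin{itemize}
\item It is self-contained.
\item It produces the constant exactly as in the statement. In the paper's route one must remember that the constant sections $v\mapsto\epsilon_j$ have squared norm $|\rV|$ in $\Omega^0(\F)$. So the determinant on $H_0$ is $|\rV|^{-n}$ times the Gram determinant, and this factor must cancel the $|\rV|^n$ in ${\det}^*\Delta_{h_0}=(|\rV|\T(\ul x))^n$.
\item It works Plücker coordinate by Plücker coordinate, so it also gives the limiting law of the trace at once, when the leading sum is nonzero.
\end{itemize}
The paper's route instead gives a frame-free description of the leading coefficient, $\T(\ul x)^n\|(\proj{\ker d_{h_0}^*})^{\wedge n}\alpha\|^2$. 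This is what connects to Proposition~\ref{prop:KL5-13-14-2}.

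\textbf{Intermediate claims to correct.}
\begin{itemize}
\item A non-spanning $K_i$ does not make a column vanish to all orders. Column $(v,i)$ generally keeps entries $\pm t(A_e)_{ki}+O(t^2)$ from rows $(e,k)\in S$ with $k\neq i$ and $e$ incident to $v$. What is true, and suffices, is your final remark: $\dim\ker d^S_{h_0}>n$ as soon as some $(\rV,K_i)$ is disconnected, so $\det d^S_{h_t}=O(t^{n+1})$.
\item $\sum_i(|K_i|-|\rV|)=0$, not $n$. Connectedness then gives $\sum_i b_1(K_i)=\sum_i(|K_i|-|\rV|+1)=n$.
\item The ``nonsingular part'' is $\pm1$ for every $\ul K\in\cC(\rG,n)$, with no further choice involved. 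This holds provided you compute in integral unimodular adapted bases: constants and $\delta_{(v,i)}$, $v\neq v_0$, on the domain; tree rows and non-tree rows on the target. In orthonormal bases it is not $\pm1$ and the compressed matrix is rescaled accordingly, so commit to the integral normalization.
\item The compressed matrix has entry $\pm A(c)_{ij}$ in row $(c,i)$, column $j$, for $c$ a cycle of $K_i$. These are rows of $A(c)$, not the columns used in \eqref{eq:matriceA}. Because each $A(c)$ is skew-Hermitian, the compressed matrix equals $\pm M_A(\ul K)^*$, so the moduli of the determinants agree.
\end{itemize}
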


As we explain in Section \ref{sec:infinitesimal-connections}, the above polynomial may be interpreted as the partition function of a random process which is the limit of the trace of the DLSF when $t$ tends to zero.

\subsection{Infinite graphs}

The definition of DLSF can be extended to infinite graphs following the general theory of DLP, as explained in  \cite[Section 4.9]{KL3}. As in the classical untwisted case (uniform spanning forests), there is a choice as to which measure in infinite dimension is the analog of the finite case, and this depends on how we want to approximate the infinite graph by finite graphs. Choosing wired or free boundary conditions for exhausting sequences of finite graphs yields two natural measures, which we call the wired and free DLSF, respectively denoted WDLSF and FDLSF.

A notable difference with the classical case is the following. It is well known (see~\cite{BLPS}) that on~$\Z^d$ with unit weights, we have $\text{WSF}=\text{FSF}$. When $d\ge 3$, we construct examples of local systems on~$\Z^d$ for which $\text{WDLSF}\neq \text{FDLSF}$.

We also give a condition for these measures to coincide, and provide examples, also on $\Z^d$, where this is the case. In that case, the correlations of the process can be evaluated, and are shown to decay at least exponentially with the graph distance. 

Studying the fine properties of these distributions leads to studying local holonomic properties of the local system, which seems to be a worthwhile path to explore. In particular, we exhibit a local-to-global principle for the existence of a spectral gap from holonomies of loops to the twisted Laplacian.

\begin{theorem}\label{thm:infinite-intro}
If there is a uniform holonomic spectral gap, then the Laplacian has a spectral gap. In particular, the Green function decays exponentially with the graph-distance. This implies that  $\text{WDLSF}=\text{FDLSF}$ and their correlations decay exponentially with the graph-distance.
\end{theorem}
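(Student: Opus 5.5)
Since the statement is informal, I will prove a precise version. Fix the following setting. The graph $\rG$ is infinite, connected and locally finite, with bounded degree. The weights satisfy $0<c\le x_e\le C$. The holonomic gap hypothesis reads: there are $R<\infty$ and $\varepsilon>0$ such that for every vertex $v$, the ball $B(v,R)$ satisfies
\[
\|d_h^{B(v,R)}f\|^2\ge \varepsilon\|f\|^2 \quad\text{for all } f\in\Omega^0(\F|_{B(v,R)}).
\]
Equivalently, the restricted local system on each ball is fully twisted with a uniform lower bound on the smallest eigenvalue of the local Laplacian. By compactness of $\U(n)$ and the finitely many isomorphism types of balls under bounded degree, this follows from a quantitative bound on how far the holonomies of the loops in $B(v,R)$ are from having a common fixed vector.

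\textbf{Step 1 (local to global).} Take $f\in\Omega^0(\F)$ finitely supported. Sum the local inequalities over all $v\in\rV$. Each vertex lies in at most $D_R$ balls, where $D_R$ is the maximal ball cardinality, so
\[
\sum_v\|f|_{B(v,R)}\|^2\ge\|f\|^2 .
\]
Each edge lies in at most $D_R$ balls, so
\[
\sum_v\|d_h^{B(v,R)}f\|^2\le D_R\|d_hf\|^2 .
\]
Together these give the Poincaré inequality $\langle\Delta_hf,f\rangle\ge(\varepsilon/D_R)\|f\|^2$. So $\Delta_h$ (the Friedrichs extension, which is bounded here because degrees and weights are bounded) has spectrum in $[\varepsilon/D_R,\,C']$. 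This step is the main obstacle in the sense of \emph{formulating} the hypothesis correctly. The overlap counting itself is routine. Care is needed only so that the local inequality holds for arbitrary restrictions $f|_B$ and not only for sections vanishing on $\partial B$; the free-type local Laplacian is the correct choice, since it is dominated by the global energy.

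\textbf{Step 2 (exponential decay of the Green function).} Apply the Combes--Thomas argument. Let $G=\Delta_h^{-1}$. Conjugate $\Delta_h$ by $e^{\alpha\,\mathrm{dist}(\cdot,o)}$. The perturbation is bounded in norm by $O(\alpha)$, with a constant depending on the degree and weight bounds. For $\alpha$ small this keeps the conjugated operator invertible, so $\|G(u,v)\|\le Ce^{-\alpha\,\mathrm{dist}(u,v)}$. An alternative is to expand $G=\frac1\lambda\sum_k(I-\Delta_h/\lambda)^k$ with $\lambda$ above the spectrum. The operator $I-\Delta_h/\lambda$ has norm less than $1$ and finite range, so the $k$-th term is supported at distance at most $k$, and this also yields exponential decay.

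\textbf{Step 3 (WDLSF $=$ FDLSF and decay of correlations).} The wired and free measures are the determinantal linear processes associated with projections on two closed subspaces. The free one is the projection on $\overline{\im d_h}$; the wired one is the orthogonal complement of $\ker d_h^*$ intersected with finitely supported cycles, following \cite[Section 4.9]{KL3}. They coincide when these subspaces are equal, which reduces to the absence of nonzero harmonic Dirichlet sections. If $\Delta_h\ge\varepsilon/D_R$, then $d_h$ has closed range and is injective. Also $d_h^*d_h$ is invertible, so $\overline{\im d_h}=\im d_h$ and both limits equal the projection $K=d_hGd_h^*$. In a finite exhaustion with either boundary condition, the local Poincaré inequality still holds for balls far from the boundary. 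The finite-volume kernels then converge to $K$, and the DLP limits coincide. Finally $K(e,e')$ is a finite combination of $G(u,v)$ over endpoints $u$ of $e$ and $v$ of $e'$, so $\|K(e,e')\|\le C'e^{-\alpha\,\mathrm{dist}(e,e')}$. The truncated correlations of the determinantal process are sums of products of kernel entries along cycles. Hence they decay exponentially with the graph distance.
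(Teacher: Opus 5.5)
Your Steps 1 and 2 are sound, and they run parallel to the paper. The paper bounds each $\|f(v)\|$ by the twisted Poincar\'e lemma along a loop $\gamma_v$ (Corollary~\ref{cor:bound}), applies Cauchy--Schwarz weighted by $\ell(\gamma_v)$, and controls overlaps through the constant $C_2$ of Definition~\ref{def:geom-reg}. You instead postulate a uniform local gap on balls and count overlaps using bounded degree. That is a bounded-geometry substitute for the paper's loop-based hypothesis, which is stated directly in terms of holonomies and needs no degree bound. Your Neumann series with $I-\Delta_h/\lambda$, $\lambda$ above the spectrum, is actually more robust than the paper's $L_h^{-1}=\sum_n P_h^n$. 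A gap $L_h\ge C$ only gives $P_h\le 1-C$, and $P_h$ can have spectrum at $-1$. For example, in rank one with holonomy $-1$ along every edge of the triangular lattice, $P_h$ is minus the simple random walk operator and has spectrum $[-1,1/2]$.

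The genuine gap is in Step~3. There you deduce $\text{WDLSF}=\text{FDLSF}$ from the $\ell^2$ statement that $\Delta_h$ is bounded below, i.e.\ that $D_h$ is injective with closed range. That implication is false in general. For the trivial rank-one local system on the $3$-regular tree, $L_h\ge 1-2\sqrt2/3$ on $\Omega^0_{\ell^2}(\F)$, yet $\D_h\neq\{0\}$ and the wired and free forests differ. The reason is that the free limit is the projection on $\boxminus_h^{\perp}=\bigstar_h\oplus\D_h$ (Theorems~\ref{thm:splitting} and~\ref{thm:infinite-volume}). The space $\D_h$ consists of differentials of harmonic sections of finite Dirichlet energy, which need not lie in $\ell^2$, so no $\ell^2$ spectral statement controls it. Your further assertion that the free finite-volume kernels ``then converge to $K$'' is equivalent to $\D_h=\{0\}$, and you give no proof of it.

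The missing ingredient is the Poincar\'e inequality for \emph{all} $f\in\Omega^0(\F)$, with norms allowed to be infinite; this is how the paper defines it. Your Step~1 in fact provides it, because the sum over balls involves only non-negative terms and never uses finite support. So $\|f\|^2\le (D_R/\varepsilon)\|d_hf\|^2$ holds for every section. Hence a harmonic $f$ with $d_hf\in\Omega^1_{\ell^2}(\F)$ lies in $\Omega^0_{\ell^2}(\F)$, so $\|D_hf\|^2=(f,L_hf)=0$ and $f=0$ (Proposition~\ref{prop:HD=0}). Thus $\D_h=\{0\}$, and Proposition~\ref{prop:wsf-fsf} gives the equality.

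Note also that you have swapped the labels. In the paper the wired measure is attached to $\bigstar_h=\overline{\im D_h}$ and the free one to $\boxminus_h^{\perp}$, consistent with $\text{WDLSF}\preceq\text{FDLSF}$. This is harmless here only because the two subspaces end up equal.
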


The exponential decay of the correlation also holds for the Gaussian free vector field (GFVF), as explained, along with this theorem, in Section \ref{sec:wired-free}.

\subsection{Organization of the paper}

In Section~\ref{sec:local-systems} we introduce local systems on graphs and explain their basic properties.
In Section~\ref{sec:holonomic-constructions} we provide some useful properties related to holonomy, in particular a twisted Poincaré formula which plays a role in the study of infinite graphs. 

Determinantal linear spanning forests are introduced in Section~\ref{sec:DLSF}, where we also state their basic properties. Section~\ref{sec:DLSF-Polymatroids} is devoted to the characterization of the support of the distribution of DLSF and the link to polymatroids, for which we provide a self-contained introduction. 

In Section~\ref{sec:characteristic-polynomial}, we explain why the determinant of the twisted Laplacian is the partition function for DLSF, and also provide a matrix-tree type formula for the characteristic polynomial of the twisted Laplacian. In Section~\ref{sec:infinitesimal-connections} we study this determinant as a function of the connection. We compute the leading order of the partition function for a connection that is a small deformation of a trivial connection, and we provide a probabilistic interpretation for this partition function.

Section~\ref{sec:electrical-networks} is devoted to the electrical networks with gauge field interpretation of DLSF. It is in many ways similar to the classical electrical networks interpretation of spanning trees, but the presence of holonomy lets new phenomena emerge.%show up.

In Section~\ref{sec:infinite-graphs} we extend the theory of local systems from Section \ref{sec:local-systems} to infinite graphs and discuss the specific analytic aspects that appear. This is used in Section~\ref{sec:wired-free} to define DLSF on infinite graphs and exhibit two particular measures, the wired and free DLSF, obtained as limits on exhaustions by finite graphs with free or wired boundary conditions. Geometric conditions on the local system are given to ensure that these two infinite distributions coincide, and that their correlations decay exponentially.

\subsection{Acknowledgments}
We thank Richard Kenyon for inspiring discussions, almost 15 years ago, during the first-named author's PhD thesis where the question of higher rank bundles was raised, and which were somewhat at the root of this project. H\'{e}lo\"{i}se Constantin wrote part of her PhD thesis on the study of DLSF (then called QSF) on vector bundles of arbitrary rank on the hypercubic lattice $\Z^d$ and we thank her for stimulating discussions and for her understanding and patience in working on a model, the bases of which were only written down in the first draft version of this paper.
We also thank Omid Amini for his interest in our work and for inspiring discussions on related topics. 

%%%%%%%%%%%%%%%%%%%%%%%%%%%%%%%%%%%%%%%%

\section{Local systems over graphs}\label{sec:local-systems} 

\subsection{Graphs}\label{sec:def-graphs} 

In this paper, we work with locally finite graphs, that are bidirected, in the sense that each edge can be considered with both orientations.

\begin{definition}\label{defgraph}
A \new{graph} is a quintuple $\rG=(\rV,\rE,s,t,i)$ consisting of 
\begin{itemize}
\item two non-empty sets $\rV$ and $\rE$,
\item two maps $s,t:\rE\to \rV$ such that for all $v\in \rV$, the sets $s^{-1}(v)$ and $t^{-1}(v)$ are finite,
\item a fixed-point free involution $i$ of $\rE$, such that the equality $t\circ i=s$ holds.
\end{itemize}
If $\rV$ is finite, the graph is said to be \new{finite}.
\end{definition}

The elements of $\rV$ are the \new{vertices} of the graph, the elements of $\rE$ its \new{edges}. Each edge~$e$ is oriented and joins its source $s(e)$ to its target $t(e)$. We will usually write $\ul{e}=s(e)$, $\ol{e}=t(e)$ and $e^{-1}=i(e)$ for the source, target, and inverse of an edge. We will also write a graph simply as a pair $(\rV,\rE)$ and not mention the maps $s,t,i$.

The involution $i$ induces an equivalence relation on $\rE$ and we define the set of \new{geometric edges} of the graph as the quotient set
\[\gE=\rE/i\,.\]
The distinction between an edge and the corresponding geometric edge is often immaterial, but when it is needed, we will denote by $\geom{e}$ the geometric edge corresponding to an edge $e$, that is,  $\geom{e}=\{e,e^{-1}\}=\geom{e^{-1}}$. See Figure~\ref{fig:graph} for an example. 

\begin{figure}[!ht]
    \centering
    \includegraphics[scale=.7]{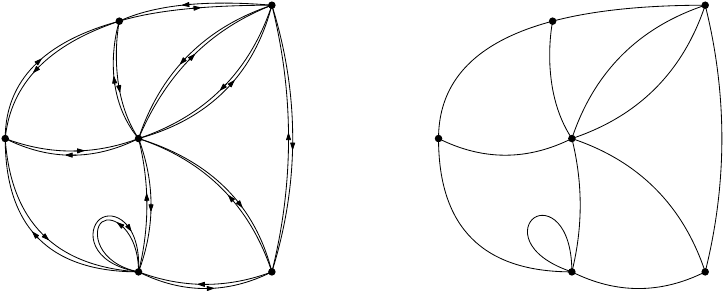}
    \caption{\small Left: a graph with all directed edges drawn. Right: the same graph with only geometric edges drawn.}
    \label{fig:graph}
\end{figure}

For any subset $A$ of $\rE$ stable by the involution $i$, we denote by $\rV(A)$ the set of vertices adjacent to at least one edge of $A$, and we call $(\rV(A),A)$ the subgraph of $\rG$ induced by $A$. We will also use this notion in the case where $A$ is a subset of $\gE$, with the obvious meaning.

\subsection{Vector bundles}

Let us now define a vector bundle over a graph $\rG$. For this, we choose a base field $\K$ that is either $\R$, $\C$, or $\bH$ (the skew-field of real quaternions), and an integer $N\geq 1$. 

\begin{definition}
A \new{$\K$-vector bundle of rank $N$ over $\rG$} is a collection $\F=((\F_{v})_{v\in \rV}, (\F_{e})_{e\in \rE})$ of vector spaces over $\K$ which all have the same dimension $N$, and such that for all edge $e\in \rE$, $\F_{e}=\F_{e^{-1}}$.
\end{definition}

In this paper, we will always consider vector bundles $\F$ which are \new{Euclidean} (if $\K=\R$), \new{Hermitian} (if $\K=\C$), or \new{quaternionic Hermitian} (if $\K=\bH$), in the sense that each of the vector spaces of which they consist is endowed with a Euclidean, Hermitian, or quaternionic Hermitian inner product. The bilinear or sesquilinear form on the vector space $\F_{v}$ for $v\in \rV$ (resp. $\F_{e}$ for $e\in \rE$) is denoted by $\langle \cdot,\cdot \rangle_{v}$ (resp. $\langle \cdot,\cdot \rangle_{e}$). When $\K=\C$ or $\bH$, we take this form to be antilinear in the first variable and linear in the second, according to the physicists' convention. We also denote by $\|\cdot\|_v$ and $\|\cdot\|_e$ the associated norms.

We will sometimes consider vector bundles of which all the fibres are the same inner product space~$F$, for instance $\C^{N}$ with its usual Hermitian structure. We will denote this vector bundle by $\rG\times F$ or, if no confusion is possible, simply by $F$. 

Let $\F$ be a vector bundle over a graph $\rG=(\rV,\rE)$. The space of \new{sections} of $\F$, or \new{$0$-forms} on~$\rG$ with values in $\F$, is the vector space
\begin{equation}\label{eq:splitOmega0}
\Omega^{0}(\F)=\prod_{v\in \rV}\F_{v}\,.
\end{equation}

The space of \new{$1$-forms} on $\rG$ with values in $\F$ is the space
\begin{equation}
\Omega^{1}(\F)=\bigg\{\omega \in \prod_{e\in \rE} \F_{e} : \forall e \in \rE, \; \omega(e^{-1})=-\omega(e) \bigg\}\,.
\end{equation}
It is convenient to attach a space of $1$-forms to each geometric edge $\geom{e}$ by setting
\begin{equation}\label{eq:Fe}
    \F_{\geom{e}}=\{\omega\in\Omega^{1}(\F) : \forall e'\notin \geom{e}, \, \omega(e')=0\}\,,
\end{equation}    
so that the space of $1$-forms can be written
\begin{equation}\label{eq:splitOmega1}
\Omega^{1}(\F)=\prod_{\geom{e}\in \gE} \F_{\geom{e}}\,.
\end{equation}
Note that for all $e\in \rE$, the linear maps $\ev_e:\F_{\geom{e}}\to \F_e, \omega\mapsto \omega(e)$ are isomorphisms (satisfying the relation $\ev_{e^{-1}}=-\ev_e$). Despite the subtle distinction that we just made, we will identify $\F_{e}$ and $\F_{[e]}$ in all situations where this does not create confusion.

The {\em total space} of $\F$ is the vector space of forms of inhomogeneous degree over $\rG$, that is,
\begin{equation}\label{eq:totalspace}
\Omega^{\vsbullet}(\F)=\Omega^{0}(\F)\oplus \Omega^{1}(\F).
\end{equation}

\subsection{Local systems}  \label{sec:locsys}

Our main object of study in this paper will be {\em local systems} over $\rG$, defined as vector bundles  endowed with a connection. 

In order to define connections, we define the sets of \new{initial} and \new{final half-edges}
\begin{equation}\label{fibprods}
\rH_{s}=\{(u,e) \in \rV\times \rE: u=s(e)\}  \mbox{ and } \rH_{t}=\{(e,v)\in \rE\times \rV : t(e)=v\}\,.
\end{equation}

\begin{definition}\label{def:connection}
A  \new{connection} $h$ on the vector bundle $\F$ is the data, for each $(u,e)\in \rH_{s}$, of a linear isomorphism $h_{e,u}:\F_{u}\to \F_{e}$.  
\end{definition}

According to this minimalistic definition, for each edge $e$ with endpoints $u=\ul{e}$ and $v=\ol{e}$, a connection~$h$ specifies two isomorphisms 
$h_{e,u}:\F_{u}\to \F_{e}$ and $h_{e^{-1},v}:F_{v}\to \F_{e^{-1}}=\F_{e}$. We complete this collection by setting
\[h_{e^{-1},u}=h_{e,u}\ , \ \ h_{u,e}=h_{u,e^{-1}}=h_{e,u}^{-1}\ , \ \ h_{e,v}=h_{e^{-1},v}\ , \ \ h_{v,e}=h_{v,e^{-1}}=h_{e,v}^{-1},\]
the logic being best explained by a glance at Figure \ref{fig:connection}. In all cases where $(a,b)$ is a half-edge of $\rG$, the connection $h$ thus specifies an isomorphism $h_{b,a}:\F_{a}\to \F_{b}$. This isomorphism is associated with the crossing of the half-edge joining $a$ to $b$, and reversing the direction turns the isomorphism into its inverse.

\begin{figure}[!ht]
    \centering
    \includegraphics[scale=1]{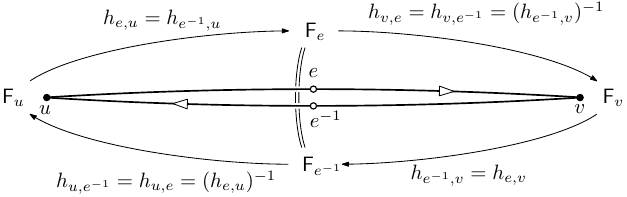}
    \caption{\small A connection on a graph, representing parallel transport along half-edges. }
    \label{fig:connection}
\end{figure}

In this paper, in addition to assuming that each fibre of a vector bundle is an inner product space, we will always assume that the isomorphisms defining the connection are isometries. For the sake of brevity, we will always use the words {\em Hermitian} and {\em unitary}, whether $\K$ be $\R$, $\C$, or~$\bH$.

We denote the set of all unitary connections on the Hermitian vector bundle $\F$ by $\A(\F)$.

\begin{definition}
A {\em local system} over $\rG$ is a pair $(\F,h)$, where~$\F$ is a vector bundle over $\rG$ and $h$ is a unitary connection on $\F$. 
\end{definition}

Two local systems $(\F,h)$ and $(\F',h')$ over $\rG$ are said to be {\em isomorphic} if there exists, for all vertex~$v$ and all edge $e$, isometries $f_{v}:\F_{v}\to \F'_{v}$ and $f_{e}:\F_{e}\to \F'_{e}$ such that for all half-edge $(v,e)$, one has 
\begin{equation} \label{eq:isomlocalsys}
f_{e}\circ h_{e,v}=h'_{e,v} \circ f_{v}.
\end{equation}

The group of {\em gauge transformations} 
\[\cJ(\F)=\prod_{v\in \rV} \U(\F_{v}) \times \prod_{e\in \gE} \U(\F_{e})\]
of the vector bundle $\F$, defined as the Cartesian product of the groups of unitary transformations of the fibres of $\F$, acts on the space $\A(\F)$ of connections on $\F$ according to the formulas
\[(j\cdot h)_{e,u}=j_{e}^{-1}\circ h_{e,u}\circ h_{u} \ \text{ and } \ (j\cdot h)_{v,e}=j_{v}^{-1}\circ h_{v,e}\circ j_{e}.\]
For any two connections $h,h'\in \A(\F)$, the local systems $(\F,h)$ and $(\F,h')$ are isomorphic if and only if $h$ and $h'$ differ by a gauge transformation. 

A {\em local subsystem} of $(\F,h)$ is a local system $(\F',h')$ such that 
\begin{itemize}
\item for all vertex $v$ and all edge~$e$, the inclusions $\F'_{v}\subseteq \F_{v}$ and $\F'_{e}\subseteq \F_{e}$ hold, 
\item  for all half-edge $(v,e)$, one has 
\begin{equation}\label{eq:defsubsystem}
h_{e,v}(\F'_{v})=\F'_{e},
\end{equation}
\item  $h'_{e,v}$ coincides with $h_{e,v}$ on $\F'_{v}$.
\end{itemize}

\subsection{Holonomy} \label{sec:holonomy}

Let $(\F,h)$ be a local system over $\rG$. For every edge $e\in \rE$, we define the holonomy along $e$ as
\[h_{e}=h_{\ol{e},e} \circ h_{e,\ul{e}}:\F_{\ul{e}}\to \F_{\ol{e}}.\]
Given a path in $\rG$, that is, a finite well-chained sequence $p=(e_{1},e_{2},\ldots, e_{n})$
of edges, and with the notation $\ul{p}=\ul{e_{1}}$ and $\ol{p}=\ol{e_{n}}$,
we define the {\em holonomy of $h$}, or {\em parallel transport} induced by $h$, along $p$ as
\[\hol_{h}(p)=h_{e_{n}}\circ \ldots \circ h_{e_{1}}: \F_{\ul{p}}\to \F_{\ol{p}}\,.\]
The basic properties of the holonomy imply that for every path $p$, the relation
\[\hol_{h}(p^{-1})=\hol_{h}(p)^{-1},\]
where $p^{-1}=(e_{n}^{-1},\ldots,e_{1}^{-1})$ denotes the path $p$ traced backwards. Moreover, whenever two paths $p_{1}$ and~$p_{2}$ can be concatenated, the relation 
\[\hol_{h}(p_{1}p_{2})=\hol_{h}(p_{2})\hol_{h}(p_{1})\]
holds.

A path $\gamma$ such that $\ol{\gamma}=\ul{\gamma}=v$ for some vertex $v$ is said to be a loop based at $v$, and $\hol_{h}(\gamma)$ belongs to the group $\Un(\F_{v})$ of unitary linear transformations of $\F_{v}$.

For every vertex $v$, we denote by $\Loop_{v}(\rG)$ the set of all loops based at $v$. It is a monoid for the operation of concatenation, the unit element being the constant loop at $v$. 

\begin{definition} \label{def:holonomygroup} Let $v$ be a vertex of $\rG$. The \emph{holonomy group} of the local system $(\F,h)$  at $v$ is the subgroup of $\U(\F_v)$ defined by 
\[\Hol_{h,v}=\{\hol_{h}(\gamma) : \gamma\in \Loop_{v}(\rG)\}\, .\] 
\end{definition}

If $\rG$ is connected, the holonomy groups at any two vertices are isomorphic, an isomorphism being given by the conjugation by the parallel transport (or holonomy) along any path joining the two vertices.

The holonomy group $\Hol_{h,v}$ is attached to the local system $(\F,h)$ and acts on the fiber $\F_{v}$. We call the space $\F_{v}$, acted on by the group $\Hol_{h,v}$, the {\em holonomy module at $v$} of $(\F,h)$. 

We will be interested in the subspaces of $\F_{v}$ that are stable under the action of $\Hol_{h,v}$, that we call {\em holonomy submodules at $v$} of $(\F,h)$.

\subsection{Local subsystems and holonomy submodules}

Let $(\F,h)$ be a local system over $\rG$. Let $v$ be a vertex of $\rG$ and $\Hol_{h,v}$ the holonomy group of $(\F,h)$ at $v$. 
There is a one-to-one correspondence between the local subsystems of $(\F,h)$ and its holonomy submodules at $v$.

\begin{proposition} \label{prop:localsysfibre}
Let $(\F,h)$ be a local system over the connected graph $\rG$. Let $v$ be a vertex of~$\rG$. \\
\indent 1. If $(\F',h')$ is a local subsystem of $(\F,h)$, then $\F'_{v}$ is a holonomy submodule at $v$ of $\F_{v}$.\\
\indent 2. For all holonomy submodule $W$ of $\F_{v}$, there exists a unique local subsystem $(\F',h')$ of $(\F,h)$ such that $\F'_{v}=W$.\\
\indent 3.~Two local subsystems $(\F',h')$ and $(\F'',h'')$ of $(\F,h)$ are isomorphic if and only if $\F'_{v}$ and $\F''_{v}$ are isomorphic holonomy submodules of $\F_{v}$.
\end{proposition}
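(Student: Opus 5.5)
The three statements follow from one observation. The condition \eqref{eq:defsubsystem} forces a local subsystem to be transported along every half-edge, hence along every path. I would first record that if $(\F',h')$ is a local subsystem, then for every edge $e$ the relations $h_{e,\ul{e}}(\F'_{\ul{e}})=\F'_e$ and $h_{e,\ol{e}}(\F'_{\ol{e}})=\F'_e$ give $h_e(\F'_{\ul{e}})=\F'_{\ol{e}}$. By induction on the length of a path $p$ this yields $\hol_h(p)(\F'_{\ul{p}})=\F'_{\ol{p}}$. Applying this to loops based at $v$ gives part 1 directly: $\F'_v$ is stable under every element of $\Hol_{h,v}$.

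For part 2, I would fix a holonomy submodule $W\subseteq\F_v$ and, using connectedness of $\rG$, define for every vertex $u$
\[\F'_u=\hol_h(p)(W)\]
for any path $p$ from $v$ to $u$. This is independent of the choice of $p$: if $q$ is another such path, then $\hol_h(q)^{-1}\hol_h(p)=\hol_h(pq^{-1})\in\Hol_{h,v}$ preserves $W$. For an edge $e$, set $\F'_e=h_{e,\ul{e}}(\F'_{\ul{e}})$. The one point needing care is that this coincides with $h_{e,\ol{e}}(\F'_{\ol{e}})$, so that $\F'_e=\F'_{e^{-1}}$ and \eqref{eq:defsubsystem} holds at both half-edges. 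This follows from $\F'_{\ol{e}}=h_e(\F'_{\ul{e}})$, which comes from the path-independence just proved by appending $e$ to a path ending at $\ul{e}$, together with the definition $h_e=h_{\ol{e},e}h_{e,\ul{e}}$. Restricting $h$ gives unitary isomorphisms $h'$ between these subspaces, so $(\F',h')$ is a local subsystem. Uniqueness follows from the transport property proved for part 1: any local subsystem with fibre $W$ at $v$ must have fibres $\hol_h(p)(W)$ at the vertices, and then its edge fibres are forced by \eqref{eq:defsubsystem}.

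For part 3, the direct implication is easy. An isomorphism $(f_u,f_e)$ of local systems satisfying \eqref{eq:isomlocalsys} intertwines holonomies along paths, so $f_v$ intertwines the two restricted actions of $\Hol_{h,v}$ on $\F'_v$ and $\F''_v$. Here "isomorphic holonomy submodules" means related by a $\Hol_{h,v}$-equivariant isometry, and I would check that equivariance is indeed what is produced. Conversely, given an equivariant isometry $f_v:\F'_v\to\F''_v$, I would define $f_u=\hol_h(p)\circ f_v\circ\hol_h(p)^{-1}$ on $\F'_u$ for any path $p$ from $v$ to $u$. This is independent of $p$ precisely by equivariance, the same mechanism as in part 2. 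I would then set $f_e=h_{e,\ul{e}}f_{\ul{e}}h_{e,\ul{e}}^{-1}$ and verify, as before, that this agrees with the formula obtained using the other endpoint $\ol{e}$, which gives \eqref{eq:isomlocalsys} at every half-edge.

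The only genuinely delicate steps are the two well-definedness checks: independence of the path, and consistency at the two ends of each edge. Both reduce to the fact that holonomies of loops at $v$ preserve $W$, respectively commute with $f_v$.
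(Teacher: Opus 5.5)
Your proposal is correct and follows essentially the same route as the paper. Part 1 and the uniqueness in part 2 come from transporting fibres along paths. Existence in part 2 uses $\F'_u=\hol_h(p)(W)$, with path-independence because loops at $v$ preserve $W$, and takes edge fibres as the common image under the two half-edge maps. Part 3 uses intertwining of holonomies in one direction, and in the other extends an equivariant isometry by conjugating with holonomies.

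Your converse in part 3 is more explicit than the paper's, which only says the construction is similar to part 2: you check that $f_u=\hol_h(p)\circ f_v\circ\hol_h(p)^{-1}$ is well defined and that the two formulas for $f_e$ agree. If ``isomorphic submodules'' is read as merely linearly isomorphic, the unitary part of the polar decomposition of an intertwiner supplies the equivariant isometry you start from.
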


We will use that fact that an isomorphism between local systems intertwines their holonomies, in the sense that, with the notation of the paragraph containing \eqref{eq:isomlocalsys}, and for all path $p$ joining a vertex $v$ to a vertex~$w$, we have the equality
\begin{equation}\label{eq:isomhol}
\hol_{h'}(p) \circ f_{v}=f_{w}\circ \hol_{h}(p) : \F_{v}\to \F'_{w}.
\end{equation}

\begin{proof}
1. Let $(\F',h')$ be a local subsystem of $(\F,h)$. For all path $p$ joining $v$ to some vertex $w$, we have 
\begin{equation}\label{eq:F'h}
\F'_{w}=\hol_{h'}(p)(\F'_{v})=\hol_{h}(p)(\F'_{v}).
\end{equation}
In particular, for every loop $\gamma$ in $\rG$ based at $v$, the equality
\[\hol_{h}(\gamma)(\F'_{v})=\F'_{v}\]
holds, and this is exactly the content of the first assertion.

2. For all vertex $w$, let us choose a path $p$ from $v$ to $w$, and define $\F'_{w}=\hol_{h}(p)(W)$. We claim that this definition does not depend on the choice of the path $p$. Indeed, if $q$ is another path from $v$ to $w$, then the concatenation $qp^{-1}$ is a loop based at $v$, so that $\hol_{h}(qp^{-1})(W)$ is equal to $W$, and
\[\hol_{h}(q)(W)=\hol_{h}(p)\big( \hol_{h}(p)^{-1}\hol_{h}(q)(W)\big)=\hol_{h}(p)\big( \hol_{h}(qp^{-1})(W)\big)=\hol_{h}(p)(W).\]
With this definition, for all vertices $x$ and $y$, and any path $q$ joining $x$ to $y$, one has $\F'_{y}=\hol_{h}(q)(\F'_{x})$. Indeed, given a path $p$ from $v$ to $x$, the path $pq$ is a path from $v$ to $y$, and
\[\hol_{h}(q)(\F'_{x})=\hol_{h}(pq)(W)=\F'_{y}.\]
In the case where $x$ and $y$ are the starting and finishing points of an edge $e$, and the path $p$ is taken to be the edge $e$, one has in particular $h_{e}(\F'_{x})=\F'_{y}$, or equivalently,
\[h_{e,x}(\F'_{x})=h_{e,y}(\F'_{y})\]
and we define $\F'_{e}$ as the common value of the two sides of this equality.

The vector bundle $\F'$ thus defined is, by construction, stable by $h$, and once endowed with the restriction of $h$, that we denote by $h'$, becomes a local subsystem of $(\F,h)$ of which the fibre over $v$ is $W$. 

Equation \eqref{eq:F'h} implies that any local subsystem which has fibre over $v$ equal to $W$ coincides with $\F'$ over all vertices of $\rG$, and \eqref{eq:defsubsystem} implies that it also coincides with $\F'$ over edges. This proves the uniqueness. 

3. Consider two local subsystems $(\F',h')$ and $(\F'',h'')$ of $(\F,h)$. If $(\F',h')$ and $(\F'',h'')$ are isomorphic, then \eqref{eq:isomhol} applied to loops based at $v$ expresses the fact that the isometry $f_{v}:\F'_{v}\to \F''_{v}$ intertwines the actions of the holonomy groups of $h'$ and $h''$. These actions are the restrictions of the action of $\Hol_{h,v}$ to the submodules $\F'_{v}$ and $\F''_{v}$, which are therefore isomorphic.

Conversely, if $\F'_{v}$ and $\F''_{v}$ are isomorphic submodules of $\F'_{v}$, then an isomorphism $f:\F'_{v}\to \F''_{v}$ can be extended to an isomorphism of local subsystems using the holonomy in a way that is very similar to the constructions already made in this proof. 
\end{proof}

The holonomy group $\Hol_{h,v}$ is countable and needs not have a particularly simple representation theory, but as far as the space $\F_{v}$ is concerned, its action has exactly the same stable subspaces, and the same notion of equivalence of submodules, as does its closure in $\U(\F_{v})$, which is compact. Therefore, any holonomy submodule admits a supplementary submodule, the fibre $\F_{v}$ admits a canonical decomposition into isotypical submodules, and this decomposition can be refined, non uniquely in general, into a decomposition as a direct sum of irreducible submodules. 

\begin{definition}\label{def:reducible}
We say that a local system $(\F,h)$ is \emph{reducible} if it is a direct sum of local subsystems of positive rank. Otherwise, we say that $(\F,h)$ is \new{irreducible}.

We say that $(\F,h)$ is \emph{isotypical} if it is a direct sum of isomorphic irreducible local systems. 
\end{definition}

The reducibility of a local system can be decided by studying the action of the holonomy group on any fibre. 

\begin{proposition}\label{prop:irredirred} Let $v$ be a vertex of the connected graph $\rG$. A local system $(\F,h)$ over $\rG$ is irreducible, resp. isotypical, if and only if $\F_{v}$ is an irreducible, resp. isotypical holonomy module.
\end{proposition}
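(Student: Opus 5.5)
The plan is to deduce both equivalences from Proposition~\ref{prop:localsysfibre}. The essential fact is that the map $(\F',h')\mapsto \F'_{v}$ is a bijection from local subsystems of $(\F,h)$ onto holonomy submodules of $\F_{v}$. I also need that it respects direct sums and isomorphisms.

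The first step is to check compatibility with direct sums. Suppose $(\F,h)=(\F',h')\oplus(\F'',h'')$ as local subsystems. Evaluating fibrewise at $v$ gives $\F_{v}=\F'_{v}\oplus\F''_{v}$, a decomposition into holonomy submodules by part~1 of Proposition~\ref{prop:localsysfibre}. Each summand has positive dimension, since local systems over a connected graph have constant rank.

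Conversely, suppose $\F_{v}=W'\oplus W''$ with $W',W''$ nonzero holonomy submodules. Part~2 gives local subsystems $\F',\F''$ with fibres $W',W''$ at $v$. For every vertex $w$ and every path $p$ from $v$ to $w$,
\[\F'_{w}=\hol_{h}(p)(W'),\qquad \F''_{w}=\hol_{h}(p)(W''),\]
and $\hol_{h}(p)$ is an isomorphism, so $\F_{w}=\F'_{w}\oplus\F''_{w}$. The same argument applies over edges, because $\F'_{e}=h_{e,w}(\F'_{w})$ and $h_{e,w}$ is an isomorphism. Hence $(\F,h)=(\F',h')\oplus(\F'',h'')$.

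If the direct sum in Definition~\ref{def:reducible} is meant to be orthogonal, nothing changes. Holonomies are unitary, so they preserve orthogonality. Moreover, any holonomy submodule has a supplementary submodule that may be taken orthogonal: the orthogonal complement of a stable subspace under a group of unitaries is again stable.

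Irreducibility now follows. $(\F,h)$ is reducible exactly when $\F_{v}$ splits as a direct sum of two nonzero holonomy submodules, that is, when $\F_{v}$ is a reducible holonomy module. The same argument applied inductively to finite direct sums shows that decompositions of $(\F,h)$ into several subsystems correspond to decompositions of $\F_{v}$ into several submodules.

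For isotypicality, I would use this correspondence together with two further facts. Part~3 of Proposition~\ref{prop:localsysfibre} says the correspondence sends isomorphic subsystems to isomorphic submodules and back. Part~2, applied to $(\F',h')$ itself, shows that a local subsystem is irreducible if and only if its fibre at $v$ is irreducible. Here one notes that the holonomy group of $(\F',h')$ at $v$ is $\Hol_{h,v}$ restricted to $\F'_{v}$. Consequently, a decomposition of $(\F,h)$ into isomorphic irreducible subsystems corresponds exactly to a decomposition of $\F_{v}$ into isomorphic irreducible submodules.

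The main subtlety is the converse direction of part~3, which the paper only sketches. Given an isomorphism of holonomy submodules $f:\F'_{v}\to\F''_{v}$, one defines $f_{w}=\hol_{h}(p)\circ f\circ\hol_{h}(p)^{-1}$ for a path $p$ from $v$ to $w$. Independence of the choice of path holds because $f$ intertwines the holonomy of loops based at $v$. One then sets $f_{e}=h_{e,w}\circ f_{w}\circ h_{e,w}^{-1}$ and checks that this is consistent at both endpoints of $e$, again by the same intertwining. I would write this verification out in a line or two and then assemble the conclusions above.
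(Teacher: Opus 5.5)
Your proposal is correct and follows the paper's route. The paper just says the statement ``follows directly from Proposition~\ref{prop:localsysfibre}'', and you write out how that fibre-at-$v$ correspondence respects (orthogonal) direct sums, irreducibility and isomorphism. Re-proving the converse of part~3 is optional, since it is already part of that proposition. If you do re-prove it, note that isomorphisms of local systems here are isometric: first replace $f$ by the unitary factor of its polar decomposition, which still intertwines the holonomy.
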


\begin{proof} This follows directly from Proposition \ref{prop:localsysfibre}.
\end{proof}

Let us give an application to a special family of graphs, namely unicycles, this, is, connected graphs with a single simple cycle.

\begin{proposition} \label{prop:unicycrang1}
Over a unicycle, any irreducible local systems has rank $1$.
\end{proposition}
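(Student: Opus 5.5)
The plan is to reduce the statement to a property of the holonomy group and then invoke Proposition~\ref{prop:irredirred}. Let $\rG$ be a unicycle, and let $\gamma$ be a loop based at a vertex $v$ of its unique simple cycle that goes once around that cycle. The main step is to show that the holonomy group $\Hol_{h,v}$ is the cyclic group generated by $\hol_h(\gamma)$.

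To see this, take an arbitrary loop $c$ based at $v$. Since $\rG$ has first Betti number $1$, its fundamental group based at $v$ is infinite cyclic and generated by the class of $\gamma$. So $c$ is homotopic to $\gamma^m$ for some $m\in\Z$, through elementary backtrack insertions and deletions of the form $ee^{-1}$.

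Combinatorially, the reduced form of $c$ is obtained by repeatedly cancelling consecutive pairs $e e^{-1}$. Each such cancellation leaves the holonomy unchanged, because $h_{e^{-1}}=h_e^{-1}$; this follows from the relations $\hol_h(p^{-1})=\hol_h(p)^{-1}$ and multiplicativity under concatenation. The reduced loop based at $v$ in a unicycle must be a power of $\gamma$. One justification is that $\rG$ minus one cycle edge is a tree, so any reduced closed walk is determined by its signed traversals of that edge. Hence $\hol_h(c)=\hol_h(\gamma)^m$, and the holonomy group is generated by the single unitary operator $U=\hol_h(\gamma)$. I expect this step to be the only real point needing care, and it is essentially routine.

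Next, suppose $(\F,h)$ is irreducible. By Proposition~\ref{prop:irredirred}, $\F_v$ is then an irreducible holonomy module. On the other hand, $U$ is unitary, so $\F_v$ splits orthogonally into eigenspaces of $U$ when $\K=\C$. Every subspace of an eigenspace, in particular every line, is stable under the whole group $\langle U\rangle$. Irreducibility therefore forces $\dim \F_v=1$, so the local system has rank $1$.

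If $\K=\R$ or $\bH$, the argument needs adjusting: a real orthogonal $U$ has invariant $2$-planes rather than lines, and the quaternionic case needs the appropriate spectral theorem. The statement is presumably intended for $\K=\C$, or "rank" is to be interpreted over the relevant normal form. I would state the complex case and remark that for $\K=\R$ the same argument gives irreducible summands of rank at most $2$. For $\K=\bH$, a quaternionic unitary is diagonalizable over $\bH$ with a quaternionic eigenvector, so rank $1$ holds again.
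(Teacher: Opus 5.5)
Your proof is correct and follows the paper's argument. The holonomy group at a vertex of a unicycle consists of the powers of the holonomy $U$ around the cycle, so any eigenline of $U$ (a quaternionic eigenline when $\K=\bH$) is a holonomy submodule, and irreducibility forces rank $1$.

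Your caveat about $\K=\R$ is well founded and goes beyond the paper, whose one-line proof tacitly assumes that $U$ has an eigenvector in $\F_v$. For example, a real rank-$2$ bundle over a single self-loop whose holonomy is a rotation by $\pi/2$ is an irreducible local system of rank $2$. So over $\R$ the statement only holds after complexification, which is all that its later use in the proof of Theorem~\ref{thm:atoms} requires.
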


\begin{proof} The reason is that the holonomy group at any vertex $v$ of a local system over a unicycle consists in the powers of the holonomy along the single cycle, based at $v$. The irreducible holonomy submodules at $v$ are therefore the lines generated by eigenvectors, and they have dimension $1$.
\end{proof}

\subsection{Inner products}

Let us endow $\Omega^{0}(\F)$ and $\Omega^{1}(\F)$ with inner products. For this, let us assign a positive weight, which we call a \new{Euclidean weight}, to each edge of the graph. 

\begin{definition}
A collection of \new{Euclidean weights} on $\rG$ is a positive real-valued function $\x:\rE\to (0,+\infty)$ such that $\x\circ i=\x$, that is $x_{e^{-1}}=x_e$ for all $e\in \rE$.  
We let $\ul{x}$ denote the collection $(x_e)_{e\in \rE}$.
\end{definition} 

Given a local system $(\F,h)$ over a finite graph $\rG$ with Euclidean weights $\ul{x}$, the spaces $\Omega^{0}(\F)$ and $\Omega^{1}(\F)$ carry inner products, defined as follows:
\begin{equation}\label{eq:SPOmega0}
\forall f_{1},f_{2}\in \Omega^{0}(\F), \quad
(f_{1},f_{2})=\sum_{v\in \rV}  \langle f_{1}(v),f_{2}(v)\rangle_{v}
\end{equation}
and
\begin{equation}\label{eq:SPOmega1}
\forall \omega_{1},\omega_{2}\in \Omega^{1}(\F), \quad
(\omega_{1},\omega_{2})=\frac{1}{2}\sum_{e\in \rE} x_{e} \langle \omega_{1}(e),\omega_{2}(e)\rangle_{e}\,.
\end{equation}

Note that, in contrast to the choice made in \cite[Section 1.2]{KL1}, the inner product on $\Omega^{0}(\F)$ does not depend on the Euclidean weights.

\subsection{Splittings}
Splittings of vector spaces, that is, direct sum decompositions, will play a central role in this paper. In this section, we assume that $\rG$ is finite. The total space $\Omega^{\vsbullet}(\F)$ of a local system $(\F,h)$ comes with a splitting
\begin{equation}\label{eq:graphicalsplitting}
\Omega^{0}(\F)\overset{\gamma_{0}}{=}\bigoplus_{v\in \rV}\F_{v} \ , \ \  \Omega^{1}(\F)\overset{\gamma_{1}}{=} \bigoplus_{e\in \gE} \F_{e}\,    
\end{equation}
that we call the {\em graphical splitting} and that we denote by $\gamma=(\gamma_{0},\gamma_{1})$. Let us observe that the splittings $\gamma_{0}$ and $\gamma_{1}$ are orthogonal with respect to the inner products introduced in the previous section. In this paper, we will devote more attention to the splitting $\gamma_{1}$ of $\Omega^{1}(\F)$.

In the case where the local system $(\F,h)$ is the direct sum $(\F',h')\oplus (\F'',h'')$, the direct sums $\Omega^{0}(\F)=\Omega^{0}(\F')\oplus \Omega^{0}(\F'')$ and $\Omega^{1}(\F)=\Omega^{1}(\F')\oplus \Omega^{1}(\F'')$ need not be orthogonal. In fact, looking at Proposition~\ref{prop:localsysfibre} and the way in which local subsystems are determined by their fibre over a vertex, we see that 
\[\Omega^{0}(\F') \perp \Omega^{0}(\F'') \ \Leftrightarrow \ \Omega^{1}(\F') \perp \Omega^{1}(\F'')  \ \Leftrightarrow \  \exists v\in \rV, \; \F'_{v}\perp \F''_{v}   \ \Leftrightarrow \  \exists e\in \gE, \; \F'_{e}\perp \F''_{e}.\]
We say that the direct sum $(\F',h')\oplus (\F'',h'')$ is orthogonal if these equivalent conditions are satisfied. 

Incidentally, non-isomorphic irreducible local subsystems, which correspond to non-isomorphic irreducible holonomy submodules, are orthogonal.

\begin{definition}
A {\em splitting} of a local system is an orthogonal decomposition  as a direct sum of local subsystems. We call {\em split local system} a local system endowed with a splitting.
\end{definition}

We are not assuming that the summands of a splitting are irreducible. For instance, $(\F,h)=(\F,h)$ is a trivial example of a splitting of the local system $(\F,h)$. However, let us stress that it is part of the definition that the splitting is orthogonal. 

A split local system $(\F,h)=(\F^{1},h^{1})\oplus \ldots \oplus (\F^{r},h^{r})$ induces a {\em horizontal splitting} of $\Omega^{\vsbullet}(\F)$, namely
\begin{equation}\label{eq:horizontalsplitting}
\Omega^{\vsbullet}(\F)\overset{\eta}{=}\Omega^{\vsbullet}(\F^{1})\oplus \ldots \oplus \Omega^{\vsbullet}(\F^{r}),
\end{equation}
that we denote by $\eta=(\eta_{0},\eta_{1})$, and which combines with the graphical splitting to produce the {\em fine splitting} $\sigma=(\sigma_{0},\sigma_{1})$
\begin{equation}\label{eq:holsplit}
\Omega^{0}(\F)\overset{\sigma_{0}}{=}\bigoplus_{v\in \rV} \bigoplus_{i=1}^{r} \F^{i}_{v} \ , \ \ \Omega^{1}(\F)\overset{\sigma_{1}}{=} \bigoplus_{e\in \gE} \bigoplus_{i=1}^{r} \F^{i}_{e}.
\end{equation} 

We will use the letter $\sigma$ to denote the splitting of the local system $(\F,h)$ as well as the splitting of~$\Omega^{\vsbullet}(\F)$ that it induces. Accordingly, we will often denote by $(\F,h,\sigma)$ the split local system. 

\subsection{Covariant derivative}

The connection $h$ on the vector bundle $\F$ gives rise to a linear operator $d_{h}:\Omega^{0}(\F)\to \Omega^{1}(\F)$ defined as follows (see Figure~\ref{fig:derivative} for an illustration).

\begin{definition}[Covariant differential]\label{def:dh}
The covariant derivative, covariant differential, twisted coboundary operator, or simply \new{differential} is the linear operator $d_{h}:\Omega^{0}(\F) \to \Omega^{1}(\F)$ defined by setting, for every $f\in \Omega^{0}(\F)$ and every $e\in \rE$,
\begin{equation}\label{eq:defdh}
(d_{h}f)(e)=h_{e,\ol{e}} f(\ol{e})-h_{e,\ul{e}} f(\ul{e})\,.
\end{equation}
\end{definition}

\begin{figure}[!h]
\centering
\includegraphics[scale=1]{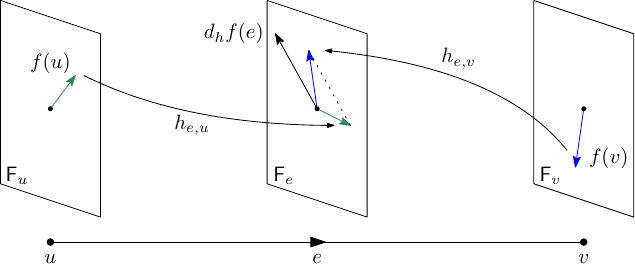}
\caption{\small Discrete covariant derivative acting over an edge $e$ connecting vertices $u$ and $v$.}\label{fig:derivative}
\end{figure}

The covariant derivative $d_{h}$ comes with a companion operator, the covariant coderivative $d_{h}^{*}$.

\begin{definition}[Covariant codifferential]\label{def:dh*}
The \new{covariant codifferential}, or simply codifferential, is the operator $d_{h}^{*}:\Omega^{1}(\F) \to \Omega^{0}(\F)$ defined by setting, for all $\omega \in \Omega^{1}(\F)$ and all $v\in \rV$,
\begin{equation}\label{eq:d-star}
(d_{h}^{*} \omega)(v)=\sum_{\substack{e\in \rE \\ \ol{e}=v}} x_{e} h_{v,e}\omega(e)\,.
\end{equation}
\end{definition}

As the notation suggests, and provided $\rG$ is finite, the map $d_{h}^{*}$ is the adjoint of $d_{h}$ with respect to the inner products \eqref{eq:SPOmega0} and \eqref{eq:SPOmega1}. For a proof of this elementary fact, see \cite[Lemma 3.11]{KL1} (with the very minor change due to the difference in inner products on $\Omega^0(\F)$).

%%%%%%%%%%%%%%%%%%%%%%%%%%%%%%%%%%%%%%%%

\section{Holonomic constructions}\label{sec:holonomic-constructions}

\subsection{Horizontal sections and flat local systems} \label{sec:horsec}

Among the isotypical local subsystems of a local system, the one that corresponds to the trivial representation of the holonomy group plays a special role. In a sense that we will now explain, it is closely related to the kernel of $d_{h}$. 

\begin{definition} Let $(\F,h)$ be a local system over $\rG$.
A section $f\in\Omega^0(\F)$ is \emph{horizontal} if $d_{h}f=0$. 
\end{definition}

The notion of horizontal section depends, of course, on the connection $h$.

\begin{proposition}\label{prop:hor}
A section $f\in \Omega^{0}(\F)$ is horizontal if and only if for all vertices $u$ and $v$ in the same connected component of $\rG$ and all path $\gamma$ joining $u$ to $v$, one has 
\begin{equation}\label{horedge}
f(v)=\hol_h(\gamma) f(u).
\end{equation}
In particular, if $f$ is a horizontal section, then $\|f\|$ is constant.
\end{proposition}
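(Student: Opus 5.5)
We prove the two implications directly from the definition \eqref{eq:defdh} of $d_h$, and then deduce the statement about norms from unitarity of the holonomy.

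First suppose $d_h f=0$. For any edge $e$, the equality $(d_hf)(e)=0$ reads $h_{e,\ol{e}}f(\ol{e})=h_{e,\ul{e}}f(\ul{e})$. Applying $h_{\ol{e},e}=h_{e,\ol{e}}^{-1}$ to both sides gives
\[f(\ol{e})=h_{\ol{e},e}\,h_{e,\ul{e}}\,f(\ul{e})=h_e f(\ul{e}).\]
This is \eqref{horedge} for a path consisting of a single edge. For a general path $\gamma=(e_1,\ldots,e_n)$ from $u$ to $v$, we induct on $n$. The inductive step is
\[f(\ol{e_{k}})=h_{e_k}f(\ul{e_k})=h_{e_k}\,h_{e_{k-1}}\cdots h_{e_1}f(u),\]
using $\ul{e_k}=\ol{e_{k-1}}$. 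At $k=n$ this gives $f(v)=\hol_h(\gamma)f(u)$. The constant path gives the identity, which handles $u=v$ trivially.

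Conversely, suppose \eqref{horedge} holds for all paths. Apply it to the single-edge path $e$ to get $f(\ol{e})=h_e f(\ul{e})=h_{\ol{e},e}h_{e,\ul{e}}f(\ul{e})$. Applying $h_{e,\ol{e}}$ to both sides yields $h_{e,\ol{e}}f(\ol{e})=h_{e,\ul{e}}f(\ul{e})$, that is, $(d_hf)(e)=0$. Since every edge is such a path, $d_hf=0$.

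For the last assertion, the connection consists of isometries, so each $h_e$, and hence each $\hol_h(\gamma)$, is an isometry $\F_u\to\F_v$. Therefore $\|f(v)\|_v=\|f(u)\|_u$ whenever $u$ and $v$ are joined by a path, so $\|f(\cdot)\|$ is constant on each connected component. This is the intended meaning of "$\|f\|$ is constant", namely the pointwise norm, and it is literally constant when $\rG$ is connected.

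There is no real obstacle here. The only care needed is bookkeeping: the half-edge conventions $h_{v,e}=h_{e,v}^{-1}$ and the identity $\ul{e_k}=\ol{e_{k-1}}$ in the composition order.
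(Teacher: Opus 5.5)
Your proof is correct and follows essentially the same route as the paper: both directions reduce to the single-edge identity $(d_hf)(e)=h_{e,\ol{e}}\big(f(\ol{e})-h_e f(\ul{e})\big)$, the extension to arbitrary paths is by induction using multiplicativity of the holonomy, and constancy of the norm follows from unitarity. Your remark that the pointwise norm is constant on each connected component, and literally constant when $\rG$ is connected, is a harmless sharpening of the paper's wording.
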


\begin{proof}
Consider a section $f$ and an edge $e$ with source $u$ and target $v$. Then 
\begin{equation}\label{compdf}
d_{h}f(e)=h_{e,v}f(v)-h_{e,u}f(u)=h_{e,v}(f(v)-h_{e}f(u)).
\end{equation}
If \eqref{horedge} holds for every path $\gamma$, it holds in particular for $\gamma=(\ul{e},e,\ol{e})$ for every edge $e$, and \eqref{compdf} shows that $d_{h}f(e)=0$. Thus, $f$ is horizontal.

Conversely, if $f$ is horizontal, then \eqref{compdf} shows that \eqref{horedge} holds for every path of the form $(\ul{e},e,\ol{e})$. Using the multiplicativity of the holonomy, one can then extend \eqref{horedge} to an arbitrary path, by induction on the number of edges which it traverses.

The last assertion is a consequence of the unitarity of the connection.
\end{proof}

An immediate consequence of this proposition is that on a connected graph, a horizontal section vanishes either nowhere on the graph, or everywhere. This can be reformulated by saying that for every vertex $v\in \rV$, the evaluation map $\ev_{v}:\Omega^{0}(\F)\to \F_{v}$ restricts to an injective map $\ev_{v}:\ker d_{h} \hookrightarrow \F_{v}$. In other words, on a connected graph, a horizontal section is entirely determined by its value at any single vertex. In particular, on a connected graph, we have the simple relation
\begin{equation}\label{eq:kerN}
\dim(\ker d_{h})\leq N.
\end{equation}

\begin{definition}
We say that a local system $(\F,h)$ over a connected graph is {\em flat} if $\dim \ker d_{h}=N$. We say that it is {\em fully twisted} if $\dim \ker d_{h}=0$.
\end{definition}

The next proposition describes the image of the map $\ev_{v}:\ker d_{h} \hookrightarrow \F_{v}$, that is, the set of possible values at $v$ of a horizontal section. More specifically, we prove that this image is the invariant holonomy submodule of $\F_{v}$.

For each vertex $v$, we denote by $(\F_{v})^{\Hol_{h,v}}$ the subspace of $\F_{v}$ that is fixed by every element of the holonomy group $\Hol_{h,v}$.

\begin{proposition}\label{prop:evkerd}
Assume that the graph $\rG$ is connected. Let $u\in\rV$ be a vertex. Then
\begin{equation}\label{eq:evkerd}
\ev_{u}(\ker d_{h})=( \F_{u})^{\Hol_{h,u}}=\bigcap_{\gamma\in \Loop_u(\rG)}\ker(\hol_h(\gamma)-\id_{\F_u})\, .
\end{equation}
\end{proposition}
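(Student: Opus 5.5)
The plan is to prove the two inclusions of the first equality separately, both by way of Proposition~\ref{prop:hor}. The second equality is simply a rewriting of the definition of the fixed subspace $(\F_{u})^{\Hol_{h,u}}$: by Definition~\ref{def:holonomygroup}, the holonomy group consists of the operators $\hol_h(\gamma)$ with $\gamma\in\Loop_u(\rG)$. A vector is fixed by every element of the group exactly when it lies in $\ker(\hol_h(\gamma)-\id_{\F_u})$ for every such loop.

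For the inclusion $\ev_u(\ker d_h)\subseteq (\F_u)^{\Hol_{h,u}}$, take a horizontal section $f$ and a loop $\gamma$ based at $u$. Applying \eqref{horedge} with $v=u$ gives $f(u)=\hol_h(\gamma)f(u)$. Hence $f(u)$ is fixed by the whole holonomy group.

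For the reverse inclusion, let $\xi\in\F_u$ be fixed by every $\hol_h(\gamma)$ with $\gamma\in\Loop_u(\rG)$.
\begin{itemize}
\item Since $\rG$ is connected, for each vertex $v$ we choose a path $p_v$ from $u$ to $v$ and set $f(v)=\hol_h(p_v)\xi$.
\item This is well defined: if $q$ is another path from $u$ to $v$, then $p_v q^{-1}$ is a loop at $u$. Multiplicativity and inversion of holonomy give $\hol_h(q)^{-1}\hol_h(p_v)\xi=\hol_h(p_vq^{-1})\xi=\xi$, so $\hol_h(q)\xi=\hol_h(p_v)\xi$. This is the same argument as in part 2 of the proof of Proposition~\ref{prop:localsysfibre}, applied to the line $\K\xi$, which is a holonomy submodule.
\item For each edge $e$ from $x$ to $y$, the path $p_x$ followed by $e$ joins $u$ to $y$. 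By well-definedness, $f(y)=h_e\hol_h(p_x)\xi=h_e f(x)$.
\item By \eqref{compdf}, $d_hf(e)=h_{e,y}(f(y)-h_ef(x))=0$. So $f$ is horizontal, and $f(u)=\xi$ (take $p_u$ to be the constant path).
\end{itemize}

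The only mildly delicate point is the independence from the chosen path, and in particular keeping track of the composition order convention $\hol_h(p_1p_2)=\hol_h(p_2)\hol_h(p_1)$. This is handled exactly as in Proposition~\ref{prop:localsysfibre}. Everything else is a direct verification.
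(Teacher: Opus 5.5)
Your proposal is correct and follows the paper's proof essentially verbatim. The forward inclusion comes from Proposition~\ref{prop:hor} with $v=u$, and the reverse inclusion from transporting $\xi$ along paths from $u$, where path-independence holds because $\xi$ is fixed by every loop holonomy; the only cosmetic difference is that you verify horizontality edge by edge via \eqref{compdf} rather than citing Proposition~\ref{prop:hor} directly.
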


\begin{proof} Applying Proposition \ref{prop:hor} to the case where $v=u$ yields the inclusion of the left-hand side in the right-hand side. 

To prove the reverse inclusion, consider an element $\xi$ of the right-hand side. For every vertex $v\in \rV$, choose a path $p$ joining $u$ to $v$ and define
\[f(v)=\hol_{h}(p) (\xi).\]
The fact that $\xi$ belongs to the right-hand side of \eqref{eq:evkerd} implies that the definition of $f(v)$ does not depend on the choice of the path $p$. By Proposition \ref{prop:hor}, the section $f$ thus defined is horizontal, and verifies $\ev_{u}(f)=\xi$.
\end{proof}

\begin{corollary} \label{cor:horizholo} Assume that $\rG$ is connected. The following assertions are equivalent.
\begin{enumerate}[\indent 1.]
\item The local system $(\F,h)$ is flat.
\item For all $u\in \rV$ and all $\gamma \in \Loop_{u}(\rG)$, one has $\hol_{h}(\gamma)=\id_{\F_{u}}$.
\item For some $v\in \rV$ and all $\gamma \in \Loop_{v}(\rG)$, one has $\hol_{h}(\gamma)=\id_{\F_{v}}$.
\end{enumerate}
\end{corollary}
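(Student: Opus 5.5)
The plan is to derive all three equivalences from Proposition~\ref{prop:evkerd} and the injectivity of $\ev_{v}:\ker d_{h}\hookrightarrow \F_{v}$ on a connected graph. I will prove the cycle $1\Rightarrow 2\Rightarrow 3\Rightarrow 1$.

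For $1\Rightarrow 2$, assume $\dim\ker d_{h}=N$ and fix a vertex $u$. Since $\ev_{u}$ is injective on $\ker d_{h}$, its image $\ev_{u}(\ker d_{h})$ has dimension $N$. It is therefore all of $\F_{u}$. Proposition~\ref{prop:evkerd} then gives
\[\F_{u}=\bigcap_{\gamma\in\Loop_{u}(\rG)}\ker\big(\hol_{h}(\gamma)-\id_{\F_{u}}\big).\]
Hence $\hol_{h}(\gamma)=\id_{\F_{u}}$ for every loop $\gamma$ based at $u$. Since $u$ was arbitrary, assertion~2 holds.

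The implication $2\Rightarrow 3$ is immediate: specialize to any single vertex $v$, which exists because $\rV$ is non-empty.

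For $3\Rightarrow 1$, assume $\hol_{h}(\gamma)=\id_{\F_{v}}$ for all $\gamma\in\Loop_{v}(\rG)$. Then the right-hand side of \eqref{eq:evkerd}, taken at $u=v$, is all of $\F_{v}$. So $\ev_{v}(\ker d_{h})=\F_{v}$, which gives $\dim\ker d_{h}\ge N$. Combined with the upper bound \eqref{eq:kerN}, we get $\dim\ker d_{h}=N$, so the local system is flat.

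I do not expect any real obstacle. The only point needing care is the dimension count: injectivity of $\ev_{u}$ on $\ker d_{h}$ (a consequence of Proposition~\ref{prop:hor}) is what converts ``$\dim\ker d_{h}=N$'' into ``the image is the whole fibre'', and conversely. One could also bypass Proposition~\ref{prop:evkerd} in $3\Rightarrow 1$ by directly defining horizontal sections through parallel transport from $v$, but citing the proposition is cleaner.
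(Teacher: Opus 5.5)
Your proof is correct and uses the same key ingredient as the paper, namely Proposition~\ref{prop:evkerd} together with the injectivity of $\ev_{u}$ on $\ker d_{h}$. The only difference is organizational: the paper proves $1\Leftrightarrow 2$ from the proposition and $2\Leftrightarrow 3$ by conjugating holonomies along a path from $u$ to $v$, whereas your cycle $1\Rightarrow 2\Rightarrow 3\Rightarrow 1$ applies the proposition directly at the single vertex $v$ in the last step, so the conjugation argument is not needed.
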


\begin{proof} One has $\dim \ker d_{h}=N$ if and only if $\ev_{u}(\ker d_{h})=\F_{u}$ for all vertex $u$, and the equivalence of the first two assertions follows from the proposition. The equivalence of $2$ and $3$ is a consequence of the fact that the holonomy along any loop based at a vertex $u$ is conjugated, by the holonomy along a path from $u$ to $v$, to the holonomy along some loop based at $v$. 
\end{proof}

According to this corollary, a local system is flat if and only if the holonomy along every loop is the identity. There are situations, for example when the graph $\rG$ is embedded in a surface, where a different notion of flatness would naturally emerge: one would call flat those connections that have trivial holonomy along loops that are contractible in the surface, but not necessarily along all loops. One way of reconciling the two pictures is to think, in this paper, of the graph as embedded in a simply connected ambient space.

\begin{corollary} \label{cor:flattrivial}
A local system of rank $N\geq 1$ over $\rG$ is flat if and only if it is isomorphic to the trivial local system $(\rG\times \C^N,I_N)$ with connection identically equal to the identity.
\end{corollary}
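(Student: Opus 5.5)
The trivial local system $(\rG\times\C^N,I_N)$ is flat, because every holonomy of the identity connection is the identity. An isomorphism of local systems intertwines holonomies by \eqref{eq:isomhol}, so if $\hol_{h'}(\gamma)=\id$ for every loop $\gamma$ based at $v$, then $\hol_h(\gamma)=f_v^{-1}\circ\id\circ f_v=\id$. By Corollary~\ref{cor:horizholo}, any local system isomorphic to the trivial one is therefore flat. This settles one direction.

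For the converse, assume $(\F,h)$ is flat and $\rG$ is connected (otherwise argue componentwise). The plan is to build the isomorphism $f$ from a basis of horizontal sections. Since $\dim\ker d_h=N$, Proposition~\ref{prop:evkerd} and the injectivity of $\ev_u$ on $\ker d_h$ show that $\ev_v:\ker d_h\to\F_v$ is a linear isomorphism for every vertex $v$. Proposition~\ref{prop:hor} shows that horizontal sections have constant norm, so by polarization the inner product $\langle s(v),t(v)\rangle_v$ of two horizontal sections $s,t$ does not depend on $v$. It follows that $\ker d_h$ carries a well-defined inner product for which every $\ev_v$ is an isometry.

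Choose an orthonormal basis $s_1,\dots,s_N$ of $\ker d_h$ for this inner product. For each vertex $v$, define $f_v:\F_v\to\C^N$ as the inverse of the isometry $\C^N\to\F_v$ given by $(c_i)\mapsto\sum_i c_i s_i(v)$. For each edge $e$, horizontality gives $h_{e,\ol e}s_i(\ol e)=h_{e,\ul e}s_i(\ul e)$; call this common vector $s_i(e)\in\F_e$. The vectors $s_i(e)$ form an orthonormal basis of $\F_e$ because $h_{e,\ul e}$ is unitary, and we define $f_e$ as the inverse of $(c_i)\mapsto\sum_i c_i s_i(e)$.

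It remains to check \eqref{eq:isomlocalsys} with $h'=I_N$. This amounts to $f_e\circ h_{e,v}=f_v$ for each half-edge $(v,e)$, which holds by construction because $h_{e,v}s_i(v)=s_i(e)$ for both endpoints $v$ of $e$. The one real subtlety is the consistency of $s_i(e)$ at the two ends of an edge, and this is exactly what horizontality provides. For a quaternionic $\K$, the same argument works with scalars acting on the right.
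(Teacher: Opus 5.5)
Your proof is correct and follows essentially the same route as the paper: an orthonormal basis of $\ker d_h$, for the inner product transported from any fibre, is exactly the paper's orthonormal basis of $\F_v$ parallel-transported to every vertex and edge, and in this frame every $h_{e,v}$ becomes the identity. Your version is more explicit than the paper's: you check orthonormality on the edge fibres, and you verify via \eqref{eq:isomhol} that flatness is preserved under isomorphism, which the paper leaves implicit.
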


\begin{proof} The trivial bundle with connection identically equal to the identity has trivial holonomy along any loop, and is therefore flat.

Conversely, consider a flat local system $(\F,h)$ and assume, without loss of generality, that $\rG$ is connected. Let us choose a vertex $v$ and an orthonormal basis of $\F_v$. Let us parallel transport this orthonormal basis to all other vertices, and all the fibres over edges. The flatness of the local system ensures that this is possible, and does not depend on the paths that we choose to transport the basis. We obtain a basis for each fibre of $\F$ and by construction, the matrix of the holonomy along every half-edge with respect to these bases is the identity.
\end{proof}

The intersection on the right-hand side of \eqref{eq:evkerd} can be restricted to any subset of $\Loop_{u}(\rG)$ with the property that any element of $\Loop_{u}(\rG)$ can be written as the concatenation (and reduction) of elements of this subset and their inverses. Here, reduction means that one can ignore pieces of a path that go along an edge and immediately backtrack this edge. 

Such a subset can be constructed from a spanning tree $T$ of $\rG$, as follows.  
For all vertices $v,w\in \rV$, let $[v,w]_{T}$ denote the unique simple path from $v$ to $w$ inside $T$. For every edge $e\in \rE$, let us define the loop  
\[\gamma_{T,u}(e)=[u,\ul{e}]_{T}e[\ol{e},u]_{T}.\]
Any loop on $\rG$ based at $u$ is equal, up to adding or removing some backtracking parts, to a word in the loops $\{\gamma_{T,u}(e) : e\in \rE\setminus T\}$. Therefore, 
\begin{equation}\label{eq:evkerdtree}
\ev_{u}(\ker d_{h})=\bigcap_{e\in \rE\setminus T}\ker(\hol_h(\gamma_{T,u}(e))-\id_{\F_u}).
\end{equation}

\begin{corollary}\label{coro:treeN} Every local system over an acyclic graph is flat.
\end{corollary}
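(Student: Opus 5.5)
Since Corollary~\ref{coro:treeN} is stated immediately after \eqref{eq:evkerdtree}, the plan is to deduce it directly from that formula, component by component.

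First reduce to a connected graph. A horizontal section on a disjoint union is just a choice of horizontal section on each component. Flatness is defined component by component through $\dim\ker d_h = N$ on a connected graph, or equivalently, by Corollary~\ref{cor:horizholo}, through triviality of all holonomies. So it is enough to treat a connected acyclic graph, that is, a tree.

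Let $\rG$ be a finite or locally finite tree. Its unique spanning tree is $T=\rG$ itself, so $\rE\setminus T=\varnothing$. The intersection in \eqref{eq:evkerdtree} is then indexed by the empty set, and by convention equals the whole fibre $\F_u$. Hence $\ev_u(\ker d_h)=\F_u$. Since $\ev_u$ is injective on $\ker d_h$, as noted after Proposition~\ref{prop:hor}, we obtain $\dim\ker d_h = N$, which is flatness.

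An alternative route avoids the empty-intersection convention by using Corollary~\ref{cor:horizholo} directly. On a tree, every loop based at $u$ reduces, after removing backtracking pieces, to the constant loop. Backtracking contributes $h_{e^{-1}}h_e=\id$, so $\hol_h(\gamma)=\id_{\F_u}$ for every loop, and the local system is flat.

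The only point needing care is the reduction claim: every closed walk in a tree is reducible to the trivial loop. I would prove this by induction on length. A nontrivial closed walk must contain an immediate backtrack $e\,e^{-1}$, since otherwise it would trace a nontrivial cycle, contradicting acyclicity. Removing that backtrack shortens the walk, and the induction concludes. This is routine, and I expect no real obstacle.
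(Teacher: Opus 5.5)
Your proof is correct, and your main route is exactly the paper's: for an acyclic graph the intersection in \eqref{eq:evkerdtree} is indexed by the empty set, so it equals $\F_u$, which gives $\ev_u(\ker d_h)=\F_u$ and hence flatness. Your reduction to connected components and your alternative argument (trivial holonomy along every loop, then Corollary~\ref{cor:horizholo}) are also valid, but the paper does not need them.
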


\begin{proof} If $\rG$ is acyclic, then the right-hand side of \eqref{eq:evkerdtree} is empty, therefore equal to $\F_{u}$.
\end{proof}

\subsection{The canonical splitting}

\begin{proposition} \label{prop:canosplit}
Let $(\F,h)$ be a local system. There is a unique orthogonal splitting $(\F,h)=(\F',h)\oplus (\F'',h)$, where $(\F',h)$ is flat and $(\F'',h)$ is fully twisted.
\end{proposition}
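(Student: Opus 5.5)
We reduce everything to the holonomy module at a fixed vertex, using Proposition~\ref{prop:localsysfibre}. We may assume $\rG$ connected; otherwise we argue componentwise, since both flatness and full twistedness are defined through $\ker d_h$, which splits over connected components. Fix a vertex $v$, and let $W'=(\F_{v})^{\Hol_{h,v}}$ be the subspace of vectors fixed by the holonomy group. Since $\Hol_{h,v}$ acts by unitary transformations, its orthogonal complement $W''=(W')^{\perp}$ is also stable under $\Hol_{h,v}$: if $\xi\perp W'$ and $g\in\Hol_{h,v}$, then $\langle g\xi,\eta\rangle=\langle \xi,g^{-1}\eta\rangle=\langle\xi,\eta\rangle=0$ for all $\eta\in W'$.

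By part 2 of Proposition~\ref{prop:localsysfibre}, there are unique local subsystems $(\F',h)$ and $(\F'',h)$ with fibres $W'$ and $W''$ at $v$. Their fibres at any other vertex or edge are the images of $W'$ and $W''$ under parallel transport along paths, which is unitary. Hence they remain orthogonal and sum to the full fibre everywhere, so $(\F,h)=(\F',h)\oplus(\F'',h)$ is an orthogonal splitting.

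Next we check the two properties of the summands. The holonomy group of $\F'$ at $v$ is the restriction of $\Hol_{h,v}$ to $W'$, which is trivial; by Corollary~\ref{cor:horizholo}, $\F'$ is flat. For $\F''$, Proposition~\ref{prop:evkerd} applied to $(\F'',h)$ gives $\ev_v(\ker d_h|_{\Omega^0(\F'')})=(W'')^{\Hol}=W''\cap W'=0$. Since $\ev_v$ is injective on horizontal sections over a connected graph, $\ker d_h$ restricted to $\Omega^0(\F'')$ is zero, so $\F''$ is fully twisted.

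For uniqueness, suppose $(\F,h)=(\F_1,h)\oplus(\F_2,h)$ is another such orthogonal splitting, with $\F_1$ flat and $\F_2$ fully twisted. Flatness of $\F_1$ means, by Corollary~\ref{cor:horizholo}, that $\Hol_{h,v}$ fixes $(\F_1)_v$ pointwise, so $(\F_1)_v\subseteq W'$. Conversely, let $\xi\in W'$ and decompose it as $\xi=\xi_1+\xi_2$ along the splitting. Both summands are $\Hol_{h,v}$-stable, so applying $g\in\Hol_{h,v}$ to $\xi$ and comparing components shows that $g\xi_2=\xi_2$ for all $g$. Thus $\xi_2$ is a fixed vector of $(\F_2)_v$. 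By Proposition~\ref{prop:evkerd}, $\xi_2$ is the value at $v$ of a horizontal section of $\F_2$, and full twistedness forces $\xi_2=0$. Therefore $(\F_1)_v=W'$. Orthogonality then gives $(\F_2)_v=W''$, and uniqueness in Proposition~\ref{prop:localsysfibre} identifies $\F_1=\F'$ and $\F_2=\F''$. No step is a serious obstacle; the only point requiring care is this component-comparison argument for uniqueness.
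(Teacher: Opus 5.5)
Your proof is correct and follows essentially the same route as the paper. The existence part is identical: the holonomy-fixed subspace at $v$ and its orthogonal complement are extended via Proposition~\ref{prop:localsysfibre}, with Corollary~\ref{cor:horizholo} and Proposition~\ref{prop:evkerd} giving flatness and full twistedness. For uniqueness, the paper argues with sections: every horizontal section lies in $\F'$, and every vector of $\F'$ extends horizontally, so $\F'_v=\ev_v(\ker d_h)$. Your component comparison in the holonomy module at $v$ is the fibrewise translation of that same argument.
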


\begin{proof} Let us consider such a decomposition. Every horizontal section of $\F$ must be a section of $\F'$. Furthermore, any vector of any fibre of $\F'$ extends to a horizontal section of $\F'$, and therefore of $\F$. From these observations, it follows that for every $v\in \rV$, we must have $\F'_{v}=\ev_{v}(\ker d_{h})$. This shows the uniqueness of the decomposition.

To prove its existence, let us choose a vertex $v$ and split the fibre $\F_{v}$ as $\F_{v}=\F'_{v}\oplus \F''_{v}$, where $\F'_{v}=(\F_{v})^{\Hol_{h,v}}$ is the subspace of holonomy fixed vectors of $\F_{v}$, and $\F''_{v}$ its orthogonal. Then $\F''_{v}$ is the sum of the non-trivial holonomy submodules of $\F_{v}$. By Proposition \ref{prop:localsysfibre}, this yields a decomposition of the local system $(\F,h)$ as $(\F',h)\oplus (\F'',h)$. 

By construction, the holonomy along any loop based at $u$ in the local system $(\F',h)$ is the identity. Therefore, by Corollary \ref{cor:horizholo}, $(\F',h)$ is flat. Moreover, if $f$ is a horizontal section of $\F''$, then by Proposition ~\ref{prop:evkerd}, we have $f_{u}\in \F'_{u}$, so that $f_{u}=0$, and $f$ vanishes. 
\end{proof}

\begin{corollary} \label{cor:alternative}
An irreducible local system is either flat and of rank $1$, or fully twisted.
\end{corollary}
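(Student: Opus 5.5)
The plan is to combine the canonical splitting of Proposition~\ref{prop:canosplit} with irreducibility. Let $(\F,h)$ be irreducible, of rank $N$; we may assume $\rG$ connected, as in the preceding results. By Proposition~\ref{prop:canosplit}, $(\F,h)$ splits orthogonally as $(\F',h)\oplus(\F'',h)$, with $(\F',h)$ flat and $(\F'',h)$ fully twisted. Since $(\F,h)$ is not a direct sum of two local subsystems of positive rank, one of the two summands has rank $0$.

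If $\F'$ has rank $0$, then $(\F,h)=(\F'',h)$ is fully twisted, and we are done.

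Otherwise $\F''$ has rank $0$ and $(\F,h)$ is flat. We must then show that its rank is $1$. By Corollary~\ref{cor:horizholo}, the holonomy group $\Hol_{h,v}$ at any vertex $v$ is trivial, so every subspace of $\F_{v}$ is a holonomy submodule. By Proposition~\ref{prop:irredirred}, irreducibility of $(\F,h)$ means that $\F_{v}$ is an irreducible holonomy module. An irreducible module over the trivial group has dimension $1$, hence $N=1$.

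Alternatively, Corollary~\ref{cor:flattrivial} identifies $(\F,h)$ with the trivial system $(\rG\times\C^{N},I_{N})$. This visibly splits as the orthogonal sum of $N$ copies of $(\rG\times\C,1)$, which forces $N=1$.

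There is no real obstacle. The only point needing care is that the canonical splitting of Proposition~\ref{prop:canosplit} is a splitting in the sense of Definition~\ref{def:reducible}, so that irreducibility genuinely rules out both summands having positive rank. This holds because its summands are local subsystems in direct sum.
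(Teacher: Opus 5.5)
Your proof is correct and follows the paper's argument: irreducibility forces one summand of the canonical splitting of Proposition~\ref{prop:canosplit} to vanish, and in the flat case it remains to show that the rank is $1$. The only cosmetic difference is that the paper directly exhibits the rank-one local subsystem spanned by a horizontal section, whereas you obtain it from the triviality of the holonomy group via Proposition~\ref{prop:irredirred} (or Corollary~\ref{cor:flattrivial}), which is the same idea.
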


\begin{proof} Consider an irreducible local system $(\F,h)$ that is not fully twisted. Then the flat summand of its canonical splitting has positive rank, and must be equal to $(\F,h)$ by irreducibility. Pick any non-zero vector in any fibre of $\F$ and extend this vector to a horizontal section. Consider, in the fibre over each vertex, the line generated by the value at this vertex of the horizontal section. These lines form a rank $1$ local subsystem of $(\F,h)$, which again by irreducibility must be $(\F,h)$ itself.
\end{proof}

Proposition \ref{prop:evkerd} provides us with a description of the flat part of a local system. Similarly, for all vertex $v$, the fibre at $v$ of the fully twisted part of $(\F,h)$ is  
\begin{equation}\label{eq:Hperpv}
\F''_{v}=\Big(\bigcap_{\gamma\in \Loop_v(\rG)}\ker(\hol_h(\gamma)-\id_{\F_v})\Big)^{\perp}=\sum_{\gamma\in \rL_v(\rG)}\im (\hol_h(\gamma)^{*}-\id_{\F_v})=\sum_{\gamma\in \rL_v(\rG)}\im (\hol_h(\gamma)-\id_{\F_v}),
\end{equation}
where the last equality follows from the fact that $\hol_{h}(\gamma)^{*}=\hol_{h}(\gamma^{-1})$.

\begin{proposition} \label{prop:twistimage} Let $(\F,h)$ be a local system over a connected graph. The following assertions are equivalent.
\begin{enumerate}[\indent 1.]
\item $(\F,h)$ is fully twisted.
\item For all  $v\in \rV$, the equality $\sum_{\gamma\in \rL_v(\rG)}\im (\hol_h(\gamma)-\id_{\F_v})=\F_{v}$ holds.
\item For all  $v\in \rV$, there exists a finite collection $\gamma_{1},\ldots,\gamma_{r}$ of loops based at $v$ such that
\begin{equation}\label{eq:sumimagesF}
\sum_{i=1}^{r} \im (\hol_h(\gamma_{i})-\id_{\F_v})=\F_{v}.
\end{equation}
\item For some $v\in \rV$, there exists a finite collection $\gamma_{1},\ldots,\gamma_{r}$ of loops based at $v$ such that \eqref{eq:sumimagesF} holds.
\end{enumerate}
\end{proposition}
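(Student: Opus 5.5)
We prove the cycle of implications $1\Rightarrow 2\Rightarrow 3\Rightarrow 4\Rightarrow 1$, relying on the description \eqref{eq:Hperpv} of the fully twisted part and on Proposition~\ref{prop:evkerd}.

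For $1\Rightarrow 2$, assume $(\F,h)$ is fully twisted, so that $\ker d_{h}=\{0\}$. By Proposition~\ref{prop:evkerd}, for every vertex $v$ the subspace $\bigcap_{\gamma\in\Loop_v(\rG)}\ker(\hol_h(\gamma)-\id_{\F_v})$ is $\ev_v(\ker d_h)=\{0\}$. Its orthogonal complement is therefore $\F_v$. The chain of equalities \eqref{eq:Hperpv}, which rests on $(\ker T)^{\perp}=\im T^{*}$ and $\hol_h(\gamma)^{*}=\hol_h(\gamma^{-1})$ with $\gamma^{-1}$ again a loop at $v$, then identifies this complement with $\sum_{\gamma}\im(\hol_h(\gamma)-\id_{\F_v})$. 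This gives assertion 2.

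For $2\Rightarrow 3$, we use finite-dimensionality. The sum $\sum_{\gamma}\im(\hol_h(\gamma)-\id_{\F_v})$ is a subspace of $\F_v$, which has dimension $N$. Choose a basis of $\F_v$ out of vectors lying in the individual summands; each basis vector lies in $\im(\hol_h(\gamma_i)-\id)$ for some loop $\gamma_i$. The resulting loops $\gamma_1,\ldots,\gamma_r$, with $r\le N$, satisfy \eqref{eq:sumimagesF}. This is the only step that needs any care, namely passing from an arbitrary sum of subspaces to a finite subfamily, and it is handled by this basis selection. The implication $3\Rightarrow 4$ is trivial, since the graph has at least one vertex.

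For $4\Rightarrow 1$, suppose \eqref{eq:sumimagesF} holds at some vertex $v$ and let $f\in\ker d_h$. By Proposition~\ref{prop:hor}, $\hol_h(\gamma_i)f(v)=f(v)$ for each $i$, so $f(v)$ lies in every $\ker(\hol_h(\gamma_i)-\id_{\F_v})$. Since $\hol_h(\gamma_i)$ is unitary, this kernel equals $\ker(\hol_h(\gamma_i)^{*}-\id)=(\im(\hol_h(\gamma_i)-\id))^{\perp}$. Hence $f(v)$ is orthogonal to the sum $\sum_i\im(\hol_h(\gamma_i)-\id_{\F_v})=\F_v$, which forces $f(v)=0$. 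A horizontal section on a connected graph is determined by its value at one vertex, so $f=0$ and the local system is fully twisted.
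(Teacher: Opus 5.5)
Your proof is correct, but it closes the cycle differently from the paper.

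\textbf{Common part.} The paper proves $1\Leftrightarrow 2$ from \eqref{eq:Hperpv} and $2\Leftrightarrow 3$ by finite-dimensionality, exactly as you do.

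\textbf{Where the routes differ.} The paper never returns from 4 to 1 directly. It proves $4\Rightarrow 3$ by transporting the loops: for an arbitrary vertex $u$ and a path $p$ from $u$ to $v$, it replaces $\gamma_1,\ldots,\gamma_r$ by the loops $p\gamma_1p^{-1},\ldots,p\gamma_rp^{-1}$ based at $u$. Since $\hol_h(p\gamma_ip^{-1})-\id_{\F_u}=\hol_h(p)^{-1}(\hol_h(\gamma_i)-\id_{\F_v})\hol_h(p)$, the isomorphism $\hol_h(p)^{-1}$ carries the images at $v$ onto those at $u$.

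Your $4\Rightarrow 1$ goes through horizontal sections instead. You use unitarity to get $\ker(\hol_h(\gamma_i)-\id)=(\im(\hol_h(\gamma_i)-\id))^{\perp}$, which forces $f(v)=0$. You then conclude with the injectivity of $\ev_v$ on $\ker d_h$. Connectivity thus enters through Proposition~\ref{prop:hor} rather than through the path $p$. In effect you reuse the orthogonality behind \eqref{eq:Hperpv}, now for a finite family of loops.

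\textbf{What each route buys.} Your version is self-contained and shows exactly where unitarity matters. Your basis selection also makes the bound $r\le N$ explicit. The paper's transport argument, however, preserves the number $r$ of loops. This is what justifies the remark right after the proposition that the minimal $r$ does not depend on the vertex, so that it is an invariant of the local system. Your argument does not give this by itself.
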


\begin{proof} The equivalence of assertions 1 and 2 follows from \eqref{eq:Hperpv}. The equivalence of 2 and 3 follows from the fact that $\F''_v$ is finite dimensional.
The assertion 3 implies tautologically 4. Assume 4 and consider any vertex $u$. Choose a path $p$ from $u$ to $v$. Then the loops $p\gamma_1p^{-1},\ldots, p\gamma_rp^{-1}$ based at $u$ fulfill \eqref{eq:sumimagesF}.
\end{proof}

If $\rG$ is connected, it follows from the proof that the smallest value of the integer $r$ for which the fourth assertion holds at a vertex $v$ does not depend on $v$. 

Therefore, if $\rG$ is connected, the common value over all vertices of this smallest integer is attached to the twisted local system itself. It is an integer between $1$ and the rank of the bundle, and it is equal to $1$ if and only if there exists a loop $\gamma$ in $\rG$ such that $\hol_{h}(\gamma)-\id$ is invertible. It is useful to have a name for this property.

\begin{definition} We say that the local system $(\F,h)$ is {\em fully twisted along a loop} $\gamma$ if $\hol_{h}(\gamma)-\id$ is invertible. We say that it is {\em fully twisted along a loop} if there exists a loop $\gamma$ such that it is fully twisted along $\gamma$.
\end{definition}

The following example shows that a local system can be fully twisted without being fully twisted along any loop. In other words, the smallest value of the integer $r$ discussed above can be strictly greater than~$1$.

\begin{example}Assume that $\rG$ is connected with at least two independent cycles. Let $k\ge 1$ be an integer. Endow $\rG$ with a local system given by the trivial bundle $\R^{2k+1}$ and connection given by matrices in the group $SO(2k+1)$. Since any matrix in $SO(2k+1)$ has $1$ as an eigenvalue (there is an odd number of eigenvalues, and they come in conjugate pairs on the unit circle, and their product is $1$), the holonomies of all cycles of $\rG$, which are also in $SO(2k+1)$, have $1$ as an eigenvalue. However, choosing generic matrices for each edge (for instance independently uniformly chosen ones), no two distinct cycles have holonomies which share eigenvectors for the eigenvalue $1$. Therefore $\ker d_h$ is trivial, but all loops $\gamma$ are such that $\ker (\hol_h(\gamma)-\id)$ is non trivial.
\end{example}

\subsection{A twisted Poincaré lemma}

In this section, we will state a result that allows one, in a fully twisted local system, to recover a section from its differential. We start with an elementary, but fundamental lemma.

\begin{lemma}
Let $p=(e_1, \ldots, e_k)$ be a path in $\rG$ based at $u=\ul{e_1}$ and ending at $v=\ol{e_k}$. 
Then for all $f\in \Omega^0(\F)$, we have
\[\hol_{h}(p)^{-1} f(v)-f(u)=\sum_{i=1}^{k} \hol_{h}(p_{|[u,e_{i}]})^{-1} d_{h}f(e_{i}).\]
\end{lemma}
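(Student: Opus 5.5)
The identity is a telescoping sum, and the proof reduces to the one-edge case. First I fix the meaning of $\hol_{h}(p_{|[u,e_{i}]})$. Since $d_{h}f(e_{i})$ lives in the fibre $\F_{e_{i}}$, the operator applied to it must map $\F_{e_{i}}$ to $\F_{u}$. I therefore read $p_{|[u,e_{i}]}$ as the portion of $p$ running from $u$ to the middle of the edge $e_{i}$, that is, the path $e_{1}\ldots e_{i-1}$ followed by the initial half-edge of $e_{i}$. Its parallel transport is
\[\hol_{h}(p_{|[u,e_{i}]})=h_{e_{i},\ul{e_{i}}}\circ \hol_{h}(e_{1}\ldots e_{i-1}):\F_{u}\to \F_{e_{i}}.\]
Write $v_{0}=u$, $v_{i}=\ol{e_{i}}$ and $P_{i}=\hol_{h}(e_{1}\ldots e_{i}):\F_{u}\to\F_{v_{i}}$, with $P_{0}=\id_{\F_{u}}$ and $P_{k}=\hol_{h}(p)$.

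\textbf{One-edge case.} By \eqref{compdf}, for an edge $e$ from $a$ to $b$,
\[d_{h}f(e)=h_{e,b}\bigl(f(b)-h_{e}f(a)\bigr), \qquad h_{e}=h_{b,e}h_{e,a}.\]
Applying $h_{e,a}^{-1}=h_{a,e}$ and using $h_{a,e}h_{e,b}=h_{e}^{-1}$ gives
\[h_{e,a}^{-1}\,d_{h}f(e)=h_{e}^{-1}f(b)-f(a).\]
This is exactly the statement for $k=1$.

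\textbf{General case.} For $e=e_{i}$ we have $\hol_{h}(p_{|[u,e_{i}]})^{-1}=P_{i-1}^{-1}\circ h_{e_{i},v_{i-1}}^{-1}$. Applying $P_{i-1}^{-1}$ to the one-edge identity, and using $P_{i}=h_{e_{i}}P_{i-1}$ (so that $P_{i-1}^{-1}h_{e_{i}}^{-1}=P_{i}^{-1}$), gives
\[\hol_{h}(p_{|[u,e_{i}]})^{-1}d_{h}f(e_{i})=P_{i}^{-1}f(v_{i})-P_{i-1}^{-1}f(v_{i-1}).\]
Summing over $i=1,\ldots,k$, the right-hand side telescopes to $P_{k}^{-1}f(v)-P_{0}^{-1}f(u)=\hol_{h}(p)^{-1}f(v)-f(u)$, which is the claim. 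One could equally run this as an induction on $k$, splitting $p$ into $e_{1}\ldots e_{k-1}$ and $e_{k}$.

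There is no substantive obstacle. The only care needed is bookkeeping: fixing the convention for $p_{|[u,e_{i}]}$ so that it ends in the fibre over the edge $e_{i}$, and respecting the paper's order of composition, $\hol_{h}(p_{1}p_{2})=\hol_{h}(p_{2})\hol_{h}(p_{1})$, when inverting the partial holonomies.
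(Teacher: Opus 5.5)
Your proof is correct and follows the paper's argument. Both reduce to the one-edge identity $\hol_{h}(p_{|[u,e_{i}]})^{-1} d_{h}f(e_{i})=\hol_{h}(p_{|[u,\ol{e_{i}}]})^{-1} f(\ol{e_{i}})-\hol_{h}(p_{|[u,\ul{e_{i}}]})^{-1} f(\ul{e_{i}})$ and then telescope the sum. Your explicit convention for $p_{|[u,e_{i}]}$, indexed by the position $i$ and ending in $\F_{e_{i}}$ through the source of $e_{i}$, is the one the paper uses implicitly, and indexing by $i$ also correctly handles paths that traverse the same edge more than once.
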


\begin{proof}
By definition, for each edge, we have $d_hf(e) = h_{e, \ol{e}} f(\ol{e}) - h_{e, \ul{e}} f(\ul{e})$.
Hence by multiplicativity of the holonomy under concatenation of paths, we have 
\[\hol_h(p_{|[u,e]})^{-1} d_h f(e)=\hol_h(p_{|[u,\ol{e}]})^{-1} f(\ol{e}) - \hol_h(p_{|[u,\ul{e}]})^{-1} f(\ul{e}).\]
Now by summing over the edges of $e\in p$, we obtain a telescoping sum, and the result follows.
\end{proof}

In particular, for a cycle $\gamma=(e_{1},\ldots,e_{k})$ based at $v$, we obtain, applying $\hol_{h}(\gamma)$ on both sides,
\begin{equation}\label{eq:holo-loop-lemma}
\left(\id_{\F_v}-\hol_h(\gamma)\right) f(v)=\sum_{i=1}^{k} \hol_h(\gamma_{|[e_{i},v]}) d_hf(e_{i})
\end{equation}

A first consequence is the following twisted Poincaré lemma, which yields $f$ in terms of $d_hf$ under the assumption that the local system is fully twisted along a loop.

\begin{proposition}[A first twisted Poincaré lemma]\label{prop:poincare-lemma}
Assume that $(\F,h)$ is fully twisted along a loop. Let $v$ be a vertex and let $\gamma=(e_{1},\ldots,e_{k})$ be a loop based at $v$ such that $\id_{\F_v}-\hol_h(\gamma)$ is invertible. Then, for all $f\in \Omega^0(\F)$, we have
\begin{equation}
    f(v)=\big(\id_{\F_v}-\hol_h(\gamma)\big)^{-1} \sum_{i=1}^{k} \hol_h(\gamma_{|[e_{i},v]}) d_hf(e_{i})\,.
\end{equation}
\end{proposition}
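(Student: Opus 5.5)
The statement is an immediate consequence of the loop identity \eqref{eq:holo-loop-lemma}, obtained from the preceding lemma. The plan is to apply the lemma to the loop $\gamma$ (so that $u=v$ and $p=\gamma$), which gives
\[\hol_h(\gamma)^{-1}f(v)-f(v)=\sum_{i=1}^{k}\hol_h(\gamma_{|[v,e_i]})^{-1}d_hf(e_i),\]
where $\gamma_{|[v,e_i]}$ is the initial segment of $\gamma$ up to the edge $e_i$. Next I apply $\hol_h(\gamma)$ to both sides and change the sign.

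The point to check here is the identification
\[\hol_h(\gamma)\,\hol_h(\gamma_{|[v,e_i]})^{-1}=\hol_h(\gamma_{|[e_i,v]}),\]
where both operators are maps $\F_{e_i}\to\F_v$ once $d_hf(e_i)\in\F_{e_i}$ is included. The loop $\gamma$ decomposes as the concatenation of the initial segment ending at the half-edge into $e_i$ with the final segment starting there. Multiplicativity of holonomy under concatenation, with the half-edge isomorphisms $h_{e,\cdot}$ entering as in the proof of the lemma, then gives the identity. This produces exactly \eqref{eq:holo-loop-lemma}:
\[\big(\id_{\F_v}-\hol_h(\gamma)\big)f(v)=\sum_{i=1}^{k}\hol_h(\gamma_{|[e_i,v]})\,d_hf(e_i).\]

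Finally, the hypothesis says that $\id_{\F_v}-\hol_h(\gamma)$ is invertible. Applying its inverse to both sides yields the formula of the statement, valid for every $f\in\Omega^0(\F)$.

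The only step requiring care is the bookkeeping of half-edge holonomies in the concatenation identity, that is, the convention for $\gamma_{|[e_i,v]}$ when the segment starts in the fibre over the edge rather than over a vertex. Everything else is formal. As a consequence, since $f(v)$ is determined by $d_hf$, the map $d_h$ is injective, consistent with $(\F,h)$ being fully twisted.
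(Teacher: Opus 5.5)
Your argument is correct and follows the paper's route exactly. The paper also obtains \eqref{eq:holo-loop-lemma} by specializing the telescoping lemma to $\gamma$ and applying $\hol_h(\gamma)$, using $\hol_h(\gamma)\hol_h(\gamma_{|[v,e_i]})^{-1}=\hol_h(\gamma_{|[e_i,v]})$, and then inverts $\id_{\F_v}-\hol_h(\gamma)$; one small slip is that no sign change is needed, since applying $\hol_h(\gamma)$ to $\hol_h(\gamma)^{-1}f(v)-f(v)$ already gives $(\id_{\F_v}-\hol_h(\gamma))f(v)$.
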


\begin{corollary}\label{cor:bound}
For any vertex $v$ and cycle $\gamma_{v}=(e_{1},\ldots,e_{k})$ based at $v$ such that $\id_{\F_{v}}-\hol_{h}(\gamma_{v})$ is invertible, we have for all $f\in \Omega^0(\F)$
\[\| f(v) \| \le  \; \big\| \big(\id_{\F_v}-\hol_h(\gamma_{v})\big)^{-1} \big\| \; \sum_{i=1}^{k} \lVert d_h f(e_{i})\rVert\,  . \]
\end{corollary}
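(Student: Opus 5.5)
The bound follows directly from the twisted Poincaré lemma, Proposition~\ref{prop:poincare-lemma}, by taking norms. Since $\id_{\F_v}-\hol_h(\gamma_v)$ is invertible, that proposition gives
\[
f(v)=\big(\id_{\F_v}-\hol_h(\gamma_v)\big)^{-1} \sum_{i=1}^{k} \hol_h(\gamma_{v|[e_{i},v]})\, d_hf(e_{i}).
\]
We bound the norm of the right-hand side in three steps.

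First, we use submultiplicativity of the operator norm to pull out $\big\|(\id_{\F_v}-\hol_h(\gamma_v))^{-1}\big\|$.

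Second, we apply the triangle inequality to the sum over $i$.

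Third, we observe that each $\hol_h(\gamma_{v|[e_i,v]})$ is a composition of the unitary maps $h_{b,a}$ along half-edges. Strictly, the first factor goes from the edge fibre $\F_{e_i}$ to the vertex fibre $\F_{\ol{e_i}}$, with $d_hf(e_i)\in\F_{e_i}$. Since all connection maps are isometries, this map is an isometry and
\[
\|\hol_h(\gamma_{v|[e_i,v]})\, d_hf(e_i)\|=\|d_hf(e_i)\|_{e_i}.
\]
Combining the three steps yields exactly the stated inequality.

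The only point requiring care is the bookkeeping of fibres: the partial holonomy must be read as the isometry from $\F_{e_i}$ through the half-edge $(e_i,\ol{e_i})$ and then along the remaining edges to $\F_v$, as in the telescoping computation of the preceding lemma. Unitarity of the connection then removes these factors. The norms $\|d_hf(e_i)\|$ are the unweighted fibre norms $\|\cdot\|_{e_i}$, not the weighted $\Omega^1$ norm, so no edge weights appear.
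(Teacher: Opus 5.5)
Your proof is correct and follows the paper's approach. The paper states this corollary without proof, as an immediate consequence of Proposition~\ref{prop:poincare-lemma}: take norms, pull out the operator norm of $(\id_{\F_v}-\hol_h(\gamma_v))^{-1}$, apply the triangle inequality, and use that each partial parallel transport $\F_{e_i}\to\F_v$ is an isometry, exactly as you do, with your fibre bookkeeping and remark on unweighted norms both accurate.
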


Let us take note of the fact that the assumption that $\id_{\F_{v}}-\hol_{h}(\gamma_{v})$ is invertible is equivalent to the strict inequality $2\id_{\F_v}-\hol_h(\gamma_{v})-\hol_h(\gamma_{v})^*>0$ between self-dual endomorphisms of $\F_v$. 

Let us now treat the general case of a fully twisted local system that needs not be fully twisted along any loop. For this, it is convenient to introduce some notation. Given a path $p=(e_{1},\ldots,e_{k})$ joining a vertex $u$ to a vertex $v$, and given a $1$-form $\omega\in \Omega^{1}(\F)$, we define the (forward) covariant discrete integral of $\omega$ along~$p$ as
\[I_{h}^{\uparrow}(p,\omega)= \sum_{i=1}^{k} \hol_h(\gamma_{|[e_{i},v]}) d_hf(e_{i}) \in \F_{v}.\]
The downward covariant discrete integral is
\[I_{h}^{\downarrow}(p,\omega)=\sum_{i=1}^{k} \hol_{h}(p_{|[u,e_{i}]})^{-1} d_{h}f(e_{i})=\hol_{h}(p^{-1})I_{h}^{\uparrow}(p,\omega) \in \F_{u}.\]

Assume that the local system $(\F,h)$ is fully twisted. Let $v$ be a vertex of $\rG$. Let $\gamma_{1},\ldots,\gamma_{r}$ be loops based at $v$ such that 
\[\sum_{i=1}^{r} \im (\hol_h(\gamma_{i})-\id_{\F_v})=\F_{v}.\]
 For each $i\in \{1,\ldots,r\}$, set $R_{i}=\im (\hol_h(\gamma_{i})-\id_{\F_v})$ and let $\proj{i}:\F_{v}\to R_{i}$ denote the orthogonal projection. Let us define
 \[\proj{}=\bigoplus_{i=1}^{r}\proj{i}\colon \F_{v} \longrightarrow \bigoplus_{i=1}^{r} R_{i} \]
the direct sum of these orthogonal projections. 

\begin{proposition}[Twisted Poincaré lemma] \label{prop:twistagain} With the notation above, $\proj{*}\proj{}$ is an invertible endomorphism of $\F_{v}$ and for all $f\in \Omega^{0}(\F)$, we have the equality
\[f(v)=\big(\proj{*}\proj{} \big)^{-1} \bigg(\bigoplus_{i=1}^{r} 
\Big[\big((\id_{\F_v}-\hol_h(\gamma_{i}))_{|R_{i}}\big)^{-1} \proj{i} \big(I^{\uparrow}_{h}(\gamma_{i},d_{h}f)\big)\Big]
\bigg).
\]
\end{proposition}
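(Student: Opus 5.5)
The plan has two steps: first prove that $\proj{*}\proj{}$ is invertible, then recover each component $\proj{i}f(v)$ from \eqref{eq:holo-loop-lemma} and reassemble.

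\emph{Invertibility.} The adjoint $\proj{*}:\bigoplus_i R_i\to \F_v$ is $(\xi_1,\ldots,\xi_r)\mapsto \sum_i \xi_i$, since each $\proj{i}^{*}$ is the inclusion $R_i\hookrightarrow \F_v$. Hence
\[\proj{*}\proj{}=\sum_{i=1}^r \proj{i},\]
a sum of orthogonal projections. This operator is self-adjoint and nonnegative. If $\langle \proj{*}\proj{}\xi,\xi\rangle=\sum_i\|\proj{i}\xi\|^2=0$, then $\xi\perp R_i$ for all $i$, so $\xi\perp\sum_i R_i=\F_v$ and $\xi=0$. A nonnegative self-adjoint operator with trivial kernel on the finite dimensional space $\F_v$ is invertible.

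\emph{Recovering each component.} For each $i$, \eqref{eq:holo-loop-lemma} applied to the loop $\gamma_i$ gives
\[(\id_{\F_v}-\hol_h(\gamma_i))f(v)=I^{\uparrow}_h(\gamma_i,d_hf),\]
and in particular the right-hand side lies in $R_i$, so applying $\proj{i}$ changes nothing. Set $U=\hol_h(\gamma_i)$. It is unitary, hence normal, so $\ker(\id-U)=\ker(\id-U^*)=R_i^\perp$. Consequently $\id-U$ preserves $R_i$, vanishes on $R_i^\perp$, and its restriction to $R_i$ is injective, hence invertible on $R_i$. Decomposing $f(v)=\proj{i}f(v)+(\id-\proj{i})f(v)$, the second part is killed by $\id-U$, so
\[(\id-U)f(v)=(\id-U)_{|R_i}\,\proj{i}f(v),\]
and therefore
\[\proj{i}f(v)=\big((\id-U)_{|R_i}\big)^{-1}\proj{i}\, I^{\uparrow}_h(\gamma_i,d_hf).\]

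\emph{Reassembling.} The direct sum over $i$ of these identities reads $\proj{} f(v)=\bigoplus_i[\cdots]$. Applying $\proj{*}$ and then $(\proj{*}\proj{})^{-1}$ yields the claimed formula.

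The only delicate point is that $\id-U$ preserves $R_i$ with kernel exactly $R_i^\perp$; this is where unitarity, through normality, is used. The rest is bookkeeping. One should also read $d_hf$ in the definition of $I^{\uparrow}_h$ as the form $\omega=d_hf$.
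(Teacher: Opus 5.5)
Your proposal is correct and follows essentially the same route as the paper: invertibility of $\proj{*}\proj{}$ from the assumption $\sum_i R_i=\F_v$, then recovering each $\proj{i}f(v)$ from \eqref{eq:holo-loop-lemma} using invertibility of $\id_{\F_v}-\hol_h(\gamma_i)$ on $R_i$, then reassembling. Your normality argument justifies a step the paper only asserts, namely that $\id_{\F_v}-\hol_h(\gamma_i)$ preserves $R_i$ and is invertible there, and you rightly make explicit the application of $\proj{*}$ that the displayed formula leaves implicit.
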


\begin{proof} Let us start by proving that $\proj{*}\proj{}$ is invertible. The adjoint $\proj{*}$ of $\proj{}$ is the sum of the inclusion maps $R_{i}\to \F_{v}$ and our main assumption that $R_{1}+\ldots+R_{r}=\F_{v}$ is equivalent to the fact that $\proj{*}$ is onto. Therefore, $\proj{}$ is injective and $\proj{*}\proj{}$ is invertible.

For each $i\in \{1,\ldots,r\}$ let us write \eqref{eq:holo-loop-lemma} along the loop $\gamma_{i}$, then apply $\proj{i}$ on both sides, and then use the fact that $\proj{i}$ commutes with $\hol_{h}(\gamma_{i})$. We find
\[\left(\id_{\F_v}-\hol_h(\gamma_{i})\right)\proj{i} f(v)=\proj{i} \big(I^{\uparrow}_{h}(\gamma_{i},d_{h}f)\big) \in R_{i}.\]
The endomorphism $\id_{\F_v}-\hol_h(\gamma)$ restricts to an invertible endomorphism of $R_{i}$, so that
\[\proj{i} f(v)=\big((\id_{\F_v}-\hol_h(\gamma_{i}))_{|R_{i}}\big)^{-1} \proj{i} \big(I^{\uparrow}_{h}(\gamma_{i},d_{h}f)\big).\]
To recover $f(v)$ from its orthogonal projections on the subspaces $R_{1},\ldots,R_{r}$, it suffices to apply the inverse of $\proj{*}\proj{}$.
\end{proof}

\begin{corollary} \label{cor:bound2} With the current notation, and writing $\gamma_{i}=(e_{i,1},\ldots,e_{i,k_{i}})$
\[\|f(v)\|\leq \big\|\big(\proj{*}\proj{} \big)^{-1}\big\|\ \sum_{i=1}^{r} \Big(\big\|\big((\id_{\F_v}-\hol_h(\gamma_{i}))_{|R_{i}}\big)^{-1}\big\|  \ \sum_{j=1}^{k_{i}} \|d_{h}f(e_{i,j})\|\Big).\]
\end{corollary}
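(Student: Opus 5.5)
The plan is to derive Corollary~\ref{cor:bound2} directly from the exact formula of Proposition~\ref{prop:twistagain}. Apply the operator norm inequality to that formula. This gives
\[\|f(v)\|\le \big\|\big(\proj{*}\proj{}\big)^{-1}\big\|\,\Big\|\bigoplus_{i=1}^{r} y_{i}\Big\|,\qquad y_{i}=\big((\id_{\F_v}-\hol_h(\gamma_{i}))_{|R_{i}}\big)^{-1}\proj{i}\big(I^{\uparrow}_{h}(\gamma_{i},d_{h}f)\big)\in R_{i}.\]
Strictly speaking, the vector $\bigoplus_i y_i$ lives in $\bigoplus_i R_i$ and is sent to $\F_v$ only after applying $\proj{*}$. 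So the first real step is to read the formula as $f(v)=(\proj{*}\proj{})^{-1}\proj{*}(\bigoplus_i y_i)$. The identity $\proj{}f(v)=\bigoplus_i y_i$ from the proof, together with the left-inverse property of $(\proj{*}\proj{})^{-1}\proj{*}$, justifies this.

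Next, bound the norm of this expression. Since $\proj{*}$ is the sum of the inclusions, it maps $\bigoplus_i y_i$ to $\sum_i y_i$. The triangle inequality then gives $\|\sum_i y_i\|\le\sum_i\|y_i\|$. This is exactly the shape of the right-hand side, a plain sum over $i$ rather than an $\ell^2$ norm.

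For each $i$, the bound
\[\|y_i\|\le \big\|\big((\id_{\F_v}-\hol_h(\gamma_{i}))_{|R_{i}}\big)^{-1}\big\|\,\big\|I^{\uparrow}_{h}(\gamma_i,d_hf)\big\|\]
holds because $\proj{i}$ is an orthogonal projection and so has norm at most $1$. Finally, $I^{\uparrow}_h(\gamma_i,d_hf)$ is a sum of terms of the form $\hol_h(\cdot)\,d_hf(e_{i,j})$. Each holonomy is unitary, so the triangle inequality yields $\sum_j\|d_hf(e_{i,j})\|$. Here $\|d_hf(e)\|$ is the fibre norm in $\F_e$, identified with $\F_{\ol{e}}$ via the unitary $h_{\ol e,e}$.

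The only point needing care is the first step. The displayed formula in Proposition~\ref{prop:twistagain} omits $\proj{*}$, so the norm of $(\proj{*}\proj{})^{-1}$ must be paired with the map $\sum_i y_i$ rather than with the direct sum. This is the interpretation under which the constant $\|(\proj{*}\proj{})^{-1}\|$ appears without an extra factor. If one instead wrote $(\proj{*}\proj{})^{-1}\proj{*}=\proj{}^{+}$, the bound would hold with a sharper constant, but the stated one suffices. Everything else is routine triangle inequalities and unitarity, exactly as in Corollary~\ref{cor:bound}.
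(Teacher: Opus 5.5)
Your argument is correct and is the immediate derivation the paper leaves implicit after Proposition~\ref{prop:twistagain}: read the formula as $f(v)=(\proj{*}\proj{})^{-1}\sum_i \proj{i}f(v)$, then use the triangle inequality, $\|\proj{i}\|\le 1$, and the unitarity of the transports appearing in $I^{\uparrow}_{h}$. Discard two things, though: your first display, which pairs $\|(\proj{*}\proj{})^{-1}\|$ with the direct-sum norm $\|\bigoplus_i y_i\|$ and is false in general (take $r\geq 2$ and all $R_i=\F_v$, so that $y_i=f(v)$ for every $i$); and the aside on the pseudo-inverse, which does not give a uniformly sharper bound.
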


%%%%%%%%%%%%%%%%%%%%%%%%%%%%%%%%%%%%%%%%

\section{Determinantal linear spanning forests (DLSF)}\label{sec:DLSF}

\subsection{Determinantal linear subspaces} \label{sec:dls}

For the sake of completeness, let us recall from \cite{KL3} the definition of determinantal linear subspaces. 

Let $E$ be a finite-dimensional inner product space endowed with an orthogonal splitting $E=E^{1}\oplus \ldots \oplus E^{s}$ that we denote by $\sigma$. Let $H$ be a linear subspace of $E$. The determinantal linear subspace measure $\DLP(E,H,\sigma)$ is a probability measure on the Grassmannian manifold of $E$. More specifically, set $d=\dim E$ and $n=\dim H$. Then the measure is supported by the set $\Gr_{n}(E)$ of $n$-dimensional subspaces of $E$, and even more precisely by the set
\[\Gr_{n}(E,\sigma)=\big\{Q \in \Gr_{n}(E) : Q=(Q\cap E^{1})\oplus \ldots \oplus (Q\cap E^{s})\big\}\]
of linear $n$-subspaces adapted to $\sigma$. The connected components of $\Gr_{n}(E,\sigma)$ are indexed by the $s$-tuples of integers $\ul{n}=(n_{1},\ldots,n_{s})$ such that $n_{1}+\ldots n_{s}=n$ and $0\leq n_{i}\leq d_{i}$ for all $i\in \{1,\ldots,s\}$, where $d_{i}=\dim E_{i}$. They are the sets
\[\Gr_{\ul{n}}(E,\sigma)=\big\{Q \in \Gr_{n}(E,\sigma) : \dim(Q\cap E^{1})=n_{1},  \ldots, \dim(Q\cap E^{s})=n_{s}\big\}.\]
For each $s$-tuple $\ul{n}$, the set $\Gr_{\ul{n}}(E,\sigma)$ carries a unique Borel measure $\nu_{\ul{n}}^{E,\sigma}$ of total mass~$\binom{d_{1}}{n_{1}}\ldots \binom{d_{s}}{n_{s}}$ that is invariant under the action of the group $\U(E^{1})\times \ldots \times \U(E^{s})$. The measure
\[\nu_{n}^{E,\sigma}=\sum_{\ul{n}} \nu_{\ul{n}}^{E,\sigma}\]
is then a Borel measure on $\Gr_{n}(E,\sigma)$ of total mass $\binom{n}{d}$.

Given any two linear $n$-subspaces $F$ and $G$ of $E$, the square cosine of the angle between $F$ and $G$ can be computed by choosing orthonormal bases $(f_{1},\ldots,f_{n})$ and $(g_{1},\ldots,g_{n})$ of $F$ and~$G$ respectively, and setting
\begin{equation}\label{eq:defcos2}
\cos^{2}(F,G)=\Big|\det\big(\langle f_{i},g_{j}\rangle\big)_{1\leq i,j \leq n}\Big|^{2}.
\end{equation}
This number does not depend on the chosen orthonormal bases. 

The determinantal linear subspace measure $\DLP(E,H,\sigma)$ is the probability measure
\[\cos^{2}(Q,H) \;\nu_{n}^{E,\sigma}(\nsd Q)\]
on $\Gr_{n}(E,\sigma)$.

It can be sampled by the following algorithm:
\begin{enumerate}[\indent $\sbullet$]
\item choose uniformly at random independent orthonormal bases of $E^{1},\ldots,E^{s}$, and merging them to produce a random orthonormal basis $(e_{1},\ldots,e_{d})$ of $E$;
\item define $K$ as the matrix in the basis $(e_{1},\ldots,e_{d})$ of the orthogonal projection on $H$;
\item sample a determinantal random subset $\X$ of $\{1,\ldots,d\}$ with kernel $K$;
\item form the random linear subspace
\[\Q =\Vect(e_{i} : i\in \X).\]
\end{enumerate}
Then $\Q$ is distributed according to the determinantal linear subspace measure $\DLP(E,H,\sigma)$, and we say more simply that $\Q$ is $\DLP(E,H,\sigma)$.

\subsection{Determinantal linear spanning forests}

The main definition is the following.

\begin{definition} Let $\rG$ be a graph with Euclidean weights $\ul{x}$. Let $(\F,h,\sigma)=(\F^{1},h^{1})\oplus \ldots \oplus (\F^{r},h^{r})$ be a split local system over $\rG$. The {\em determinantal linear spanning forest} associated to $(\F,h,\sigma)$ is the determinantal linear subspace $\DLP(\Omega^{1}(\F),\im d_{h},\sigma_{1})$.
\end{definition}

To be clear, the determinantal linear spanning forest is the determinantal random linear subspace of $\Omega^{1}(\F)$ associated to the inner product induced defined by \eqref{eq:SPOmega1}, the range of the linear operator $d_{h}$ defined by \eqref{def:dh}, and the splitting $\sigma_{1}$ defined by \eqref{eq:holsplit}.

We denote by $\DLSF(\F,h,\sigma)$ the distribution of the determinantal linear spanning forest. When~$\sigma=\gamma$ is the trivial splitting given by \eqref{eq:graphicalsplitting}, we simply write $\DLSF(\F,h)$. We say that a random linear subspace $\Q$ of $\Omega^{1}(\F)$ is a $\DLSF(\F,h,\sigma)$ instead of saying that it has the distribution $\DLSF(\F,h,\sigma)$.

Let $(\F,h,\sigma)=(\F^{1},h^{1})\oplus \ldots \oplus (\F^{r},h^{r})$ be a split local system over $\rG$. Let $\sigma_{1}$ be, as before, the induced fine splitting of $\Omega^{1}(\F)$. For every linear subspace $Q$ of $\Omega^{1}(\F)$, for all edge $e$ and all integer $i\in \{1,\ldots,r\}$, we write
\[Q_{e}=Q\cap \F_{e}, \ Q^{i}=Q\cap \F^{i}  \ \text{ and } \ Q_{e}^{i}=Q\cap \F_{e}^{i}.\]
Let us accordingly introduce the vectors of integers
\[\dim_{\gamma} Q=(\dim Q_{e})_{e\in \gE}, \ \dim_{\eta} Q = (\dim Q^{i})_{1\leq i \leq r} \ \text{ and } \ \dim_{\sigma} Q=(\dim Q_{e}^{i})_{e\in \gE, 1\leq i\leq  r}\]
which contain an information finer than $\dim Q$. Indeed, if $\Q$ is a $\DLSF(\F,h,\sigma)$, then $\dim \Q$ is almost surely constant, equal to the rank of $d_{h}$. We will often call the vector $\dim_{\gamma}Q$ the {\em trace} of $Q$.

\subsection{First properties} \label{sec:firstprop}

Let us collect some of the properties of the $\DLSF(\F,h,\sigma)$ that follow directly from the general study of determinantal linear subspaces conducted in \cite{KL3}.

As before, let $(\F,h,\sigma)$ be a split local system over a finite graph $\rG$. Let $\Q$ be a $\DLSF(\F,h,\sigma)$. 

\begin{enumerate}[\indent $\sbullet$]
\item The random variable $\Q$ takes its values in $\Gr_{n}(\Omega^{1}(\F),\sigma_{1})$, where $n$ is the rank of $d_{h}$. By \eqref{eq:kerN}, we know that this rank satisfies $N(|\rV|-1)\leq n \leq N |\rV|$. We will explain later, in Section \ref{sec:stratif}, that for a generic connection $h$, it is equal to $N|\rV|$.
\item Almost surely, $\Q$ is a valid substitute to $\im d_{h}$ in the splitting $\Omega^{1}(\F)=\im d_{h} \oplus (\im d_{h})^{\perp}$, in the sense that $\Omega^{1}(\F)=\Q \oplus (\im d_{h})^{\perp}$, or equivalently, since the dimension of $Q$ is the rank of $d_h$ almost surely, $\Q\cap (\im d_{h})^{\perp}=\{0\}$.

Yet another equivalent condition is that $\proj{Q}d_h:\Omega^0(\F)\to Q$ is injective, where $\proj{Q}$ is the orthogonal projection on $Q$.

\item For every vector of integers ${\ul n}_{\sigma}=(n_{e}^{i})_{e\in \gE, 1\leq i \leq r}$, let us define the set
\[\Gr_{{\ul n}_{\sigma}}(\Omega^{1}(\F),\sigma_{1})=\{Q\in \Gr_{n}(\Omega^{1}(\F),\sigma_{1}) : \dim_{\sigma} Q = {\ul n}_{\sigma}\}.\]
The support of the distribution of $\Q$, defined as the smallest closed subset of $\Gr_{n}(\Omega^{1}(\F))$ with full measure, is equal to a union of subsets of the form $\Gr_{{\ul n}_{\sigma}}(\Omega^{1}(\F),\sigma_{1})$, namely those for which there is a positive probability that $\dim_{\sigma} \Q={\ul n}_{\sigma}$ (see \cite[Prop. 5.9]{KL3}). We will characterize these ${\ul n}_{\sigma}$ in a moment.

\item The distribution of $\Q$ depends in a non-trivial way on the splitting of the local system. However, the distribution of the trace of $\Q$ does not. More precisely, if we denote by ${\sf R}$ a $\DLSF(\F,h)$, associated to the trivial splitting of $(\F,h)$, then, according to \cite[Cor. 5.7]{KL3}, we have the equality in distribution
\[\dim_{\gamma} \Q \overset{\text{(d)}}{=} \dim_{\gamma} {\sf R}.\]
\end{enumerate}

\subsection{The case of rank $1$} \label{sec:rank1}

Consider a local system $(\F,h)$ of rank $1$. It is irreducible, hence, by Corollary~\ref{cor:alternative}, it is either flat or fully twisted. Holonomies along loops are linear isomorphisms of lines, that is, scalars, and $(\F,h)$ is flat if the holonomy along every loop is $1$, and fully twisted if there exists at least one loop along which the holonomy differs from $1$.

Another consequence of $\F$ having rank $1$ is the identification
\[\Gr(\Omega^1(\F),\sigma_1)\simeq 2^{\gE},\]
through which the DLSF can be seen as the set of edges of a random spanning subgraph of $\rG$. In what follows, for each subset $S$ of $\gE$, we set $\ul{x}^S=\prod_{\geom{e}\in S}x_e$.

If the local system is flat, this random subgraph, that we denote by $\T$, is distributed on the set of spanning trees of $\rG$ according to 
\begin{equation}\label{eq:law-st}
    \P(\T=T) \propto \ul{x}^T\,,
\end{equation}
by the famous result of Burton and Pemantle \cite{Burton-Pemantle}, see \cite[Section 6]{KL3}.
The normalizing constant is the product of nonzero eigenvalues of the Laplacian, divided by the number of vertices.

If the local system is fully twisted, this random subgraph, that we denote by $\rB$, is a random cycle-rooted spanning forest (CRSF) whose distribution is 
\begin{equation}\label{eq:law-crsf}
    \P(\rB=B) \propto \prod_{\gamma \text{ cycle of } B} \big(2-2\mathrm{Re} (\hol_h(\gamma)) \big) \, \ul{x}^B\,,
\end{equation}
where the product is over the simple cycles of the CRSF $B$ (with an arbitrarily chosen orientation). This is a result of Kenyon \cite{Kenyon}, see \cite[Section 14.2]{KL3}.
We furthermore know that the normalizing constant is the determinant of the twisted Laplacian. This statement will be generalized to higher rank in Section~\ref{sec:characteristic-polynomial}.

Note that the above also holds when the coefficient field is the skew-field of quaternions, in which case the process is a quaternionic DPP, with distribution given by \eqref{eq:law-crsf} (where the real part is now defined on the quaternions), see \cite{Kassel-ESAIM}.

\subsection{Independent splitting of a determinantal linear spanning forest} Let us prove a first property that is more specific to the graphical setting. 

\begin{proposition}\label{prop:independence} Let $(\F,h,\sigma)=(\F^{1},h^{1})\oplus \ldots \oplus (\F^{r},h^{r})$ be a split local system over $\rG$. Let $\Q$ be a $\DLSF(\F,h,\sigma)$. For all $i\in \{1,\ldots,r\}$, define
$\Q^{i}=\Q \cap \Omega^{1}(\F^{i})$.

Then $\Q^{1},\ldots,\Q^{r}$ are independent, and for all $i\in \{1,\ldots,r\}$, the random variable $\Q^{i}$ is a $\DLSF(\F^{i},h^{i})$.
\end{proposition}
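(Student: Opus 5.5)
The plan is to reduce the statement to a general fact about determinantal linear subspaces: if the ambient space and the subspace $H$ both split compatibly and orthogonally, then the DLP factorizes. The key observation is that the differential respects the horizontal splitting. Since each $(\F^{i},h^{i})$ is a local subsystem, $d_{h}$ maps $\Omega^{0}(\F^{i})$ into $\Omega^{1}(\F^{i})$ and restricts there to $d_{h^{i}}$. Hence
\[\im d_{h}=\im d_{h^{1}}\oplus\ldots\oplus \im d_{h^{r}},\qquad \im d_{h^{i}}\subseteq \Omega^{1}(\F^{i}).\]
Because the splitting of the local system is orthogonal, the decomposition $\Omega^{1}(\F)=\bigoplus_{i}\Omega^{1}(\F^{i})$ is orthogonal, and so is the decomposition of $\im d_{h}$. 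Moreover, the fine splitting $\sigma_{1}$ refines this horizontal splitting, and its restriction to $\Omega^{1}(\F^{i})$ is exactly the graphical splitting $\gamma_{1}$ of $\Omega^{1}(\F^{i})$.

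Next I would use the sampling algorithm of Section~\ref{sec:dls}. Choose independent uniform orthonormal bases of each block $\F^{i}_{e}$. The merged basis of $\Omega^{1}(\F)$ is then the concatenation of bases $\mathcal B^{i}$ of the $\Omega^{1}(\F^{i})$, and these are independent. Within each $\Omega^{1}(\F^{i})$, $\mathcal B^{i}$ is exactly the random basis used to sample $\DLSF(\F^{i},h^{i})$.

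In such a basis the orthogonal projection on $\im d_{h}$ is the orthogonal direct sum of the projections on the $\im d_{h^{i}}$. Its matrix $K$ is therefore block diagonal, with blocks $K^{i}$, the matrices of the projections on $\im d_{h^{i}}$ in the bases $\mathcal B^{i}$. A determinantal point process with block diagonal kernel is the disjoint union of independent determinantal processes with kernels $K^{i}$. This follows because every principal minor of a block diagonal matrix is the product of the minors of its blocks, so the correlation functions factorize.

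Consequently $\X=\bigsqcup_{i}\X^{i}$, where the $\X^{i}$ are independent conditionally on the bases. The sampled subspace is $\Q=\bigoplus_{i}\Vect(\mathcal B^{i}_{j}:j\in\X^{i})$, and since each summand lies in $\Omega^{1}(\F^{i})$, we get $\Q\cap\Omega^{1}(\F^{i})=\Vect(\mathcal B^{i}_{j}:j\in\X^{i})$. Conditionally on all the bases, the $\Q^{i}$ are independent, and $\Q^{i}$ depends only on $\mathcal B^{i}$ and $\X^{i}$. Since the $\mathcal B^{i}$ are themselves independent, the pairs $(\mathcal B^{i},\X^{i})$ are independent, and hence so are the $\Q^{i}$. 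Each pair follows precisely the sampling procedure of $\DLSF(\F^{i},h^{i})$, which gives the distributional claim.

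The only delicate point is justifying the identification $\Q^{i}=\Vect(\mathcal B^{i}_{j}:j\in\X^{i})$. This holds because $\Q$ is adapted to the horizontal splitting: any $\omega\in\Q\cap\Omega^{1}(\F^{i})$ decomposes along the summands, and the components outside $\Omega^{1}(\F^{i})$ must vanish. One should also check that the sampling algorithm is indeed distributed as the DLP measure, but that is taken from \cite{KL3}.

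Alternatively, one can verify the result directly on densities. The measure $\nu^{\Omega^{1}(\F),\sigma_{1}}$ is the product of the measures $\nu^{\Omega^{1}(\F^{i}),\gamma_{1}}$ under $Q\mapsto(Q^{i})_{i}$. Also, when $F=\bigoplus F^{i}$ and $G=\bigoplus G^{i}$ with $F^{i},G^{i}\subseteq E^{i}$ orthogonal blocks, $\cos^{2}(F,G)=\prod_{i}\cos^{2}(F^{i},G^{i})$ if $\dim F^{i}=\dim G^{i}$ for all $i$, and $0$ otherwise. Together these give the factorization of the density, which I would use as a cross-check.
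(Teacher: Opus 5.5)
Your proof is correct, but your main route differs from the paper's.

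\textbf{The paper's route.} It works directly with the defining density. It notes that $d_h$ is block diagonal, so $\im d_h=\bigoplus_i \im d_{h^i}$ is adapted to the horizontal splitting. It then proves that $\cos^2(F,G)=\prod_i\cos^2(F^i,G^i)$ when the block dimensions agree, and that it vanishes otherwise. It concludes by identifying $\cos^2(Q,\im d_h)\,\nu(\nsd Q)$ with the product of the measures $\cos^2(Q^i,\im d_{h^i})\,\nu(\nsd Q^i)$. This is exactly what you relegate to a cross-check in your last paragraph.

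\textbf{Your route.} Your primary argument goes through the sampling algorithm of Section~\ref{sec:dls}. You use independent random bases of the blocks $\F^i_e$, a block-diagonal kernel, and the elementary fact that a finite DPP with block-diagonal kernel is a disjoint union of independent DPPs. You then condition to get unconditional independence of the pairs $(\mathcal B^i,\X^i)$. This last step implicitly uses that the conditional law of $\X^i$ depends on $\mathcal B^i$ alone, which holds because $K^i$ does.

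\textbf{What each buys.} Your approach gives an explicit coupling in which $\Q$ is visibly the direct sum of independent $\DLSF(\F^i,h^i)$, and it uses only finite DPP facts. The price is reliance on the theorem from \cite{KL3} that the algorithm actually samples $\DLP(E,H,\sigma)$. The density route is self-contained given the definition of the DLP. However, it requires that the invariant reference measure $\nu^{\Omega^1(\F),\sigma_1}$ correspond, under $Q\mapsto (Q^i)_i$, to the product of the measures $\nu^{\Omega^1(\F^i),\gamma_1}$. This follows from uniqueness of the invariant measure with prescribed total mass. The paper leaves this point implicit in its ``easy induction argument'', and you state it explicitly.
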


For the needs of this proof, and in line with \eqref{eq:horizontalsplitting}, let us denote by $\eta=(\eta_{0},\eta_{1})$ the splitting $\Omega^{\vsbullet}(\F)=\Omega^{\vsbullet}(\F^{1})\oplus \ldots \oplus \Omega^{\vsbullet}(\F^{r})$. The splitting is less fine that $\sigma$, so that $\Q$, which is adapted to $\sigma_{1}$, is in particular adapted to~$\eta_{1}$. Therefore, $\Q=\Q^{1}\oplus \ldots \oplus \Q^{r}$.

\begin{proof} The proof consists in an observation and a computation. The observation is that the subspace $\im d_{h}$ of $\Omega^{1}(\F)$ is adapted to the splitting $\eta_{1}$. Indeed, the operator $d_{h}:\Omega^{0}(\F) \to \Omega^{1}(\F)$ is block-diagonal with respect to the splittings $\eta_{0}$ and $\eta_{1}$, the diagonal blocks being the operators $d_{h^{1}},\ldots,d_{h^{r}}$. Therefore, $\im d_{h}=\im d_{h^{1}}\oplus \ldots \oplus \im d_{h^{r}}$, where for each $i\in \{1,\ldots,r\}$, we have $\im d_{h^{i}}=\im d_{h} \cap \Omega^{1}(\F^{i})$.

The computation is the following. Let $E$ be an inner product space, and let $E=E'\oplus E''$ be a splitting of $E$. Let $F'$ and $G'$ be linear subspaces of $E'$, and $F''$ and $G''$ linear subspaces of~$E''$. Set $F=F'\oplus F''$ and $G=G'\oplus G''$. Assume that $\dim F=\dim G$. Then, choosing orthonormal bases of the spaces $F$ and $G$ adapted to their splittings, it follows directly from the definition of $\cos^{2}(F,G)$ given by \eqref{eq:defcos2} that
\[\cos^{2}(F,G)=\cos^{2}(F',G')\cos^{2}(F'',G'')\]
if $\dim F'=\dim G'$ and $\dim F''=\dim G''$, and $\cos^{2}(F,G)=0$ otherwise. 

For all $i\in \{1,\ldots,r\}$, let us define $n_{i}=\rank (d_{h^{i}})$. From the computation just made and an easy induction argument, we see that the function $Q\mapsto \cos^{2}(Q,H)$ vanishes outside $\Gr_{\ul{n}}(\Omega^{1}(\F),\eta_{1})$, and that on $\Gr_{\ul{n}}(\Omega^{1}(\F),\eta_{1})$, the following equality of measures holds:
\[\cos^{2}(Q,\im d_{h})\;\nu_{n}^{(\Omega^{1}(\F),\eta_{1})}(\nsd Q)=\bigotimes_{i=1}^{r}  \cos^{2}(Q^{i},\im d_{h^{i}}) \;\nu_{n_{i}}^{(\Omega^{1}(\F^{i}),\gamma_{1})}(\nsd Q^{i}),\]
which is the content of the proposition.
\end{proof}

\subsection{Divisibility of the trace}

Proposition \ref{prop:independence} and the property \cite[Cor. 5.7]{KL3} mentioned in Section~\ref{sec:firstprop} imply the following property of {\em divisibility} of the trace of the $\DLSF$. 

\begin{theorem} \label{thm:divtrace} Consider a split local system $(\F,h,\sigma)=(\F^{1},h^{1})\oplus \ldots \oplus (\F^{r},h^{r})$ over $\rG$. Let $\Q$ be a $\DLSF(\F,h)$. Let $\Q^{1},\ldots,\Q^{r}$ be independent $\DLSF(\F^{1},h^{1}),\ldots,\DLSF(\F^{r},h^{r})$. Then the following equality in distribution holds between $\N^{\gE}$-valued random variables:
\[\dim_{\gamma}\Q \overset{(d)}= \dim_{\gamma}\Q^{1}+\ldots + \dim_{\gamma}\Q^{r}.\]
\end{theorem}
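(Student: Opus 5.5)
The plan is to pass through an auxiliary $\DLSF(\F,h,\sigma)$ attached to the given splitting, and compare its trace with the two sides of the claimed identity. Let $\mathsf{S}$ be a $\DLSF(\F,h,\sigma)$, that is, the determinantal linear subspace $\DLP(\Omega^{1}(\F),\im d_{h},\sigma_{1})$ for the fine splitting $\sigma_{1}$ of \eqref{eq:holsplit}.

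First, I compare $\dim_\gamma \Q$ with $\dim_\gamma \mathsf{S}$. The fine splitting $\sigma_{1}$ refines the graphical splitting $\gamma_{1}$, and $\Q$ is the DLP attached to the same pair $(\Omega^1(\F),\im d_h)$ with the coarser splitting $\gamma_1$. By the property \cite[Cor. 5.7]{KL3} recalled in Section~\ref{sec:firstprop}, the trace does not depend on the splitting, so $\dim_{\gamma}\Q$ and $\dim_{\gamma}\mathsf{S}$ have the same distribution.

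Second, I decompose $\mathsf{S}$ along the horizontal splitting. Since $\mathsf{S}$ is adapted to $\sigma_1$, it is in particular adapted to $\eta_1$, so $\mathsf{S}=\mathsf{S}^{1}\oplus\ldots\oplus\mathsf{S}^{r}$ with $\mathsf{S}^{i}=\mathsf{S}\cap\Omega^{1}(\F^{i})$. By Proposition~\ref{prop:independence}, the $\mathsf{S}^{i}$ are independent and each $\mathsf{S}^{i}$ is a $\DLSF(\F^{i},h^{i})$. Hence $(\mathsf{S}^{1},\ldots,\mathsf{S}^{r})$ has the same joint law as $(\Q^{1},\ldots,\Q^{r})$.

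The remaining step, which is the only point needing care, is additivity of the trace:
\[\dim \mathsf{S}_{e}=\sum_{i=1}^{r}\dim \mathsf{S}^{i}_{e}\quad\text{for each } e\in\gE,\]
where $\mathsf{S}^i_e=\mathsf{S}\cap\F^i_e$. Because $\mathsf{S}$ is adapted to $\sigma_1$, we have $\mathsf{S}=\bigoplus_{e,i}\mathsf{S}\cap\F_e^i$. Intersecting this decomposition with $\F_e=\bigoplus_i \F_e^i$ gives $\mathsf{S}_e=\bigoplus_i \mathsf{S}^i_e$, where the sum is direct because the $\F_e^i$ are orthogonal. This proves the identity almost surely, so $\dim_\gamma\mathsf{S}=\sum_i\dim_\gamma\mathsf{S}^i$. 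Combining this with the equality in law from the first step and the joint law from the second gives the theorem.
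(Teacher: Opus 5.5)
Your proposal is correct and follows the paper's proof exactly: introduce a $\DLSF(\F,h,\sigma)$, compare its trace with that of $\Q$ via \cite[Cor.~5.7]{KL3}, and split it into independent $\DLSF(\F^{i},h^{i})$ using Proposition~\ref{prop:independence}. Your explicit check that $\mathsf{S}_{e}=\bigoplus_{i}\mathsf{S}^{i}_{e}$ simply spells out the step the paper dismisses with ``the result follows readily.''
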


\begin{proof} Let $\Q^{\sigma}$ be a $\DLSF(\F,h,\sigma)$. On the one hand, according to Proposition \ref{prop:independence}, we have the equality of distribution of random linear spaces
\[\Q^{\sigma}\overset{(d)}=\Q^{1}\oplus \ldots \oplus \Q^{r}.\]
On the other hand, by \cite[Cor. 5.7]{KL3}, we have the equality of distribution 
\[\dim_{\gamma}\Q\overset{(d)}=\dim_{\gamma}\Q^{\sigma}.\]
The result follows readily.
\end{proof}

Let us stress that the $\DLSF$ itself does not enjoy such a property of divisibility, as the example presented in the next section shows.

\subsection{An example}

Let us illustrate this distinction between divisibility of the trace, and not of the $\DLSF$, on an example. 

Assume that the graph $\rG$ is a triangle with three vertices $u,v,w$ and three geometric edges $[e],[f],[g]$, with $e$ joining $u$ to $v$, $f$ joining $v$ to $w$, and $g$ joining $w$ to $u$. We assume that the three edges are endowed with equal Euclidean weights.

Assume the $\F$ is the trivial real bundle of rank $2$, with all fibres equal to $\R^{2}$, and that all holonomies are the identity. The local system $(\F,h)$ splits as the sum $(\F^{1},h^{1})\oplus (\F^{2},h^{2})$ of two systems of rank $1$, each fibre being split as $\R^{2}=\R\oplus \R$. Let respectively $\Q^1$, $\Q^2$ and $\Q$ be $\DLSF(\F^{1},h^{1})$, $\DLSF(\F^{2},h^{2})$ and $\DLSF(\F,h)$, with $\Q^1$ and $\Q^2$ independent.

Let us compute the common distribution of $\Q^1$ and $\Q^2$. They are $\DLSF$ on a flat local system of rank~$1$, so that, as explained in Section \ref{sec:rank1}, and more precisely in \eqref{eq:law-st}, they are uniform spanning trees of the graph. Therefore, for each $i\in \{1,2\}$,
\[\P(\Q^i=\F^{i}_{e}\oplus \F^{i}_{f})=\P(\Q^i= \F^{i}_{f}\oplus \F^{i}_{g})=\P(\Q^i= \F^{i}_{g}\oplus \F^{i}_{e})=\frac{1}{3}.\]
In particular, the trace of $\Q^i$ is equidistributed on $\{(1,1,0),(1,0,1),(0,1,1)\}$.

\begin{figure}[h!]
\begin{center}
\includegraphics[width=.58\textwidth]{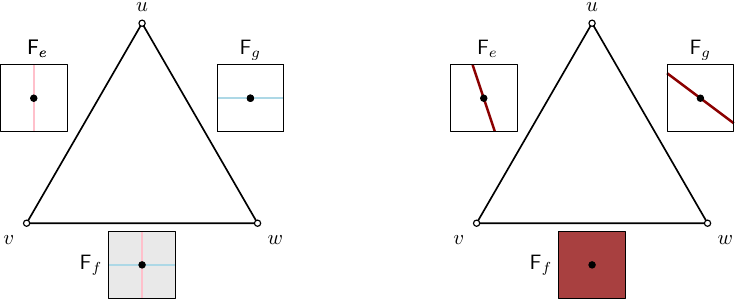}
\caption{\small On the left, in pink and blue, a joint realization of $\DLSF(\F^{1},h^{1})$ and $\DLSF(\F^{2},h^{2})$. On the right, a realization of $\DLSF(\F,h)$ with the same trace.}
\label{fig:triangle}
\end{center}
\end{figure}

Let us now describe the distribution of $\Q$. The range of $d_h$ is the $4$-dimensional space
\[\im \d_{h}=\{(x,y,z)\in \F_{e}\oplus \F_{f} \oplus \F_{g} : x+y+z=0\},\]
and
\[(\im d_{h})^{\perp}=\{(t,t,t) \in \F_{e}\oplus \F_{g}\oplus \F_{h} : t\in \R^{2}\}.\]

Since $\Q$ is an adapted $4$-dimensional subspace of $\F_e\oplus \F_f\oplus \F_g$, it can either be the direct sum of two of the three fibres, or the direct sum of one of the fibres, and two $1$-dimensional subspaces of the other two. 

By definition of a \DLP, The probability that $\Q=\F_e\oplus \F_f$ is the product of $\cos^2(\F_e\oplus \F_f, \im d_h)$, which is equal to $\frac{1}{9}$ as a result of \eqref{eq:defcos2}, by the total mass of the measure $\nu_{(2,2,0)}^{\Omega^1(\F),\gamma}$, which is equal to $1$. Therefore, 
\[\P(\Q=\F_{e}\oplus \F_{f})=\P(\Q= \F_{f}\oplus \F_{g})=\P(\Q= \F_{g}\oplus \F_{e})=\frac{1}{9}.\]

Let us now choose two unit vectors $y\in \F_{f}=\R^2$ and $z\in \F_{g}=\R^2$, set 
\[Q=\F_{e}\oplus \R y \oplus \R z,\]
and determine the density of probability that $\Q=Q$. An application of \eqref{eq:defcos2} yields
\begin{equation}\label{eq:densprobQ}
\cos^{2}(Q,\im d_{h}) = \frac{1}{9} \det(y,z)^{2}.
\end{equation}
The integral of this function over $\Gr_{(2,1,1)}(\Omega^1(\F),\sigma)$ is equal to $\frac{1}{18}$ times the mass of the measure $\nu_{(2,1,1)}^{\Omega^1(\F),\gamma}$, that is, $\frac{2}{9}$. Therefore, 
\[\P\big(\dim_\gamma \Q=(2,1,1)\big)=\P\big(\dim_\gamma \Q=(1,2,1)\big)=\P\big(\dim_\gamma \Q=(1,1,2)\big)=\frac{2}{9}\]
and the distribution of $\Q$ on each of these three events is described by \eqref{eq:densprobQ}.

We check on this example that $\dim_\gamma \Q$ has the same distribution as the sum of two independent copies of $\dim_\gamma \Q^1$ and $\dim_\gamma \Q^2$
respectively. However, $\Q$ does not have the same distribution as the direct sum of independent copies of $\Q^1$ and $\Q^2$. Indeed, for instance, on $\Gr_{(2,1,1)}(\Omega^1(\F),\sigma_1)$, the distribution of $\Q^1\oplus \Q^2$ has two atoms, of mass $\frac{1}{9}$ each, whereas, according to \eqref{eq:densprobQ} the distribution of $\Q$ is non-atomic and has full support. 

Let us emphasize that the distributions $\Q$ and the independent direct sum of $\Q^1$ and $\Q^2$ have distinct support, but they are not mutually singular.

\subsection{A special unitary example}

In the quaternionic case of rank $1$, an interesting property holds: adding two independent copies of a quaternionic CRSF yields the same trace as the rank $2$ DLSF associated with the corresponding connection in $\SU(2)$. We say that a connection is \new{special unitary} if the holonomy of any based loop is an element of the special unitary group of the fibre at which the loop is based.

\begin{proposition}\label{prop:su2}
Assume that $N=2$ and the connection is special unitary. Then the trace of the DLSF has the same law as the sum of two independent random CRSF whose common law is \eqref{eq:law-crsf}.
\end{proposition}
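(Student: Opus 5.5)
The plan is to identify the rank $2$ complex local system with special unitary connection with a rank $1$ quaternionic local system. Then Theorem~\ref{thm:divtrace} and the quaternionic rank $1$ case recalled in Section~\ref{sec:rank1} handle the rest.

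First, the identification. Up to gauge (parallel transport along a spanning tree, as in the proof of Corollary~\ref{cor:flattrivial}), I may assume that $\F$ is the trivial bundle $\C^2$. The holonomies along half-edges can be chosen in $\SU(2)$: the special unitary hypothesis says all loop holonomies lie in $\SU(2)$, and gauging along a spanning tree puts the tree edges to the identity. The remaining edge holonomies are then loop holonomies, hence in $\SU(2)$. Identify $\C^2\simeq\bH$, with $\C$ acting on the right, say, and $\SU(2)$ acting as left multiplication by unit quaternions. In this way $(\F,h)$ becomes the underlying complex local system of a quaternionic rank $1$ local system $(\F_\bH,h_\bH)$. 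Under this identification:
\begin{itemize}
\item $\Omega^1(\F)=\Omega^1(\F_\bH)$ as real inner product spaces;
\item the complex Hermitian product is a component of the quaternionic one;
\item $\im d_h$ is a quaternionic subspace of $\Omega^1(\F)$.
\end{itemize}

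Second, I would compare the trace of the complex DLSF with twice a quaternionic DPP. The complex DLSF is a DLP with the projection $K$ onto $\im d_h$, written in a random orthonormal basis adapted to the edge splitting. I would pick in each fibre $\F_e\simeq\bH$ the complex basis $(q_e, q_e j)$, with $q_e$ a uniform unit quaternion; this is a legitimate Haar-random basis up to a $\U(1)$ phase correction, which I will need to check, or else use the basis independence of the trace, \cite[Cor. 5.7]{KL3}. In this basis the $2|\gE|\times 2|\gE|$ kernel $K$ has the $2\times2$ block structure of the complex matrix representation of the quaternionic $|\gE|\times|\gE|$ kernel $K_\bH$ of the projection onto $\im d_{h_\bH}$. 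The restrictions of $K$ to edge blocks $A\times\{1,2\}$ then have determinants equal to Study determinants, that is, squares of Moore determinants of $K_\bH$.

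From this I would deduce the law of the trace. By the point-process computation of \cite{Kassel-ESAIM}, the trace of the complex DPP with such a kernel has the law of a quaternionic determinantal process (weights given by Moore determinants squared, which compute $\prod(2-2\mathrm{Re}\,\hol)$ by the rank $1$ quaternionic Kenyon formula) convolved with itself? This is the delicate point. Literally, the Study determinant identity gives that the trace restricted to $\{0,2\}$-valued configurations is twice a quaternionic CRSF. But the trace can also take value $1$ on edges, so that identity is not the statement we want.

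The route I would actually take is to use Theorem~\ref{thm:divtrace} via a splitting, not a literal quaternionic identification. After extending scalars, for instance tensoring with the conjugate or doubling, the local system $\F\otimes_\C\C^2$ viewed through $\bH\otimes_\R\C\simeq M_2(\C)$ splits as two copies of $(\F,h)$. Comparing generating functions then gives the result. Concretely, I would compare probability generating functions of the traces, $\E[\ul{x}^{\tr\Q}]$, which by Theorem~\ref{thm:mtt-intro} and its proof equal $\det\Delta_h(\ul{x})/\det\Delta_h(\ul 1)$. The quaternionic CRSF has generating function $\mathrm{Sdet}$, a Moore determinant, of the quaternionic Laplacian, and its square is the complex determinant of $\Delta_h(\ul x)$, since the complex Laplacian is the complex form of the quaternionic one. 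So the sum of two independent quaternionic CRSFs has generating function $\det\Delta_h(\ul x)$ up to normalization.

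Equality of traces in law will then follow if the trace law is determined by its multivariate generating function in $\ul{x}$, which holds since both sides are polynomials in $\ul x$. The main obstacle is justifying $\E[\ul x^{\tr\Q}]\propto\det(d_h^*\,\mathrm{diag}(\ul x)\,d_h)$ at this point of the paper. I would derive it from the DLP identity $\E[\prod_e x_e^{\dim\Q_e}]=\det(I+K(X-I))$, with $X$ the diagonal edge-weight operator, together with the Cauchy–Binet formula.
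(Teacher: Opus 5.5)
Your final route, comparing probability generating functions, is correct, and it differs from the paper's. The paper gives no argument of its own: it presents the statement as a specialization of the general result \cite[Proposition 6.13]{KL3} on determinantal linear processes.

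Stripped of the detours, your proof runs as follows.
\begin{enumerate}
\item An element of $\SU(2)$ fixing a non-zero vector is the identity. So the local system is either flat, in which case every weight in \eqref{eq:law-crsf} vanishes and the statement is void, or it is fully twisted.
\item In the fully twisted case, write $\Delta_h(\ul y)$ for the Laplacian with weights $\ul y$. Use the generating-function identity $\E[\prod_e t_e^{n_e}]=\det(\id+(m_{\ul t}-\id)\proj{\im d_h})$, the one used in the proof of Foster's network theorem. Combined with $\proj{\im d_h}=d_h\Delta_h^{-1}d_h^*$ and $\det(1+AB)=\det(1+BA)$, it gives $\E[\ul t^{\tr\Q}]=\det\Delta_h(\ul t\ul x)/\det\Delta_h(\ul x)$. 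Cauchy--Binet is not needed.
\item The law \eqref{eq:law-crsf} has generating function $Z(\ul t\ul x)/Z(\ul x)$, where $Z(\ul y)=\sum_B \ul y^B\prod_\gamma(2-2\mathrm{Re}\,\hol_h(\gamma))$.
\item Kenyon's quaternionic matrix-tree theorem \cite{Kenyon} gives $Z(\ul y)=\mathrm{Mdet}\,\Delta_h(\ul y)$. The complex determinant of the complex form of a quaternion-Hermitian matrix is the square of its Moore determinant. Hence $\det\Delta_h(\ul y)=Z(\ul y)^2$ for all $\ul y$.
\item The two polynomial generating functions therefore coincide, which gives equality of the laws.
\end{enumerate}

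What your route buys is an explicit, elementary proof in which the edge weights themselves serve as the generating variables. Its price is that all the content sits in a graph-specific external identity, Kenyon's theorem, which you should cite explicitly rather than assert. The paper's route instead works at the level of abstract determinantal processes carrying a quaternionic structure. It uses the graph only to identify the resulting quaternionic process with the CRSF law of Section~\ref{sec:rank1}.

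A few slips need fixing.
\begin{itemize}
\item Your formula $\det\Delta_h(\ul x)/\det\Delta_h(\ul 1)$ conflates the weights with the generating variables. It is only right for unit weights.
\item What you call Sdet, the Study determinant, is already the square. You mean the Moore determinant.
\item Drop the opening plan via Theorem~\ref{thm:divtrace}. An $\SU(2)$ local system is in general irreducible over $\C$, so there is no splitting to feed into that theorem.
\item Drop the extension-of-scalars paragraph as well. Splitting a complexified rank-$4$ system into two copies of $(\F,h)$ says nothing about the quaternionic process.
\end{itemize}
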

\begin{proof}
This is a specialization of~\cite[Proposition 6.13]{KL3}.
\end{proof}

As this example shows, there are sometimes simplifications to the law of the trace of a DLSF, but which do not immediately imply a simplification to its distribution. 
The secret of those subtleties seems to lie in the spectral properties of the holonomies of the cycles of the graph.

%%%%%%%%%%%%%%%%%%%%%%%%%%%%%%%%%%%%%%%%

\section{DLSF and polymatroids}\label{sec:DLSF-Polymatroids} 

We will describe in some detail the support of the distribution of the random vector of integers $\dim_{\gamma} \Q=(\dim \Q_{e})_{e\in \gE}$. This support is a polytope in $\N^{\gE}$ associated to a certain polymatroid. Let us briefly review the language of matroids and polymatroids. 

\subsection{Definition of polymatroids}

A \new{matroid} (see \cite{Oxley,Schrijver}) is the abstraction of the dependence relations induced by a finite configuration of vectors in a vector space. 
A pair~$\mM=(S,\cI)$ is a matroid on the finite ground set~$S$ if $\cI$ is a non-empty collection of subsets of $S$, called \new{independent} subsets, such that 
\begin{itemize}
\item if $J\subset I$ and $I\in \cI$, then $J\in \cI$,
\item if $I,J\in \cI$ and $\vert J \vert < \vert I \vert$, there exists $i\in I\setminus J$ such that $J\cup \{i\}\in \cI$.
\end{itemize}
Maximal elements of $\cI$ all have the same cardinality, called the \new{rank} of the matroid. Elements of maximal rank are called \new{bases}, and the set of bases of $\mM$ is denoted by $\B$. 

For every subset $A$ of $S$ the set of independent subsets of $S$ contained in~$A$ is a matroid on the ground set $A$, and the rank of this matroid is denoted by $\rk_{\mM}(A)$. The function 
\[\rk_\mM \colon 2^S\to \N,\ A\mapsto \max\{ |I| : I\subset A, I\in \cI \}\]
is the {\em rank function} of the matroid $\mM$ and it characterizes $\mM$ completely. Moreover, a function $\submod:2^{S}\to \N$ is the rank function of a matroid on $S$ if and only if
\begin{enumerate}[\indent 1.]
\item $\submod(\varnothing)=0$,
\item $\submod$ is non-decreasing,
\item $\submod$ is sub-modular: $\submod(A\cup B)+\submod(A\cap B) \leq \submod(A)+\submod(B)$ for all $A,B\subset S$,
\item $\submod(\{i\})\leq 1$ for all $i\in S$.
\end{enumerate}

\begin{definition} By a {\em polymatroid}, we mean a pair $\mP=(S,\submod)$, where $S$ is a finite set and $\submod:S\to \N$ a function, called the {\em rank function} of $\mP$, which satisfies the properties 1, 2 and 3 above.
\end{definition}

To a polymatroid $P=(S,\rho)$, which may well be a matroid, two convex polytopes in $\R_+^S$ are attached. Here and thereafter, by $\R_{+}^{S}$, we mean $(\R_{+})^{S}$. For all $m\in \R_{+}^{S}$ and all $T\subseteq S$, we set $m(T)=\sum_{i\in T} m(i)$.

The first polytope associated to the polymatroid $P$ is its independence polytope, defined by
\[\pPind(\submod)=\big\{m\in \R_+^S: \forall T\subseteq S, \ m(T)\le \submod(T)\big\}.\]
The second is its basis polytope, which is a face of the independence polytope, defined by 
\[\pPbas(\submod)=\{m\in \pPind(\mP): m(S)=\submod(S)\}.\]

For the sets of integer points of these polytopes, we introduce the notation
\[\nPind(\submod)=\pPind(\submod)\cap \N^S \ \text{ and} \ \nPbas(\submod)=\pPbas(\submod)\cap \N^S.\]
The polytopes $\pPind(\submod)$ and $\pPbas(\submod)$ are integer in the sense that they are respectively the convex hulls of $\nPind(\submod)$ and $\nPbas(\submod)$ (see \cite[Cor. 44.3d]{Schrijver}).

There is a natural partial order on the set of polymatroids on a common ground set. It is defined by $\mP_1=(S,\submod_{1})\preceq \mP_2=(S,\submod_{2})$ if $\submod_{1}\leq \submod_{2}$. This holds if and only if $\pPind(\submod_{1})\subseteq \pPind(\submod_{2})$. 

In the case of two matroids $\mM_{1}$ and $\mM_{2}$ with the same rank, the relation $\mM_1\preceq \mM_2$ holds if and only if $\B_1\subseteq \B_2$. In that case, $\pPbas(\mM_{1})\subseteq \pPbas(\mM_{2})$ and we say that $\mM_1$ is a \new{submatroid} of $\mM_2$.

\subsection{Linear polymatroids} \label{sec:linpol}

A fundamental example of a matroid is given by a a finite set $S$ of elements of a vector space and the collection $\cI$ of all linearly independent subsets of $S$. For such a linear matroid, the rank function associates to every subset of $S$ the dimension of its linear span. 

Similarly, a fundamental example of a polymatroid on a set $S$ is given by attaching to every element~$i$ of $S$ a finite dimensional subspace $F_{i}$ of some fixed vector space, and defining, for every subset $A$ of~$S$, the number $\submod(A)$ as the dimension of the sum $F_{A}=\sum_{i\in A}F_{i}$. That the function $\submod$ thus defined is sub-modular follows from the fact that for all $A,B\subseteq S$, one has the two relations $F_{A\cup B}=F_{A}+F_{B}$ and $F_{A\cap B}\subseteq F_{A}\cap F_{B}$, and the identity $\dim(F_{A}+F_{B})+\dim (F_{A}\cap F_{B})=\dim F_{A}+\dim F_{B}$. A polymatroid of this form is called a linear polymatroid. 

\begin{figure}[h!]
\begin{center}
{\includegraphics[width=4cm]{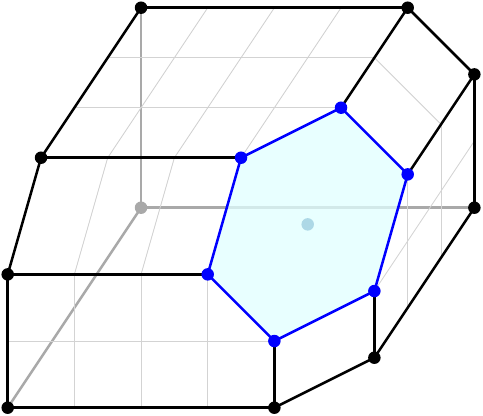}}
\caption{\small The independent polytope of the linear polymatroid on $S=\{1,2,3\}$ associated to the subspaces $F_{1}={\rm Vect}(e_{1},e_{2},e_{3},e_{4})$, $F_{2}={\rm Vect}(e_{4},e_{5},e_{6},e_{7},e_{8})$ and $F_{3}={\rm Vect}(e_{1},e_{8},e_{9})$ of $\C^{9}$. The blue face is the corresponding basis polytope, with its unique interior integral point $(3,4,2)$.}
\end{center}
\end{figure}

\subsection{Bases of linear polymatroids} For a linear matroid $M=(S,\cI)$, corresponding to a family of vectors $(v_{i} : i\in S)$ of some vector space, the independent sets (resp. bases) are the sets $I\subseteq S$ such that $\{v_{i} : i \in I\}$ is linearly independent (resp. a basis of ${\rm Vect}(v_{i} : i\in S)$). Let us give an analogous picture for linear polymatroids. 

Let $(F_{i}:i\in S)$ be a family of finite-dimensional subspaces of a vector space. We already explained that the function $\submod:2^{S}\to \N$ defined by $\submod(A)=\dim \sum_{i\in A}F_{i}$ is sub-modular, and we call $(S,\submod)$ the linear polymatroid associated to $(F_{i}:i\in S)$.

\begin{proposition}\label{prop:meuse} Let $(S,\submod)$ be the linear polymatroid associated to a family $(F_{i}:i\in S)$ of finite-dimensional subspaces of a vector space.
\begin{enumerate}[\indent 1.]
\item A point $(n_{i})_{i\in S}\in \N^{S}$ belongs to $\pPind(\submod)$ if and only if for every $i\in S$ there exists $n_{i}$ vectors $v_{i,1},\ldots,v_{i,n_{i}}$ of $F_{i}$ such that the family $(v_{i,j})_{i \in S, 1\leq j \leq n_{i}}$ is linearly independent.

\item A point $(n_{i})_{i\in S}\in \N^{S}$ belongs to $\pPbas(\submod)$ if and only if for every $i\in S$ there exists $n_{i}$ vectors $v_{i,1},\ldots,v_{i,n_{i}}$ of $F_{i}$ such that the family $(v_{i,j})_{i \in S, 1\leq j \leq n_{i}}$ is a basis of $\sum_{i\in S} F_{i}$.

\item The previous statements remain true if for every $i\in S$ a basis $B_{i}$ of $F_{i}$ is fixed in advance, and each vector $v_{i,j}$ is constrained to belong to $B_{i}$.
\end{enumerate}
\end{proposition}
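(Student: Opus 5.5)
Write $F_A=\sum_{i\in A}F_i$ for $A\subseteq S$, so that $\submod(A)=\dim F_A$. The easy direction of each assertion is linear algebra; the converse is a greedy/exchange argument. I would prove the bases-restricted version (assertion 3) directly; it implies assertions 1 and 2, since vectors taken from $B_i$ are in particular vectors of $F_i$.

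\emph{Necessity.} Suppose that for each $i\in S$ there are vectors $v_{i,1},\ldots,v_{i,n_i}\in F_i$ such that the whole family $(v_{i,j})$ is linearly independent. For any $T\subseteq S$, the subfamily $(v_{i,j})_{i\in T}$ is linearly independent and lies in $F_T$. Hence $\sum_{i\in T}n_i\le \dim F_T=\submod(T)$, that is, $(n_i)\in\pPind(\submod)$. If the family is moreover a basis of $F_S$, then $\sum_{i\in S}n_i=\submod(S)$, so $(n_i)\in\pPbas(\submod)$.

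\emph{Sufficiency.} This is the heart of the proof. Fix bases $B_i$ of the $F_i$ and consider the ground set $\widetilde S=\bigsqcup_i B_i$ with its linear matroid $\widetilde M$, with rank function $\widetilde\rk$. The relevant facts are:
\begin{itemize}
\item for every $i$, $\widetilde\rk(B_i)=\dim F_i$;
\item for every $T\subseteq S$, $\widetilde\rk\big(\bigcup_{i\in T}B_i\big)=\submod(T)$.
\end{itemize}
We must show that if $(n_i)$ satisfies $n(T)\le\submod(T)$ for all $T$, then there is an independent set $I\subseteq\widetilde S$ with $|I\cap B_i|=n_i$ for every $i$.

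I would argue by the matroid intersection theorem (Edmonds). Let $\widetilde M_2$ be the partition matroid on $\widetilde S$ whose independent sets are those $J$ with $|J\cap B_i|\le n_i$ for all $i$. We want a common independent set of $\widetilde M$ and $\widetilde M_2$ of size $n(S)$. By the min-max formula, the maximum size of a common independent set is
\[\min_{X\subseteq\widetilde S}\Big(\widetilde\rk(X)+\sum_i\min\big(n_i,|B_i\setminus X|\big)\Big).\]
So it suffices to show that each term is at least $n(S)$. Fix $X$ and let $T=\{i:|B_i\setminus X|<n_i\}$. For $i\in T$ the partition part contributes $|B_i\setminus X|$; for $i\notin T$ it contributes $n_i$.

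It then suffices to show
\[\widetilde\rk(X)\ \ge\ \sum_{i\in T}\big(n_i-|B_i\setminus X|\big).\]
Let $X'=X\cap\bigcup_{i\in T}B_i$. Adding back the elements of $B_i\setminus X$ for $i\in T$ raises the rank by at most one per element, so
\[\widetilde\rk(X')\ \ge\ \widetilde\rk\Big(\bigcup_{i\in T}B_i\Big)-\sum_{i\in T}|B_i\setminus X|=\submod(T)-\sum_{i\in T}|B_i\setminus X|.\]
Using $\submod(T)\ge n(T)$, this gives the desired bound, since $\widetilde\rk(X)\ge\widetilde\rk(X')$. This yields assertions 1 and 3 for the independence polytope.

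\emph{Basis case.} If $(n_i)\in\pPbas(\submod)$, the independent family produced above has $n(S)=\submod(S)=\dim F_S$ elements, all lying in $F_S$. It is therefore a basis of $F_S$, which gives assertions 2 and 3.

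The main obstacle is the sufficiency step. If one prefers to avoid quoting matroid intersection, the alternative is Rado's theorem on independent transversals, or an inductive augmenting-path exchange argument, which would be the only genuinely nontrivial part to write out.
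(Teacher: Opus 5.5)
Your proof is correct. The necessity direction and the passage from the independence case to the basis case are exactly as in the paper. The difference lies in the sufficiency step.

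\textbf{The paper's route.} The paper applies Rado's theorem to the linear matroid on $\bigsqcup_i B_i$ and to the family in which each $B_i$ is repeated $n_i$ times. It verifies Rado's condition in one line: if $W$ involves exactly the indices in $T$, then $\rk(A_W)=\submod(T)\geq n(T)\geq |W|$.

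\textbf{Your route.} You invoke Edmonds' matroid intersection theorem with the partition matroid of capacities $n_i$.

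\textbf{Why they are the same argument.} Since the $B_i$ are disjoint, the partial transversals of the paper's family are exactly the independent sets of your partition matroid. Rado's theorem is the specialization of the intersection min--max formula to a transversal matroid.

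\textbf{What each formulation buys.}
\begin{itemize}
\item In the paper's version, Rado's condition only ranges over unions of whole blocks $B_i$. It therefore follows immediately from $n\in\pPind(\submod)$.
\item With the general min--max you must handle arbitrary $X\subseteq\widetilde S$. Your estimate $\widetilde\rk(X)\geq\widetilde\rk(X')\geq \submod(T)-\sum_{i\in T}|B_i\setminus X|$ does this correctly.
\item In exchange, your route rests on the more widely quoted theorem and makes the reduction to block unions explicit.
\end{itemize}

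\textbf{One step to state explicitly.} A common independent set $J$ of size $n(S)$ automatically satisfies $|J\cap B_i|=n_i$ for every $i$, because each $|J\cap B_i|\leq n_i$ and these numbers sum to $n(S)$. This is what produces the required family with exactly $n_i$ vectors from each $B_i$.
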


This proposition is well known, and seems to be an elementary result in linear algebra. Nevertheless, we are not aware of a short direct proof. In \cite{KL3}, we referred to the proof given, with a slightly different wording, in \cite[Proposition 1.1]{Lafforgue}. For the sake of completeness, we give here another proof, based on a classical theorem of Hall and Rado about matroids, that we start by recalling.

Let $S$ be a finite set and $\cA=(A_v:v\in V)$ a collection of subsets of $S$.
We say that a subset $J$ of $S$ is \emph{transverse} to $\cA$ if there is a bijection $\psi:V\to J$ such that for all $v\in V$, we have $\psi(v)\in A_v$.

For each $W\subseteq V$, we let $A_W=\cup_{v\in W} A_v$. Let $\mM=(S,\cI)$ be a matroid on $S$, whose rank function is denoted $\rk_\mM$. 

Then Rado's theorem (see \cite[Theorem 11.2.2]{Oxley}) states that there exists an element of $\cI$ which is transverse to $\cA$ if and only if for all $W\subseteq V$, we have
\begin{equation}
{\rk}_M(A_W)\ge \vert W\vert\,.
\end{equation}

\begin{proof}[Proof of Proposition \ref{prop:meuse}] 
Let us consider $n=(n_{i})_{i\in S}\in \N^{S}$. We will first prove that if there exists a linearly independent family $(v_{i,j})$ as in the first assertion, then $n$ belongs to $\pPind(\rho)$. Then, we will fix a basis $B_{i}$ of each space $F_{i}$ and prove that if $n$ belongs to $\pPind(\rho)$, then it is possible to choose a linearly independent family $(v_{i,j})$ with $v_{i,j}\in B_{i}$ for all $i,j$. All the assertions of the proposition follow immediately from these two.
\smallskip

$\sbullet$ Let us start and assume that there exists a linearly independent family $(v_{i,j})$ as in the first assertion. Then for every subset $T$ of $S$,
\[n(T)=\sum_{i\in T} n_{i}=|\{v_{i,j} : i\in T, 1\leq j \leq n_{i}\}|\leq \dim \sum_{i\in T} F_{i}=\submod(T).\]
Therefore, $n$ belongs to $\pPind(\submod)$.
\smallskip

$\sbullet$ Let us now assume that $n$ belongs to $\pPind(\submod)$. Let us fix, for each $i\in S$, a basis $B_{i}$ of $F_{i}$. Let us form the disjoint union $B=\bigsqcup_{i\in S}B_{i}$. The set $B$ naturally indexes a family of vectors and we denote by $M=(B,\cI)$ the corresponding linear matroid on $B$. 

Let us now define the set $V=\{(i,j) : i\in S, 1\leq j \leq n_{i}\}$ and consider the family
\[\cA=(A_{v}: v\in V)=(B_{i} : (i,j)\in V)\]
of subsets of $B$, in which each basis $B_{i}$ appears $n_{i}$ times.

We claim that the family $\cA$ satisfies the assumption of Rado's theorem for the matroid $M$. Indeed, let us consider $W\subseteq V$ and define $T=\{i\in S : \exists j\in \{1,\ldots,n_{i}\}, (i,j)\in W\}$. Then, on the one hand,
\[\rk_{M}(A_{W})=\dim {\rm Vect}\bigcup_{(i,j)\in W} B_{i}=\dim \sum_{(i,j)\in W} F_{i}=\dim \sum_{i\in T} F_{i}=\submod(T).\]
On the other hand, 
\[|W| \leq \sum_{i\in T} n_{i}\]
and the assumption that $n\in \pPind(\submod)$ allows us to conclude that $\rk_{M}(A_{W})\geq |W|$.

Therefore, by Rado's theorem, there exists a subset $J$ of $B$ that belongs to $\cI$ and is transverse to~$\cA$. Since $J$ is transverse to $\cA$, it contains $n_{i}$ elements of $B_{i}$ for each $i\in S$ and can be enumerated as $J=\{v_{i,j} : i\in S, 1\leq j \leq n_{i}\}$, in such a way that $v_{i,j}\in F_{i}$ for all $(i,j)$. Moreover, since $J$ belongs to $\cI$, the family $(v_{i,j})$ is linearly independent. It has therefore all the desired properties.
\end{proof}

\subsection{The polymatroid of a local system} \label{sec:polyloc}

Let $(\F,h)$ be a local system over a finite graph $\rG$. For every subset $A$ of~$\gE$, let us define
\[\Omega^{0}(A)=\bigoplus_{v\in \rV(A)} \F_{v} \ \text{ and }\   \F_{A}=\Omega^{1}(A)=\bigoplus_{e\in A} \F_{e}.\]
The adjoint $d_{h}^{*}:\Omega^{1}(\F)\to \Omega^{0}(\F)$ of the covariant differential and the orthogonal projection $\proj{\im d_{h}}:\Omega^{1}(\F)\to \Omega^{1}(\F)$ on $\im d_{h}$ are distinct linear maps but they share the same kernel. Therefore, for every $A\subseteq \gE$, the spaces $d_{h}^{*}(\F_{A})$ and $\proj{\im d_{h}}(\F_{A})$ have the same dimension. We define  a function $\submod_{h}:2^{\gE} \to \N$ by setting, for all $A\subseteq \gE$,
\[\submod_{h}(A)=\dim d_{h}^{*}(\F_{A})=\dim \proj{\im d_{h}}(\F_{A}).\]
This function $\submod_{h}$ is the rank function of the linear polymatroid associated with the linear subspaces $(d_{h}^{*}(\F_{e}) : e\in \gE)$ of $\Omega^{0}(\F)$. We set $P_{h}=(\gE,\submod_{h})$, and say that this polymatroid is associated with the local system $(\F,h)$.

\begin{proposition} \label{prop:supporttrace} The support of the trace of a $\DLSF(\F,h)$ is the set of integer points of $\pPbas(\submod_{h})$.
\end{proposition}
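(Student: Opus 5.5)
The plan is to combine the general description of the support of a determinantal linear process with Proposition~\ref{prop:meuse}. From Section~\ref{sec:firstprop} (that is, \cite[Prop.~5.9]{KL3}), the support of the DLSF is the union of the sets $\Gr_{\ul{m}}(\Omega^{1}(\F),\gamma_{1})$ that receive positive probability. So the support of the trace is the set of $\ul{m}\in\N^{\gE}$ with $\P(\dim_\gamma \Q=\ul{m})>0$. Since this probability is $\int_{\Gr_{\ul{m}}}\cos^{2}(Q,\im d_{h})\,\nu_{\ul{m}}(\nsd Q)$, and the integrand is continuous and nonnegative while $\nu_{\ul{m}}$ has full support on the connected manifold $\Gr_{\ul{m}}$, we get positivity exactly when some $Q=\bigoplus_e Q_e$ with $\dim Q_e=m_e$ satisfies $\cos^{2}(Q,\im d_{h})\neq 0$.

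Next I translate this nonvanishing condition. With $n=\rank d_h=\dim\im d_h$ and $\dim Q=n$, the condition $\cos^2(Q,\im d_h)\ne0$ means that $\proj{\im d_h}$ restricted to $Q$ is injective, hence an isomorphism $Q\to\im d_h$. Equivalently, $d_h^{*}$ restricted to $Q$ is injective, since $d_h^*$ and $\proj{\im d_h}$ have the same kernel. Choose a basis $(q_{e,j})_{1\le j\le m_e}$ of each $Q_e\subseteq\F_e$. The condition then says that the vectors $d_h^{*}q_{e,j}\in d_h^{*}(\F_e)$ are linearly independent, $n$ in number. Their span has dimension at most $\dim d_h^*(\Omega^1(\F))=\rank d_h^*=n=\submod_h(\gE)$, so they form a basis of $\sum_e d_h^*(\F_e)$.

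For the converse, suppose vectors $w_{e,j}\in d_h^*(\F_e)$ form a basis of $\sum_e d_h^*(\F_e)$. Lift each one to some $q_{e,j}\in\F_e$ with $d_h^* q_{e,j}=w_{e,j}$. The lifts in a fixed $\F_e$ are linearly independent, because their images are, so $Q_e=\Vect(q_{e,j})$ has dimension $m_e$. The space $Q=\bigoplus_e Q_e$ then has dimension $n$, and $d_h^*$ is injective on it. By part~2 of Proposition~\ref{prop:meuse}, applied to the linear polymatroid of the subspaces $(d_h^*(\F_e))_{e\in\gE}$, such bases exist exactly when $\ul{m}\in\pPbas(\submod_h)$. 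This proves that the support of the trace is $\nPbas(\submod_h)$.

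The main point needing care is the first step: positivity of the probability requires the existence of a single good $Q$, not a generic one. This is handled by the continuity of $\cos^2$ and the full support of $\nu_{\ul{m}}$, both of which follow from its $\prod_e\U(\F_e)$-invariance on a homogeneous space. The rest is a direct linear algebra translation.
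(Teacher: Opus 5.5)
Your proposal is correct and follows essentially the same route as the paper. You reduce positivity of $\P(\dim_\gamma \Q=\ul{m})$ to the existence of a single adapted $Q$ with $Q\cap\ker d_h^*=\{0\}$, rephrase this as choosing vectors in the spaces $d_h^*(\F_e)$ that form a basis of $\im d_h^*$, and conclude with part~2 of Proposition~\ref{prop:meuse}. You also justify two steps the paper leaves implicit: the first reduction, via continuity of $\cos^2$ and full support of $\nu_{\ul{m}}$, and the lifting argument in the converse direction.
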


\begin{proof} Let $\Q$ be a $\DLSF(\F,h)$. A point $\ul{n}=(n_{e})_{e\in \gE}\in \N^{\gE}$ is in the support of the distribution of the random integer vector $\dim_{\gamma}\Q$ if and only if $\sum_{e\in \gE} n_{e}=\dim \im d_{h}$ and there exists a point $Q=\bigoplus_{e\in \gE}Q_{e}$ of $\prod_{e\in \gE} \Gr_{n_{e}}(\F_{e})$ such that $\cos^{2}(Q,\im d_{h})>0$, that is, such that $Q\cap (\im d_{h})^{\perp}=\{0\}$, or equivalently such that $Q\cap \ker d_{h}^{*}=\{0\}$. This in turn is equivalent to the fact that $d_{h}^{*}$ sets a linear isomorphism between $Q$ and $\im d_{h}^{*}$.

Thus, $\ul{n}$ is in the support of the distribution of $\dim_{\gamma}\Q$ if and only if it is possible, for each edge $e\in \gE$, to find $n_{e}$ vectors $u_{e,1},\ldots,u_{e,n_{e}}$ in $\F_{e}$ such that the family $(v_{e,i})=(d_{h}^{*}u_{e,i})$ is a basis of $\im d_{h}^{*}$. 

According to Proposition \ref{prop:meuse}, this is equivalent to the fact that $\ul{n}$ belongs to the basis polytope of the polymatroid $\rho_{h}$.
\end{proof}

Let us give an expression of the rank function $\submod_{h}$. Let us consider a subset $A$ of $\gE$. This subset induces a subgraph $\rG^{A}=(\rV(A),A)$ of $\rG$, and we think of $\rG^{A}$ as a graph in its own right. We denote by $d_{h}^{A}:\Omega^{0}(A)\to \Omega^{1}(A)$ the covariant differential on $\rG^{A}$ and by $(d_{h}^{A})^{*}$ its adjoint. 

We will use the fact that the restriction of ${d_{h}^{*}}$ to $\Omega^{1}(A)$ sends $\Omega^{1}(A)$ to $\Omega^{0}(A)$ and, seen as a linear map from $\Omega^{1}(A)$ to $\Omega^{0}(A)$, this restriction coincides with $(d_{h}^{A})^{*}$.

By contrast, note that the restriction of $d_{h}$ to $\Omega^{0}(A)$ does not send $\Omega^{0}(A)$ to $\Omega^{1}(A)$ in general, but this will not be a problem. 

\begin{theorem}\label{theo:computerank} For every subset $A$ of $\gE$, we have
\begin{equation}\label{eq:calcrhoh}
\submod_{h}(A)=\rk (d_{h}^{A})=N |\rV(A)|- \dim \ker (d_{h}^{A}).
\end{equation}
\end{theorem}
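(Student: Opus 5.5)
The plan is to reduce everything to linear algebra on the subgraph $\rG^{A}$, using the observation made just before the statement. By definition, $\submod_{h}(A)=\dim d_{h}^{*}(\F_{A})$, where $\F_{A}=\Omega^{1}(A)$. The restriction of $d_{h}^{*}$ to $\Omega^{1}(A)$ takes values in $\Omega^{0}(A)\subseteq \Omega^{0}(\F)$, and as a map $\Omega^{1}(A)\to\Omega^{0}(A)$ it coincides with $(d_{h}^{A})^{*}$. This is checked directly from \eqref{eq:d-star}. For $\omega$ supported on $A$, the value $(d_{h}^{*}\omega)(v)$ only involves edges of $A$ ending at $v$. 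It therefore vanishes unless $v\in\rV(A)$, and for $v\in\rV(A)$ it equals the formula defining $(d_{h}^{A})^{*}\omega(v)$ on the graph $\rG^{A}$ with the restricted weights and connection. Since $\Omega^{0}(A)$ embeds isometrically into $\Omega^{0}(\F)$, dimensions of images are unaffected, and we get
\[\submod_{h}(A)=\dim \im (d_{h}^{A})^{*}=\rk\big((d_{h}^{A})^{*}\big).\]

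Next, I will use that $\rG^{A}$ is a finite graph with the inner products \eqref{eq:SPOmega0} and \eqref{eq:SPOmega1} restricted to $\Omega^{0}(A)$ and $\Omega^{1}(A)$. By the adjointness statement after Definition \ref{def:dh*}, applied to $\rG^{A}$ in place of $\rG$, the operator $(d_{h}^{A})^{*}$ is the genuine adjoint of $d_{h}^{A}$. Over $\K=\R,\C$ or $\bH$, a linear map and its adjoint have the same rank; for $\bH$ this holds because $\im T^{*}=(\ker T)^{\perp}$ still holds for right $\bH$-modules with a quaternionic Hermitian form. Hence $\submod_{h}(A)=\rk(d_{h}^{A})$.

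Finally, rank–nullity for $d_{h}^{A}:\Omega^{0}(A)\to\Omega^{1}(A)$ gives $\rk(d_{h}^{A})=\dim\Omega^{0}(A)-\dim\ker d_{h}^{A}$, with $\dim \Omega^{0}(A)=N|\rV(A)|$ since each fibre has dimension $N$. This yields \eqref{eq:calcrhoh}.

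The only point requiring care is the identification of the restriction of $d_{h}^{*}$ with $(d_{h}^{A})^{*}$. One must check that the adjoint on the subgraph is computed with respect to the same weights, and that no edge outside $A$ contributes. Both follow immediately from the explicit formula \eqref{eq:d-star}, so I do not expect a genuine obstacle. Note that $\rG^{A}$ need not be connected, but connectedness plays no role in the argument.
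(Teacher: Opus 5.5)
Your proposal is correct and follows the paper's proof essentially step by step. Both identify the restriction of $d_h^*$ to $\Omega^1(A)$ with $(d_h^A)^*$, use that a map and its adjoint have the same rank, and conclude by rank--nullity with $\dim \Omega^0(A)=N|\rV(A)|$; your explicit check via \eqref{eq:d-star} and your remark on the quaternionic case just spell out what the paper leaves implicit.
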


\begin{proof} By definition of the polymatroid $\submod_{h}$, we have
\[\submod_{h}(A)=\dim d_{h}^{*}(\Omega^{1}(A))=\dim \res{d_{h}^{*}}{\Omega^{1}(A)}(\Omega^{1}(A)).\]
Thus, by the observation made just before the statement of the proposition, 
\[\submod_{h}(A)=\dim (d_{h}^{A})^{*}(\Omega^{1}(A))=\rk((d_{h}^{A})^{*}).\]
We now use the fact that adjoint linear maps have the same rank and conclude that 
\[\submod_{h}(A)=\rk (d_{h}^{A}) =\dim \Omega^{0}(A)-\dim \ker (d_{h}^{A})=N|\rV(A)|-\dim \ker (d_{h}^{A}),\]
which establishes the equality. 
\end{proof}

Recall that \eqref{eq:evkerdtree} gives a fairly concrete way of computing the dimension of $\ker (d_{h}^{A})$ when $\rG^{A}$ is connected, as the dimension of the common eigenspace for the eigenvalue $1$ of the holonomies along the loops $\gamma_{T}(e)$, based at a fixed vertex of $\rG$, where $T$ is a spanning tree of $A$ and $e$ spans $A\setminus T$. 

Let us define the numbers
\begin{itemize}
\item $c_{0}(A)=|\rV(A)|$, the number of vertices of this subgraph,
\item $b_{0}(A)$ the number of its connected components,
\item $b_{0}'(A)$ the number of its acyclic connected components.
\end{itemize}

\begin{proposition} \label{prop:rhocircrho}
For every subset $A$ of $\gE$,
\begin{equation}\label{eq:circrhoh}
N (c_{0}(A)-b_{0}(A))\leq \submod_{h}(A)\leq N (c_{0}(A)-b'_{0}(A)).
\end{equation}
\end{proposition}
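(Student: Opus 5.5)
By Theorem~\ref{theo:computerank}, $\submod_{h}(A)=N c_{0}(A)-\dim\ker(d_{h}^{A})$. The claimed inequalities are therefore equivalent to
\[
N b_{0}'(A)\leq \dim\ker(d_{h}^{A})\leq N b_{0}(A).
\]
We reduce to connected components. Write $\rG^{A}$ as the disjoint union of its connected components $C_{1},\ldots,C_{b_{0}(A)}$. Then $\Omega^{0}(A)$ and $\Omega^{1}(A)$ split as direct sums over these components, and $d_{h}^{A}$ is block-diagonal with blocks the covariant differentials $d_{h}^{C_{j}}$ of the restricted local systems. Hence
\[
\dim\ker(d_{h}^{A})=\sum_{j}\dim\ker(d_{h}^{C_{j}}).
\]

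Each $C_{j}$ is a connected graph, so \eqref{eq:kerN} gives $\dim\ker(d_{h}^{C_{j}})\leq N$. Summing over components gives the upper bound on the kernel, and hence the lower bound $N(c_{0}(A)-b_{0}(A))\leq\submod_{h}(A)$.

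For the other inequality, Corollary~\ref{coro:treeN} says every local system over an acyclic graph is flat. So if $C_{j}$ is acyclic, the restricted system is flat, meaning $\dim\ker(d_{h}^{C_{j}})=N$. Bounding the contribution of every other component below by $0$ gives $\dim\ker(d_{h}^{A})\geq N b_{0}'(A)$, which yields $\submod_{h}(A)\leq N(c_{0}(A)-b_{0}'(A))$.

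No step is a real obstacle. The only points to check are that the restriction of $(\F,h)$ to each component is a local system over a connected graph, so that \eqref{eq:kerN} and Corollary~\ref{coro:treeN} apply, and that the block-diagonal decomposition of $d_{h}^{A}$ holds because no edge of $A$ joins two distinct components.
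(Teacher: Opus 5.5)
Your proof is correct and follows essentially the same route as the paper's. The paper also combines Theorem~\ref{theo:computerank} with the splitting of $\ker(d_h^A)$ over the connected components of $\rG^A$. It then uses \eqref{eq:kerN} for the bound $\dim\ker(d_h^A)\le N b_0(A)$, and Corollary~\ref{coro:treeN} on the acyclic components for $\dim\ker(d_h^A)\ge N b_0'(A)$.
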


\begin{proof}  We deduce the inequalities from Theorem \ref{theo:computerank}, using three facts. The first is that the kernel of $d_{h}^{A}$ is the direct sum of the kernels of the restrictions of $d_{h}^{A}$ to the connected components of $\rG^{A}$. The second fact is that on any connected graph, according to \eqref{eq:kerN}, the kernel of the covariant derivative has dimension at most $N$. Therefore, $\dim \ker (d_{h}^{A}) \leq N b_{0}(A)$ and the lower bound follows. 

To prove the upper bound, we use a third fact, namely that on a tree, according to Corollary \ref{coro:treeN}, the kernel of the covariant derivative has exactly dimension $N$. Therefore, $\dim \ker (d_{h}^{A}) \geq N b'_{0}(A)$ and the upper bound follows.
\end{proof}

\subsection{Addition of polymatroids}

Given two polymatroids $P_{1}=(S,\submod_{1})$ and $P_{2}=(S,\submod_{2})$ on the same ground set, one can form the polymatroid $P=(S,\submod_{1}+\submod_{2})$. The inclusion $\pPbas(\mP_{1})+\pPbas(\mP_{2})\subseteq \pPbas(\mP)$ is readily checked, and it is in fact an equality. 

\begin{proposition}\label{prop:Schrijver}
The  basis polytope of the sum of two polymatroids is the Minkowski sum of their basis polytopes.
\end{proposition}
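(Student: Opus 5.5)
The inclusion $\pPbas(\submod_{1})+\pPbas(\submod_{2})\subseteq \pPbas(\submod_{1}+\submod_{2})$ is immediate. For $m_{1}\in\pPbas(\submod_{1})$ and $m_{2}\in\pPbas(\submod_{2})$, every $T\subseteq S$ satisfies $(m_{1}+m_{2})(T)\le \submod_{1}(T)+\submod_{2}(T)$, with equality at $T=S$. All the work is therefore in the reverse inclusion. Given $m\in\pPbas(\submod)$ with $\submod=\submod_{1}+\submod_{2}$, we must produce $m_{1}\in \pPbas(\submod_{1})$ with $m-m_{1}\in\pPbas(\submod_{2})$. I would use the polymatroid intersection theorem (Edmonds), i.e.\ the integrality/min-max theorem for the intersection of two polymatroid polytopes \cite[Cor.~46.1b]{Schrijver}.

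The plan is to look for $m_{1}$ in the intersection of $\pPind(\submod_{1})$ with the polytope
\[\{y\in\R_{+}^{S}: y\le m,\ m(T)-y(T)\le \submod_{2}(T)\ \forall T\}.\]
The second set is the independence polytope of the polymatroid
\[\submod_{2}'(U)=\min_{T\supseteq U}\big(m(U)+\ldots\big),\]
more precisely the ``reduction'' $\tilde\submod(U)=\min_{W\subseteq U}\big(m(U\setminus W)+\ldots\big)$. To avoid this bookkeeping, I would instead use the cleaner route of Edmonds' theorem in the form
\[\max\{y(S): y\in\pPind(\submod_{1})\cap\pPind(\submod_{2}')\}=\min_{U\subseteq S}\big(\submod_{1}(U)+\submod_{2}'(S\setminus U)\big),\]
where $\submod_{2}'(U)=m(S)-\ldots$ is obtained from $\submod_{2}$ and $m$. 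Concretely, I take $\submod_{2}^{\sharp}(U)=\submod_{2}(S)-\submod_{2}(S\setminus U)$ shifted by $m$, and the goal is that the minimum equals $\submod_{1}(S)$. At that point the key inequality is
\[m(U)\le \submod(U)=\submod_{1}(U)+\submod_{2}(U)\quad\text{for all } U,\]
which combined with $m(S)=\submod(S)$ gives $\submod_{1}(U)+\big(m(S\setminus U)-\ldots\big)\ge \submod_{1}(S)$ after using submodularity of $\submod_{1}$.

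A more self-contained alternative, which I would actually prefer here, is a greedy/exchange argument. First reduce to integer $m$, since the paper needs only integer points and integrality of both sides follows from \cite[Cor.~44.3d]{Schrijver}. Then induct on $\submod(S)$: find $i\in S$ with $m(i)>0$ such that decreasing $m(i)$ by one and decreasing $\submod_{1}$ by one on sets containing $i$ keeps everything a polymatroid pair with $m-e_{i}\in\pPbas(\submod_{1}'+\submod_{2})$. For this, $i$ must be chosen with $e_{i}\in \pPind(\submod_{1})$ ``compatibly with tight sets''. Call $T$ tight for $m$ if $m(T)=\submod(T)$; tight sets are closed under union and intersection by submodularity. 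The contraction is $\submod_{1}'(T)=\min(\submod_{1}(T),\submod_{1}(T\cup\{i\})-1)$, the standard polymatroid contraction of $\submod_{1}$ by the vector $e_{i}$. The inductive step must verify $m-e_{i}\le \submod_{1}'+\submod_{2}$ on all sets, which reduces to showing that no set $T\not\ni i$ has $m(T)=\submod_{1}(T\cup\{i\})-1+\submod_{2}(T)$ with $\submod_{1}(T\cup\{i\})=\submod_{1}(T)$ (and similar). Choosing $i$ in a minimal tight set on which $\submod_{1}$ is positive should handle this. I expect this case analysis on tight sets to be the main obstacle.

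Given the delicacy of that choice, my final commitment is to cite the polymatroid intersection/sum theorem directly, as the paper does elsewhere with \cite{Schrijver} \cite[Thm.~44.6]{Schrijver}, and to present only the easy inclusion plus a short verification that the hypotheses of that theorem hold for $\submod_{1}$, $\submod_{2}$.
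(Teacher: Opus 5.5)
Your final commitment, citing the polymatroid sum theorem \cite[Thm.~44.6]{Schrijver}, is the route the paper takes. But as you describe it (easy inclusion, citation, check of hypotheses), the argument stops one step short. That theorem, as the paper uses it, concerns independence polytopes: $\pPind(\submod_1)+\pPind(\submod_2)=\pPind(\submod_1+\submod_2)$. So for $m\in\pPbas(\submod_1+\submod_2)$ it only yields $m=m_1+m_2$ with $m_i\in\pPind(\submod_i)$. You still have to show $m_i\in\pPbas(\submod_i)$, and this is exactly the step the paper adds after the citation. One has $m_1(S)\le\submod_1(S)$ and $m_2(S)\le\submod_2(S)$, while $m_1(S)+m_2(S)=m(S)=\submod_1(S)+\submod_2(S)$, so both inequalities are equalities. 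There are no further hypotheses to verify, since the theorem needs only submodularity, which is part of the definition of a polymatroid.

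The two sketches preceding your commitment are not proofs and should be discarded rather than kept. In the first, $\{y\in\R_+^S : y\le m,\ m(T)-y(T)\le\submod_2(T)\ \forall T\}$ is not an independence polytope. Its constraints bound $y(T)$ from below, and it excludes $0$ whenever $m(T)>\submod_2(T)$ for some $T$; this already happens at $T=S$ as soon as $\submod_1(S)>0$. The set equals $\{m-z : z\in\pPind(\submod_2),\ z\le m\}$, a reflected polymatroid. An intersection argument would need the dual of that truncated polymatroid with respect to $m$, which your placeholders never specify. In the second sketch, the existence of an index $i$ with $m-e_i\in\pPbas(\submod_1'+\submod_2)$ is the whole difficulty, and the tight-set analysis it requires is left undone.
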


\begin{proof} By \cite[Theorem 44.6]{Schrijver}, the equality $\pPind(\mP_{1})+\pPind(\mP_{2})= \pPind(\mP)$ holds. Let us consider a vector $v\in \pPbas(\mP)$ and write it as $v=v_{1}+v_{2}$ with $v_{1}\in \pPind(\mP_{1})$ and $v_{2}\in \pPind(\mP_{2})$. Then from the relations $v_{1}(S)\leq \submod_{1}(S)$, $v_{2}(S)\leq \submod_{2}(S)$ and $v_{1}(S)+v_{2}(S)=v(S)=(\submod_{1}+\submod_{2})(S)$, we deduce that $v_{1}(S)=\submod_{1}(S)$ and $v_{2}(S)=\submod_{2}(S)$, that is, $v_{1}\in \pPbas(\submod_{1})$ and $v_{2}\in \pPbas(\submod_{2})$.
\end{proof}

In particular, if $\submod$ is a polymatroid on $S$, and $k\geq 1$ an integer, then the function $k\submod$ is also a polymatroid on $S$, and the real polytope $\pPbas(k\submod)$ is the dilation by a factor $k$ of the real polytope $\pPbas(\submod)$. However, the same equality does not hold for integer points but, in general, only the inclusion 
\[k \nPbas(k\submod)  \subseteq \nPbas(k\submod).\]
 The correct equality for integer points is 
\[\nPbas(k\submod)=\underbrace{\nPbas(\submod)+\ldots +\nPbas(\submod)}_{k \text{ times}},\]
the set of sums of $k$ elements of $\nPbas(\submod)$. 

\begin{corollary} Let $(\F,h,\sigma)=(\F^{1},h^{1})\oplus \ldots \oplus (\F^{r},h^{r})$ be a split local system. Then the support of the trace of a $\DLSF(F,h)$ is 
\[\pPbas(\submod_{h})=\pPbas(\submod_{h^{1}}+\ldots+\submod_{h^{r}})=\pPbas(\submod_{h^{1}})+\ldots+\pPbas(\submod_{h^{r}}).\] 
\end{corollary}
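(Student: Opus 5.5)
The corollary follows by combining Proposition~\ref{prop:supporttrace} with an additivity property of the rank function $\submod_{h}$ under splittings, and then applying Proposition~\ref{prop:Schrijver}. Two steps remain: show that $\submod_{h}=\submod_{h^{1}}+\ldots+\submod_{h^{r}}$ as functions on $2^{\gE}$, and pass from this identity to the Minkowski sum decomposition.

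For additivity, fix $A\subseteq\gE$ and use Theorem~\ref{theo:computerank}, which gives $\submod_{h}(A)=\rk(d_{h}^{A})$. The splitting of $(\F,h)$ restricts to the subgraph $\rG^{A}$, and it is still an orthogonal decomposition into local subsystems. Exactly as in the proof of Proposition~\ref{prop:independence}, the operator $d_{h}^{A}$ is then block-diagonal with respect to the horizontal splittings $\eta_{0},\eta_{1}$ restricted to $\Omega^{0}(A)$ and $\Omega^{1}(A)$. Its diagonal blocks are the operators $d_{h^{i}}^{A}$. The rank of a block-diagonal operator is the sum of the ranks of its blocks, so $\rk(d_{h}^{A})=\sum_{i}\rk(d_{h^{i}}^{A})=\sum_{i}\submod_{h^{i}}(A)$. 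The only point to check is that the restriction of the splitting to $\rG^{A}$ is still a splitting into local subsystems. This holds because condition \eqref{eq:defsubsystem} is local to half-edges. This step is routine rather than a real obstacle.

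With additivity in hand, the first equality $\pPbas(\submod_{h})=\pPbas(\submod_{h^{1}}+\ldots+\submod_{h^{r}})$ is immediate. The second equality follows from Proposition~\ref{prop:Schrijver} by induction on $r$. Each partial sum $\submod_{h^{1}}+\ldots+\submod_{h^{j}}$ is again a polymatroid rank function, since the properties of vanishing at $\varnothing$, monotonicity and submodularity are all preserved under addition.

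Finally, Proposition~\ref{prop:supporttrace} identifies the support of the trace with the integer points of $\pPbas(\submod_{h})$. The statement is meant at the level of integer points, and there are two natural ways to cover it. The first is to invoke the integer decomposition property $\nPbas(\submod_{1}+\submod_{2})=\nPbas(\submod_{1})+\nPbas(\submod_{2})$ recalled just above, which the paper states for multiples and which holds for sums of polymatroids by the same reference in \cite{Schrijver}. The second, more directly, is probabilistic and uses Theorem~\ref{thm:divtrace}. The trace of $\Q$ has the law of $\sum_{i}\dim_{\gamma}\Q^{i}$ with the $\Q^{i}$ independent. The support of a sum of independent variables with finite supports is the Minkowski sum of the supports, and each support is $\nPbas(\submod_{h^{i}})$ by Proposition~\ref{prop:supporttrace}. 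I would present this second argument, because it yields the integer statement without appealing to further polymatroid theory.
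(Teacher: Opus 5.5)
Your proof is correct and is essentially the paper's. The paper obtains $\submod_h=\sum_i\submod_{h^i}$ from the identity $d_h^*(\F_A)=\bigoplus_i d_{h^i}^*(\F^i_A)$, which is the same block-diagonal observation you make, though without restricting to $\rG^A$ or passing through Theorem~\ref{theo:computerank}. It then invokes Proposition~\ref{prop:Schrijver}, and it records your probabilistic argument via Theorem~\ref{thm:divtrace} and Proposition~\ref{prop:supporttrace} as an alternative proof. Your explicit treatment of integer points is a useful precision, since Proposition~\ref{prop:Schrijver} alone only gives the equality of real polytopes.
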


\begin{proof} For every subset $A$ of $\gE$, we have
\begin{align*}
\submod_{h}(A)=\dim d_{h}^{*}(\F_{A})&=\dim(d_{h^{1}}^{*}(\F^{1}_{A})\oplus \ldots \oplus d_{h^{r}}^{*}(\F^{r}_{A}))\\
&=\dim d_{h^{1}}^{*}(\F^{1}_{A})+\ldots+\dim d_{h^{r}}^{*}(\F^{r}_{A})=\submod_{h^{1}}(A)+\ldots + \submod_{h^{r}}(A)
\end{align*}
and the result follows from Proposition \ref{prop:Schrijver}.
\end{proof}

This corollary also follows from Theorem \ref{thm:divtrace} and Proposition \ref{prop:supporttrace}, because the support of the distribution of a sum of independent random variables (in this case with values in $\N^{\gE}$) is the sum of the supports of each individual random variable. 

\subsection{Circular and bicircular matroids} 

On the set of edges of the finite connected graph $\rG$, there are two matroids that play a special role, of which the rank functions already appeared in Proposition \ref{prop:rhocircrho}.

The first is the circular matroid, of which the independent sets are the acyclic sets of edges, that is, the forests. The bases of this matroid are the spanning trees of $\rG$. We denote its rank function by $\submod_{\o}:2^{\gE}\to \N$. It is given, for every subset $A\subseteq \gE$, by
\begin{equation}\label{eq:rankcirc}
\submod_{\o}(A)=c_{0}(A)-b_{0}(A).
\end{equation}
We denote by $\cT(\rG)$ the set of all spanning trees of $\rG$.

The second matroid on $\gE$ is the bicircular matroid, of which the independent sets are the sets of edges which do not contain a connected subgraph with two independent cycles. If $\rG$ is a tree, this matroid is equal to the circular matroid. Otherwise, if $\rG$ has at least one cycle, the bases of the bicircular matroid are the cycle-rooted spanning forests (CRSF) of $\rG$, namely the subgraphs of which each connected component contains exactly one cycle. We denote the rank function of this matroid by $\submod_{\oo}:2^{\gE}\to \N$. It is given, for every subset $A\subseteq \gE$, by
\begin{equation}\label{eq:rankbicirc}
\submod_{\oo}(A)=c_{0}(A)-b'_{0}(A).
\end{equation}
We denote by $\B(\rG)$ the set of all CRSFs of $\rG$.

Proposition \ref{prop:rhocircrho} can be rewritten as
\begin{equation}\label{eq:itisjustthat}
N \submod_{\o}\leq \submod_{h}\leq N \submod_{\oo},
\end{equation}
and we reinterpret these inequalities using Proposition \ref{prop:Schrijver}, as follows.

\begin{theorem} \label{thm:maininclusion}
Let $(\F,h)$ be a local system of rank $N$ over $\rG$. The following inclusions of convex polytopes hold:
\begin{equation}
\pPind(N\submod_{\o})\subseteq  \pPind(\submod_{h})\subseteq \pPind(N\submod_{\oo}).
\end{equation}
\end{theorem}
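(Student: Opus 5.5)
The inclusions follow directly from the pointwise inequalities \eqref{eq:itisjustthat}, $N\submod_{\o}\leq \submod_{h}\leq N\submod_{\oo}$, i.e.\ from Proposition~\ref{prop:rhocircrho} rewritten via \eqref{eq:rankcirc} and \eqref{eq:rankbicirc}. We then use the general fact, recalled after the definition of the partial order $\preceq$ on polymatroids, that $\submod_1\leq\submod_2$ pointwise on $2^{S}$ implies $\pPind(\submod_1)\subseteq\pPind(\submod_2)$.

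We first check that all three functions are polymatroid rank functions on $\gE$. For $\submod_{h}$, this holds because it is a linear polymatroid: it is the rank function associated with the subspaces $d_{h}^{*}(\F_{e})$ of $\Omega^{0}(\F)$, as in Section~\ref{sec:linpol}. For $N\submod_{\o}$ and $N\submod_{\oo}$, it holds because they are $N$ times matroid rank functions, and the three axioms are preserved by positive integer scaling, or equivalently by the addition of polymatroids.

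The inclusion itself is immediate. If $m\in\pPind(N\submod_{\o})$, then for every $T\subseteq\gE$ we have $m(T)\leq N\submod_{\o}(T)\leq\submod_{h}(T)$, so $m\in\pPind(\submod_{h})$. The same argument with the right-hand inequality gives $\pPind(\submod_{h})\subseteq\pPind(N\submod_{\oo})$.

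The only place requiring care is the identification of the bounds in Proposition~\ref{prop:rhocircrho} with $N\submod_{\o}$ and $N\submod_{\oo}$. Here one has to note that the formulas $c_0-b_0$ and $c_0-b_0'$ for the circular and bicircular rank functions are computed on the subgraph $(\rV(A),A)$ induced by $A$. This is the same convention used in Theorem~\ref{theo:computerank}, so no isolated vertices enter, and the quantities coincide term by term.
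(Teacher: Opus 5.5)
Your proof is correct and is essentially the paper's argument: the paper also rewrites Proposition~\ref{prop:rhocircrho} as the pointwise inequalities $N\submod_{\o}\leq\submod_{h}\leq N\submod_{\oo}$ of \eqref{eq:itisjustthat} and reads them off as inclusions of independence polytopes. Your preliminary check that the three functions are polymatroid rank functions is harmless but not needed, since a pointwise inequality between set functions implies the inclusion of the corresponding polytopes $\pPind$ for arbitrary set functions.
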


Let us emphasize that the same inclusions for the basis polytopes do not hold in general, for the ranks of the polymatroids $N\submod_{\o}$, $\submod_{h}$, and $N\submod_{\oo}$ need not agree.

\subsection{Support of the distribution of the $\DLSF$}

Let us gather the previous results and give as precise as possible a description of the support of the distribution of a $\DLSF$. 

The starting point to define a $\DLSF$ is a split local system, but by Proposition \ref{prop:independence}, we lose nothing by assuming that we are working with a local system $(\F,h)$ with the trivial splitting. 

By \cite[Prop. 5.9]{KL3}, the support of the distribution of the $\DLSF$ on $(\F,h)$ is completely characterised by the support of the distribution of its trace. 

By \cite[Cor. 5.7]{KL3}, the distribution of the trace of the $\DLSF$ is not changed by changing the splitting of $(\F,h)$, and we find it convenient to replace the trivial splitting by a splitting into irreducible local systems. 

By Proposition \ref{prop:independence} again, it is enough to study each irreducible local subsystem separately. Altogether, we have reduced the problem to the case where $(\F,h)$ is an irreducible local system. Recall from Corollary~\ref{cor:alternative} that such a local system is either flat of rank $1$, or fully twisted.

\begin{proposition} \label{prop:polydirred}
Let $(\F,h)$ be an irreducible local system of rank $N$ over $\rG$.
\begin{enumerate}[\indent 1.]
\item If $(\F,h)$ is flat, of rank $1$, then $\submod_{h}=\submod_{\o}$ and $\pPbas(\submod_{h})=\pPbas(\submod_{\o})$.
\item If $(\F,h)$ is fully twisted, then $\submod_{h}\leq N\submod_{\oo}$, and $\submod_{h}(\gE)=N\submod_{\oo}(\gE)$, and $\pPbas(\submod_{h})\subseteq \pPbas(N\submod_{\oo})$.
\end{enumerate}
\end{proposition}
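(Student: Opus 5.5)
The plan is to compute $\submod_{h}$ directly from Theorem~\ref{theo:computerank}, i.e.\ $\submod_{h}(A)=N|\rV(A)|-\dim\ker(d_{h}^{A})$, and compare it with \eqref{eq:rankcirc} and \eqref{eq:rankbicirc}. We assume, as in the surrounding discussion, that $\rG$ is connected.

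\emph{Case 1: flat and of rank $1$.} Here $N=1$. By Corollary~\ref{cor:horizholo}, the holonomy along every loop of $\rG$ is the identity. The restriction of $(\F,h)$ to a subgraph $\rG^{A}$ sends loops of $\rG^{A}$ to loops of $\rG$, so on each connected component of $\rG^{A}$ all holonomies are trivial. By Corollary~\ref{cor:horizholo} again, the restriction to each component is flat, with a kernel of dimension $N=1$. Since $\ker d_{h}^{A}$ is the direct sum of the kernels over the connected components, we get $\dim\ker d_{h}^{A}=b_{0}(A)$. Hence $\submod_{h}(A)=c_{0}(A)-b_{0}(A)=\submod_{\o}(A)$ for every $A$. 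The two polymatroids are therefore equal, and so are their basis polytopes.

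\emph{Case 2: fully twisted.} The inequality $\submod_{h}\le N\submod_{\oo}$ is exactly the upper bound in Proposition~\ref{prop:rhocircrho}, rewritten as \eqref{eq:itisjustthat}. For the total rank, full twistedness means $\ker d_{h}=\{0\}$, so $\submod_{h}(\gE)=N|\rV(\gE)|$. I expect the main subtlety to be the bookkeeping at this step. If $\rG$ is not a tree, then $b'_{0}(\gE)=0$, so $N\submod_{\oo}(\gE)=N c_{0}(\gE)$. It remains to check $c_{0}(\gE)=|\rV(\gE)|=|\rV|$, that is, that $\rG$ has no isolated vertices. This holds because $\rG$ is connected and not a single vertex; a single-vertex graph, like any tree, carries only flat local systems by Corollary~\ref{coro:treeN}. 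If $\rG$ were a tree, Corollary~\ref{coro:treeN} would force $(\F,h)$ to be flat, which contradicts full twistedness when $N\ge1$. So this case cannot occur.

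Finally, take $m\in\pPbas(\submod_{h})$. Then $m(T)\le\submod_{h}(T)\le N\submod_{\oo}(T)$ for every $T\subseteq\gE$. We also have $m(\gE)=\submod_{h}(\gE)=N\submod_{\oo}(\gE)$. Together these give $m\in\pPbas(N\submod_{\oo})$, which is the claimed inclusion.
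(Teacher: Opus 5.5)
Your proof is correct. Part 2 is the paper's argument: the upper bound comes from Proposition~\ref{prop:rhocircrho}, the total rank from Theorem~\ref{theo:computerank} with $\ker d_h=\{0\}$, and the inclusion of basis polytopes from the equality of ranks. Your bookkeeping there is more careful than the paper's, which does not explain why $b_0'(\gE)=0$ and $\rV(\gE)=\rV$.

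Part 1 takes a different route from the paper's main argument, which is probabilistic. There, the rank-$1$ flat $\DLSF$ is the weighted spanning tree of Burton--Pemantle, so its trace has full support on indicators of spanning trees. Proposition~\ref{prop:supporttrace} identifies this support with $\nPbas(\submod_h)$. One then concludes $\submod_h=\submod_{\o}$ because a polymatroid is recovered from its integer bases via $\submod(A)=\max\{m(A) : m\in\nPbas(\submod)\}$, a step the paper leaves tacit. Your component-by-component computation $\dim\ker d_h^A=b_0(A)$ is the direct check via \eqref{eq:calcrhoh} and \eqref{eq:rankcirc} that the paper mentions in a single sentence. It is more elementary and self-contained: it needs neither the spanning-tree result nor the reconstruction of a rank function from its basis polytope.

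There is one wording slip. What rules out isolated vertices is not that $\rG$ is ``not a single vertex'' but that $\rG$ has at least one edge. A single vertex carrying self-loops is not a tree and does support fully twisted systems; the paper uses such examples. The correct statement is that a connected graph with an isolated vertex has no edges, hence is a tree, hence carries only flat systems by Corollary~\ref{coro:treeN}.
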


\begin{proof} 1. We know from Section \ref{sec:rank1} that $\DLSF(\F,h)$ is a probability of full support on the set of spanning trees of $\rG$, which is the set of bases of the circular matroid $\submod_\o$. Besides, by Proposition~\ref{prop:supporttrace}, this support is $\pPbas(\submod_{h})$. Therefore, $\submod_h=\submod_\o$. 

This equality can also be checked directly from \eqref{eq:calcrhoh} and \eqref{eq:rankcirc}.

2. The first inequality is true by Proposition \ref{prop:rhocircrho}. The second equality follows from Theorem~\ref{theo:computerank} and the fact that $\ker d_{h}=\{0\}$. Now, by definition of the independence polytope, we have the inclusion $\pPind(\submod_{h})\subseteq \pPind(N\submod_{\oo})$. Since the polymatroids $\submod_{h}$ and $N\submod_{\oo}$ have the same rank, the same inclusion holds for their basis polytopes. 
\end{proof}

The last inclusion of the second assertion is an equality for almost all connections $h$, in a sense made precise by Proposition \ref{prop:negligible} below. However, it is not always an equality, as one sees already in rank $1$. Indeed, in this case, the integer points of $\pPbas(\submod_h)$ are the indicator functions of CRSFs, all of whose cycles have holonomy different from $1$, and even a fully twisted connection can have holonomy equal to $1$ along some cycles. Proposition \ref{prop:crit=} below gives a criterion of equality.

Let us summarize this discussion.

\begin{theorem}\label{thm:superposition} Let $(\F,h,\sigma)$ be a split local system of rank $N$ over $\rG$. Set $k=N-\dim \ker d_{h}$. Then
\[\pPbas(\submod_h) \subseteq (N-k) \pPbas( \submod_\o) + k \pPbas( \submod_{\oo})=\pPbas((N-k)\submod_\o+k\submod_{\oo}).\]
In particular, almost surely, $\dim_\gamma \Q$, as a vector of $\set{N}^{\gE}$, is the sum of $N-k$ indicators of spanning trees and $k$ indicators of CRSF.
\end{theorem}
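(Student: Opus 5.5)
We reduce to irreducible pieces through the canonical splitting, treat each piece with Proposition~\ref{prop:polydirred}, and reassemble with the additivity of rank functions and Proposition~\ref{prop:Schrijver}.

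\textbf{Splitting.} By Proposition~\ref{prop:canosplit}, write $(\F,h)=(\F',h)\oplus(\F'',h)$ with $(\F',h)$ flat and $(\F'',h)$ fully twisted. Since $\ker d_h=\ker d_{h|\F'}\oplus\ker d_{h|\F''}$ and $\ker d_{h|\F''}=0$, we get $\dim\ker d_h=\dim\ker d_{h|\F'}=\operatorname{rank}\F'$, by flatness of $\F'$. Hence $\F'$ has rank $N-k$ and $\F''$ has rank $k$. By Corollary~\ref{cor:flattrivial}, the flat part is isomorphic to a trivial local system, so it splits orthogonally into $N-k$ flat local systems of rank~$1$.

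\textbf{Rank functions on each piece.} The computation in the proof of the corollary following Proposition~\ref{prop:Schrijver} applies to any orthogonal decomposition $d_h^*(\F_A)=\bigoplus_i d_{h^i}^*(\F^i_A)$. It gives
\[\submod_h=\sum_{i=1}^{N-k}\submod_{h^i}+\submod_{h''}.\]
Each rank-one flat summand has $\submod_{h^i}=\submod_\o$ by Proposition~\ref{prop:polydirred}(1), or directly from \eqref{eq:calcrhoh} and \eqref{eq:rankcirc}. For the fully twisted part of rank $k$, the argument of Proposition~\ref{prop:polydirred}(2) does not use irreducibility. Indeed, Proposition~\ref{prop:rhocircrho} gives $\submod_{h''}\le k\submod_{\oo}$. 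Theorem~\ref{theo:computerank} with $\ker d_{h''}=0$ gives $\submod_{h''}(\gE)=k|\rV|$. Since $\submod_{\oo}(\gE)=|\rV|$ when $\rG$ is connected and not a tree, the two sides agree on $\gE$. (If $\rG$ is a tree, then $k=0$ by Corollary~\ref{coro:treeN}, and this part is empty.)

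\textbf{Assembly.} Write $\rho=(N-k)\submod_\o+k\submod_{\oo}$. The previous step gives $\submod_h\le\rho$ and $\submod_h(\gE)=\rho(\gE)$. Then $\pPind(\submod_h)\subseteq\pPind(\rho)$, and restricting to the common face where the total sum equals the common value $\submod_h(\gE)=\rho(\gE)$ gives the inclusion of basis polytopes. Proposition~\ref{prop:Schrijver}, applied repeatedly, gives
\[\pPbas(\rho)=(N-k)\pPbas(\submod_\o)+k\pPbas(\submod_{\oo}).\]

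\textbf{Almost sure statement.} By Proposition~\ref{prop:supporttrace}, $\dim_\gamma\Q\in\nPbas(\submod_h)\subseteq\nPbas(\rho)$ almost surely. We then need the integer decomposition property
\[\nPbas(\submod_1+\submod_2)=\nPbas(\submod_1)+\nPbas(\submod_2),\]
applied iteratively. This follows from the integral version of the Minkowski sum theorem for polymatroids (polymatroid intersection / \cite[Theorem 44.6]{Schrijver}), with the same face argument as in the proof of Proposition~\ref{prop:Schrijver}. The integer points of $\pPbas(\submod_\o)$ and $\pPbas(\submod_{\oo})$ are exactly the indicators of spanning trees and of CRSFs, so $\dim_\gamma\Q$ is a sum of $N-k$ indicators of spanning trees and $k$ indicators of CRSFs. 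This integral decomposition is the one step that requires an external result rather than routine bookkeeping.
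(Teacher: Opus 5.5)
Your proof is correct and is essentially the paper's argument: Proposition~\ref{prop:polydirred} on each summand, then additivity under direct sums together with Proposition~\ref{prop:Schrijver}. It differs only in two ways: you use the coarser flat/fully twisted splitting rather than a decomposition into irreducibles, correctly observing that part~2 of Proposition~\ref{prop:polydirred} never uses irreducibility, and you make explicit the integer decomposition property behind the ``in particular'' clause, which the paper merely asserts after Proposition~\ref{prop:Schrijver}. One small point: if $\Q$ is a $\DLSF(\F,h,\sigma)$ rather than a $\DLSF(\F,h)$, recall that the law of the trace does not depend on the splitting (Section~\ref{sec:firstprop}) before invoking Proposition~\ref{prop:supporttrace}.
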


\begin{proof} This follows from Proposition \ref{prop:polydirred} and the fact that both sides of the inclusion are additive, in the sense of the Minkowski sum of polytopes, with respect to the direct sum of local systems.
\end{proof}

This theorem raises at least two natural questions. Is it possible, given a $\DLSF$, to find spanning trees and CRSFs, the superposition of which give rise to the same trace? We expect an application of Edmonds' greedy algorithm to give a positive answer to this question.

Conversely, is it possible to construct in a sensible way a $\DLSF$ from spanning trees, CRSFs, and possibly some extra information? 

\subsection{Atoms} \label{sec:atoms}

The distribution of the $\DLSF(\F,h)$ is, in general, a mixture of an atomic and a continuous part. In this paragraph, we give a full description of the atomic part. 

\begin{theorem}\label{thm:atoms}
Let $(\F,h)$ be a local system over the connected graph $(\rG,\ul{x})$ and $\Q$ be a $\DLSF(\F,h)$. The distribution of $\Q$ admits atoms if and only if $(\F,h)$ is either flat, or fully twisted along a simple loop. 

If $(\F,h)$ is flat, then the atoms are the sums $\F_T=\bigoplus_{e\in T} \F_{e}$, where $T$ is a spanning tree of $\rG$, and
\begin{equation}
    \P\big(\Q= \F_T\big) = \bigg(\frac{\ul{x}^T}{\T(\ul{x})}\bigg)^{N} \, ,
\end{equation}
where $\T(\ul{x})=\sum_{T\in \cT(\rG)} \ul{x}^T$ is the generating polynomial of spanning trees. 

If $(\F,h)$ is fully twisted, then the atoms are of the form
$\F_B=\bigoplus_{e\in B} \F_{e}$ for $B$ a CRSF of $\rG$, and for all such $B$, 
\begin{equation}\label{eq:law-atoms}
    \P\big(\Q= \F_B \big)= \frac{1}{\det \Delta_h} \prod_{\gamma \ \text{\rm cycle of } B} \! \big|\det(\id-\hol_h(\gamma))\big|^2 \  (\ul{x}^{B})^{N} \,.
\end{equation}
\end{theorem}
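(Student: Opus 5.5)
The plan is to use the structure of the measure $\nu_{n}^{E,\sigma}$ from Section~\ref{sec:dls}. For $E=\Omega^{1}(\F)$ with the graphical splitting, a component $\Gr_{\ul{n}}(E,\gamma_{1})$ is a single point exactly when $n_{e}\in\{0,N\}$ for every $e$. Otherwise it is a positive-dimensional product of Grassmannians carrying a smooth invariant measure. On such a component the density $\cos^{2}(\cdot,\im d_{h})$ is continuous, so that component contributes no atoms. Hence the only possible atoms are the spaces $\F_{A}=\bigoplus_{e\in A}\F_{e}$ with $A\subseteq\gE$. Since $\nu_{\ul{n}}^{E,\gamma_{1}}$ then has mass $\prod_{e}\binom{N}{n_{e}}=1$, the atom carries mass $\cos^{2}(\F_{A},\im d_{h})$. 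This mass is positive iff $N|A|=\rk d_{h}$ and $\F_{A}\cap\ker d_{h}^{*}=\{0\}$, that is, iff $\submod_{h}(A)=N|A|=\submod_{h}(\gE)$.

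\textbf{Step 1: which $A$ give atoms.} By Theorem~\ref{theo:computerank}, $\submod_{h}(\gE)=N|\rV|-\dim\ker d_{h}$. Since $0\leq\dim\ker d_{h}\leq N$ by \eqref{eq:kerN}, the condition $N|A|=\submod_{h}(\gE)$ forces $\dim\ker d_{h}\in\{0,N\}$, so the local system is flat or fully twisted.
\begin{itemize}
\item Flat case: $\submod_{h}=N\submod_{\o}$ (Proposition~\ref{prop:polydirred} together with Corollary~\ref{cor:flattrivial}). The condition then says that $A$ is a basis of the circular matroid, i.e.\ a spanning tree.
\item Fully twisted case: $|A|=|\rV|$, and for every $B\subseteq A$ we have $N|B|\leq\submod_{h}(B)\leq N\submod_{\oo}(B)$ by \eqref{eq:itisjustthat}. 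Thus $A$ is independent in the bicircular matroid and of full rank, i.e.\ a CRSF. Moreover $\submod_{h}(A)=N|\rV|$ means $\ker d_{h}^{A}=0$. This kernel splits over the unicycle components of $A$, and by Proposition~\ref{prop:evkerd} and \eqref{eq:evkerdtree} the kernel on a component is $\ker(\id-\hol_{h}(\gamma))$ for its unique cycle $\gamma$. So $A$ is an atom iff every cycle of the CRSF has $\det(\id-\hol_{h}(\gamma))\neq0$.
\end{itemize}
Conversely, suppose some simple loop $\gamma$ has $\id-\hol_{h}(\gamma)$ invertible. Then $\ker d_{h}=0$ by Proposition~\ref{prop:evkerd}. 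Completing $\gamma$ to a spanning unicycle (a spanning tree containing all but one edge of $\gamma$, plus that edge) produces an atom. This proves the characterization of when atoms exist.

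\textbf{Step 2: masses.} I would use the identity $\cos^{2}(Q,\im d)=\det(d^{*}\proj{Q}d)/\det(d^{*}d)$ for $d$ injective with $\dim Q=\rk d$. This follows from \eqref{eq:defcos2} by writing an orthonormal basis of $\im d$ as $d(d^{*}d)^{-1/2}$ applied to an orthonormal basis of $\Omega^{0}(\F)$.
\begin{itemize}
\item Fully twisted case: $d_{h}^{*}\proj{\F_{B}}d_{h}=(d_{h}^{B})^{*}d_{h}^{B}$, which is block diagonal over the components of $B$. It remains to show that for a unicycle $U$ with cycle $\gamma$,
\[\det\big((d_{h}^{U})^{*}d_{h}^{U}\big)=(\ul{x}^{U})^{N}\,\big|\det(\id-\hol_{h}(\gamma))\big|^{2}.\]
For this I would use $\det(d^{*}d)=|\det d|^{2}\prod_{e}x_{e}^{N}$, valid because $d_{h}^{U}$ is square ($|U|=|\rV(U)|$) and the $x_{e}$ enter only through the inner product on $\Omega^{1}$. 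Then gauge-fix so that $h$ is the identity on the tree edges and on one half-edge of the remaining cycle edge. After eliminating the tree by triangular row operations, $d_{h}^{U}$ reduces to the block $\id-\hol_{h}(\gamma)$ (up to a unitary factor), so $|\det d_{h}^{U}|=|\det(\id-\hol_{h}(\gamma))|$.
\item Flat case: trivialize by Corollary~\ref{cor:flattrivial} and split $(\F,h)$ orthogonally into $N$ trivial rank-$1$ systems. The cosine factorization in the proof of Proposition~\ref{prop:independence} gives $\cos^{2}(\F_{T},\im d_{h})=\prod_{i=1}^{N}\cos^{2}(\F^{i}_{T},\im d_{h^{i}})$. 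Each factor equals $\ul{x}^{T}/\T(\ul{x})$ by Burton--Pemantle \eqref{eq:law-st}, which gives $(\ul{x}^{T}/\T(\ul{x}))^{N}$.
\end{itemize}

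The main obstacle is the unicycle determinant computation: keeping track of the unitary gauge factors and of the weights so that exactly $(\ul{x}^{U})^{N}$ appears. A secondary point to handle carefully is the continuity of the density on the non-atomic components, which rules out atoms there.
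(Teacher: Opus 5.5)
Your proposal is correct, and its skeleton matches the paper's proof. Atoms can only sit on the isolated points $\F_A$ of the adapted Grassmannian. Their mass is $\cos^2(\F_A,\im d_h)=\det(d_h^*\proj{\F_A}d_h)/\det\Delta_h$, which factorizes over the components of $A$. The flat case reduces to $N$ copies of Burton--Pemantle.

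The sub-arguments differ in three places.

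\emph{Identifying the atoms.} To decide which $A$ can carry mass, the paper invokes Theorem~\ref{thm:superposition} (the trace is a sum of $N-k$ tree indicators and $k$ CRSF indicators) together with a counting argument. It only learns which CRSFs have positive mass a posteriori, from the explicit formula \eqref{eq:law-atoms}. You instead read everything off the rank function: the condition $\submod_h(A)=N|A|=\submod_h(\gE)=N|\rV|-\dim\ker d_h$, the identity $\submod_h=N\submod_{\o}$ in the flat case, and the bound $\submod_h\le N\submod_{\oo}$ in the fully twisted case. This bypasses the Minkowski-sum machinery. It also yields the positivity criterion ($\det(\id-\hol_h(\gamma))\neq 0$ for every cycle of $B$) intrinsically, as $\ker d_h^B=\{0\}$, before any mass is computed. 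Your divisibility argument also avoids a harmless slip in the paper's count, which should read $N|A|=N|\rV|-(N-k)$.

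\emph{The unicycle determinant.} For a unicycle $U$, the paper splits the local system into rank-one summands (Proposition~\ref{prop:unicycrang1}). It then quotes Forman's rank-one formula $\ul{x}^U|1-z|^2$ for each eigenvalue $z$ of the cycle holonomy. You compute $\det\big((d_h^U)^*d_h^U\big)=|\det d_h^U|^2(\ul{x}^U)^N$ for the square operator $d_h^U$. After gauge fixing and eliminating the tree, you reduce $d_h^U$ to the block $\id-\hol_h(\gamma)$. This is self-contained, in effect reproves Forman's formula in arbitrary rank, and makes the origin of the factor $(\ul{x}^U)^N$ transparent. The cost is some bookkeeping that the citation avoids.

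\emph{The flat case.} The paper argues at the level of distributions of traces, via \cite[Cor.~5.7]{KL3} and Theorem~\ref{thm:divtrace}. You factor $\cos^2(\F_T,\im d_h)$ directly, as in the proof of Proposition~\ref{prop:independence}. This is more elementary and gives the same result.
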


Note that there are examples of local systems which are not fully twisted along a simple loop, but which are fully twisted along a loop: take a graph with one vertex and two self-loops, with respective holonomies which have $1$ as eigenvalue, but not their product.  In this case, the local system is fully twisted along the product of the two self-loops. In such a situation, it follows from the theorem that the $\DLSF$ has no atom.

\begin{proof} According to its definition recalled in Section \ref{sec:dls}, the distribution of a $\DLP$ on a split inner product space $(E,\sigma)$ is supported by a disjoint union of Cartesian products of Grassmannians, and it has a continuous density with respect to the invariant measure that we denoted by $\nu_n^{E,\sigma}$. Therefore, it can only have atoms on connected components that are points. 

In our situation, the isolated points of $\Gr(\Omega^1(\F),\sigma)$ are the subspaces
\[\F_A=\bigoplus_{e\in A} \F_e, \ \ A \subseteq \gE,\]
and the atoms of the $\DLSF(\F,h)$ must be of this form. 

By Theorem \ref{thm:superposition}, any $A$ such that $\F_A$ is an atom of $\DLSF(\F,h)$ must satisfy
\[N \1_A=\1_{T_1}+\ldots+\1_{T_{N-k}}+\1_{B_1}+\ldots+\1_{B_k}\]
for some spanning trees $T_1,\ldots,T_{N-k}$ and CRSFs $B_1,\ldots,B_k$, and with $k=N-\dim \ker d_h$.

The equality implies in particular that $N|A|=(N-k)(|\rV|-1)+k|\rV|=N|\rV|-k$, so that either $k=0$, and $(\F,h)$ is flat, or $k=N$, and $(\F,h)$ is fully twisted. 

If $k=0$, then the function $N\1_A=\1_{T_1}+\ldots+\1_{T_{N}}$ takes only the values $0$ and $N$, so that $T_2,\ldots,T_N$ must contain $T_1$, hence $T_1=\ldots=T_N$, and $A$ is this spanning tree. 

When $k=N$, the same argument shows that $A$ is a CRSF.

Let us assume that $(\F,h)$ is flat, choose a spanning tree $T$ and compute $\P(\Q=\F_T)$. By Corollary \ref{cor:flattrivial}, since $(\F,h)$ is flat, it is isomorphic to the trivial bundle $\rG\times \C^N$ with the connection identically equal to the identity. It is therefore isomorphic to the direct sum of $N$ copies of the trivial bundle of rank $1$ with trivial holonomies. We denote by $\sigma$ the corresponding splitting of $(\rG\times \C^N,I_N)=(\rG\times \C,1)\oplus \ldots \oplus (\rG\times \C,1)$. We can now do the computation:
\begin{align*}
\P_{(\F,h)}(\Q=\F_T)&=\P_{(\rG\times \C^N,I_N)}(\Q=\F_T)& \text{by the isomorphism of local systems}\\
&=\P_{(\rG\times \C^N,I_N)}(\dim_\gamma \Q=N\1_T) & \text{$\F_T$ is characterized by $\dim_\gamma \F_T=N\1_T$}\\
&=\P_{(\rG\times \C^N,I_N,\sigma)}(\dim_\gamma \Q=N\1_T)
& \text{by \cite[Cor. 5.7]{KL3}}
\\
&=\P_{(\rG\times \C,1)}(\dim_\gamma \Q=\1_T)^N & \text{by Theorem \ref{thm:divtrace}}\\
&=\bigg(\frac{\ul{x}^T}{\T(\ul{x})}\bigg)^{N} & \text{by \eqref{eq:law-st}.}
\end{align*}

Let us now turn to the fully twisted case. Let $B$ be a CRSF of $\rG$. By definition of the $\DLSF$ and by \cite[Prop. 4.6 (3)]{KL3}, we have
\[\P(\Q=\F_B)=\cos^2(\F_B,\im d_h)=\det({\sf 1}^{\F_B} \proj{\im d_h} {\sf 1}_{\F_B}),\]
where we borrow from \cite[Section 2.4]{KL3} the notation ${\sf 1}_{\F_B}$ for the inclusion $\F_B\to \Omega^1(\F)$ and ${\sf 1}^{\F_B}:\Omega^1(\F)\to \F_B$ for the orthogonal projection. 

Expressing the projection in terms of the inverse of the Laplacian, we find
\[\P(\Q=\F_B)=\det({\sf 1}^{\F_B} d_h \Delta_h^{-1} d_h^* {\sf 1}_{\F_B})=\det(d_h^* {\sf 1}_{\F_B} {\sf 1}^{\F_B} d_h )/\det(\Delta_h)=\det(d_h^* \proj{\F_B} d_h )/\det(\Delta_h).\]
The numerator is the determinant of the Laplacian $\Delta^B_h$ on the local system induced by $(\F,h)$ on the graph $\rG^B=(\rV,B)$.

This Laplacian factorizes over the connected components of $\rG^B$. Let us consider one connected component $C$ of $B$, with simple cycle $\gamma_C$, oriented in an arbitrary way. By Proposition \ref{prop:unicycrang1}, the local system over $C$ splits into a direct sum of irreducible local systems of rank $1$. On each of them, the holonomy $z$ along $\gamma_C$ is one of the eigenvalues of $\hol_h(\gamma_C)$, and, according to \cite{Forman}, the determinant of the corresponding Laplacian is $\ul{x}^{C} |1-z|^2$. 

Collecting all the rank $1$ contributions, we find that 
\[\P(\Q=\F_B)=\frac{1}{\det \Delta_h} \prod_{C } \prod_{z} \big(\ul{x}^C |1-z|^2\big),\]
from which \eqref{eq:law-atoms} follows immediately.

It follows from this explicit expression that if the distribution of $\Q$ has an atom, then there exists a loop $\gamma$ along which $\id -\hol_h(\gamma)$ is invertible. Thus, $(\F,h)$ is fully twisted along $\gamma$.

Conversely, if $(\F,h)$ is twisted along the simple loop $\gamma$, then there exists a connected CRSF~$B$, that is, a spanning unicycle, with unique cycle $\gamma$, and $\P(\Q=\F_B)>0$. 
\end{proof}

Recall that $\B(\rG)$ denotes the set of all CRSFs of $\rG$. From Theorem \ref{thm:atoms} it follows that the total mass of the atoms is
\begin{itemize}
\item $\displaystyle \T(\ul{x})^{-N} \sum_{T\in \cT(\rG)} (\ul{x}^T)^N$ if $(\F,h)$ is flat,
\item $\displaystyle \det \Delta_h^{-1}
\sum_{B\in \B(\rG)} (\ul{x}^B)^N \prod_{\gamma \ \text{\rm cycle of } B} \vert \det(\id-\hol_h(\gamma))\vert^2$ if $(\F,h)$ is fully twisted along a simple loop,
\end{itemize}
and $0$ if $(\F,h)$ is neither flat nor fully twisted along a simple loop.

An interesting consequence of Theorem \ref{thm:atoms} is the following. If $(\F,h)$ is fully twisted along a simple loop, there is a probability distribution on $\B(\rG)$ which assigns each CRSF $B$ a probability proportional to 
\begin{equation}\label{eq:law-crsf-n}
    \P(\rB=B) \propto (\ul{x}^B)^N \, \prod_{\gamma \ \text{\rm cycle of } B} \vert \det(\id-\hol_h(\gamma))\vert^2\,.
\end{equation}
It is the distribution of the $\DLSF$ conditioned to take its values in its set of atoms.  
This may allow to deduce geometric information about uniform CRSF using higher rank bundles, in the spirit of \cite{Kenyon, Kassel-Kenyon} which were restricted to probability distributions on CRSF twisted by rank $1$ local systems.

\subsection{Stratification of the set of connections} \label{sec:stratif}

Over the graph $\rG$, let us consider a vector bundle $\F$, without choosing a connection. The set of polymatroids on the ground set $\gE$ is partially ordered as a subset of $\N^{\gE}$ and we proved that for every connection $h\in \A(\F)$, the inequality $N\submod_{\o} \leq \submod_{h} \leq N \submod_{\oo}$ holds. This inequality means that $\submod_{h}$ belongs to the interval $[N\submod_{\o},N\submod_{\oo}]$ of the poset of polymatroids on the ground set $\gE$. There is therefore a partition of the set of all unitary connections on $\F$ according to their associated polymatroid:
\[\A(\F)=\bigsqcup_{N\submod_{\o} \leq \submod \leq N\submod_{\oo}} \A_{\submod}(\F),\]
with $\A_{\submod}(\F)=\{h\in \A(\F) : \submod_{h}=\submod\}$ for every polymatroid $\submod$ on $\gE$.

A less precise stratification of $\A(\F)$ can be obtained by defining, for each integer $k\in \{0,\ldots,N\}$, \[\A_{k}(\F)=\{h\in \A(\F) : \dim \ker d_{h}=N-k\}.\]
With our terminology, $(\F,h)$ is flat if and only if $h$ belongs to $\A_0(\F)$, and $(\F,h)$ is fully twisted if and only if $h$ belongs to $\A_N(\F)$. 

For each $k\in \{0,\ldots,N\}$, 
\[\A_{k}(\F)=\bigsqcup_{\submod : \submod(\gE)=N (|\rV|-1)+k} \A_{\submod}(\F).\]

\begin{proposition} We have the equality $\A_{0}(\F)=\A_{N\submod_{\o}}(\F)$ and the inclusion $\A_{N}(\F)\supset \A_{N\submod_{\oo}}(\F)$. Moreover, this inclusion is strict on any graph that has at least two independent cycles.  
\end{proposition}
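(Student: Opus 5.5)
The plan has three parts: the equality for $\A_0(\F)$, the inclusion for $\A_N(\F)$, and an explicit example showing strictness.

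For the equality $\A_0(\F)=\A_{N\submod_\o}(\F)$, I will use Theorem~\ref{theo:computerank} in both directions. If $h\in\A_0(\F)$, then $(\F,h)$ is flat, and by Corollary~\ref{cor:flattrivial} it is isomorphic to the trivial local system $(\rG\times\C^N,I_N)$. Its restriction to any subgraph $(\rV(A),A)$ is then also flat, so $\dim\ker d_h^A=N b_0(A)$. Theorem~\ref{theo:computerank} gives $\submod_h(A)=N(c_0(A)-b_0(A))=N\submod_\o(A)$ by \eqref{eq:rankcirc}. Conversely, if $\submod_h=N\submod_\o$, then evaluating at $A=\gE$ gives $N|\rV|-\dim\ker d_h=N(|\rV|-1)$. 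Hence $\dim\ker d_h=N$, and $h\in\A_0(\F)$.

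For the inclusion $\A_N(\F)\supset\A_{N\submod_{\oo}}(\F)$, the same evaluation at $A=\gE$ suffices. If $\rG$ has a cycle, then $b_0'(\gE)=0$, so $N\submod_{\oo}(\gE)=N|\rV|$. Therefore $\submod_h=N\submod_{\oo}$ forces $\dim\ker d_h=0$, that is, $h\in\A_N(\F)$. If $\rG$ is a tree, both sides are described by Corollary~\ref{coro:treeN}: every connection is flat, so $\A_N(\F)$ is empty when $N\ge1$. Also $\submod_{\oo}=\submod_\o$, so $\A_{N\submod_{\oo}}(\F)=\A_0(\F)$. This case needs care: as literally stated, the inclusion $\A_N\supset\A_{N\submod_{\oo}}$ would fail on a tree. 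So I will read the statement as assuming that $\rG$ has at least one cycle, since the bicircular matroid was introduced with that caveat, and remark on the degenerate case.

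For strictness, assume $\rG$ has two independent cycles. I will construct a fully twisted $h$ with $\submod_h\neq N\submod_{\oo}$. The idea is to put trivial holonomy on one simple cycle $\gamma_1$ and a holonomy $U$ with $U-\id$ invertible (for instance $U=-\id$) on a second, independent cycle $\gamma_2$. Concretely, choose a spanning tree $T$ containing all but one edge $e_1$ of $\gamma_1$ and all but one edge $e_2$ of $\gamma_2$; this is possible by independence of the cycles. Set the connection to be the identity on all edges except $e_2$, where the holonomy is $-\id$. Then $\hol_h(\gamma_1)=\id$ while $\hol_h(\gamma_{T,u}(e_2))=-\id$. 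By \eqref{eq:evkerdtree}, $\ker d_h=0$, so $h\in\A_N(\F)$. Now take $A$ to be the edge set of $\gamma_1$. Its induced subgraph is a single cycle with trivial holonomy, so $\dim\ker d_h^A=N$ by Proposition~\ref{prop:evkerd}. Hence $\submod_h(A)=N(|\rV(A)|-1)$, while $N\submod_{\oo}(A)=N|\rV(A)|$. Thus $h\notin\A_{N\submod_{\oo}}(\F)$.

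The main obstacle is arranging the holonomies independently along the two cycles. I handle it through the choice of the spanning tree $T$ and the edges $e_1,e_2$ outside it, which is elementary. The only genuine subtlety is the tree case discussed above.
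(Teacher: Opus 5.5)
Your argument is essentially the paper's. The paper derives the first assertion from Proposition~\ref{prop:polydirred}, which ultimately rests on \eqref{eq:calcrhoh}; your direct evaluation of \eqref{eq:calcrhoh} at $A=\gE$ is the cleaner justification, especially for the inclusion. The paper proves strictness as you do, with a connection that is fully twisted along one loop and has trivial holonomy along another. Your remark about trees is correct and is not addressed in the paper. On a tree every connection is flat and $\submod_{\oo}=\submod_{\o}$, so $\A_{N\submod_{\oo}}(\F)=\A(\F)\neq\varnothing$ while $\A_N(\F)=\varnothing$. The inclusion therefore requires $\rG$ to contain a cycle.

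One step is wrongly justified. For two arbitrary independent simple cycles there need not exist a spanning tree containing all but one edge of each. Take $\gamma_1=(a,b,c,d,a)$ and $\gamma_2=(a,x,c,y,a)$, which share only the vertices $a$ and $c$. Then $(\gamma_1\setminus e_1)\cup(\gamma_2\setminus e_2)$ has $6$ edges on $6$ vertices, so it always contains a cycle and lies in no tree.

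Since you only need to exhibit one connection, this is easy to repair in either of two ways:
\begin{itemize}
\item Choose $T$ first, and let $\gamma_1,\gamma_2$ be the fundamental cycles of two distinct edges of $\gE\setminus T$; these exist because the first Betti number is at least $2$.
\item Drop $T$ altogether. Put holonomy $-\id$ on a single edge of $\gamma_2$ that is not in $\gamma_1$, and the identity elsewhere with respect to chosen orthonormal frames. Such an edge exists because distinct simple cycles are incomparable as edge sets. Then $\hol_h(\gamma_1)=\id$ and $\hol_h(\gamma_2)=-\id$, and Proposition~\ref{prop:evkerd} gives $\ker d_h=0$.
\end{itemize}
The rest of your computation on $A=\gamma_1$ then goes through unchanged.
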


\begin{proof} The first assertion comes from Proposition \ref{prop:polydirred}. 
For the second, consider two independent loops in the graph. Any connection that makes the local system twisted along the first loop and has holonomy equal to the identity belongs to $\A_{N}(\F)$ but not to $\A_{N\submod_{\oo}}(\F)$.
\end{proof}

Choosing an orthonormal basis of each fibre of $\F$ puts the set $\A(\F)$ of all connections on $\F$ in one-to-one correspondence with a finite Cartesian product of unitary groups. Changing the bases alters the correspondence, but preserves both the smooth structure and the Haar probability measure. This allows us to speak of negligible subsets of $\A(\F)$.

\begin{proposition}\label{prop:negligible}
The set $\A(\F)\setminus \A_{N\submod_{\oo}}(\F)$ is a closed negligible subset of $\A(\F)$.
\end{proposition}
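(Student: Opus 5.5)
We need to show that the set of connections $h$ with $\submod_h\neq N\submod_{\oo}$ is closed and negligible. Since $\submod_h\le N\submod_{\oo}$ always holds by \eqref{eq:itisjustthat}, the complement of $\A_{N\submod_{\oo}}(\F)$ is the finite union, over subsets $A\subseteq\gE$, of the sets
\[Z_A=\{h\in\A(\F) : \submod_h(A)<N\submod_{\oo}(A)\}.\]
So it suffices to prove that each $Z_A$ is closed and negligible. We work in the identification of $\A(\F)$ with a product $\prod \U(N)$, as the paragraph before the statement allows.

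For closedness, we use Theorem~\ref{theo:computerank}: $\submod_h(A)=\rk(d_h^A)$. The matrix of $d_h^A$ in fixed orthonormal bases depends polynomially (in fact linearly) on the entries of the matrices $h_{e,u}$. The condition $\rk(d_h^A)<r$, with $r=N\submod_{\oo}(A)$, is the simultaneous vanishing of all $r\times r$ minors. Hence $Z_A$ is the zero set in $\prod\U(N)$ of finitely many real-polynomial functions in the real and imaginary parts of the entries. It is therefore closed, and it is a real algebraic subset of the compact real algebraic manifold $\prod\U(N)$.

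For negligibility, $\prod\U(N)$ is a connected real analytic manifold (and for $\Or(N)$ we should work on each connected component, or note the statement concerns the group fixed in the setting). The zero set of a real analytic function that is not identically zero on a connected analytic manifold has Haar measure zero. So it suffices to exhibit, for each $A$, a single connection $h_A$ with $\rk(d_{h_A}^A)=N\submod_{\oo}(A)$; then some $r\times r$ minor is a non-identically-zero analytic function, and $Z_A$ is contained in its null set.

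The main step is constructing $h_A$, and for this we reduce to rank $1$. Consider the subgraph $(\rV(A),A)$ and its connected components. On each acyclic component the kernel has dimension $N$ regardless of $h$, by Corollary~\ref{coro:treeN}. On each component with a cycle, we need $\ker d_h^A$ restricted to that component to be $0$, since by \eqref{eq:calcrhoh} and \eqref{eq:rankbicirc} the target is exactly $\dim\ker d^A_h=N b_0'(A)$. We take $h$ to be a direct sum of $N$ rank-$1$ connections (diagonal matrices in the chosen bases). The kernel then splits as a sum of rank-$1$ kernels. For each cyclic component, we choose a cycle in it, and choose the phases on the edges so that the holonomy along that cycle is a diagonal matrix with no eigenvalue equal to $1$, for instance $-\id$. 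Then Proposition~\ref{prop:evkerd} shows there is no horizontal section on that component. Since the components of $A$ are edge-disjoint, one global $h$ (arbitrary on edges outside $A$) achieves this simultaneously for all components. Alternatively, taking $h=-\id$ holonomy on one cycle of every cyclic component of $\gE$ itself already works for all $A$ at once.

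The expected obstacle is only the analytic-measure fact, together with handling the disconnected groups $\Or(N)$ component by component. For $\Or(N)$ we can still choose the diagonal entries $\pm1$ so as to make the holonomy $-\id$, or adapt the sign to the determinant constraint when working inside $\SO(N)$.
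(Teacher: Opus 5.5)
Your closedness argument matches the paper's: lower semicontinuity of the rank, written here as the vanishing of the $r\times r$ minors of $d_h^A$. Your proof of negligibility, however, takes a genuinely different route.

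The paper argues probabilistically. For a Haar-random connection, almost surely no cycle has a holonomy with eigenvalue $1$. Hence no connected non-tree subgraph carries a horizontal section, and Theorem~\ref{theo:computerank} gives $\submod_h=N\submod_{\oo}$.

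You replace this with the fact that a real-analytic function on a connected manifold which is not identically zero has a null zero set, so one explicit witness per subset $A$ suffices. Your version needs no distributional input about holonomies (that cycle holonomies are Haar-distributed, and that having eigenvalue $1$ is a Haar-null event). It also shows that the complement is a proper real algebraic subset. The paper's version instead produces a single generic condition that works for all $A$ at once. For $\K=\C$, and likewise for $\K=\bH$ since $\Sp(N)$ is connected and contains $-\id$, your argument is complete.

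Two side remarks are wrong, though.

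First, $\Or(N)$ cannot be handled component by component. The determinant of the holonomy along a fixed cycle is constant on each component of $\prod\Or(N)$. Every element of $\Or(N)$ with determinant $(-1)^{N+1}$ has $1$ as an eigenvalue. So, for $A$ the edge set of that cycle, the minors vanish identically on components of total Haar measure $1/2$; your $-\id$ witness lives only in the other components. In fact the statement is false for $\K=\R$. Already for $N=1$, the holonomy along a cycle is $+1$ with probability $1/2$, and then $\submod_h(A)=\submod_{\oo}(A)-1$. The paper's proof breaks there as well. Your analytic argument shows precisely that connectedness of the structure group is what is used, so you should restrict the claim rather than assert the real case.

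Second, the ``alternatively'' sentence is false. Making the holonomy $-\id$ along one cycle of $\rG$ says nothing about a subset $A$ whose cyclic components avoid that cycle. A single witness for all $A$ would need every simple cycle to have a holonomy without eigenvalue $1$. Your per-$A$ witnesses avoid this requirement and suffice, since there are finitely many $A$.
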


\begin{proof} Let us consider $h\in \A_{N\submod_{\oo}}(\F)$. By definition, this means that for all subset $A$ of $\gE$, we have $\dim d_{h}^{*}(\F_{A})=N\submod_{\oo}(A)$. Therefore, by lower semi-continuity of the rank, any connection $h'$ close enough to~$h$ satisfies $\dim d_{h'}^{*}(\F_{A})\geq N\submod_{\oo}(A)$, and $\submod_{h'}\geq N\submod_{\oo}$. Since the reverse inequality holds for all connections, we must have equality. Finally, any connection close enough to $h$ belongs to $\A_{N\submod_{\oo}}(\F)$, which is therefore an open set.

When a connection is picked uniformly at random in $\A(\F)$, almost surely, along no loop in the graph does the holonomy have $1$ as an eigenvalue. In particular, on any connected subgraph of~$\rG$ that is not a tree, there is no non-zero horizontal section. Therefore, by Theorem \ref{theo:computerank}, almost surely, we have $\rho_{h}(A)=N\submod_{\oo}(A)$. This proves that $\A_{N\submod_{\oo}}(\F)$ has full measure in $\A(\F)$. 
\end{proof}

Let us conclude by giving a criterion for a connection to belong to $\A_{N\submod_{\oo}}(\F)$.

\begin{proposition} \label{prop:crit=} 
Let $(\F,h)$ be a fully twisted local system of rank $N$ over $\rG$. The following assertions are equivalent.
\begin{enumerate}[\indent 1.]
    \item $\pPbas(\submod_{h})= \pPbas(N\submod_{\oo})$.
    \item For all $B\in \B(\rG)$, one has $N\1_B \in \pPbas(\submod_h)$.
     \item For all $B\in \B(\rG)$, one has $\P(\Q=\F_B)>0$.
    \item $(\F,h)$ is fully twisted along every simple loop in $\rG$.
\end{enumerate}
\end{proposition}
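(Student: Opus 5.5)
I will prove the cycle of implications $1\Rightarrow 2\Rightarrow 3\Rightarrow 4\Rightarrow 1$, using the description of atoms from Theorem \ref{thm:atoms} and the rank formula of Theorem \ref{theo:computerank}.

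For $1\Rightarrow 2$, note that for a CRSF $B$, the vector $\1_B$ is an integer point of $\pPbas(\submod_{\oo})$. Hence $N\1_B$ is an integer point of $\pPbas(N\submod_{\oo})$, which equals $\pPbas(\submod_h)$ by assumption.

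For $2\Rightarrow 3$, Proposition \ref{prop:supporttrace} shows that $N\1_B$ lies in the support of $\dim_\gamma\Q$. The event $\dim_\gamma \Q=N\1_B$ coincides with the event $\Q=\F_B$, because $\Gr_{N\1_B}$ is the single point $\F_B$. Therefore $\P(\Q=\F_B)>0$.

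For $3\Rightarrow 4$, let $\gamma$ be a simple loop. Complete its cycle to a spanning unicycle $B$, which is possible since $\rG$ is connected: take the cycle and add tree edges. Then $B$ is a CRSF whose only cycle is $\gamma$. By formula \eqref{eq:law-atoms}, $\P(\Q=\F_B)>0$ forces $\det(\id-\hol_h(\gamma))\neq 0$, that is, $(\F,h)$ is fully twisted along $\gamma$.

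The implication $4\Rightarrow 1$ is the main step. I will show that $\submod_h(A)=N\submod_{\oo}(A)$ for every $A\subseteq\gE$; then the two polymatroids coincide and so do their basis polytopes. By Theorem \ref{theo:computerank} and \eqref{eq:rankbicirc}, this amounts to showing that $\dim\ker d_h^A = N b_0'(A)$.

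The kernel splits over the connected components of $\rG^A$. On an acyclic component it has dimension exactly $N$, by Corollary \ref{coro:treeN}. On a component $C$ containing a cycle, choose a simple loop $\gamma$ in $C$ based at some vertex $u$ of $C$. By Proposition \ref{prop:evkerd} applied to the connected graph $C$, $\ev_u(\ker d_h^C)\subseteq\ker(\hol_h(\gamma)-\id)$. This space is $\{0\}$ by assumption 4. The hypothesis applies because a simple loop of $C$ is a simple loop of $\rG$, and full twistedness along a loop does not depend on the base point, since changing it conjugates the holonomy. Since evaluation at $u$ is injective on horizontal sections of a connected graph, $\ker d_h^C=0$.

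Summing over components gives $\dim\ker d_h^A=Nb_0'(A)$. The only care needed is the conventions for $\rV(A)$ and for isolated components, which cause no trouble since $A$ consists of edges.

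The full-twistedness assumption on $(\F,h)$ is then only used implicitly, through \eqref{eq:law-atoms} in the step $3\Rightarrow 4$.
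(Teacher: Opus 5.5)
Your proof is correct. It shares the steps $1\Rightarrow 2$, $2\Rightarrow 3$ and $3\Rightarrow 4$ with the paper, but closes the cycle differently.

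The paper proves the equivalences $1\Leftrightarrow 2$, $2\Leftrightarrow 3$ and $3\Leftrightarrow 4$. Its substantive step, $2\Rightarrow 1$, is convex-geometric. The vectors $N\1_B$, $B\in\B(\rG)$, are the extreme points of $\pPbas(N\submod_{\oo})$, so the convex polytope $\pPbas(\submod_h)$, which contains them, contains $\pPbas(N\submod_{\oo})$. The reverse inclusion comes from Proposition~\ref{prop:polydirred}, which is where full twistedness enters, through the equality of total ranks. In the paper, getting from $4$ back to $1$ therefore passes through the atom formula \eqref{eq:law-atoms} ($4\Rightarrow 3$), the support description ($3\Rightarrow 2$) and then this convexity argument.

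You instead prove $4\Rightarrow 1$ directly at the level of rank functions. You use Theorem~\ref{theo:computerank}, together with the fact that horizontal sections vanish on every component of $\rG^A$ that contains a simple cycle. This is essentially the argument the paper itself uses in the proof of Proposition~\ref{prop:negligible}, sharpened so that only simple loops are needed.

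What your route buys:
\begin{itemize}
\item The stronger conclusion $\submod_h=N\submod_{\oo}$, i.e.\ $h\in\A_{N\submod_{\oo}}(\F)$, obtained without integrality, extreme points, or the basis-polytope inclusion of Proposition~\ref{prop:polydirred}.
\item You only need the easy direction $3\Rightarrow 4$ of the atom formula.
\end{itemize}

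What the paper's route buys in exchange: it shows that equality of the two basis polytopes is already detected by the finitely many atoms $\F_B$. That is the natural statement from the probabilistic side.
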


\begin{proof} For every $B\in \B(\rG)$, the vector $N\1_B$ belongs to $\pPbas(N\submod_{\oo})$, hence $1$ implies $2$. 

By definition of the bicircular matroid,
$\nPbas(\submod_{\oo})$ is the set of indicator functions of CRSFs. We also know that $\pPbas(\submod_{\oo})$ is the convex hull of this set. Moreover, no strict convex combination of two indicator functions of distinct CRSFs can be integer-valued. Therefore, the indicator functions of CRSFs are exactly the extremal point of $\nPbas(\submod_{\oo})$. Therefore, the functions $N\1_B$ are the extremal points of $\nPbas(N\submod_{\oo})$. Any convex polytope which contains them contains $\pPbas(N\submod_{\oo})$. Together with the second assertion of Proposition \ref{prop:polydirred}, this proves that $2$ implies $1$.

Let $B$ be a CRSF of $\rG$. By Proposition \ref{prop:supporttrace}, the function $N\1_B$ belongs to $\pPbas(\submod_h)$ if and only if $\P(\dim_\gamma \Q=N\1_B)>0$ and since the stratum of the Grassmannian of $\Omega^1(\F)$ corresponding to the dimension $N\1_B$ consists of the single point $\F_B$, this is equivalent to $\P(\Q=\F_B)>0$. Therefore, $2$ and $3$ are equivalent.

Finally, the equivalence of $3$ and $4$ follows from Theorem \ref{thm:atoms} and the fact that every single loop of $\rG$ is included in a CRSF. 
\end{proof}

This proposition does not say in general that $\pPbas(\submod_h)$ is the convex hull of the vectors of the form $N\1_B, B \in \B(\rG)$ that it contains. For instance, consider the case where $\rG$ has only one vertex and two self-loops $e$ and $f$. Consider two local systems $(\F,h)$ and $(\F',h')$ of rank $1$ over~$\rG$. Assume that $h_e=h'_f=1$ and $h_f=h'_e=-1$. Then the dimension of $\DLSF(\F,h)$ is almost surely $(0,1)$ and the dimension of $\DLSF(\F',h')$ is almost surely $(1,0)$. Therefore, the dimension of $\DLSF(\F\oplus \F',h\oplus h')$ is almost surely $(1,1)$. Therefore, $\pPbas(\submod_{h\oplus h'})=\{(1,1)\}$ does not contain any vector of the form $2\1_B$ for any $B\in \B(\rG)=\{\{e\},\{f\}\}$. 

%%%%%%%%%%%%%%%%%%%%%%%%%%%%%%%%%%%%%%%%

\section{The characteristic polynomial of the Laplacian}\label{sec:characteristic-polynomial}

From \cite{KL3} it follows that $\det (\Delta_h)_{\im d_h^*}^{\im d_h^*}$ can be interpreted as a partition for the $\DLSF$.
Indeed, this follows from the fact that the density of the distribution of the $\DLSF$ with respect to the reference measure $\nu_n$ on the Grassmannian (see Section \ref{sec:dls}) is 
\[\frac{d\P}{d\nu_n}(Q)=\cos^2(Q, \im d_h)=\frac{\det( d_h^* \proj{Q} d_h)}{\det (\Delta_h)_{\im d_h^*}^{\im d_h^*}}\,,\]
with normalizing constant
\[\det (\Delta_h)_{\im d_h^*}^{\im d_h^*} = \int_{\Gr_n(\Omega^1(\F),\sigma_1)} \det( d_h^* \proj{Q} d_h) d\nu_n\,.\]

The goal of this section is to give a combinatorial expression for this partition function, that is, for the product of nonzero eigenvalues of the twisted Laplacian $\Delta_h$ (Theorem \ref{thm:mtt}). We will obtain it as a corollary of an expression for the characteristic polynomial of the Laplacian (Theorem \ref{thm:carlap}). 

The problem of finding ``any combinatorial meaning" to the determinant of the twisted Laplacian in rank greater or equal to $2$
was posed by Kenyon in 
\cite[Section 14, Open question~2]{Kenyon}. In \cite{KL2,KL2b,KL5}, we proposed possible answers to this question, which we did not find completely satisfactory. The answer proposed here has the advantage over those previous ones of expressing a positive spectral quantity (the product of nonzero eigenvalues) as a sum over a combinatorial object (integer points of the Minkowski sum of $(N-k)$ copies of the base polytope of the circular matroid, and $k$-copies of the base polytope of the bicircular matroid) of a manifestly gauge-invariant and positive quantity.

In order to prove our result, we first set up some notations and results from abstract theory of representable polymatroids on a finite set. The generalized matrix-tree theorem (Theorem~\ref{thm:mtt}) is then seen as a special case of this result.

\subsection{Measured polymatroids}

Let $(E_{1},(\cdot,\cdot))$ and $(E_{0},(\cdot,\cdot))$ be finite dimensional inner product spaces. Let $E_{1}=\bigoplus_{i\in S} E_{1,i}$ be an orthogonal splitting of $E_{1}$, that we denote by $\sigma$. Let $\partial \colon E_{1} \to E_{0}$ a linear map. According to Section \ref{sec:linpol}, the family of linear subspaces $(E_{0,i}=\partial E_{1,i} : i\in S)$ determines a polymatroid on~$S$, of which we denote by $\submod$ the rank function: for every subset $I$ of $S$, 
\[\submod(I)=\dim \sum_{i\in I} E_{0,i}.\]

The dual spaces $E^{1}$ and $E^{0}$ of $E_{1}$ and $E_{0}$ inherit inner products from $E_{1}$ and $E_{0}$. Moreover,~$E^{1}$ inherits the orthogonal splitting $E^{1}=\bigoplus_{i\in S} E^{1,i}$, where for each $i\in S$,
\[E^{1,i}=\{\varphi \in E^{1} : \forall j\in S \setminus  \{i\}, \, \varphi_{|E_{1,j}}=0\}.\]

Given a collection of positive reals $\ul{x}=(\x_i:i\in S)$ that we call {\em Euclidean weights}, we twist the inner product on $E^{1}$ into a new inner product $\lanx \cdot,\cdot \ranx$ by setting, for all $i,j\in S$, and all $\alpha_{i}\in E^{1,i}$ and $\alpha_{j}\in E^{1,j}$,
\begin{equation}
\lanx u_{i}, u_{j} \ranx = x_{i} (\alpha_{i},\alpha_{j}).
\end{equation}
Note that this scalar product vanishes if $i\neq j$.

For every subset $I$ of $S$, we will use the notation $\ul\x^{I}=\prod_{i\in I}x_{i}$ and $\ul\x^{-I}=(\ul\x^{I})^{-1}$.

Let us now consider the linear map $d:E^{0}\to E^{1}$, defined as the transposed map of $\partial$, and the map $d^{*}:E^{1}\to E^{0}$, defined as the adjoint of $d$.

For all split inner product space $W=\bigoplus_{i\in S} W_{i}$ and all family $\ul{t}=(t_{i},i\in S)$ of scalars, we define the endomorphism $m^{W}_{\ul{t}}=\sum_{i\in S} t_{i} {\rm id}_{W_{i}}$ of $W$. In the next results, we denote by $\ul{k}$ a vector of $\N^{S}$, and we use the notation $\ul{t}^{\ul{k}}=\prod_{i\in S}t_{i}^{k_{i}}$. 

Recall from Section \ref{sec:dls} the definitions of various subsets of the Grassmannian of $W$, and the measures on these sets, associated to an orthogonal splitting $\sigma$ of $W$. Let us also introduce the following notation: if $\sR$ is a linear subspace of $W$, we denote by ${\sf 1}_{\sR}\colon \sR \to W$ the inclusion map of $\sR$ into this space, and by ${\sf 1}^{\sR}\colon W \to \sR$ the adjoint of ${\sf 1}_{\sR}$, that is, the orthogonal projection onto $\sR$. It will always be clear from the context which space plays the role of $W$.

We will use the following lemma.

\begin{lemma} \label{lem:det1+ta} Let $W=\bigoplus_{i\in S} W_{i}$ be an orthogonal splitting of an inner product space $W$. 
For all $a\in \End(W)$ and all family $\ul{t}=(t_{i},i\in S)$ of scalars, 
\[\det(1+m^{W}_{\ul{t}}a)=\sum_{\ul{k}\geq 0} \ul{t}^{\ul{k}} \int_{\Gr_{\ul{k}}(W ,\sigma)}\det({\sf 1}^{R}a {\sf 1}_{R}) \d \nu_{\ul{k}}(R).\]
\end{lemma}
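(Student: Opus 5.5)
The plan is to expand the determinant by multilinearity and recognise each coefficient as a Grassmannian integral. First choose an orthonormal basis $(w_1,\ldots,w_d)$ of $W$ adapted to the splitting $\sigma$. Write $S(j)\in S$ for the index with $w_j\in W_{S(j)}$. Then $m^W_{\ul t}$ is diagonal with entries $t_{S(j)}$, and the classical principal-minor expansion gives
\[\det(1+m^W_{\ul t}a)=\sum_{J\subseteq\{1,\ldots,d\}}\Big(\prod_{j\in J}t_{S(j)}\Big)\det(a_{JJ}),\]
where $a_{JJ}$ is the principal submatrix of $a$ on $J$. Grouping the subsets $J$ by the vector $\ul k$ with $k_i=|J\cap S^{-1}(i)|$ gives
\[\det(1+m^W_{\ul t}a)=\sum_{\ul k}\ul t^{\ul k}\sum_{J\text{ of type }\ul k}\det\big({\sf 1}^{R_J}a{\sf 1}_{R_J}\big),\qquad R_J=\Vect(w_j:j\in J).\]
The problem is thus reduced to showing, for each $\ul k$, that the inner sum equals $\int_{\Gr_{\ul k}(W,\sigma)}\det({\sf 1}^Ra{\sf 1}_R)\,\d\nu_{\ul k}(R)$.

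For this, observe that the left-hand side of the lemma does not depend on the choice of adapted orthonormal basis, since $m^W_{\ul t}$ is defined intrinsically. Hence the inner sum is the same for every adapted basis. Average it over the basis $(g_1w_1,\ldots,g_dw_d)$, where $g=(g_i)_{i\in S}$ is Haar distributed on $\prod_{i\in S}\U(W_i)$. For a fixed $J$ of type $\ul k$, the random subspace $gR_J$ is distributed according to the unique invariant probability measure on $\Gr_{\ul k}(W,\sigma)=\prod_i\Gr_{k_i}(W_i)$. There are $\prod_i\binom{d_i}{k_i}$ subsets $J$ of type $\ul k$, and this number is exactly the total mass of $\nu_{\ul k}$. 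Summing the expectations over these $J$ therefore gives $\int\det({\sf 1}^Ra{\sf 1}_R)\,\d\nu_{\ul k}(R)$.

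One also needs that $R\mapsto\det({\sf 1}^Ra{\sf 1}_R)$ is a well-defined function of $R$. It is, because it equals the determinant of the compression of $a$ to $R$, which is basis-independent. It is also continuous in $R$, so the integral makes sense. An alternative argument avoids averaging: the map $R\mapsto\det(a_R)$, restricted to $\Gr_{\ul k}(W,\sigma)$, is a trace-type pairing of $\ext^{\ul k}a$ with the projector onto $\ext R$. Integrating that projector against invariant measure gives, by Schur's lemma, a multiple of the identity on $\bigotimes_i\ext^{k_i}W_i$. But the averaging argument above is cleaner and is the one I would use.

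The main obstacle is bookkeeping the normalization, namely matching the total mass $\prod\binom{d_i}{k_i}$ of $\nu_{\ul k}$ with the count of coordinate subspaces. I would also double-check the convention on the field $\K$ (real, complex or quaternionic). In the quaternionic case the determinant should be understood via the underlying complex structure, and the principal-minor expansion still holds there.
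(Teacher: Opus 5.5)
Your proof is correct, but there is nothing in the paper to compare it against step by step: the paper gives no argument for this lemma and simply imports it from \cite[Prop.~3.8]{KL3}. What you give is a self-contained derivation.

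It uses only three ingredients:
\begin{itemize}
\item the principal-minor expansion in an adapted orthonormal basis;
\item the fact that each coefficient of $\ul t^{\ul k}$ in $\det(1+m^W_{\ul t}a)$ is intrinsic, so the sum over coordinate subspaces of type $\ul k$ does not depend on the adapted basis;
\item transitivity of $\prod_{i\in S}\U(W_i)$ on $\Gr_{\ul k}(W,\sigma)=\prod_i\Gr_{k_i}(W_i)$.
\end{itemize}
Its main merit is that it explains the normalisation of $\nu_{\ul k}$. The mass $\prod_i\binom{d_i}{k_i}$ is exactly the number of coordinate subspaces of type $\ul k$, so $\nu_{\ul k}$ is the Haar average of the counting measure on them.

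The same averaging also gives, with no appeal to Schur's lemma, the operator statement behind your alternative sketch. Write $\pi_R$ for the orthogonal projection onto the line $\ext^{|\ul k|}R$. The unit Plücker vectors of the $R_J$ of type $\ul k$ form an orthonormal basis of $\bigotimes_i\ext^{k_i}W_i$, so averaging gives $\int_{\Gr_{\ul k}(W,\sigma)}\pi_R\,\d\nu_{\ul k}(R)=\id$ on that space. Since $\det({\sf 1}^Ra{\sf 1}_R)=\Tr(\pi_R\,\ext^{|\ul k|}a)$, the lemma is the trace of this identity against $\ext^{|\ul k|}a$. This is the form that fits the exterior-algebra formalism of \cite{KL3}.

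One correction to your closing remark. Over $\bH$, computing $\det$ through the underlying complex structure makes the identity false. For $W=\bH$ and $a=\lambda\in\R$, the left-hand side becomes $(1+t\lambda)^{2}$ while the right-hand side is $1+t\lambda^{2}$. So the quaternionic case needs a different determinant convention. This does not affect your argument over $\R$ or $\C$.
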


\begin{proof} This follows from \cite[Prop. 3.8]{KL3}.
\end{proof}

We denote by ${\sf J}^{0}:E_{0}\to E^{0}$ the antilinear isomorphism induced by the inner product on $E_{0}$. To be clear, for all $u,v\in E_{0}$, we have $\big({\sf J}^{0} u\big)(v)=(u,v)$. We define similarly ${\sf J}^{1}:E_{1}\to E^{1}$ induced by the inner product on $E_{1}$, of which we stress that it does not depend on $\ul{x}$. We set ${\sf J}_0=({\sf J}^0)^{-1}$ and ${\sf J}_1=({\sf J}^1)^{-1}$.

We will use the fact that the adjoint of $\partial$ is equal to 
\[\partial^{*}=\sJ_{1} d \sJ^{0}.\]

Let us define, for all $\ul{k} \in \N^{S}$
\[w(\ul{k})=\int_{\Gr_{\ul{k}}(E_{1},\sigma)} \det({\sf 1}^{U}\partial^{*} \partial {\sf 1}_{U}) \d \nu_{\ul{k}}(U).\]

\begin{proposition} \label{prop:det1+tdd*} For all scalars $\ul{t}=(t_{i},i\in S)$, the following equality holds:
\[\det(1+m^{E^{1}}_{\ul{t}}dd^{*})=\sum_{\ul{k}\in \pPind(\submod)}  \ul{t}^{\ul{k}} \ul{x}^{\ul{k}} w(\ul{k}).\]
\end{proposition}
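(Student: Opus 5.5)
Apply Lemma~\ref{lem:det1+ta} with $W=E^{1}$, split as $\bigoplus_{i\in S}E^{1,i}$ (orthogonal for the twisted product $\lanx\cdot,\cdot\ranx$ too), and $a=dd^{*}$. This gives
\[\det(1+m^{E^{1}}_{\ul{t}}dd^{*})=\sum_{\ul{k}\geq 0}\ul{t}^{\ul{k}}\int_{\Gr_{\ul{k}}(E^{1},\sigma)}\det({\sf 1}^{R}dd^{*}{\sf 1}_{R})\d\nu_{\ul{k}}(R).\]
It then remains to do two things: identify each integral with $\ul{x}^{\ul{k}}w(\ul{k})$, and show that the integrand vanishes unless $\ul{k}\in\pPind(\submod)$.

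\emph{Transport from $E^{1}$ to $E_{1}$.} The antilinear isomorphism $\sJ^{1}$ maps $E_{1,i}$ onto $E^{1,i}$. It induces a bijection $U\mapsto R=\sJ^{1}U$ from $\Gr_{\ul{k}}(E_{1},\sigma)$ onto $\Gr_{\ul{k}}(E^{1},\sigma)$. This bijection intertwines the actions of $\prod_i\U(E_{1,i})$ and $\prod_i \U(E^{1,i})$; the latter group is the same for both inner products, since they differ by the positive scalar $x_i$ on each block. Since each $\nu_{\ul{k}}$ is the unique invariant measure with the prescribed mass, it is pushed to $\nu_{\ul{k}}$.

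\emph{Comparing the integrands.} Fix $U$ with $\dim(U\cap E_{1,i})=k_i$, and pick an orthonormal basis of $U$ adapted to the splitting. Its image under $\sJ^{1}$, rescaled by $x_i^{-1/2}$ on block $i$, is an orthonormal basis of $R$ for $\lanx\cdot,\cdot\ranx$. Now
\[\det({\sf 1}^{R}dd^{*}{\sf 1}_{R})=\det\big(\lanx d^{*}r_a,d^{*}r_b\ranx\big)_{a,b}=\det\big((d^{*}r_a,d^{*}r_b)_{E^0}\big)_{a,b}.\]
Here we need to express $d^{*}$ (adjoint of $d=\partial^{t}$ with respect to $\lanx\cdot,\cdot\ranx$) through $\partial$. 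Unwinding $\lanx d\beta,\alpha\ranx=(\beta,d^{*}\alpha)$ blockwise, and using $\partial^{*}=\sJ_1 d\sJ^0$, we expect $d^{*}\alpha=\sJ^{0}\partial\,\sJ_{1}(m_{\ul{x}}\alpha)$. On the rescaled basis vector $x_i^{-1/2}\sJ^{1}u$ this gives $x_i^{1/2}\sJ^{0}\partial u$. The Gram determinant then becomes $\ul{x}^{\ul{k}}\det\big((\partial u_a,\partial u_b)\big)=\ul{x}^{\ul{k}}\det({\sf 1}^{U}\partial^{*}\partial{\sf 1}_U)$, up to complex conjugation from antilinearity, which is harmless because the determinant of a Gram matrix is real. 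Integrating yields $\ul{x}^{\ul{k}}w(\ul{k})$. Carrying this adjoint computation out carefully, keeping track of where $x_i$ versus $x_i^{-1}$ appears, is the step I expect to be the main obstacle. If the exponent came out as $\ul{x}^{-\ul{k}}$, I would recheck the convention for the twisted product against the statement.

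\emph{Restriction of the summation range.} The integrand $\det({\sf 1}^{U}\partial^{*}\partial{\sf 1}_U)$ is the Gram determinant of $\partial u_a$, so it is nonzero iff $\partial$ is injective on $U$. This happens iff the vectors $\partial u_{i,j}$, with $u_{i,j}\in E_{1,i}$, are linearly independent in $E_0$. Since $\partial u_{i,j}\in E_{0,i}$, Proposition~\ref{prop:meuse}(1) shows this forces $\ul{k}\in\pPind(\submod)$. Hence $w(\ul{k})=0$ off $\pPind(\submod)$, and only integer points of $\pPind(\submod)$ contribute, which completes the proof.
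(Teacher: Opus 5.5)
Your proposal is correct and follows essentially the paper's route: Lemma~\ref{lem:det1+ta} applied to $a=dd^{*}$ on $E^{1}$, the identity $d^{*}=\sJ^{0}\partial\sJ_{1}m^{E^{1}}_{\ul{x}}$ (your adjoint computation is right and gives $\ul{x}^{\ul{k}}$, not $\ul{x}^{-\ul{k}}$), and transport by $\sJ^{1}$. The only differences are that you use Gram determinants in adapted orthonormal bases where the paper uses operator identities, and that you restrict the range by showing directly via Proposition~\ref{prop:meuse}(1) that $w(\ul{k})=0$ off $\pPind(\submod)$ (which is all the equality needs), whereas the paper cites \cite{KL3} for the sharper statement that the nonzero terms are precisely the points of $\pPind(\submod)$.
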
 

\begin{proof} By \cite[Lemma 13.4]{KL5}, we have the identity
\[d^{*}={\sf J}^{0} \partial \sJ_{1} m^{E^{1}}_{\ul{x}}.\]
Let us choose $\ul{k}\in \N^{S}$ and $R\in \Gr_{\ul{k}}(E^{1}\!,\sigma)$. Then $m_{\ul{x}}^{E^{1}}{\sf 1}_{R}={\sf 1}_{R}m_{\ul{x}}^{R}$, so that
\[\det({\sf 1}^{R}d d^{*} {\sf 1}_{R})=\det({\sf 1}^{R}d {\sf J}^{0} \partial \sJ_1 {\sf 1}_{R})\det(m^{R}_{\ul{x}})=\ul{x}^{\ul{k}} \det({\sf 1}^{R}d {\sf J}^{0} \partial \sJ_1 {\sf 1}_{R}).\]
Let us define $U=\sJ_1(R)$. Then $\sJ_1 {\sf 1}_{R}={\sf 1}_{U}\sJ_1$ and ${\sf 1}^{R}{\sf J}^{1}={\sf J}^{1}{\sf 1}^{U}$. Therefore, 
\begin{align*} 
\det({\sf 1}^{R}d {\sf J}^{0} \partial \sJ_1 {\sf 1}_{R})&=\det({\sf 1}^{\sR}{\sf J}^{1}\sJ_1d {\sf J}^{0} \partial \sJ_1 {\sf 1}_{\sR})\\
&=\det({\sf J}^{1}{\sf 1}^{U}\sJ_1d {\sf J}^{0} \partial  {\sf 1}_{U} \sJ_1)=\det({\sf 1}^{U}\sJ_1d {\sf J}^{0} \partial  {\sf 1}_{U})=\det({\sf 1}^{U}\partial^{*} \partial  {\sf 1}_{U}).
\end{align*}

Finally, we apply Lemma \ref{lem:det1+ta} to $W=E^{1}$ and $a=dd^{*}$ and we use the fact that the isomorphism~${\sf J}^{1}$ preserves the measures on the Grassmannians to change the variable from $R$ to $U$, and we arrive at
\[\det(1+m_{\ul{t}}dd^{*})=\sum_{\ul{k}\geq 0}  \ul{t}^{\ul{k}} \ul{x}^{\ul{k}} w(\ul{k}).\]
There remains to restrict the sum to the $\ul{k}\geq 0$ for which the integral is not zero. By \cite[Prop. 3.5]{KL3}, they are precisely the points of the independence polytope of the polymatroid $\submod$.
\end{proof}

\begin{corollary} With the notation above, the following equality holds:
\[\det\big((d^{*}d)_{(\ker d)^{\perp}}^{(\ker d)^{\perp}}\big)=\sum_{\ul{k}\in \pPbas(\submod)} \ul{x}^{\ul{k}}w(\ul{k}).\]
\end{corollary}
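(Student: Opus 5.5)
The plan is to extract the product of non-zero eigenvalues from the characteristic-polynomial-type identity of Proposition~\ref{prop:det1+tdd*} by specialising all the $t_i$ to a common value $t$ and reading off a single coefficient.

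\textbf{Specialisation.} Set $t_i=t$ for every $i\in S$, so that $m^{E^1}_{\ul t}=t\,\id$. Proposition~\ref{prop:det1+tdd*} then gives
\[\det(1+t\,dd^{*})=\sum_{\ul k\in\pPind(\submod)} t^{|\ul k|}\,\ul x^{\ul k}\,w(\ul k),\qquad |\ul k|=\sum_{i\in S}k_i .\]

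\textbf{Spectral side.} Let $r$ be the rank of $d$. Recall that $dd^{*}$ and $d^{*}d$ have the same non-zero eigenvalues with multiplicities. Moreover, $d^{*}d$ vanishes on $\ker d$ and preserves $(\ker d)^{\perp}$, where it is invertible. Hence
\[\det(1+t\,dd^{*})=\prod_{j=1}^{r}(1+t\lambda_j),\]
where $\lambda_1,\dots,\lambda_r$ are the eigenvalues of $\big(d^{*}d\big)_{(\ker d)^{\perp}}^{(\ker d)^{\perp}}$. This is a polynomial in $t$ of degree $r$, and its coefficient of $t^{r}$ is $\prod_j\lambda_j$, the determinant on the left of the corollary.

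\textbf{Combinatorial side.} The coefficient of $t^{r}$ on the right is the sum over those $\ul k\in\pPind(\submod)$ with $|\ul k|=r$. We now check that $r=\submod(S)$. Since $d$ is the transpose of $\partial$, we have $\rk d=\rk\partial=\dim\partial(E_1)=\dim\sum_{i}E_{0,i}=\submod(S)$. By definition of the basis polytope, the integer points of $\pPind(\submod)$ with $\ul k(S)=\submod(S)$ are exactly the integer points of $\pPbas(\submod)$. Matching the coefficients of $t^{r}$ on both sides yields the claimed formula. Points $\ul k$ with $|\ul k|>r$ do not arise, because $\ul k(S)\le\submod(S)$ on $\pPind(\submod)$.

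The only step that needs care is the identification $\rk d=\submod(S)$, together with making sure that the sum in Proposition~\ref{prop:det1+tdd*} runs over integer points. Both follow directly from the definitions: the sum is over $\ul k\in\N^S$, and $\rk d=\rk\partial$. I therefore expect no real obstacle.
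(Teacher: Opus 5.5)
Your proof is correct and follows the same route as the paper. Both read off the determinant of the compression as the top coefficient, that of $t^{\submod(S)}$, of $\det(1+t\,dd^{*})$, obtained by specialising Proposition~\ref{prop:det1+tdd*} with all $t_i=t$; the top-degree terms are exactly the integer points of $\pPbas(\submod)$. You also make explicit two facts the paper leaves implicit, namely $\rk d=\submod(S)$ and $\ul{k}(S)\le\submod(S)$ on $\pPind(\submod)$.
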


\begin{proof} The determinant of the compression of $d^{*}d$ on the orthogonal of its kernel is the leading coefficient of the polynomial $\det(1+ t d^{*}d)=\det(1+ t dd^{*})$. It is therefore the same sum as above, restricted to the~$\ul{k}$ of maximal weight, that is, the bases of the polytope $\submod$.
\end{proof}

\subsection{The Laplacian of a local system}\label{sec:Laplocsys}

The Laplacian associated to a local system $(\F,h)$ over the graph $\rG$ is the operator 
\[\Delta_{h}=d_{h}^{*}d_{h} \in \End\big( \Omega^{0}(\F) \big).\]
We are interested in the characteristic polynomial of this operator
\begin{align*}
\det (1+t\Delta_{h})=\det (1+td_{h}^{*}d_{h})=\det (1+td_{h}d_{h}^{*}).
\end{align*}

Let us introduce the dual spaces $\Omega_{0}(\F)$ and $\Omega_{1}(\F)$ of $\Omega^{0}(\F)$ and $\Omega^{1}(\F)$ respectively. As vector spaces, we identify these dual spaces with the original ones. The only difference is that the inner product on $\Omega_{1}(\F)$ is not twisted by the Euclidean weights:
\begin{equation*}\label{eq:SPOmega_{1}}
\forall c_{1},c_{2}\in \Omega_{1}(\F), \quad
(c_{1},c_{2})=\frac{1}{2}\sum_{e\in \rE}  \langle c_{1}(e),c_{2}(e)\rangle_{e}\,.
\end{equation*}

Let $\partial_{h}:\Omega_{1}(\F)\to \Omega_{0}(\F)$ be the transposed map of $d_{h}$ and $\partial_{h}^{*}:\Omega_{0}(\F)\to \Omega_{1}(\F)$ its adjoint. To be explicit, if $e$ is an edge and $\xi\in \F_{e}$, then
\begin{equation}\label{eq:defpartialh}
\partial_{h}(\xi)=h_{\ol{e},e}(\xi)-h_{\ul{e},e}(\xi) \in \F_{\ol{e}}\oplus \F_{\ul{e}} \subseteq \Omega_{0}(\F),
\end{equation}
and if $v$ is a vertex and $\zeta\in \F_{v}$,
\begin{equation} \label{eq:defpartialh*}
\partial^{*}_{h}(\zeta)=\sum_{e : \ol{e}=v} h_{e,v}(\zeta)\in \bigoplus_{e:\ol{e} = v}\F_{e}\subseteq \Omega_{1}(\F).
\end{equation}

For all $\ul{k}\in \N^{\gE}$, let us define
\begin{equation}\label{eq:wh}
w_{h}(\ul{k})=\int_{\Gr_{\ul{k}}(\Omega_{1}(\F),\gamma_{1})}\det({\sf 1}^{U}\partial_{h}^{*}\partial_{h} {\sf 1}_{U}) \d \nu_{\ul{k}}(U),
\end{equation}
where $\gamma_{1}$ is the graphical splitting defined by \eqref{eq:graphicalsplitting}. 

\begin{theorem}\label{thm:carlap}
Let $\Delta_{h}$ be the Laplacian associated to a local system $(F,h)$. For all $t$,
\[\det(1+t\Delta_{h})=\sum_{\ul{k}\in \pPind(N \submod_{\oo})} t^{|\ul{k}|}\ul{x}^{\ul{k}} w_{h}(\ul{k}),\]
with $|\ul{k}|=\sum_{e\in \gE} k_{e}$.
\end{theorem}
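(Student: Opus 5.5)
The plan is to specialize the abstract Proposition~\ref{prop:det1+tdd*} to the graphical setting, with the constant family $\ul{t}=(t,\ldots,t)$. I would take $E_{1}=\Omega_{1}(\F)$ with its unweighted inner product and the graphical splitting $\gamma_{1}$, $S=\gE$, $E_{0}=\Omega_{0}(\F)$, and $\partial=\partial_{h}$ as in \eqref{eq:defpartialh}. The first thing to check is that the abstract constructions reproduce the concrete objects. The dual spaces $E^{1}$ and $E^{0}$ are identified with $\Omega^{1}(\F)$ and $\Omega^{0}(\F)$. The twisted inner product on $E^{1}$, with weights $x_{e}$ on the block $E^{1,e}$, is exactly \eqref{eq:SPOmega1}. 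The transpose $d$ of $\partial_{h}$ is $d_{h}$, so that $d^{*}$ is the codifferential $d_{h}^{*}$ of Definition~\ref{def:dh*}. With these identifications, $w(\ul{k})$ coincides with $w_{h}(\ul{k})$ as defined in \eqref{eq:wh}. This bookkeeping step is where I expect the only real care to be needed: the transposition must be taken consistently with the antilinear maps ${\sf J}$, and the factor $\frac12$ in the inner products must match.

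Once this is settled, $m^{E^{1}}_{\ul{t}}=t\,\id$, and Proposition~\ref{prop:det1+tdd*} gives
\[\det(1+t\,d_{h}d_{h}^{*})=\sum_{\ul{k}\in \pPind(\submod)} t^{|\ul{k}|}\ul{x}^{\ul{k}}w_{h}(\ul{k}).\]
Here $\submod$ is the rank function of the linear polymatroid of the subspaces $\partial_{h}(\F_{e})$, $e\in\gE$, of $\Omega_{0}(\F)$. I would then use the identity $\det(1+t\Delta_{h})=\det(1+t\,d_{h}^{*}d_{h})=\det(1+t\,d_{h}d_{h}^{*})$ to rewrite the left-hand side as the characteristic polynomial in the statement.

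Next I would identify $\submod$ with $\submod_{h}$. As vector spaces, $\partial_{h}$ agrees with $d_{h}^{*}$ up to the positive diagonal factor $m_{\ul{x}}$, which preserves each fibre $\F_{e}$; this is the content of the identity $d^{*}={\sf J}^{0}\partial\sJ_{1}m_{\ul{x}}$. Hence $\partial_{h}(\F_{A})=d_{h}^{*}(\F_{A})$ for every $A\subseteq\gE$, and so $\submod=\submod_{h}$.

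Finally I would enlarge the index set. By \eqref{eq:itisjustthat}, $\submod_{h}\le N\submod_{\oo}$, so $\pPind(\submod_{h})\subseteq\pPind(N\submod_{\oo})$ by Theorem~\ref{thm:maininclusion}. For the remaining $\ul{k}$, those in $\pPind(N\submod_{\oo})$ but outside $\pPind(\submod_{h})$, the integral $w_{h}(\ul{k})$ vanishes. This follows from \cite[Prop. 3.5]{KL3}, already used at the end of the proof of Proposition~\ref{prop:det1+tdd*}, and can also be seen directly: for such $\ul{k}$, every adapted $U$ of type $\ul{k}$ has $\partial_{h}{\sf 1}_{U}$ non-injective, so the integrand is zero. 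The sum can therefore be taken over $\pPind(N\submod_{\oo})$, which is the form given in the statement and is independent of $h$.
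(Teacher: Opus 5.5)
Your proposal is correct and follows the paper's own proof, which simply specializes Proposition~\ref{prop:det1+tdd*} to $E_1=\Omega_1(\F)$, $\partial=\partial_h$, $\ul{t}=(t,\ldots,t)$, and then enlarges the index set using Theorem~\ref{thm:maininclusion}. You also supply a step the paper leaves implicit: $w_h(\ul{k})=0$ for $\ul{k}\in\pPind(N\submod_{\oo})\setminus\pPind(\submod_h)$, because whenever $k(A)>\submod_h(A)=\dim\partial_h(\F_A)$ the map $\partial_h{\sf 1}_U$ cannot be injective on $U\cap\F_A$.
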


\begin{proof} We specialize Proposition \ref{prop:det1+tdd*} and use Theorem \ref{thm:maininclusion}. 
\end{proof}

Let $\T(\ul{x})$ denote the generating function of spanning trees of $\rG$, that is, the polynomial
\[\T(\ul{x})=\sum_{T\in \cT(\rG)} \ul{x}^{T},\]
where $\cT(\rG)$ is the set of spanning trees of $\rG$. 

\begin{theorem}\label{thm:mtt}
Set $k=\dim \ker d_{h}$. Then the product of the non-zero eigenvalues of $\Delta_{h}$ is
\[{\det}^{*}(\Delta_{h})=|\rV|^{k}\T(\ul{x})^{k}\sum_{\ul{k}\in \pPbas((N-k) \submod_{\oo})} \ul{x}^{\ul{k}} w_{h}(\ul{k}).
\]
\end{theorem}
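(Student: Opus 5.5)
The plan is to reduce to the two extreme cases using the canonical splitting of Proposition~\ref{prop:canosplit}. Write $(\F,h)=(\F',h)\oplus(\F'',h)$ orthogonally, with $(\F',h)$ flat of rank $k=\dim\ker d_h$ and $(\F'',h)$ fully twisted of rank $N-k$. As in the proof of Proposition~\ref{prop:independence}, $d_h$ is block-diagonal with respect to $\Omega^0(\F)=\Omega^0(\F')\oplus\Omega^0(\F'')$ and $\Omega^1(\F)=\Omega^1(\F')\oplus\Omega^1(\F'')$. Since these sums are orthogonal, $\Delta_h=\Delta_{h'}\oplus\Delta_{h''}$ as well, and $\Delta_{h''}$ is invertible because $\ker d_{h''}=\{0\}$. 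This gives
\[{\det}^*(\Delta_h)={\det}^*(\Delta_{h'})\,\det(\Delta_{h''}).\]

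For the flat factor, I use Corollary~\ref{cor:flattrivial}: $(\F',h)$ is isomorphic to $(\rG\times\C^k,I_k)$. An isomorphism of local systems is given by fibrewise isometries intertwining $d_h$, so it conjugates the Laplacians unitarily. Hence $\Delta_{h'}$ is unitarily equivalent to $k$ copies of the scalar weighted graph Laplacian. By the classical matrix-tree theorem, the product of the nonzero eigenvalues of the scalar Laplacian (with the unweighted inner product on $\Omega^0$ used here) is $|\rV|\,\T(\ul x)$. Alternatively, this follows from the rank $1$ case of the Corollary following Proposition~\ref{prop:det1+tdd*}, combined with \eqref{eq:law-st}. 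The flat factor therefore contributes $|\rV|^k\T(\ul x)^k$.

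For the fully twisted factor, I apply the Corollary following Proposition~\ref{prop:det1+tdd*} to $E_1=\Omega_1(\F'')$ with its graphical splitting and $\partial=\partial_{h''}$. Since $\ker d_{h''}=0$, this gives
\[\det\Delta_{h''}=\sum_{\ul k\in\pPbas(\submod_{h''})}\ul x^{\ul k}\,w_{h''}(\ul k).\]
Two points remain. First, by Proposition~\ref{prop:polydirred}(2), applied summand by summand to an irreducible decomposition of $(\F'',h)$, we have $\pPbas(\submod_{h''})\subseteq\pPbas((N-k)\submod_{\oo})$, with the same total rank $(N-k)|\rV|$. Second, for integer $\ul k$ in the larger polytope but outside $\pPbas(\submod_{h''})$, the weight vanishes: the integrand $\det({\sf 1}^U\partial^*\partial{\sf 1}_U)$ is zero for every $U$, by \cite[Prop.~3.5]{KL3} as used at the end of the proof of Proposition~\ref{prop:det1+tdd*}. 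So the sum may be extended to all of $\pPbas((N-k)\submod_{\oo})$. Multiplying the two factors gives the formula.

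The main obstacle is bookkeeping about which $w_h$ appears in the statement. The weights on the right-hand side must be read as those of the fully twisted summand, that is $w_{h''}$, with Grassmannians of $\Omega_1(\F'')$. When $k=0$ this is literally $w_h$; in general I would make this identification explicit. Two checks are needed here. One is that $w$ is invariant under isomorphism of local systems, which holds because fibrewise unitaries preserve the measures $\nu_{\ul k}$ and conjugate $\partial_h^*\partial_h$. The other is that the normalization of the inner products on $\Omega_1$ matches the one used in \cite[Lemma 13.4]{KL5}, so that the factor $\ul x^{\ul k}$ comes out exactly as stated.
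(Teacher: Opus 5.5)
Your argument is correct and is essentially the paper's proof: the canonical splitting into a flat and a fully twisted part, a factor $|\rV|\T(\ul{x})$ from each of the $k$ trivial rank-one summands via the classical matrix-tree theorem, and Theorem~\ref{thm:carlap} (equivalently the corollary you invoke) on the fully twisted summand, together with $\submod_{h''}\le (N-k)\submod_{\oo}$ and equality of the ranks. Your caveat that the weights must be read as $w_{h''}$ is also what the paper's proof tacitly does, and it is more than bookkeeping: the factorization $\det(1+m_{\ul t}\partial_h^*\partial_h)=\det(1+m_{\ul t}\partial_{h'}^*\partial_{h'})\det(1+m_{\ul t}\partial_{h''}^*\partial_{h''})$ gives $w_h(\ul k)=\sum_{\ul k'+\ul k''=\ul k}w_{h'}(\ul k')\,w_{h''}(\ul k'')$, whose cross terms with $\ul k'\neq 0$ are in general positive when $0<k<N$ (already for two vertices joined by two parallel edges), so reading the statement literally with the full-bundle $w_h$ would give the wrong value.
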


\begin{proof} To start with, we split $(\F,h)$ as the sum of its trivial isotypical part (see end of Section~\ref{sec:horsec}) and a local subsystem on which $d_{h}$ is injective. 

The trivial isotypical part splits into a sum of $k$ trivial rank $1$ subsystems, each with one zero eigenvalue of the Laplacian.

The product of the non-zero eigenvalues factorizes along the splitting. Each trivial rank 1 system yields $|\rV| \T(\ul{x})$.

For the other subsystem, the result is a consequence of Theorem \ref{thm:carlap} and the fact that $\submod_{h}\leq (N-k)\submod_{\oo}$ and the fact that the rank of $\submod_{h}$ is equal to the rank of $(N-k)\submod_{\oo}$.
\end{proof}

%%%%%%%%%%%%%%%%%%%%%%%%%%%%%%%%%%%%%%%%

\section{Infinitesimal connections}\label{sec:infinitesimal-connections}

In this section, we assume that $\rG$ is connected. Recall that, given a vector space $W$, we denote by $\rG\times W$ the vector bundle over $\rG$ of which all fibres are equal to $W$, and that we denote by~$\Omega^{\sbullet}(W)$ the corresponding spaces of $0$- and $1$-forms.

Let $\u(N)$ be the space of skew-Hermitian $N\times N$ matrices. Let $A$ be an element of $\Omega^1(\u(N))$, that is, a $\u(N)$-valued $1$-form on $\rG$. We think of~$A$ as an \emph{infinitesimal} connection on~$\rG$ which induces a genuine connection in the following way: for each real $t$, we define a connection $h_t$ on the bundle $\rG\times \C^{N}$ by setting, for each edge $e$ joining $u=\ul{e}$ to $v=\ol{e}$,
\begin{equation}
(h_{t})_{e,u}=(h_{t})_{v,e}=e^{\frac{t}{2} A_e}.
\end{equation}
We are going to study the local system $(\F=\rG\times \C^{N}, h_{t})$ as $t$ tends to $0$.

For every $1$-chain $c=\sum_{e\in \gE} c_{e} e$ in $\rG$, we define $A(c)=\sum_{e\in \gE} c_{e} A_e\in \u(N)$. 

In Theorem~\ref{thm:partition-limit} below, we compute the leading term of $\det \Delta_{h_t}$ as $t$ tends to $0$. It involves a family of subgraphs of $\rG$ which we introduce now.

Let $\cC(\rG,N)$ denote the set of $N$-tuples of connected spanning subgraphs of $\rG$ such that the sum of their first Betti numbers is~$N$, i.e. the total amount of edges is $N\vert\rV\vert$. Using the notation~$\cC_{r}(\rG)$ for the set of connected spanning subgraphs~$K$ of $\rG$ with $r$ independent cycles, that is, such that $b_1(K)={\rm rk} \,H_{1}(K,\Z)=r$, we have
\[\cC(\rG,N)=\bigsqcup_{\ul{r}\geq 0, |\ul{r}|=N} \cC_{\ul{r}}(\rG,N)\,.\]

To each $\underline{K}=(K_{1},\ldots,K_{N})\in\cC(\rG,N)$, and $A\in\Omega^1(\u(N))$ we will associate an $N\times N$ matrix. To this end, for each $j\in \{1,\ldots,N\}$, we set $r_{j}=b_{1}(K_{j})$ and we choose an integral basis $(c_{1,j},\ldots,c_{r_{j},j})$ of the free abelian group~$Z_{1}(K_{j},\Z)$. This gives us $N$ cycles along each of which we integrate~$A$, to produce $N$ skew-Hermitian matrices. From each of these matrices $A(c_{i,j})$, with $1\leq i \leq r_{j}$, we keep just one column, that with the label~$j$. With these $N$ columns, we form the matrix
\begin{equation}\label{eq:matriceA}
M_{A}(\ul{K})=(A(c_{1,1})_{1}\ldots A(c_{r_{1},1})_{1}\ldots\ldots A(c_{1,N})_{N}\ldots A(c_{r_{N},N})_{N}).
\end{equation}
This matrix depends on the choices of integral bases that we made, but the modulus of its determinant does not. Indeed, two different choices lead to two matrices that differ by multiplication on the right by a block-diagonal matrix with integer entries, the $j$-th block being the change of basis in $Z_{1}(K_{j},\Z)$ whose determinant is thus~$\pm 1$. Therefore, the square of the modulus of the determinant of $M_{A}(\ul{K})$ is independent of the choice of integral bases. 

\begin{theorem}\label{thm:partition-limit}Let $A\in \Omega^1(\u(N))$ and define $h_t$ as above.
As $t$ tends to $0$, we have
\begin{equation}\label{eq:partition-limit}
\det \Delta_{h_t}=t^{2N} \; \sum_{\underline{K}\in \cC(\rG,N)} |\det M_{A}(\ul{K})|^{2} \; \ul{x}^{\underline K}+O(t^{2N+1})
\,.
\end{equation}
\end{theorem}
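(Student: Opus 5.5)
We will work directly with the determinant of $\Delta_{h_t}=d_{h_t}^*d_{h_t}$ on $\Omega^0(\C^N)$, identifying all fibres with $\C^N$. Since the trivial connection has $\ker d_{h_0}$ of dimension $N$ (constant sections), $\det\Delta_{h_t}$ vanishes at $t=0$, and we expect the first nonzero order to be $t^{2N}$. The plan is to apply the Cauchy--Binet formula to $\det(d_{h_t}^*d_{h_t})$ in the basis of $\Omega^0$ given by $\rV\times\set{N}$ and of $\Omega^1$ given by $\gE\times\set{N}$. This writes $\det\Delta_{h_t}$ as a sum over subsets $S\subseteq\gE\times\set{N}$ of size $N|\rV|$, with weight $\prod_{(e,j)\in S}x_e$ times $|\det (d_{h_t})_{S}|^2$, where $(d_{h_t})_S$ is the square submatrix of rows indexed by $S$. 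Writing $S=\bigsqcup_j K_j\times\{j\}$, each $K_j\subseteq\gE$ is a set of edges used in colour $j$, and the weight is $\ul{x}^{\ul{K}}$.

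Next we expand each minor in $t$. We gauge so that $(d_{h_t}f)(e)=e^{tA_e/2}f(\ol e)-e^{-tA_e/2}f(\ul e)$ up to a unitary factor that does not change $|\det|$. Then $d_{h_t}=d_0+t D_1+O(t^2)$, where $d_0=d\otimes \id_{\C^N}$ is the untwisted differential and $(D_1f)(e)=\tfrac12 A_e(f(\ol e)+f(\ul e))$. The claim to establish is $\det(d_{h_t})_S=O(t^N)$, with leading coefficient nonzero only if each $K_j$ is a connected spanning subgraph; in that case the coefficient is $\pm\det M_A(\ul K)$, up to a factor that will be fixed below. The mechanism is the following. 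The block of $d_0$ restricted to rows $K_j\times\{j\}$ is the incidence matrix of $K_j$ acting on the $j$-th coordinate. Its kernel contains the constants on each component, and its cokernel is $Z_1(K_j)$ (cycles), via $\im d\,^\perp=\ker\partial$. A determinant of a perturbation $d_0+tD_1$ of a square matrix with corank $m$ is $t^{m}\det\big(P_{\mathrm{coker}}D_1|_{\ker}\big)\cdot{\det}'(d_0)+O(t^{m+1})$ (Schur complement / first-order perturbation of a singular matrix). Here $\ker d_0|_S$ is at least the $N$ global constants $\C^N$ (constant vector fields), and $\dim$ counting $|K_j|=|\rV|-1+r_j$ gives corank exactly $N$ precisely when each $K_j$ is connected spanning. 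Otherwise the corank exceeds $N$ and the minor is $O(t^{N+1})$.

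Then we compute the reduced $N\times N$ matrix. Pairing a cycle $c\in Z_1(K_j,\Z)$ in colour $j$ with $D_1$ applied to a constant section $\xi\in\C^N$ gives $\sum_e c_e\,(A_e\xi)_j=(A(c)\xi)_j$. Hence the matrix from $\ker=\C^N$ to $\mathrm{coker}=\bigoplus_j Z_1(K_j)$ is the matrix with rows $(A(c_{i,j}))_{j\text{-th row}}$. Its transpose/conjugate gives the columns in \eqref{eq:matriceA}, using skew-Hermitianity, up to $|\det|$. The remaining factor ${\det}'(d_0)$ restricted to the complements should be a product of nonzero minors of incidence matrices, each $\pm1$ by total unimodularity, provided the bases of $Z_1(K_j)$ are integral and the normalisations are chosen compatibly; this is exactly why integral bases appear. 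Squaring gives $t^{2N}|\det M_A(\ul K)|^2\ul x^{\ul K}$, and the remaining terms are $O(t^{2N+1})$.

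The main obstacle is making the singular-perturbation step rigorous with correct normalisation. One has to check that the pseudo-determinant factor equals exactly $1$ in modulus when the kernel and cokernel are given by the integral lattices: the constant section basis $\xi=e_i$ and the integral cycle basis. My first attempt will use the lattice-index identity: for an integer matrix $B$ with saturated kernel and cokernel lattices $L,L'$, $\det\begin{pmatrix}B & \text{lattice complement}\end{pmatrix}$ factorises as $\pm\det(\text{pairing})$. I will handle it concretely by completing rows and columns: choose a spanning tree $T_j\subseteq K_j$, a root vertex, and unimodular changes of basis that make the incidence block of $T_j$ triangular with $\pm1$ diagonal.
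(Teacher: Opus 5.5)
Your proposal is correct, and the step you flag as the main obstacle goes through exactly as you plan. It is not the paper's route, though.

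\textbf{The paper's route.} The paper never applies Cauchy--Binet to $\det\Delta_{h_t}$. It takes a Schur complement along $\Omega^0(\F)=H_0^\perp\oplus H_0$, with $H_0=\ker d_{h_0}$ the constant sections. This factors the leading coefficient as ${\det}^*(\Delta_{h_0})=(|\rV|\T(\ul x))^N$ times the determinant on $H_0$ of $\omega_A^*\proj{\ker d^*_{h_0}}\omega_A$. The latter is a Gram determinant of the projections of the columns $A_1,\dots,A_N$ of $A$ onto the cycle space. The sum over $\cC(\rG,N)$ is then imported from \cite[Prop.~13.14]{KL5}, which expands $\T(\ul x)^N\|(\proj{\ker d^*_{h_0}})^{\wedge N}(A_1\wedge\dots\wedge A_N)\|^2$.

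\textbf{Your route.} You expand over maximal minors first and perturb each minor separately. The index set $\cC(\rG,N)$ then appears directly from the corank count $\sum_j b_0(K_j)\geq N$, and the constant comes from unimodularity. The normalisation works as follows. In colour $j$, take rows $\{e^*:e\in T_j\}\cup\{c_{i,j}\}$ and columns $\{\delta_v:v\neq v_0\}\cup\{\mathbf 1\}$.
\begin{itemize}
\item Both changes of basis have determinant $\pm1$. Restricted to $K_j\setminus T_j$, an integral basis of $Z_1(K_j,\Z)$ gives a matrix of determinant $\pm1$, because for the fundamental cycles of $T_j$ it is the identity.
\item Cycles annihilate the incidence rows, and $\mathbf 1$ lies in the kernel. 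So $(d_0)_S$ becomes $\mathrm{diag}(B,0)$, where $B$ is block-diagonal with reduced incidence matrices of trees as blocks, hence $|\det B|=1$.
\item Consequently $\det(d_{h_t})_S=\pm t^N\det\big(\langle c_{i,j},D_1(\mathbf 1\otimes\epsilon_l)\rangle\big)+O(t^{N+1})$. This holds even with the $O(t^2)$ terms of $d_{h_t}$ included.
\end{itemize}
With the paper's conventions one has exactly $(d_{h_t}f)(e)=e^{-tA_e/2}f(\ol e)-e^{tA_e/2}f(\ul e)$, so no gauge change is needed. Your flipped sign of $A$ is harmless.

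\textbf{What each route buys.} Your route is elementary and self-contained, and the constants take care of themselves because every basis is integral. The paper's route yields a coordinate-free intermediate formula in terms of $\|(\proj{\ker d^*_{h_0}})^{\wedge N}\alpha\|^2$. That formula is what feeds the probabilistic interpretation and Proposition~\ref{prop:KL5-13-14-2}.

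The normalisation is more delicate in the paper's route. The constant sections $\epsilon_l$ have squared norm $|\rV|$ in $\Omega^0(\F)$, so the determinant on $H_0$ equals $|\rV|^{-N}\|(\proj{\ker d^*_{h_0}})^{\wedge N}\alpha\|^2$. This factor must cancel the $|\rV|^N$ in ${\det}^*\Delta_{h_0}$, and it is absent from the display in Proposition~\ref{prop:Schur-expansion}. Check on a $2$-cycle with unit weights and $N=1$: there $\det\Delta_{h_t}=|1-e^{tA(c)}|^2$, whereas that display gives twice this. Your direct computation lands on the constant in \eqref{eq:partition-limit} without this bookkeeping.
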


Let us break down the proof of Theorem \ref{thm:partition-limit} in two steps, namely Propositions \ref{prop:Schur-expansion} and \ref{prop:KL5-13-14}, from which the result follows readily. 

In the following, given a connection $h$, we will use the short-hand notation $\boxminus_{h}$ for $\ker d_{h}^*$.

Recall that $\F$ is the vector bundle $\rG\times \C^{N}$. 

\begin{proposition}\label{prop:Schur-expansion}
Let $A\in \Omega^1(\u(N))$ and define $h_t$ as above. For each $j\in \set{N}$, let $A_j\in \Omega^1(\F)$ be the $j$-th column of $A$, which is an element of $\Omega^{1}(\rG\times \C^{N})=\Omega^{1}(\F)$. Define $\alpha=A_1\wedge \ldots \wedge A_N\in \ext^N \Omega^1(\F)$. 
As $t$ tends to~$0$, we have
\begin{equation}\label{eq:prop-schur}
\det \Delta_{h_t} = t^{2N} \vert \rV\vert^N \T(\ul{x})^N \lVert (\proj{\ker d^*_{h_0}})^{\wedge N}(\alpha) \rVert^2+O(t^{2N+1})\,.
\end{equation}
\end{proposition}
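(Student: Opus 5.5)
We will argue by perturbation theory of the smallest eigenvalues of $\Delta_{h_t}$, that is, by a Schur complement computation relative to the splitting $\Omega^{0}(\F)=\ker d_{h_0}\oplus(\ker d_{h_0})^{\perp}$. At $t=0$ the connection is trivial. Hence $\ker d_{h_0}$ is the $N$-dimensional space of constant sections, spanned by $\phi_j=|\rV|^{-1/2}\,\epsilon_j$ with $\epsilon_j$ the $j$-th basis vector of $\C^N$ placed at every vertex. On $(\ker d_{h_0})^{\perp}$, $\Delta_{h_0}$ is $N$ copies of the scalar Laplacian, with determinant $(|\rV|\T(\ul{x}))^N$ by the classical matrix-tree theorem (or Theorem~\ref{thm:mtt} with $k=N$).

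First write $d_{h_t}=d_{h_0}+tD+O(t^2)$, where $D$ is read off from \eqref{eq:defdh}. Since $(h_t)_{e,u}=e^{\frac t2A_e}$ and $(h_t)_{e,v}=e^{-\frac t2 A_e}$, on a constant section $\epsilon$ we get $(d_{h_t}\epsilon)(e)=-tA_e\epsilon+O(t^3)$. Thus $d_{h_t}\phi_j=-t|\rV|^{-1/2}A_j+O(t^2)$, with $A_j$ the $j$-th column form.

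Next take the block decomposition of $\Delta_{h_t}$ with respect to $P=\ker d_{h_0}$ and $P^{\perp}$:
\[\Delta_{h_t}=\begin{pmatrix} a_t & b_t\\ b_t^* & c_t\end{pmatrix},\qquad c_t=c_0+O(t)\ \text{invertible}.\]
Then $\det\Delta_{h_t}=\det c_t\,\det(a_t-b_tc_t^{-1}b_t^*)$. Here $a_t=(d_{h_t}\phi_i,d_{h_t}\phi_j)=O(t^2)$, and $b_t=O(t)$ because $b_0=0$. So the Schur complement is $t^2S+O(t^3)$ and $\det\Delta_{h_t}=t^{2N}\det c_0\,\det S+O(t^{2N+1})$.

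It remains to identify $S$ with the Gram matrix of the vectors $|\rV|^{-1/2}\proj{\ker d_{h_0}^*}A_j$. The term $a_t/t^2$ tends to $|\rV|^{-1}(A_i,A_j)$. For the cross term, $b_t=(d_{h_t}\phi,d_{h_t}\psi)$ with $\psi\in P^\perp$ equals $t(-|\rV|^{-1/2}A_i,d_{h_0}\psi)+O(t^2)$, and $c_0=d_{h_0}^*d_{h_0}$ on $P^\perp$. Hence $b_tc_t^{-1}b_t^*/t^2\to |\rV|^{-1}(A_i,d_{h_0}(d_{h_0}^*d_{h_0})^{-1}d_{h_0}^*A_j)$, which is $|\rV|^{-1}(A_i,\proj{\im d_{h_0}}A_j)$. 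Subtracting gives
\[S_{ij}=|\rV|^{-1}\big(\proj{\ker d_{h_0}^*}A_i,\proj{\ker d_{h_0}^*}A_j\big),\]
since $\ker d_{h_0}^*=(\im d_{h_0})^\perp$. Its determinant is $|\rV|^{-N}\|(\proj{\ker d_{h_0}^*})^{\wedge N}\alpha\|^2$, the Gram determinant of a wedge product. Combining with $\det c_0=|\rV|^N\T(\ul{x})^N$ gives \eqref{eq:prop-schur}.

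The main obstacle is this cross-term computation: we must check that the first-order expansion of $b_t$ really involves only $d_{h_0}\psi$ paired with $A_i$. The other first-order term, $(d_{h_0}\phi,\cdot)$, vanishes since $d_{h_0}\phi=0$. We must also ensure the $O(t^3)$ remainders survive taking determinants with uniform control. Since everything is polynomial or analytic in $t$ in finite dimension, this is routine.
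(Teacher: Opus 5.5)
Your argument is the paper's argument in block-matrix form. The paper applies a Schur complement to $d_{h_t}$ relative to $H_0=\ker d_{h_0}$, writes the second factor as $\det\big(1^{H_0}d_{h_t}^*\proj{\ker(1^{H_0^\perp}d_{h_t}^*)}d_{h_t}1_{H_0}\big)$, and expands the projection to first order; that factor is your $a_t-b_tc_t^{-1}b_t^*$. Your intermediate steps are correct, and you normalize $H_0$ more carefully than the paper does.

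The failure is in your last sentence. With $\det c_0=|\rV|^N\T(\ul{x})^N$ and $\det S=|\rV|^{-N}\|(\proj{\ker d^*_{h_0}})^{\wedge N}\alpha\|^2$, the powers of $|\rV|$ cancel. What you have actually proved is
\[\det\Delta_{h_t}=t^{2N}\,\T(\ul{x})^N\,\big\|(\proj{\ker d^*_{h_0}})^{\wedge N}(\alpha)\big\|^2+O(t^{2N+1}).\]
This is not \eqref{eq:prop-schur}: it lacks the factor $|\rV|^N$.

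No argument can close this gap, because the displayed formula is itself off by $|\rV|^N$. Take an $n$-cycle with $n\geq 2$, unit weights, $N=1$, $A_{e_k}=\mathrm{i}a_k$ on its positively oriented edges, and $a=\sum_k a_k$.
\begin{enumerate}[\indent 1.]
\item The holonomy of $h_t$ around the cycle is $e^{\mathrm{i}ta}$, so $\det\Delta_{h_t}=|1-e^{\mathrm{i}ta}|^2=t^2a^2+O(t^4)$.
\item On the other side, $\T(\ul{x})=n$. The space $\ker d^*_{h_0}$ is spanned by the unit flow $\chi$ around the cycle, with $\|\chi\|^2=n$ and $(\chi,A_1)=\mathrm{i}a$. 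Hence $\|\proj{\ker d^*_{h_0}}A_1\|^2=a^2/n$.
\item The correct coefficient is therefore $\T(\ul{x})\|\proj{\ker d^*_{h_0}}A_1\|^2=a^2$, as your computation gives. By contrast, \eqref{eq:prop-schur} predicts $na^2$.
\end{enumerate}

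The paper's proof picks up the spurious factor when it evaluates $\det(\omega_A^*\proj{\ker d^*_{h_0}}\omega_A)$ in the canonical basis of $H_0$. The constant sections $\epsilon_j$ are orthogonal but have $\|\epsilon_j\|^2=|\rV|$. So the Gram determinant $\|(\proj{\ker d^*_{h_0}})^{\wedge N}\alpha\|^2$ must be divided by $|\rV|^N$, exactly as your normalized $\phi_j$ do. The corrected version is also the one consistent with Theorem~\ref{thm:partition-limit} via Proposition~\ref{prop:KL5-13-14}.

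You should therefore conclude with the formula displayed above, rather than claim \eqref{eq:prop-schur}.
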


\begin{proof}
Let us apply the Schur complement formula, in the guise of \cite[Lemma 5.7]{KL5}, to the operator $d_{h_{t}}:\Omega^{0}(\F)\to \Omega^{1}(\F)$ and the subspace~$H_{0}=\ker d_{h_{0}}$ of $\Omega^0(\F)$ consisting in constant $0$-forms. We find
\begin{equation}\label{eq:schur-deltat}
\det \Delta_{h_t} = \det\left(d_{h_t}^* d_{h_t}\right)=\det\left(1^{H_0^\perp}d_{h_t}^* d_{h_t} 1_{H_0^\perp}\right) 
\det \left( 1^{H_0} d_{h_t}^* \proj{\ker(1^{H_{0}^{\perp}}d_{h_{t}}^{*})} d_{h_t} 1_{H_0} \right)\,.
\end{equation}

Let us write a Taylor expansion of $d_{h_{t}}$ in a neighborhood of $t=0$. We split $\Omega^0(\F)$ as $H_0^{\perp} \oplus H_0$ and analyse the map $d_{h_t}$ on each summand. We find
\begin{equation}\label{eq:dht-expansion}
d_{h_t}=(d_{h_{0}} + O(t)) 1_{{H_0}^{\perp}} + (t {\omega}_A +O(t^2)) 1_{H_0},
\end{equation}
where $ \omega_{A}:H_{0} \subset \Omega^0(\F) \to \Omega^{1}(\F)$ is the linear map defined as follows: for every horizontal section $f\in H_{0}$, of which we denote by $f\in \C^{N}$ the unique value, and every edge~$e$, define
\begin{equation}\label{eq:omegaA}
(\omega_{A}(f))_{e}=A_{e}(f).
\end{equation}
Taking the adjoint of \eqref{eq:dht-expansion} yields
\begin{equation}\label{eq:dht*}
d^*_{h_t}=1^{H_0^{\perp}} (d_{h_{0}}^*+O(t)) + 1^{H_0} (t {\omega}_A^*  +O(t^2)).
\end{equation}
 
We now claim that
\begin{equation}\label{eq:proj-expansion}
\proj{\ker(1^{H_{0}^{\perp}}d_{h_{t}}^{*})} =\proj{\ker d^*_{h_0}}+O(t)\,.
\end{equation}
Let us show the analogous result for the complementary projections for which we have an explicit form. First, note that \begin{equation}\label{eq:orthogonals}
\left(\ker(1^{H_{0}^{\perp}}d_{h_{t}}^{*})\right)^{\perp}=\im(d_{h_{t}} 1_{H_{0}^{\perp}}) \; \text{and} \;  (\ker d_{h_{0}}^{*})^{\perp}=\im d_{h_0}=\im (d_{h_0} 1_{H_0^\perp})\,.
\end{equation} 
Secondly, for any injective linear map $u$ between inner product spaces $E^{0}$ and $E^{1}$, we may write 
\[\proj{\im u}=u (u^* u)^{-1} u^*\,.\] 
This formula shows that the map $u \mapsto \proj{\im u}$ from $\mathcal L(E^{0}, E^{1})$ to $\End(E^{1})$ is differentiable on the set of injective linear maps. 

The map $u_{0}=d_{h_0} 1_{H_0^\perp}$ is injective by construction, and by continuity, the map $u_{t}=d_{h_t} 1_{H_0^{\perp}}$ is injective for $t$ small enough. Therefore, 
\begin{equation}\label{eq:proj-Ot}
\proj{\im\big(d_{h_{t}} 1_{H_{0}^{\perp}}\big)}=\proj{\im d_{h_0}}+O(t)\,.
\end{equation}
From \eqref{eq:orthogonals} and \eqref{eq:proj-Ot} we thus deduce
\[\proj{\ker(1^{H_{0}^{\perp}}d_{h_{t}}^{*})}= \id_{\Omega^1(\F)}- \proj{\im\big(d_{h_{t}} 1_{H_{0}^{\perp}}\big)}= \id_{\Omega^1(\F)} - \proj{\im d_{h_0}}+O(t)=\proj{\ker d_{h_{0}}^{*}}+O(t)\,. \]
and the claim \eqref{eq:proj-expansion} is proved. 

From \eqref{eq:schur-deltat}, \eqref{eq:dht-expansion}, \eqref{eq:dht*}, and \eqref{eq:proj-expansion} we thus find
\begin{equation}
\det \Delta_{h_t}= \det\big( ({\Delta_{h_{0}}+O(t)})_{H_0^{\perp}}^{H_0^{\perp}}\big) \, \det( t  {\omega}_A^* \proj{\ker d_{h_{0}}^{*}} t {\omega}_A +O(t^3)).
\end{equation}
By factoring out the leading power of $t$, we obtain
\begin{equation}\label{eq:step2}
\det \Delta_{h_t}=t^{2N}\, \det\big({\Delta_{h_{0}}}_{H_0^\perp}^{H_0^\perp}\big) \, 
\det(\omega_A^* \proj{\ker d_{h_{0}}^{*}} \omega_A)+O(t^{2N+1})\,.
\end{equation}
We recognize the first term: letting $\Delta$ be the Laplacian on the trivial bundle of rank $1$, we have
\[\det\Big((\Delta_{h_{0}})_{H_0^\perp}^{H_0^\perp}\Big)=\left(\det \big(\Delta_{(\ker \Delta)^{\perp}}^{(\ker \Delta)^{\perp}}\big)\right)^{N}=(\vert \rV\vert \T(\ul{x}))^N\,,\]
where we used the classical matrix-tree theorem in the last equality (see for instance \cite[Section 6]{KL5}).
In order to compute $\det(\omega_A^* \proj{\ker d_{h_{0}}^{*}} \omega_A)$, we pick the basis of $H_0$ given by the canonical basis of~$\C^N$. Set $\alpha=A_1\wedge \ldots\wedge A_N$. Then, a computation similar to that done in the proof of \cite[Prop. 13.14]{KL5} yields
\[\det(\omega_A^* \proj{\ker d_{h_{0}}^{*}} \omega_A)=\lVert (\proj{\ker d_{h_{0}}^{*}})^{\wedge N}(\alpha) \rVert^2\]
and this finishes the proof.
\end{proof}

In order to compute $\lVert (\proj{\ker d^*_{h_0}})^{\wedge N}(\alpha) \rVert^2$, we rely on 
\cite[Prop. 13.14]{KL5}.

\begin{proposition}\label{prop:KL5-13-14}
We have
\begin{equation}\label{eq:Lx}
\sum_{\ul{K}\in \cC(\rG,N)} \ul{x}^{\ul{K}} |\det M_{A}(\ul{K})|^{2} = \T(\ul{x})^N \lVert (\proj{\ker d^*_{h_0}})^{\wedge N}(\alpha) \rVert^2\,.
\end{equation}
\end{proposition}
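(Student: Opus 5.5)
We propose to compute the right-hand side of \eqref{eq:Lx} by expanding the squared norm of $(\proj{\ker d^*_{h_0}})^{\wedge N}(\alpha)$ over the graphical splitting, and then recognising each term combinatorially. Since $h_0$ is trivial, $\ker d^*_{h_0}=\ker d^*\otimes \C^N$, where $d^*$ is the scalar codifferential. Hence $\proj{\ker d^*_{h_0}}=\proj{\ker d^*}\otimes \id_{\C^N}$. We identify $\Omega^1(\F)=\Omega^1(\C)\otimes\C^N=\bigoplus_{j=1}^N\Omega^1(\C)\otimes \C\epsilon_j$, and write each $A_j=\sum_l A_{lj}\otimes \epsilon_l$, where $A_{lj}\in\Omega^1(\C)$ is the $(l,j)$ entry of $A$.

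The first step is to compute the norm of a wedge of projected vectors as a Gram determinant: $\lVert(\proj{})^{\wedge N}\alpha\rVert^2=\det\big((\proj{}A_i,\proj{}A_j)\big)_{i,j}$. By Cauchy--Binet with respect to an orthonormal basis, this equals a sum of squared minors. Rather than working in a basis of $\ker d^*$, I will use the classical fact that
\[\T(\ul{x})\,\proj{\ker d^*}=\sum_{T\in\cT(\rG)}\ul{x}^T\,\proj{}^{\,T},\]
where $\proj{}^{\,T}$ is the (oblique) projection onto cycles along $\Omega^1(T)$. Concretely, $\proj{}^{\,T}\omega=\sum_{e\notin T}\omega(e)\,x_e\,\theta_{T,e}$, with $\theta_{T,e}$ the fundamental cycle of $e$ relative to $T$. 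This is the Kirchhoff/mean-projection formula, in the spirit of the mean projection theorem from \cite{KL4}.

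Rather than expanding projectors, the cleaner route is to apply the generalized Cauchy--Binet identity for $\ext^N$ of $\proj{\ker d^*}$ as used in \cite[Prop. 13.14]{KL5}, which the paper cites. I would prove the key lemma that
\[\T(\ul{x})^N\, \big\lVert(\proj{\ker d^*}\otimes\id)^{\wedge N}\alpha\big\rVert^2\]
equals a sum over $N$-tuples of subsets. Writing $\ker d^*\otimes \C^N=\bigoplus_l \ker d^*\otimes\epsilon_l$, the $N$-fold wedge decomposes into components indexed by how many of the $N$ factors land in each summand $l$. Each summand carries a wedge power of $\proj{\ker d^*}$. For each $l$ with multiplicity $r_l$, the squared norm of an $r_l$-fold wedge of projections onto the cycle space is given by the scalar result: $\T(\ul{x})\lVert\proj{\ker d^*}^{\wedge r}(\beta_1\wedge\dots\wedge\beta_r)\rVert^2$ equals the sum, over connected spanning subgraphs $K$ with $b_1(K)=r$, of $\ul{x}^K\,|\det(\beta_i(c_{k}))|^2$, with $(c_k)$ an integral basis of $Z_1(K,\Z)$. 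This is the one-form version of the matrix-tree theorem for $\cC_r(\rG)$, i.e. \cite[Prop. 13.14]{KL5} in rank one. Multiplying over $l$ and summing over the distributions of factors reassembles, via Laplace expansion of the $N\times N$ determinant along column blocks, exactly $|\det M_A(\ul{K})|^2$. The $l$-th block columns are $A(c_{i,l})_l$, the $l$-th columns integrated on cycles of $K_l$.

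The main obstacle is the bookkeeping in the last step. The cross terms between different assignments of factors to summands must be orthogonal: they are, since the summands $\ker d^*\otimes\epsilon_l$ are mutually orthogonal, so their wedge components lie in orthogonal pieces. The harder part is matching the Laplace expansion of $\det M_A$ with the product of minors, including the choice of which rows (the indices $j$ of $A_j$) go to which $K_l$. I expect to handle this by expanding $\alpha=\sum_{l_1,\dots,l_N}\bigwedge_j A_{l_jj}\otimes\epsilon_{l_j}$, grouping by the multiset $(r_l)$, and checking that the resulting coefficient is a generalized Laplace expansion whose squared modulus, summed over $\ul{K}$, gives $|\det M_A(\ul{K})|^2$. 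The integrality of cycle bases is what makes the scalar lemma independent of choices.
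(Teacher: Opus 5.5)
Your route (decompose $\Omega^1(\C^N)=\bigoplus_l \Omega^1(\C)\otimes\epsilon_l$, apply the single-graph case of \cite[Prop.~13.14]{KL5} in each summand, then reassemble by a Laplace expansion) is a reasonable unpacking of what the paper does in one stroke. The paper applies \cite[Prop.~13.14]{KL5} to the circular matroid of the disjoint union of $N$ copies of $\rG$, taking the \emph{rows} $\theta^i=A^i$ of $A$ as the $1$-forms. However, your reassembly step fails as stated. In the summand $l$ the factor $A_j$ contributes $A_{lj}$. So the column of the reassembled $N\times N$ matrix attached to a cycle $c_{i,l}$ of $K_l$ is $\big(A_{lj}(c_{i,l})\big)_{1\le j\le N}$, which is the $l$-th \emph{row} of $A(c_{i,l})$, not its $l$-th column $A(c_{i,l})_l$. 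Your computation therefore produces $\sum_{\ul{K}}\ul{x}^{\ul{K}}|\det M'(\ul{K})|^2$ with $M'(\ul{K})_{j,(i,l)}=A(c_{i,l})_{lj}$, whereas $M_A(\ul{K})_{j,(i,l)}=A(c_{i,l})_{jl}$. The missing ingredient is that $A$ is $\u(N)$-valued. Since $c_{i,l}$ has integer coefficients, $A(c_{i,l})^*=-A(c_{i,l})$, hence $M'(\ul{K})=-\overline{M_A(\ul{K})}$ and the two determinants have the same modulus. This is not a formality, because for a general $\mathfrak{gl}_N(\C)$-valued $1$-form the identity is false. Take $N=2$, $A_{21}=A_{22}=0$, $A_{11}=\beta_1$, $A_{12}=\beta_2$, and let $P=\proj{\ker d^{*}}$ act on scalar $1$-forms with $P\beta_1,P\beta_2$ linearly independent. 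Then every $M_A(\ul{K})$ has a zero second row, while the right-hand side equals $\T(\ul{x})^2\|P\beta_1\wedge P\beta_2\|^2>0$. So an argument that never invokes $A_e^*=-A_e$ cannot be complete. The paper uses the same symmetry without comment when it identifies the norms for $\vartheta$ and $\alpha$: identifying the $j$-th copy of $\rG$ with the $j$-th coordinate gives $\theta^i=-\overline{A_i}$, so $\vartheta=(-1)^N\overline{\alpha}$.

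A second, smaller point concerns your claim that cross terms between different assignments are orthogonal. This is true only for assignments of different type $\ul{r}=(r_1,\dots,r_N)$. Two assignments $\phi\neq\phi'$ of the $N$ factors to the summands with the same fibre sizes give terms in the same space $\bigotimes_l\ext^{r_l}\Omega^1(\C)$, and these interfere. Multiplying per-summand scalar identities term by term would give $\sum_\phi|t_\phi|^2$ instead of $|\sum_\phi t_\phi|^2$, where $t_\phi$ is the product of minors attached to $\phi$. What you need is the scalar lemma as an identity of sesquilinear forms on $\ext^r\Omega^1(\C)$:
\[\T(\ul{x})\,\big(P^{\wedge r}\beta,P^{\wedge r}\beta'\big)=\sum_{K\in\cC_r(\rG)}\ul{x}^{K}\,\overline{\langle c_K,\beta\rangle}\,\langle c_K,\beta'\rangle.\]
Here $\langle c_K,\beta_1\wedge\dots\wedge\beta_r\rangle=\det\big(\beta_i(c_k)\big)_{i,k}$ for an integral basis $(c_1,\dots,c_r)$ of $Z_1(K,\Z)$. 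This follows from your quadratic version by polarizing one slot at a time. Tensoring these identities over $l$ and applying the result to the whole type-$\ul{r}$ component of $\alpha$ puts the sum over $\phi$ inside the modulus, where it is exactly the Laplace expansion of $\det M'(\ul{K})$. With these two repairs your argument is a correct alternative to the paper's. It needs only the single-graph case of \cite[Prop.~13.14]{KL5}, rather than its version for a disjoint union of $N$ copies of $\rG$.
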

\begin{proof}
We consider the matroid on $\gE\times \set{N}$ defined by the map $d^*_{h_{0}}$ in the canonical basis. We think of it as $N$ copies of the graph $\rG$ on which we consider the circular matroid. More precisely, let $\rG_1,\ldots,\rG_N$ be these $N$ ordered copies, and let $\rG_{\set{N}}$ be their union.

The circular matroid on $\rG_{\set{N}}$ is the union of circular matroids on each of the $\rG_i$. In particular, its bases are the union of spanning trees of each $\rG_i$ (recall that we assume $\rG$ to be connected), which we denote $\ul{T}=(T_1,\ldots, T_N)$.

There is an isomorphism 
\[\Omega^1(\rG_{\set{N}})=\Omega^1(\rG_1)\oplus \ldots \oplus \Omega^1(\rG_N)\,.\]
We shall denote $\theta=\theta_{1}\oplus \ldots \oplus \theta_{N}$ the associated decomposition of a $1$-form $\theta$ on $\rG_{\set{N}}$.

Let us now define $N$ $1$-forms $\theta^{1},\ldots, \theta^{N}$ by taking the rows of $A$, that is, $\theta^{i}=A^i$.
Note that with this correspondence, the matrix $M_A(\ul{K})$ is $\theta^{i}(c_j)$ for $c_j$ a cycle basis of $\ul{K}$.

For this matroid, we have the following polynomials: 
\[\rB(\ul{x})=\det (d_{h_{0}}^* d_{h_{0}})_{\im d_{h_{0}}^*}^{\im d_{h_{0}}^*}=\left(\vert \rV\vert \T(\ul{x})\right)^N\]
and, with $\vartheta=\theta^{1}\wedge \ldots \theta^{N}$,
\[\rL_{N}(\vartheta,\ul{x})=\sum_{\ul{K}\in \cC(\rG,N)} w(\ul{K}) \ul{x}^{\ul{K}} \,.\]
where 
\[w(\ul{K})=w(\ul{T}) \det(\vartheta, \mathscr Z_{\ul{T}}^{\ul{K}})^2 \,.\]
Here $\ul{T}$ is a union of a tree on each copy of $\rG$, contained in $\ul{K}$, and $w(\ul{T})=\vert \rV \vert^N$.
Moreover, it follows from the above observation that $\det(\vartheta, \mathscr Z_{\ul{T}}^{\ul{K}})^2=\vert \det M_A(\ul{K})\vert^2$. In addition, we have
\[\lVert (\proj{\ker d^*_{h_0}})^{\wedge N}(\vartheta) \rVert^2=\lVert (\proj{\ker d^*_{h_0}})^{\wedge N}(\alpha) \rVert^2\]
since $\theta^{i}=A^i$.
The result thus follows from \cite[Prop. 13.14]{KL5} which states that $\rL_N(\vartheta,\ul{x})=\rB(\ul{x})\lVert (\proj{\ker d^*_{h_0}})^{\wedge N}(\vartheta) \rVert^2$, upon diving both sides by $\vert\rV\vert^N$.
\end{proof}

Plugging \eqref{eq:Lx} into \eqref{eq:prop-schur} yields Theorem \ref{thm:partition-limit}. 
Let us now turn to the specialization of the other equality given in \cite[Proposition 13.14]{KL5}.
Recall \eqref{eq:omegaA} and define
\[d_{h_0,A}=(d_{h_0})_{|(\ker d_{h_0})^\perp}\oplus \omega_A \colon \Omega^0(\F)= (\ker d_{h_0})^\perp \oplus \ker d_{h_0} \to \Omega^1(\F).\] 

\begin{proposition}\label{prop:KL5-13-14-2}
We have
\begin{equation}
\det\left(d_{h_{0},A}^* \circ d_{h_{0},A}\right) = \vert \rV\vert^N \sum_{\ul{K}\in \cC(\rG,N)} \ul{x}^{\ul{K}} |\det M_{A}(\ul{K})|^{2}\,.
\end{equation}
\end{proposition}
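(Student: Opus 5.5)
We compute $\det(d_{h_0,A}^*d_{h_0,A})$ by a block decomposition with respect to $\Omega^0(\F)=H_0^\perp\oplus H_0$, where $H_0=\ker d_{h_0}$, and then use Proposition~\ref{prop:KL5-13-14}. The operator $d_{h_0,A}$ is injective exactly when the leading coefficient of Theorem~\ref{thm:partition-limit} is nonzero; otherwise both sides vanish, as the computation below shows.

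First apply the Schur complement formula \cite[Lemma 5.7]{KL5} to $u=d_{h_0,A}$ and the subspace $H_0$, in exactly the form used in \eqref{eq:schur-deltat}:
\[\det(u^*u)=\det\big(1^{H_0^\perp}u^*u\,1_{H_0^\perp}\big)\,\det\big(1^{H_0}u^*\,\proj{\ker(1^{H_0^\perp}u^*)}\,u\,1_{H_0}\big).\]
Since $u1_{H_0^\perp}=d_{h_0}1_{H_0^\perp}$, the first factor is $\det\big((\Delta_{h_0})_{H_0^\perp}^{H_0^\perp}\big)=(|\rV|\T(\ul{x}))^N$. This follows from the classical matrix-tree theorem applied to each of the $N$ trivial rank-one summands, as in the proof of Proposition~\ref{prop:Schur-expansion}.

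For the second factor, note that $\ker(1^{H_0^\perp}u^*)=(\im u1_{H_0^\perp})^\perp=(\im d_{h_0})^\perp=\ker d_{h_0}^*$, by \eqref{eq:orthogonals}. Also $u1_{H_0}=\omega_A$. The second factor is therefore $\det(\omega_A^*\proj{\ker d^*_{h_0}}\omega_A)$. This is the same quantity that appeared in \eqref{eq:step2}, but now it holds exactly rather than up to $O(t)$ corrections.

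The same Gram-determinant computation as in the proof of Proposition~\ref{prop:Schur-expansion} applies here. We use the canonical basis of $H_0\simeq\C^N$, whose vectors are orthonormal up to a factor: a constant section has squared norm $|\rV|$. This normalisation has to be tracked, and it is the main point requiring care. With it, the computation gives $\lVert(\proj{\ker d^*_{h_0}})^{\wedge N}\alpha\rVert^2$, possibly times a power of $|\rV|$.

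I expect the convention chosen in Proposition~\ref{prop:Schur-expansion} to be exactly the one that makes the powers of $|\rV|$ agree there. I will keep that convention. The product is then $|\rV|^N\T(\ul{x})^N\lVert(\proj{\ker d^*_{h_0}})^{\wedge N}\alpha\rVert^2$. Substituting Proposition~\ref{prop:KL5-13-14} replaces $\T(\ul{x})^N\lVert\cdot\rVert^2$ by $\sum_{\ul{K}}\ul{x}^{\ul{K}}|\det M_A(\ul{K})|^2$, which is the claim.

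An alternative route avoids the normalisation issue. Observe that $d_{h_0,A}$ is exactly the $t$-rescaled leading part of $d_{h_t}$: its component on $H_0^\perp$ is taken at order $t^0$ and its component on $H_0$ at order $t^1$. The claimed identity is then the statement that the coefficient of $t^{2N}$ in Theorem~\ref{thm:partition-limit} equals $\det(d_{h_0,A}^*d_{h_0,A})$. To see this, compute $\det\Delta_{h_t}=t^{2N}\det(\tilde d_t^*\tilde d_t)$ with $\tilde d_t=d_{h_t}(1_{H_0^\perp}+t^{-1}1_{H_0})$, and note that $\tilde d_t\to d_{h_0,A}$ as $t\to0$. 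This can serve as a consistency check on the constant.
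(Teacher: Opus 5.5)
\textbf{Gap: the one constant that matters is not computed.} Your block decomposition is sound as far as it goes. The Schur complement with respect to $H_0=\ker d_{h_0}$, the identification $\ker(1^{H_0^\perp}u^*)=\ker d_{h_0}^*$, and the first factor $(|\rV|\T(\ul{x}))^N$ are all correct. This route is a genuine alternative to the paper's proof: you derive the identity from Proposition~\ref{prop:KL5-13-14} plus a Schur complement, whereas the paper identifies $d_{h_0,A}$ with the operator $d\oplus\omega_\vartheta$ of \cite[Prop.~13.14]{KL5} and quotes the second identity $\rL_N(\vartheta,\ul{x})=\det\Delta_\vartheta$ directly.

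However, the normalisation of $\det(\omega_A^*\proj{\ker d^*_{h_0}}\omega_A)$ is left open (``possibly times a power of $|\rV|$'') and settled by appeal to a convention. Compute it with the inner product \eqref{eq:SPOmega0}, which you yourself invoke. An orthonormal basis of $H_0$ is $(e_j/\sqrt{|\rV|})_j$, its images are $A_j/\sqrt{|\rV|}$, and the Gram determinant is $|\rV|^{-N}\|(\proj{\ker d^*_{h_0}})^{\wedge N}(\alpha)\|^2$. The product of the two factors is therefore $\T(\ul{x})^N\|(\proj{\ker d^*_{h_0}})^{\wedge N}(\alpha)\|^2$. By Proposition~\ref{prop:KL5-13-14} this equals $\sum_{\ul{K}\in\cC(\rG,N)}\ul{x}^{\ul{K}}|\det M_A(\ul{K})|^2$, with no factor $|\rV|^N$.

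A two-vertex check confirms this. Take $N=1$, vertices $u,v$, two edges $e,f$ from $u$ to $v$, with $A_e=ia$ and $A_f=ib$. The Gram matrix of the images under $d_{h_0,A}$ of the orthonormal basis $(1,1)/\sqrt2,(1,-1)/\sqrt2$ has determinant $x_ex_f(a-b)^2$. The right-hand side of the statement is $2x_ex_f(a-b)^2$.

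So the identity with the factor $|\rV|^N$ holds only if $\ker d_{h_0}$ carries the inner product making the canonical basis of $\C^N$ orthonormal. Equivalently, $d_{h_0,A}$ must be regarded as a map on $(\ker d_{h_0})^\perp\oplus\C^N$. This convention is implicit in the paper's identification with $d\oplus\omega_\vartheta$. Once you state it explicitly, your Schur-complement argument goes through and gives exactly $|\rV|^N\T(\ul{x})^N\|(\proj{\ker d^*_{h_0}})^{\wedge N}(\alpha)\|^2$. Without it, the argument proves a different constant.

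\textbf{Deferring to Proposition~\ref{prop:Schur-expansion} does not fix the constant.} Combined with \eqref{eq:Lx}, that proposition gives $\det\Delta_{h_t}\sim t^{2N}|\rV|^N\sum_{\ul{K}}\ul{x}^{\ul{K}}|\det M_A(\ul{K})|^2$. Theorem~\ref{thm:partition-limit} asserts the same without $|\rV|^N$, and the theorem is the correct one: in the example above, $\det\Delta_{h_t}=2x_ex_f(1-\cos((a-b)t))$.

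For the same reason, your alternative route does not support the stated constant. Your $\tilde d_t$ acts on $\Omega^0(\F)$ with its own inner product, so $\lim_{t\to0}t^{-2N}\det\Delta_{h_t}$ computes $\det(d_{h_0,A}^*d_{h_0,A})$ in the convention where it equals $\sum_{\ul{K}}\ul{x}^{\ul{K}}|\det M_A(\ul{K})|^2$. Carried out, that route would have exposed the factor $|\rV|^N$ rather than confirmed it.
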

\begin{proof}
We simply need to check that with the above consideration of the circular matroid on~$N$ copies of~$\rG$, we have
$d_{h_0,A}=d\oplus \omega_\vartheta$ and then apply the second equality of \cite[Proposition~13.14]{KL3} which states that $\rL_N(\vartheta,\ul{x})=\det \Delta_\vartheta$ where $\Delta_\vartheta=(d\oplus \omega_\vartheta)^*\circ (d\oplus \omega_\vartheta)$.
\end{proof}

The next result follows directly from the previous discussion and, provided $d_{h_0,A}$ is injective, expresses the limit as $t$ tends to $0$ of the distribution of the trace of the $\DLSF(\F,h_t)$.

\begin{theorem} Assume that $A$ is such that $d_{h_0,A}$ is injective. Then, for all $\ul{n}\in \N^{\gE}$ such that $\sum n_e=N|\rV|$, we have
\[\lim_{t\to 0} \P(\dim_\gamma \Q=\ul{n})= \frac{|\rV|^N}{\det(d_{h_{0},A}^* d_{h_{0},A})} \sum_{\ul{K}\in \cC_{\ul{n}}(\rG,N)} \ul{x}^{\ul{K}} |\det M_{A}(\ul{K})|^{2}.\]
\end{theorem}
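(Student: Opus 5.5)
The plan is to write the probability as a ratio of two determinants, each with an explicit $t$-expansion, and take the limit. Since $\sum n_e = N|\rV|$, only fully twisted configurations are involved, and for small $t\neq0$ the local system $(\F,h_t)$ is fully twisted. Indeed, by Proposition \ref{prop:KL5-13-14-2} and Theorem \ref{thm:partition-limit}, injectivity of $d_{h_0,A}$ makes the leading coefficient of $\det\Delta_{h_t}$ equal to $t^{2N}|\rV|^{-N}\det(d^*_{h_0,A}d_{h_0,A})\neq 0$. So $\Delta_{h_t}$ is invertible for small $t\ne0$, and $\dim\Q=N|\rV|$ almost surely. By the DLP structure recalled at the beginning of Section \ref{sec:characteristic-polynomial},
\[
\P(\dim_\gamma\Q=\ul n)=\frac{1}{\det\Delta_{h_t}}\int_{\Gr_{\ul n}(\Omega^1(\F),\gamma_1)}\det\big(d_{h_t}^*\proj{Q}d_{h_t}\big)\,\d\nu_{\ul n}(Q),
\]
which, as in the proof of Proposition \ref{prop:det1+tdd*}, equals $\ul x^{\ul n}w_{h_t}(\ul n)/\det\Delta_{h_t}$.

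\textbf{Denominator.} This is handled by Theorem \ref{thm:partition-limit} combined with Proposition \ref{prop:KL5-13-14-2}:
\[
\det\Delta_{h_t}=t^{2N}|\rV|^{-N}\det(d^*_{h_0,A}d_{h_0,A})+O(t^{2N+1}).
\]

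\textbf{Numerator.} For each fixed $Q\in\Gr_{\ul n}$, I redo the Schur complement argument of Proposition \ref{prop:Schur-expansion}, with $d_{h_t}$ replaced by $\proj{Q}d_{h_t}$. Equivalently, I rerun the argument on the weighted graph in which the weights are multiplied by $\proj{Q}$; this is legitimate because everything in that proof is linear algebra in $d_{h_t}$. The outcome should be
\[
\det(d_{h_t}^*\proj{Q}d_{h_t})=t^{2N}\det\big((d_{h_0}^*\proj{Q}d_{h_0})^{H_0^\perp}_{H_0^\perp}\big)\det\big(\omega_A^*\,\Pi_Q\,\omega_A\big)+O(t^{2N+1}),
\]
where $\Pi_Q$ is the orthogonal projection onto the kernel of $1^{H_0^\perp}d_{h_0}^*\proj{Q}$ inside $Q$. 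The $O(t^{2N+1})$ error is uniform in $Q$ by compactness of the Grassmannian. There is one case to handle separately: when $\proj{Q}d_{h_0}$ is not injective on $H_0^\perp$, the leading term vanishes, and it still has the correct order. Integrating over $Q$ then gives $t^{2N}$ times an explicit Grassmannian integral.

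\textbf{Identifying the integral (main obstacle).} The remaining step is to show that this integral equals $|\rV|^N\sum_{\ul K\in\cC_{\ul n}(\rG,N)}\ul x^{\ul K}|\det M_A(\ul K)|^2$. Here I would avoid evaluating the Grassmannian integral directly. Instead I apply Lemma \ref{lem:det1+ta} to the operator $d_{h_0,A}d_{h_0,A}^*$ on $\Omega^1(\F)$. Comparing coefficients of $\ul t^{\ul n}$ identifies the $\ul n$-component of the leading term with the $\ul n$-homogeneous part of $\det(d_{h_0,A}^*d_{h_0,A})$, viewed as a polynomial in edge weights carrying a separate weight for each unit of $n_e$. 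By Proposition \ref{prop:KL5-13-14-2}, refined by multidegree (the polynomial $\rL_N$ of \cite[Prop. 13.14]{KL5} is graded by the edge multiset of $\ul K$), this homogeneous part is $|\rV|^N\sum_{\ul K\in\cC_{\ul n}}\ul x^{\ul K}|\det M_A(\ul K)|^2$.

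To make this comparison rigorous, I need the leading coefficient of $\ul x^{\ul n}w_{h_t}(\ul n)$ to equal $\ul x^{\ul n}w_{A}(\ul n)$, where $w_A$ is defined by \eqref{eq:wh} with $\partial_{h_0,A}$ in place of $\partial_h$. The cleanest route I see is to note that $t^{-1}$-rescaling on $H_0$ turns $d_{h_t}$ into $d_{h_0,A}+O(t)$: one conjugates by $\mathrm{diag}(1_{H_0^\perp},t^{-1}1_{H_0})$, which multiplies the determinant by $t^{2N}$. Then $\det(d_{h_t}^*\proj{Q}d_{h_t})=t^{2N}\big(\det(d_{h_0,A}^*\proj{Q}d_{h_0,A})+O(t)\big)$ uniformly in $Q$, and integration gives exactly $t^{2N}\ul x^{\ul n}w_A(\ul n)+O(t^{2N+1})$. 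This rescaling trick also yields the denominator directly.

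Dividing the numerator expansion by the denominator expansion and letting $t\to0$ gives the stated limit. I expect the graded refinement of \cite[Prop. 13.14]{KL5} to be the delicate point.
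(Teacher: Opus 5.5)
Your argument is correct in substance and takes a different route from the paper's. As written, however, the constants do not match.

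\textbf{Comparison of routes.} Write $\Sigma_{\ul n}=\sum_{\ul K\in\cC_{\ul n}(\rG,N)}\ul x^{\ul K}|\det M_A(\ul K)|^2$ and $\Sigma=\sum_{\ul n}\Sigma_{\ul n}$. The paper only says the result follows from the preceding discussion. The natural reading is that it is Theorem~\ref{thm:partition-limit} read coefficientwise in $\ul x$:
\begin{itemize}
\item $\det\Delta_{h_t}=\sum_{|\ul n|=N|\rV|}\ul x^{\ul n}w_{h_t}(\ul n)$, with $w_{h_t}(\ul n)$ independent of $\ul x$;
\item Theorem~\ref{thm:partition-limit} holds for every positive $\ul x$, so evaluating at finitely many weight vectors and inverting the monomial system gives $\ul x^{\ul n}w_{h_t}(\ul n)=t^{2N}\Sigma_{\ul n}+O(t^{2N+1})$;
\item dividing by $\det\Delta_{h_t}=t^{2N}\Sigma+O(t^{2N+1})$ and rewriting $\Sigma$ with Proposition~\ref{prop:KL5-13-14-2} gives the formula.
\end{itemize}
You instead re-derive the $t$-expansion by rescaling on $H_0$. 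This is more conceptual. It shows $\im d_{h_t}\to\im d_{h_0,A}$, so the limiting trace law is that of $\DLP(\Omega^1(\F),\im d_{h_0,A},\gamma_1)$, and the DLSF itself converges. It also explains the injectivity hypothesis, and it bypasses the Schur-complement computation, so your Schur paragraph can be dropped. The price is that the combinatorics must come from Proposition~\ref{prop:KL5-13-14-2} in graded form. That refinement is not delicate: $d_{h_0,A}$ does not depend on $\ul x$, so $\det(d_{h_0,A}^*d_{h_0,A})=\sum_{\ul n}\ul x^{\ul n}w_A(\ul n)$ with $w_A$ independent of $\ul x$. A polynomial identity valid for all $\ul x>0$ can then be compared coefficientwise, exactly as in the paper's route.

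\textbf{The normalization slip.} Your two expansions of the denominator are incompatible.
\begin{itemize}
\item With $S_t=\mathrm{diag}(1_{H_0^\perp},t^{-1}1_{H_0})$ acting on $\Omega^0(\F)$, one has $\det(S_t^*S_t)=t^{-2N}$. Your rescaling therefore gives $\det\Delta_{h_t}=t^{2N}\det(d_{h_0,A}^*d_{h_0,A})+O(t^{2N+1})$, not $t^{2N}|\rV|^{-N}\det(d_{h_0,A}^*d_{h_0,A})$.
\item The two agree only if $\omega_A$ is defined on $\C^N$ with its standard inner product, a constant section with value $c$ (of norm $\sqrt{|\rV|}\,|c|$ in $\Omega^0(\F)$) being represented by $c$. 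This multiplies $\det(d_{h_0,A}^*d_{h_0,A})$ by $|\rV|^N$. It is the normalization in which Proposition~\ref{prop:KL5-13-14-2}, and hence the statement, holds.
\item Check: take two vertices joined by two edges $e,f$, with $N=1$, $A_e=ia$, $A_f=ib$. Then $\det\Delta_{h_t}=2x_ex_f(1-\cos t(a-b))$ and $\Sigma=x_ex_f(a-b)^2$. Also $\det(d_{h_0,A}^*d_{h_0,A})$ is $x_ex_f(a-b)^2$ with $\omega_A$ on $H_0$, but $2x_ex_f(a-b)^2$ with $\omega_A$ on $\C$.
\item In that normalization the factor produced by your rescaling is $t^{2N}|\rV|^{-N}$, not $t^{2N}$.
\end{itemize}
As you assemble things, your numerator $t^{2N}|\rV|^N\Sigma_{\ul n}$ over your denominator $t^{2N}|\rV|^{-N}\det(d_{h_0,A}^*d_{h_0,A})$ gives $|\rV|^{2N}\Sigma_{\ul n}/\det(d_{h_0,A}^*d_{h_0,A})$. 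This is off by $|\rV|^N$ from the statement and does not sum to $1$ over $\ul n$.

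The fix is the one you mention in passing: take numerator and denominator from the same rescaling. The Jacobian then cancels and the limit is $\ul x^{\ul n}w_A(\ul n)/\det(d_{h_0,A}^*d_{h_0,A})$. Graded Proposition~\ref{prop:KL5-13-14-2} turns this into exactly the stated right-hand side.
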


A case where the assumption that $d_{h_0,A}$ is injective is not satisfied is when the $1$-form $A$ is exact, in which case the curve $t\mapsto h_t$ is tangent at $t=0$ to the gauge orbit of the trivial connection $h_0$. 

%%%%%%%%%%%%%%%%%%%%%%%%%%%%%%%%%%%%%%%%

\section{Electrical networks}\label{sec:electrical-networks}

\subsection{The setting}\label{sec:nomenclature} 
In this section, we explain how the classical interpretation of graphs with Euclidean weights as electrical networks can be adapted to the framework of this paper, namely to local systems. For this, we start by making little adjustments in our formalism, introducing some notation, and explaining our convention. 

In contrast to what we did in Section \ref{sec:Laplocsys}, we find it more convenient here to define, for each edge $e$, the Euclidean weight $y_e=x_e^{-1}$, and to endow the space $\Omega_1(\F)$ of $1$-chains with the inner product
\begin{equation*}
\forall c_{1},c_{2}\in \Omega_{1}(\F), \quad
(c_{1},c_{2})=\frac{1}{2}\sum_{e\in \rE} y_e \langle c_{1}(e),c_{2}(e)\rangle_{e}\,.
\end{equation*}
Just as the weights $x_e$ could be interpreted as conductances (or inverse lengths), the weights $y_e$ represent resistances (or lengths).

The spaces $\Omega^0(\F)$ and $\Omega_0(\F)$ are the same vector space, but we keep the distinction between them for the sake of consistency. They are endowed with the untwisted inner product \eqref{eq:SPOmega0}. The inverse isometries~${\sf J}^0$ and ${\sf J}_0$ are, therefore, the identity map (see the diagram below).

We keep the definition \eqref{eq:defpartialh} for the twisted boundary operator $\partial_h:\Omega_1(\F) \to \Omega_0(\F)$, the definition of which does not involve any Euclidean weight.

Finally, let us define the inverse isometries $\sR:\Omega_1(\F)\to \Omega^1(\F)$ and $\sC:\Omega^1(\F)\to \Omega_1(\F)$ by setting
\[\sR=\bigoplus_{e\in \gE} y_e {\id}_{\F_e} \ \text{ and } \ \sC=\bigoplus_{e\in \gE} x_e {\id}_{\F_e}.\]

These definitions are summarized in the following commutative diagram:
\[\xymatrix{ \Omega^{0}(\F) \ar[r]^-{d_{h}} \ar@<0.5ex>[d]^{\sJ_{0}} & (\Omega^{1}(\F),\langle \cdot,\cdot\rangle_{\ul{x}}) \ar@<0.5ex>[d]^{\sC} \\
\Omega_{0}(\F) \ar@<0.5ex>[u]^{\sJ^{0}} & (\Omega_{1}(\F),\langle \cdot,\cdot\rangle_{\ul{y}}) \ar[l]_-{\partial_{h}} \ar@<0.5ex>[u]^{\sR}.
}\]

As far as an electric interpretation is involved, we think of the elements of $\Omega_1(\F)$ as currents, the elements of $\Omega^1(\F)$ as differences of potential, the elements of $\Omega^0(\F)$ as potentials, and the elements of~$\Omega_0(\F)$ as incoming currents. The electrical energy (or more properly power) of a current $i\in \Omega_1(\F)$ is the number $\|i\|^2_{\ul{y}}=\|\sR i \|^2_{\ul x}$.

\subsection{The classical problem}

One of the original motivations for the study of the Laplacian on networks with weighted edges is the study of electrical networks \cite{Kirchhoff} (see \cite{Weyl} for one of the first historical reformulations of this problem in the language of Analysis Situs, which became algebraic graph theory). The problem addressed by Kirchhoff is the following. Consider a network of electric generators, each consisting of a perfect voltage generator and a resistor, and find the current in each generator. The network is represented by a finite connected graph $\rG=(\rV,\rE)$, with one edge for each generator. On each oriented edge $e$ of the graph, the tension of the perfect generator is denoted by $\omega_{e}$, and the resistance by~$y_{e}$. Then $\omega$ is a $1$-form on the graph and $\ul{y}=(y_{e})_{e\in \gE}$ is a collection of Euclidean weights. The current~$i$ to be determined is a $1$-chain on $\rG$, determined by the two Kirchhoff laws, namely
\begin{equation*}
\left\{\begin{array}{l}
     \omega-\sR i \text{ sums to } 0 \text{ along every cycle, that is, belongs to } \im d,\\
     \partial_{h}i=j.
\end{array} \right. 
\end{equation*}

The unique solution $i$ to this problem is given by the decomposition of $\omega$ along $\Omega^{1}=\im d\oplus \ker d^{*}$:
\[i=\sC \sigma, \ \text{ where } \ \omega= d \eta + \sigma, \ \text{ with } \ d^{*}\sigma=0.\]
In other words,
\begin{equation}\label{eq:solkirch}
  i=\sC \proj{\ker d^{*}}\omega.  
\end{equation}

It is also possible to further impose at each node of the network an incoming current, represented by a $0$-chain $j$. This $0$-chain must satisfy the condition $j\in \im \partial$, which is equivalent to the fact that the total incoming current in the network is zero. Now, the equations become

\begin{equation*}
\left\{\begin{array}{l}
     \omega-\sR i\in \im d,\\
     \partial i=j.
\end{array} \right.
\end{equation*}

To solve these inhomogeneous equations, we introduce an arbitrary current $c\in \Omega_{1}(\F)$ such that $\partial c=j$. Using $c$, the equations can be rewritten as 
\[\omega-\sR (i-c) \in \sR c+\im d \ \text{ and } \ \partial (i-c)=0.\]
To solve these, we write the decomposition of $\omega-\sR c$ along $\Omega^{1}=\im d\oplus \ker d^{*}$ and find
\[\omega-\sR c= d \eta + \sigma, \ \text{ with } \ d^{*}\sigma=0.\]
Since $\partial =d^* \sR$, the second Kirchhoff equation yields $d^{*}(\sR(c-i))=0$, so that $\sR(i-c)=\sigma$. Therefore,
\begin{equation}
i=c+\sC \proj{\ker d^{*}}(\omega-\sR c)=\sC \proj{\ker d^{*}}\omega + ({\rm id}-\proj{\ker \partial})c.
\end{equation}

From a probabilistic perspective, the operator ${\rm id}-\proj{\ker \partial}$ can be nicely expressed in terms of the uniform spanning tree of the graph. Indeed, if $\T$ is a random spanning tree of $\rG$ distributed according to \eqref{eq:law-st}, we have
\[\proj{\ker \partial}=\E[\sP^{\ker \partial}_{\| \sT}],\]
by the mean projection theorem \cite[Thm. 5.5]{KL5} conjugated by the inverse isometries $\sR$ and~$\sC$, and \cite[Section 6]{KL5}.

This can be used to give a probabilistic solution to Kirchhoff's equations in certain cases. 
For example, let $a$ and $b$ be two nodes of the network, that is, two vertices of $\rG$. Let us consider the case where $\omega=0$ and $j=a-b$, representing one unit of current entering the network at $a$ and exiting at $b$. Let us choose a path $c$  from $a$ to $b$. Then for every spanning tree~$T$, the chain~$\proj{\ker \partial} c$ is the loop $c [b,a]_{T}$. Therefore, $c-\proj{\ker \partial} c=[a,b]_{T}$ and it is well known (since~\cite{Pemantle}) that when~$T$ follows the above law, this random path is the loop erased random walk from $a$ to~$b$. Therefore, the current $i$ is the expectation of the $1$-chain given by the oriented loop-erased random walk from $a$ to $b$.

\subsection{The case of local systems} Let us proceed by analogy in the case where the graph is endowed with a local system. Let us consider a local system $(\F,h)$ over a weighted graph $(\rG,\ul{x})$.

Let us choose $\omega\in \Omega^{1}(\F)$. Let us also take $j\in \im \partial_{h}\subseteq \Omega_{0}(\F)$. We are looking for $i\in \Omega_{1}(\F)$ such that
\begin{equation}\label{eq:kir-problem}
\left\{\begin{array}{l}
     \omega-\sR i\in \im d_{h},\\
     \partial_{h}i=j.
\end{array} \right.
\end{equation}

Let us choose $c\in \Omega_{1}(\F)$ such that $\partial_{h}c=j$. Following step by step the reasoning of the previous section, we find that the unique solution is $i=\sC \proj{\ker d_{h}^{*}}\omega + ({\rm id}-\proj{\ker \partial_{h}})c$,
which we may also rewrite as
\begin{equation}\label{eq:kir-solution}
i=\proj{\ker \partial_{h}}\sC \omega + ({\rm id}-\proj{\ker \partial_{h}})c.
\end{equation}

This solution can also be characterized by a minimization problem.

\begin{proposition}[Thomson's minimization principle] Given $j$ and $\omega$, the solution $i$ to \eqref{eq:kir-problem} given by \eqref{eq:kir-solution} satisfies
\[i=\arg\min\big\{ \|\iota-\sC\omega\|_{\ul{y}}^{2} : \iota \in \Omega^{1}(\F), \partial_{h}\iota=j\big\}.\]
\end{proposition}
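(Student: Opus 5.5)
The argument is the standard orthogonal decomposition, carried out in $\Omega_1(\F)$ with the inner product $\langle\cdot,\cdot\rangle_{\ul{y}}$. I would first note (presumably a typo) that the minimization ranges over currents $\iota\in\Omega_1(\F)$, since $\partial_h$ is defined on $1$-chains.

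\textbf{Step 1: reduce to a kernel.} Parametrize the affine constraint set as $\{\iota:\partial_h\iota=j\}=c+\ker\partial_h$, where $c$ is any chain with $\partial_h c=j$. The problem then becomes minimizing $\|c+k-\sC\omega\|^2_{\ul y}$ over $k\in\ker\partial_h$. Its unique minimizer is $k=-\proj{\ker\partial_h}(c-\sC\omega)$, where $\proj{\ker\partial_h}$ is the orthogonal projection for $\langle\cdot,\cdot\rangle_{\ul y}$. This gives
\[\iota_{\min}=c-\proj{\ker\partial_h}c+\proj{\ker\partial_h}\sC\omega,\]
which is exactly \eqref{eq:kir-solution}. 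Strict convexity of the squared norm on the affine subspace gives uniqueness.

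\textbf{Step 2: check the projection is the right one.} The projection appearing in \eqref{eq:kir-solution} must be orthogonal for the $\ul{y}$-inner product on $\Omega_1(\F)$. This is consistent with the earlier rewriting $\sC\proj{\ker d_h^*}=\proj{\ker\partial_h}\sC$, which I would verify briefly. Since $\partial_h=\sJ_0d_h^*\sR$, we have $\ker\partial_h=\sC\ker d_h^*$. Moreover $\sC$ is an isometry from $(\Omega^1,\langle\cdot,\cdot\rangle_{\ul x})$ to $(\Omega_1,\langle\cdot,\cdot\rangle_{\ul y})$: indeed $\|\sC\omega\|^2_{\ul y}=\tfrac12\sum y_ex_e^2\|\omega_e\|^2=\|\omega\|^2_{\ul x}$. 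An isometry conjugates orthogonal projections, so $\sC\proj{\ker d_h^*}\sR=\proj{\ker\partial_h}$.

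\textbf{Step 3 (alternative, variational check).} One can also argue directly. Let $i$ be given by \eqref{eq:kir-solution}, so $\partial_h i=j$. Then $\iota-i\in\ker\partial_h$ for every admissible $\iota$. Next, $i-\sC\omega=(\id-\proj{\ker\partial_h})(c-\sC\omega)\in(\ker\partial_h)^{\perp}$. Pythagoras then gives
\[\|\iota-\sC\omega\|^2=\|\iota-i\|^2+\|i-\sC\omega\|^2,\]
which is minimized exactly at $\iota=i$.

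The only delicate point is Step 2, namely confirming that the relevant projection is orthogonal for the $\ul y$-inner product. I expect this to reduce quickly to the isometry property of $\sC$. Everything else is routine.
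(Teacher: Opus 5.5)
Your proof is correct and is essentially the paper's argument. The paper likewise writes $i-c=\proj{\ker\partial_h}(\sC\omega-c)$, recognizes this as the closest point of $\ker\partial_h$ to $\sC\omega-c$, and then translates the constraint set by the particular solution $c$; your Step 2 ($\sC$ is an isometry, so the projection is $\ul{y}$-orthogonal) and your Pythagorean check in Step 3 just make explicit what the paper leaves implicit.
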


\begin{proof} We have $i-c=\proj{\ker \partial_{h}}(\sC \omega-c)$. Therefore,
\[i-c=\arg\min \{ \| \sC \omega-(c+\iota) \|_{\ul{y}}^{2} : \partial_{h}(c+\iota)=j\}=\arg\min \{ \| \sC \omega-\iota \|_{\ul{y}}^{2} : \partial_{h}\iota=j\}-c,\]
the second equality being an instance of a fully general identity about $\arg\min$ and translations. 
\end{proof}

The following result expresses two monotonicity properties of the solution of \eqref{eq:kir-problem} with respect to the electric resistances in the network.

\begin{proposition}[Rayleigh's monotonicity law] Consider $\omega\in \Omega^1(\F)$ and $j\in \im \partial_h$.
\begin{enumerate}[\indent 1.]
    \item If $\omega=0$, then the map $\ul{x}\mapsto \|\sR i\|_{\ul x}^{2}$ is non-increasing. 
    \item If $j=0$, then the map $\ul{x}\mapsto \|\sR i\|_{\ul x}^{2}$ is non-decreasing. 
\end{enumerate}
\end{proposition}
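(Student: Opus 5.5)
We use the variational characterisations of the two pieces of the solution \eqref{eq:kir-solution}, handling the two cases separately. In both cases the quantity $\|\sR i\|_{\ul x}^{2}$ equals $\|i\|_{\ul y}^{2}=\sum_{e\in\gE} y_e\|i(e)\|^2$ (with the factor $\frac12$ absorbing the two orientations), where $y_e=x_e^{-1}$. Since the problem is local in each coordinate $x_e$, it suffices to show monotonicity in a single $x_e$ while the others are fixed, or equivalently in the single resistance $y_e$ with opposite direction.

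\textbf{Case 1: $\omega=0$.} By Thomson's minimization principle, $i$ minimises $\|\iota\|_{\ul y}^{2}$ over the affine space $\{\iota:\partial_h\iota=j\}$. This constraint does not depend on the weights, since $\partial_h$ is defined by \eqref{eq:defpartialh} without weights. The energy is therefore
\[\mathcal E(\ul y)=\min_{\partial_h\iota=j}\ \sum_{e} y_e\|\iota(e)\|^2 ,\]
a minimum over a fixed set of functions each of which is non-decreasing (indeed affine) in every $y_e$. Hence $\mathcal E$ is non-decreasing in each $y_e$, that is, non-increasing in each $x_e$. This is exactly the classical argument and needs nothing new.

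\textbf{Case 2: $j=0$.} We then have $i=\sC\proj{\ker d_h^*}\omega$, so $\sR i=\proj{\ker d_h^*}\omega$ and the quantity is $\|\proj{\ker d_h^*}\omega\|_{\ul x}^{2}$. We use the dual variational formula: the orthogonal projection onto $\ker d_h^*=(\im d_h)^{\perp}$ is the residual of the best approximation from $\im d_h$. This gives
\[\|\proj{\ker d_h^*}\omega\|_{\ul x}^{2}=\min_{f\in\Omega^0(\F)}\|\omega-d_hf\|_{\ul x}^{2}=\min_f \sum_e x_e\|(\omega-d_hf)(e)\|^2 .\]
Here $d_h$ and $\omega$ do not depend on the weights; only the norm does, through the $\ul x$-weighted inner product \eqref{eq:SPOmega1}, and it is linear in each $x_e$. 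A minimum of functions that are non-decreasing in each $x_e$ is itself non-decreasing, which gives the second claim.

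The only step needing care is checking that the weights enter exclusively through the norms. This requires that neither the constraint set ($\partial_h\iota=j$, or $\im d_h$) nor the data ($\omega$, $j$) depends on $\ul x$. This holds with the conventions of Section~\ref{sec:nomenclature}, because $\partial_h$ is unweighted and $d_h$ is unweighted. It is also where the identification $\|\sR i\|_{\ul x}=\|i\|_{\ul y}$ and the formula $i=\sC\proj{\ker d_h^*}\omega$ from the derivation of \eqref{eq:kir-solution} are used. The holonomy plays no role beyond fixing these weight-independent linear constraints.
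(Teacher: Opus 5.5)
Your proof is correct. Case~1 is the paper's own argument: with $\omega=0$, Thomson's principle writes $\|\sR i\|^2_{\ul x}=\|i\|^2_{\ul y}$ as a minimum of functions non-decreasing in $\ul y$, taken over the weight-independent set $\{\partial_h\iota=j\}$.

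In Case~2 you take a different route. The paper stays on the side of currents. It combines the $\ul y$-orthogonal splitting $\sC\omega=(\sC\omega-i)+i$, which gives $\|i-\sC\omega\|^2_{\ul y}=\|\sC\omega\|^2_{\ul y}-\|i\|^2_{\ul y}$, with the same Thomson comparison as in Case~1, so that both cases come from a single variational principle. You instead pass to the side of potentials and use the dual principle $\|\sR i\|^2_{\ul x}=\|\proj{\ker d_h^*}\omega\|^2_{\ul x}=\min_f\|\omega-d_hf\|^2_{\ul x}$. There the datum $\omega$ and the subspace $\im d_h$ do not depend on $\ul x$, and the weights enter only linearly through the norm.

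This is the cleaner choice. On the current side the source $\sC\omega$ itself moves with $\ul x$; indeed $\|\sC\omega\|^2_{\ul y}=\|\omega\|^2_{\ul x}$. The paper's chain $\|i-\sC\omega\|^2_{\ul y}\geq\|i'-\sC\omega\|^2_{\ul y'}$ does not keep track of this dependence. To make the current-side argument rigorous, one has to notice that $\|\sC\omega\|^2_{\ul y}-\|\iota-\sC\omega\|^2_{\ul y}=\sum_{e\in\gE}\big(2\,\mathrm{Re}\langle\iota(e),\omega(e)\rangle_e-y_e\|\iota(e)\|_e^2\big)$. This exhibits $\|i\|^2_{\ul y}$ as a maximum over $\ker\partial_h$ of functions non-increasing in $\ul y$. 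Your formulation makes this bookkeeping unnecessary.
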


\begin{proof} Consider two conductances $\ul x$ and $\ul x\leq \ul x'$. We have, in general,
\[\|i-\sC \omega\|^{2}_{\ul y}\geq \|i-\sC \omega\|^{2}_{\ul y'}\geq \|i'-\sC \omega\|^{2}_{\ul y'}.\]
If $\omega=0$, then we find
\[\|\sR i \|^{2}_{\ul x}=\|i\|^{2}_{\ul y}\geq \|i'\|^{2}_{\ul y'}=\|\sR i' \|^{2}_{\ul x'},\]
which proves the first assertion.

If $j=0$, the decomposition $\sC \omega=(\sC \omega -i )+i$ is $\ul y$-orthogonal, so that 
\[\|i-\sC \omega\|^{2}_{\ul y}=\|\sC \omega\|^{2}_{\ul y}-\|i\|^{2}_{\ul y}.\]
The same holds for $i'$, and we find $\|\sR i \|^{2}_{\ul x}=\|i\|_{\ul y}^{2}\leq \|i'\|^{2}_{\ul y'}=\|\sR i' \|^{2}_{\ul x'}$.
\end{proof}

\begin{remark} \label{rem:inject}
For a generic connection $h$, the operator $\partial_{h}$ is onto, so that the condition $j\in \im \partial_h$ is satisfied by every $0$-chain. Therefore, in electrical terms, it is possible to inject current at a single vertex of the graph, and to let it find its way inside the network.
\end{remark}

A probabilistic interpretation of the projection is given by the equality
\[\id-\proj{\ker \partial_{h}}=\id -\E[\sP^{\ker \partial_{h}}_{\| \S}]=\E[\sP^{\S}_{\| \ker \partial_{h}}],\]
where $\S$ is the $\DLP(\Omega_{1}(\F),\im \partial_{h}^{*},\gamma_{1})$, and we emphasize that only the distribution of $\S$ depends on the Euclidean weights in the last expression. The projection is purely topological. Moreover, note that $\Q=\sR(\S)$ is the $\DLSF(\F,h)$.

Then the solution \eqref{eq:kir-solution} to \eqref{eq:kir-problem} is 
\begin{equation}\label{eq:kir-solution-proba}
i=\E\big[\sP^{\S}_{\| \ker \partial_{h}}(c)\big]
\end{equation}
for any $c\in \Omega_1(\F)$ such that $\partial_h c=j$.

A different choice $c'$ of $1$-chain such that $\partial_h c'=j$ differs from $c$ by an element of $\ker \partial_h$, and therefore satisfies $\sP^{\S}_{\| \ker \partial_{h}}(c')=\sP^{\S}_{\| \ker \partial_{h}}(c)$ almost surely. 

Let us explain how to find a $1$-chain $c$ such that $\partial_{h}c=j$ in the case where $(\F,h)$ is fully twisted along a loop. By linearity, we lose no generality by assuming that $j$ is supported by a unique vertex $v$, where it is equal to $j_v$. Let us choose a loop $\gamma=(e_1,\ldots,e_k)$ based at $v$ such that $\hol_h(\gamma)$ does not have $1$ as an eigenvalue. Then we define $c$ as the parallel transport along~$\gamma$ of $({\rm id}-\hol_h(\gamma))^{-1}(j_{v})$, that is,
\[c=\sum_{i=1}^k \underbrace{\big(\hol_h(\gamma_{|[v,e_i]})- \hol_h({\gamma_{|[e_i,v]}}^{-1})\big)(j_v)}_{\in\,  \F_{e_i}}.
\]

\subsection{Effective resistance} It seems natural to call the operator 
\[R_{h}=\proj{\im d_{h}}\sR:\Omega_{1}(\F)\to \Omega^{1}(\F)\]
the effective resistance of the network. When $\omega=0$ and $j=\partial_{h} c$, the solution $i$ to the Kirchhoff problem satisfies
\[\sR i=R_{h}c.\]

The following proposition is a generalization of the Foster--Tetali \cite{Tetali} theorem.

\begin{proposition}[Foster's network theorem]
Let $\Q$ be the $\DLSF(\F,h)$ and let $\ul n = \dim_{\gamma} \Q$ its trace. Then
\[\det\big({\rm id}_{\Omega^{1}(\F)} +m_{\ul z}^{\Omega^{1}(\F)} R_{h} \sC\big)=\E\Big[\prod_{e\in \gE} (1+z_{e})^{n_{e}}\Big].\]
In particular,
\[\det\big({\rm id}_{\F_{e}} +zx_{e} (R_{h})^{\F_{e}}_{\F_{e}} \big)=\E\big[(1+z)^{n_{e}}\big],\]
and 
\[\E[n_{e}]=x_{e}\Tr \big((R_{h})^{\F_{e}}_{\F_{e}}\big) \ \text{ and } \ \P(\Q_{e}=\F_{e})=x_{e}^{N} \det \big((R_{h})^{\F_{e}}_{\F_{e}}\big).\]
Moreover,
\[\sum_{e\in \gE}x_{e} \Tr \big((R_{h})^{\F_{e}}_{\F_{e}}\big) =\rk(d_{h})=N|\rV|-\dim (\ker d_{h}).\]
\end{proposition}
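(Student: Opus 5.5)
The plan is to recognise $\sC^{1/2}$-conjugates of $R_h\sC$ as the orthogonal projection onto $\im d_h$, and then apply the generating-function identity for determinantal linear processes, in the form of Lemma \ref{lem:det1+ta} and \cite{KL3}.

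First, compute $R_h\sC=\proj{\im d_h}\sR\sC=\proj{\im d_h}$, since $\sR$ and $\sC$ are inverse maps. The operator $m_{\ul z}^{\Omega^1(\F)}$ acts by the scalar $z_e$ on each $\F_e$. So the left-hand side is simply
\[\det\big(\id+m_{\ul z}\proj{\im d_h}\big).\]
Here $\proj{\im d_h}$ is the orthogonal projection in $(\Omega^1(\F),\langle\cdot,\cdot\rangle_{\ul x})$, and the graphical splitting $\gamma_1$ is orthogonal for this inner product.

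Second, expand this determinant with Lemma \ref{lem:det1+ta}, taking $W=\Omega^1(\F)$ and $a=\proj{\im d_h}$. This gives
\[\sum_{\ul k}\ul z^{\ul k}\int_{\Gr_{\ul k}}\det\big({\sf 1}^R\proj{\im d_h}{\sf 1}_R\big)\,\d\nu_{\ul k}(R).\]
For $\dim R=\rk d_h$, the integrand $\det({\sf 1}^R\proj{H}{\sf 1}_R)$ equals $\cos^2(R,H)$. Integrating it gives $\P(\dim_\gamma\Q=\ul k)$, by the definition of the DLP. For general $\dim R$, the integral is the probability that $\dim_\gamma\Q$ dominates a configuration of type $\ul k$, weighted combinatorially. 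To avoid a direct computation, I would invoke the DLP generating-function identity of \cite{KL3} (the same Prop.~3.8 behind Lemma \ref{lem:det1+ta}). It states that
\[\det\big(\id+m_{\ul z}\proj{H}\big)=\E\Big[\prod_e(1+z_e)^{\dim\Q_e}\Big].\]
As a sanity check, when each $\Q_e$ is a coordinate subspace in a random basis, the identity reduces to the classical DPP formula $\det(1+ZK)=\E\prod_{i\in\X}(1+z_i)$. The random-basis sampling algorithm of Section~\ref{sec:dls} gives exactly this: condition on the random bases, apply the DPP identity, and average, noting that the determinant does not depend on the chosen bases. I expect this step to be the only real content, and I would prove it by this conditioning argument.

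The remaining assertions are specialisations.

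Setting $z_{e'}=0$ for $e'\neq e$ reduces the determinant to the block $\det\big(\id_{\F_e}+z\,(\proj{\im d_h})^{\F_e}_{\F_e}\big)$. This block equals $\det\big(\id+zx_e(R_h)^{\F_e}_{\F_e}\big)$, once one accounts for the weight $x_e$ relating the two inner products. This bookkeeping between $\langle\cdot,\cdot\rangle_{\ul x}$ and the untwisted pairing must be checked carefully.

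Differentiating at $z=0$ gives $\E[n_e]=x_e\Tr\big((R_h)^{\F_e}_{\F_e}\big)$. The coefficient of $z^N$ is $\P(n_e=N)$, which is $\P(\Q_e=\F_e)$. It equals $x_e^N\det\big((R_h)^{\F_e}_{\F_e}\big)$.

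Finally, summing $\E[n_e]$ over $e$ gives $\E[\dim\Q]$. This equals $\rk d_h$ almost surely, which is $N|\rV|-\dim\ker d_h$ by Theorem \ref{theo:computerank} applied to $A=\gE$. Equivalently, it is $\Tr\proj{\im d_h}$.
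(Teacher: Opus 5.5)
Your proposal is correct and follows the paper's proof. You observe that $R_h\sC=\proj{\im d_h}$, apply the DLP generating-function identity $\det(\id+m_{\ul z}\proj{\im d_h})=\E[\prod_e(1+z_e)^{n_e}]$, and specialise, with the factor $x_e$ coming simply from $\sR$ acting by $y_e=x_e^{-1}$ on $\F_e$. The only addition is that you justify the generating-function identity by conditioning on the random bases of the sampling algorithm and using the classical DPP formula, whereas the paper just cites \cite{KL3}.
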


\begin{proof} The first relation is a direct application of one of the expression of the generating function of a DLP, namely, in this setting
\[\det\big({\rm id}_{\Omega^{1}(\F)} +m_{\ul z}^{\Omega^{1}(\F)} \proj{\im d_{h}}\big)=\E\Big[\prod_{e\in \gE} (1+z_{e})^{n_{e}}\Big].\]
The others follow by successive specialization. Finally, the last identity expresses the fact that $\dim \Q$ is almost surely equal to the rank of $d_h$.
\end{proof}

\subsection{Relation with the Gaussian free vector field} \label{sec:GFVF}

To the local system $(\F,h)$, there is an associated Gaussian random vector $\sPhi$ of $\Omega^{0}(\F)$ such that for all $\alpha, \beta\in \Omega^{0}(\F)$, 
\[\E[\langle \alpha, \sPhi\rangle \langle \sPhi,\beta \rangle]=\langle \alpha, \Delta_{h}^{-1} \beta\rangle.\]

Since the projection on $\im d_{h}$ can be written as $d_{h}\Delta_{h}^{-1}d_{h}^{*}$, a small computation gives the distribution of $d_{h}\sPhi$, which is a Gaussian random vector in $\Omega^{1}(\F)$, namely, for all $\eta,\xi\in \Omega^{1}(\F)$,
\[\E[\langle \eta, d_{h}\sPhi\rangle \langle d_{h}\sPhi,\xi \rangle]=\langle \eta, \proj{\im d_{h}}\xi\rangle.\]
From this relation, we deduce the following expression of the effective resistance.

\begin{proposition} Let $\eta$ and $\xi$ be elements of $\Omega^{1}(\F)$. Let $e$ and $f$ be two edges. Then
\[\E[\langle \eta_{e}, d_{h}\sPhi\rangle_{e} \langle d_{h}\sPhi,\xi_{f} \rangle_{f}]=\langle \eta_{e}, R_{h} \xi_{f}\rangle_{e}.\]
\end{proposition}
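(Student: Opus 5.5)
We need to prove that $\E[\langle \eta_e, d_h\sPhi\rangle_e \langle d_h\sPhi,\xi_f\rangle_f]=\langle \eta_e, R_h \xi_f\rangle_e$. The plan is to reduce it to the covariance identity for $d_h\sPhi$ displayed just before the statement, and then to unwind the definition $R_h=\proj{\im d_h}\sR$. We read $\eta_e$ as an element of $\F_e\subseteq\Omega^1(\F)$ and $\xi_f$ as an element of $\F_f$. The pairings $\langle\cdot,\cdot\rangle_e$ are the fibrewise inner products. Where $R_h$ is applied to $\xi_f$, we regard $\xi_f$ as a $1$-chain in $\Omega_1(\F)$, supported on $f$, with the same underlying vector.

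First, we establish the covariance identity $\E[\langle \eta, d_h\sPhi\rangle\langle d_h\sPhi,\xi\rangle]=\langle\eta,\proj{\im d_h}\xi\rangle$ in $\Omega^1(\F)$. We use the adjoint to write $\langle\eta,d_h\sPhi\rangle=\langle d_h^*\eta,\sPhi\rangle$. The defining covariance of $\sPhi$ then gives $\langle d_h^*\eta,\Delta_h^{-1}d_h^*\xi\rangle=\langle\eta,d_h\Delta_h^{-1}d_h^*\xi\rangle$. Finally, $d_h\Delta_h^{-1}d_h^*=\proj{\im d_h}$, because it is self-adjoint, idempotent, and has range $\im d_h$. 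This step uses that $\Delta_h$ is invertible, i.e.\ the local system is fully twisted. In the general case we replace $\Delta_h^{-1}$ by the pseudo-inverse on $(\ker d_h)^\perp$, and the same projection formula holds.

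Second, we localise to edges. Take $\eta=\eta_e$ supported on $e$. The weighted inner product \eqref{eq:SPOmega1} then gives $\langle\eta_e,\omega\rangle=x_e\langle\eta_e,\omega(e)\rangle_e$, and similarly for $\xi_f$ on $f$. Substituting into the identity from the first step yields
\[
x_ex_f\,\E[\langle\eta_e,(d_h\sPhi)(e)\rangle_e\langle(d_h\sPhi)(f),\xi_f\rangle_f]=x_e\langle\eta_e,(\proj{\im d_h}\xi_f)(e)\rangle_e .
\]
Dividing by $x_e x_f$ and using $\sR\xi_f=y_f\xi_f=x_f^{-1}\xi_f$ on the fibre over $f$ gives $(\proj{\im d_h}\sR\,\xi_f)(e)=(R_h\xi_f)(e)$. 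This is exactly the claimed formula.

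The only delicate point is the normalisation conventions. We must check where the weights $x_e$ and $y_f$ sit, i.e.\ which of the two factors is measured with the unweighted fibre product, so that precisely one factor of $\sR$ appears and matches $R_h=\proj{\im d_h}\sR$. The rest is routine bookkeeping.
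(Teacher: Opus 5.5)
Your proposal is correct and takes essentially the same route as the paper. The paper's proof is exactly your displayed identity $x_ex_f\,\E[\langle\eta_e,(d_h\sPhi)(e)\rangle_e\langle(d_h\sPhi)(f),\xi_f\rangle_f]=x_e\langle\eta_e,(\proj{\im d_h}\xi_f)(e)\rangle_e$, read off from the covariance of $d_h\sPhi$ and the weighted inner product, followed by absorbing $x_f^{-1}$ into $\sR$. The only difference is that you also verify the covariance identity through $d_h\Delta_h^{-1}d_h^*=\proj{\im d_h}$, which the paper states just before the proposition as a ``small computation'' without giving details.
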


\begin{proof} From the equation above and the definition of the inner products, we find
\[x_{e}x_{f}\E[\langle \eta_{e}, d_{h}\sPhi\rangle_{e} \langle d_{h}\sPhi,\xi_{f} \rangle_{f}]=x_{e}\langle \eta_{e}, \proj{\im d_{h}}\xi_{f}\rangle_{e}=x_{e}x_{f}\langle \eta_{e}, R_{h} \xi_{f}\rangle_{e},\]
hence the result.
\end{proof}

%%%%%%%%%%%%%%%%%%%%%%%%%%%%%%%%%%%%%%%%

\section{Infinite graphs}\label{sec:infinite-graphs}

In addition to its mathematical interest, it is useful from the point of view of statistical physics to consider determinantal linear spanning forests on infinite graphs. For a treatment of this question in the classical case of spanning forests, see~\cite[Chapters 9 and 10]{Lyons-Peres}.

We assume that our graphs are countably infinite and locally finite, in the sense that $\rV$ is countable and each vertex has finite degree.

\subsection{Hilbert spaces and splitting}

Even when $\rG$ is infinite, the equations \eqref{eq:defdh} and \eqref{eq:d-star} define linear operators between the spaces 
\[\Omega^{0}(\F)=\prod_{v\in \rV} \F_{v} \ \text{ and } \ \Omega^{1}(\F)=\prod_{e\in \gE} \F_{e}.\]
We keep the notation $d_{h}$ and $d_{h}^{*}$ for these operators, but emphasize that $d_{h}^{*}$ is not the adjoint of~$d_{h}$ anymore, simply because there is no inner product structure on $\Omega^{0}(\F)$ and $\Omega^{1}(\F)$. We keep the notation $\Delta_{h}=d_{h}^{*}d_{h}$ for the linear endomorphism of $\Omega^{0}(\F)$.

Let us now define Hermitian structures on subspaces of $\Omega^0(\F)$ and $\Omega^1(\F)$. For this purpose, we use the inner product \eqref{eq:SPOmega1} on $\Omega^{1}(\F)$, and a twisted version of  \eqref{eq:SPOmega0} on $\Omega^{0}(\F)$. For every vertex~$v$, we define $\lambda_{v}=\sum_{\ol{e}=v}x_{e}$ and we set 
\begin{equation}\label{eq:SPOmega0lambda}
\forall f_{1},f_{2}\in \Omega^{0}(\F), \quad
(f_{1},f_{2})=\sum_{v\in \rV}  \lambda_{v} \langle f_{1}(v),f_{2}(v)\rangle_{v}.
\end{equation}
We define the two Hilbert spaces $\Omega^0_{\ell^2}(\F)=\{f\in\Omega^0(\F) : \|f\|^{2}<\infty\}$ endowed with the inner product~\eqref{eq:SPOmega0lambda}, and $\Omega^1_{\ell^2}(\F)=\{\omega\in\Omega^1(\F) : \|\omega\|^{2}<\infty\}$ endowed with \eqref{eq:SPOmega1}.

\begin{proposition} The operator $d_{h}$ sends $\Omega^0_{\ell^2}(\F)$ to $\Omega^1_{\ell^{2}}(\F)$. By restriction, it defines a bounded linear operator $D_{h}$, with operator norm $\|D_{h}\|\leq \sqrt{2}$.
\end{proposition}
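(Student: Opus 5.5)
The plan is to bound $\|d_h f\|^2$ edge by edge, using only the unitarity of the connection and the definition of $\lambda_v$. For $f\in\Omega^0_{\ell^2}(\F)$ and an edge $e$ with $u=\ul{e}$, $v=\ol{e}$, the definition \eqref{eq:defdh} gives $(d_hf)(e)=h_{e,v}f(v)-h_{e,u}f(u)$. Since $h_{e,v}$ and $h_{e,u}$ are isometries, the triangle inequality together with $(a+b)^2\le 2a^2+2b^2$ yields
\[\|(d_hf)(e)\|_e^2\le 2\|f(v)\|_v^2+2\|f(u)\|_u^2 .\]

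Next I insert this into the inner product \eqref{eq:SPOmega1}:
\[\|d_hf\|^2=\tfrac12\sum_{e\in\rE}x_e\|(d_hf)(e)\|_e^2\le \sum_{e\in\rE}x_e\big(\|f(\ol{e})\|^2+\|f(\ul{e})\|^2\big).\]
I then regroup this sum by vertices. Because the involution $e\mapsto e^{-1}$ is a bijection of $\rE$ exchanging sources and targets and preserving $x_e$, the term $\sum_e x_e\|f(\ul{e})\|^2$ equals $\sum_e x_e\|f(\ol{e})\|^2$. Hence the right-hand side equals
\[2\sum_{v\in\rV}\Big(\sum_{\ol{e}=v}x_e\Big)\|f(v)\|^2=2\sum_{v}\lambda_v\|f(v)\|_v^2=2\|f\|^2,\]
with $\|f\|$ the norm from \eqref{eq:SPOmega0lambda}. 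All terms are nonnegative, so this rearrangement of the (possibly infinite) sums is legitimate by Tonelli. It follows that $d_hf\in\Omega^1_{\ell^2}(\F)$ and $\|d_hf\|\le\sqrt2\,\|f\|$. Linearity is inherited from $d_h$, so the restriction $D_h$ is a bounded operator with $\|D_h\|\le\sqrt2$.

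No step is a real obstacle. The only care needed is in the bookkeeping: the factor $\tfrac12$ in \eqref{eq:SPOmega1} compensates for each geometric edge being counted twice in $\rE$, and the weight $\lambda_v$ in \eqref{eq:SPOmega0lambda} is exactly what absorbs the vertex degrees, which may be unbounded. The constant $\sqrt2$ comes from the elementary inequality $(a+b)^2\le2a^2+2b^2$.
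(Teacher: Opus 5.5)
Your proposal is correct and follows essentially the same route as the paper's proof: an edge-by-edge bound from unitarity, the triangle inequality and $(a+b)^2\le 2a^2+2b^2$, followed by regrouping the sum over edges by vertices via $\lambda_v$ to get $\|d_hf\|^2\le 2\|f\|^2$. Your added justifications (the involution exchanging sources and targets, and Tonelli for the rearrangement) are details the paper leaves implicit.
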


\begin{proof} Let $f$ be an element of $\Omega^0_{\ell^2}(\F)$. For each edge $e$, the triangular inequality combined with the unitarity of the connection $h$ implies the inequalities 
\[\|d_{h}f(e)\|^{2}\leq (\|h_{e,\ul{e}}f(\ul e)\|+\|h_{e,\ol{e}}f(\ol e)\|)^{2}\leq (\|f(\ul e)\|+\|f(\ol e)\|)^{2}\leq 2 (\|f(\ul e)\|^{2}+\|f(\ol e)\|^{2}).\]
Thus,
\[\|d_{h}f\|^{2}=\frac{1}{2}\sum_{e\in \rE} x_{e} \|d_{h}f(e)\|^{2}\leq \sum_{e\in \rE} (x_{e} \|f(\ul e)\|^{2} + x_{e} \|f(\ol e)\|^{2})=2 \sum_{v\in \rV} \lambda_{v} \|f(v)\|^{2}.\]
This proves that $d_{h}f$ belongs to $\Omega^1_{\ell^{2}}(\F)$ and $\|d_{h}f\|^{2}\leq 2 \|f\|^{2}$.
\end{proof}

We denote by $D_{h}^{*}$ the adjoint of $D_{h}$, which is also a bounded linear operator, from $\Omega^1_{\ell^2}(\F)$ to~$\Omega^0_{\ell^{2}}(\F)$, with the same operator norm as $D_{h}$. It is given by the formula 
\begin{equation}\label{eq:d-starlambda}
(D_{h}^{*} \omega)(v)=\frac{1}{\lambda_{v}} (d_{h}^{*}\omega) (v)=\frac{1}{\lambda_{v}}\sum_{\substack{e\in \rE \\ \ol{e}=v}} x_{e} h_{v,e}\omega(e)\,.
\end{equation}

Let us define the bounded linear endomorphism 
\[L_{h}=D_{h}^{*}D_{h}\]
of $\Omega^{0}_{\ell^{2}}(\F)$.

Let us borrow and adapt some more notation from \cite{BLPS}. Let us denote respectively by~$\Omega^{0}_{0}(\F)$ and $\Omega^{1}_{0}(F)$ the spaces of $0$-forms and $1$-forms with finite support. 

\[\Omega^{0}_{0}(\F) \subset \Omega^{0}_{\ell^{2}}(\F) \subset \Omega^{0}(\F)\]

\begin{definition}
Let us define
\[\bigstar_{h}=\overline{\im D_{h}}=\overline{D_{h}(\Omega^{0}_{0}(\F))} \ \text{ and } \  \boxminus_{h}=\overline{\Omega^{1}_{0}(\F)\cap \ker D^{*}_{h}},\]
the closures being in the $\ell^{2}$ sense.
\end{definition}

Let us make a few simple remarks. Firstly, the space $\bigstar_{h}$ is equal to the closure of the range of~$D_{h}$. Indeed, the direct inclusion follows by definition, and reciprocally, any element of $\Omega^{0}_{\ell^{2}}(\F)$ is the limit of a sequence of elements with finite support, so that the range of $D_{h}$, hence its closure, is included in $\bigstar_{h}$.

Secondly, the space $\boxminus_{h}$ is included in the closed subspace $\ker D_{h}^{*}$, but this inclusion can be strict. Thirdly, the spaces $\bigstar_{h}$ and $\boxminus_{h}$ are orthogonal. However, their sum is not, in general, equal to $\Omega^1_{\ell^{2}}(\F)$. 

Let us define the space of harmonic functions with finite Dirichlet energy
\[\H_{h}=\{f\in \Omega^{0}(\F) : \lapp_h f=0 \text{ and } d_hf\in \Omega^{1}_{\ell^{2}}(\F)\}\]
and the space of their differentials
\[\D_h=d_h(\H_{h})\subseteq \Omega^1_{\ell^2}(\F).\]

\begin{theorem}\label{thm:splitting}
The following three orthogonal decompositions of $\Omega^{1}_{\ell^{2}}(\F)$ hold.
\[\Omega^1_{\ell^2}(\F)=\bigstar_{h} \oplus \ker D_{h}^{*}=\bigstar_{h}\oplus \D_h \oplus \boxminus_{h}=\big(d_h(\Omega^{0}(\F))\cap \Omega^{1}_{\ell^{2}}(\F)\big)\oplus \boxminus_{h}\,.\]
\end{theorem}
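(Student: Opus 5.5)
The plan is to follow the classical argument of \cite{BLPS} for the untwisted case. The only inputs are the Hilbert space structure, adjointness of $D_h$ and $D_h^*$, and the fact that $d_h$ and $d_h^*$ are local operators.

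The first decomposition is the standard one: for a bounded operator between Hilbert spaces one has $\overline{\im D_h}^{\perp}=\ker D_h^*$, so $\Omega^1_{\ell^2}(\F)=\bigstar_h\oplus\ker D_h^*$.

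For the second decomposition, I will show that $\ker D_h^*=\D_h\oplus\boxminus_h$ orthogonally, and the key step is to identify $\boxminus_h^\perp$. Take $\omega\in\Omega^1_{\ell^2}(\F)$ orthogonal to $\boxminus_h$. For each vertex $v$ and each $\xi\in\F_v$, the finitely supported form $\partial_h$-type cycle obtained from loops lies in $\Omega^1_0(\F)\cap\ker D_h^*$. More precisely, $\Omega^1_0(\F)\cap\ker D_h^*$ contains the forms $\sR\,\partial_h^*$-dual cycles, namely the transports of vectors around finite loops, weighted by $y_e=x_e^{-1}$. The condition that $\omega$ is orthogonal to all these says that the discrete covariant integral of $\omega$ along every closed loop vanishes, with the same bookkeeping as in the lemma preceding Proposition~\ref{prop:poincare-lemma}.

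By the construction in Proposition~\ref{prop:evkerd}, I then integrate $\omega$ along paths from a fixed base vertex in each component, using the identity of that lemma in reverse. This produces $f\in\Omega^0(\F)$ with $d_hf=\omega$, and path independence is exactly the vanishing of loop integrals. Hence $\boxminus_h^\perp=d_h(\Omega^0(\F))\cap\Omega^1_{\ell^2}(\F)$. The reverse inclusion holds because for $\theta\in\Omega^1_0(\F)\cap\ker D_h^*$ and any $f$, the pairing $(d_hf,\theta)$ is a finite sum equal to $\sum_v\langle f(v),(d_h^*\theta)(v)\rangle=0$. This finite summation by parts needs no $\ell^2$ assumption on $f$, which is why $\boxminus_h$ is defined with finite support. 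This already gives the third decomposition.

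I expect the main obstacle to be this characterization of $\boxminus_h^\perp$, in particular checking that $\Omega^1_0(\F)\cap\ker D_h^*$ is spanned by the loop forms. If that is awkward, I will argue directly instead. Suppose $\omega\perp\Omega^1_0\cap\ker D_h^*$. Then on each finite subgraph, by finite-dimensional linear algebra (the third decomposition restricted to a finite set of edges with $d^*$ taken only at interior vertices), $\omega$ restricted to that edge set lies in the image of $d_h$. Uniqueness up to horizontal sections, which form a finite-dimensional space of dimension at most $N$ per component by \eqref{eq:kerN}, then lets me use a compactness argument to pass to the limit along an exhaustion and obtain a global primitive.

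Finally, I intersect the third decomposition with $\ker D_h^*$. Since $\bigstar_h\subseteq d_h(\Omega^0)\cap\Omega^1_{\ell^2}$, it remains to show that
\[
\big(d_h(\Omega^0(\F))\cap\Omega^1_{\ell^2}(\F)\big)\cap\ker D_h^*=\D_h.
\]
This holds because $d_hf\in\ker D_h^*$ means $d_h^*d_hf=0$, that is, $\Delta_hf=0$ with $d_hf\in\ell^2$, i.e.\ $f\in\H_h$. Combining this with $\bigstar_h\perp\ker D_h^*$ gives $d_h(\Omega^0)\cap\Omega^1_{\ell^2}=\bigstar_h\oplus\D_h$, and therefore $\ker D_h^*=\D_h\oplus\boxminus_h$. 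The three displayed decompositions follow.
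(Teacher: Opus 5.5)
Your overall architecture matches the paper's. You get the first decomposition from $\overline{\im D_h}^{\perp}=\ker D_h^*$, then the key identification $\boxminus_h^{\perp}=d_h(\Omega^0(\F))\cap\Omega^1_{\ell^2}(\F)$, whose easy inclusion you handle correctly by finite summation by parts, and then the final intersection with $\ker D_h^*$, which is fine. The gap is in your primary proof of the hard inclusion, which fails as soon as $h$ is not flat.

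Let $\theta$ be the form obtained by parallel transporting $\xi\in\F_v$ around a loop $\gamma$ based at $v$, with weights $y_e$. Then $d_h^*\theta$ vanishes away from $v$ but equals $(\hol_h(\gamma)-\id_{\F_v})\xi$ at $v$. So $\theta\in\ker d_h^*$ only when $\hol_h(\gamma)\xi=\xi$. For instance, in rank $1$ with no simple cycle of trivial holonomy, the associated matroid is bicircular, and $\Omega^1_0(\F)\cap\ker d_h^*$ is spanned by forms carried by theta graphs and handcuffs, not by cycles.

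Correspondingly, exact forms do not have vanishing covariant loop integrals: by \eqref{eq:holo-loop-lemma}, $\sum_i\hol_h(\gamma_{|[e_i,v]})\,d_hf(e_i)=(\id_{\F_v}-\hol_h(\gamma))f(v)$. Integrating $\omega$ from a base vertex is consistent only if a single vector $f(v)$ satisfies $(\id_{\F_v}-\hol_h(\gamma))f(v)=\sum_i\hol_h(\gamma_{|[e_i,v]})\,\omega(e_i)$ for all loops $\gamma$ at $v$ simultaneously. Orthogonality to loop forms does not produce such a vector. What you wrote is exactly the paper's argument for the flat summand of the canonical splitting, and it does not extend beyond it.

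Your fallback is what actually proves the statement, once two points are fixed.

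\emph{Free, not wired, decomposition.} A form supported on a finite edge set $A$ lies in $\ker d_h^*$ if and only if $d_h^*$ vanishes at every vertex of $\rV(A)$, boundary vertices included. So the identity you need is $\Omega^1(A)=\im d_h^A\oplus\ker(d_h^A)^*$ for the subgraph $(\rV(A),A)$. Imposing $d^*=0$ only at interior vertices gives a larger, wired, kernel, to which $\omega$ is not known to be orthogonal.

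\emph{Stabilization, not compactness.} The sets of primitives are affine spaces, so compactness does not apply directly. Along an exhaustion by connected finite subgraphs, the values at a fixed vertex $v_0$ of primitives of $\omega$ form a nested sequence of nonempty affine subspaces of $\F_{v_0}$, which stabilizes by dimension. Choose $\xi$ in the limit. By Proposition~\ref{prop:hor}, the primitive on each subgraph with value $\xi$ at $v_0$ is unique, so these glue into a global $f$ with $d_hf=\omega$.

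The paper instead treats the fully twisted summand by duality. The functional $(\omega,\cdot)$ vanishes on $\Omega^1_0(\F)\cap\ker d_h^*$, hence equals $L\circ d_h^*$ for a linear form $L$ on $\Omega^0_0(\F)$. This $L$ is represented by some $f\in\Omega^0(\F)=\prod_v\F_v$, the algebraic dual of $\bigoplus_v\F_v$, which gives $d_hf=\omega$ at once. Your corrected argument is this duality unwound over finite pieces. Its advantage is that it treats flat and twisted parts uniformly, without the canonical splitting; the paper's version avoids the stabilization step.
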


There is an analog of this proposition in \cite[Chapter 9, Eq. (9.6)]{Lyons-Peres} in the case of a trivial bundle of rank $1$, but the proof here is a little more intricate due to the presence of holonomy along loops.

\begin{proof} Let $\omega\in \Omega^1_{\ell^2}(\F)$ be orthogonal to $\bigstar_{h}$. Then $\omega$ is orthogonal to the range of $D_{h}$, so that it belongs to the kernel of $D_{h}^{*}$. This proves the first decomposition.

Let now $\omega\in \Omega^1_{\ell^2}(\F)$ be orthogonal to $\boxminus_{h}$. We will construct a function $f\in \Omega^{0}(\F)$ such that $d_{h}f=\omega$. This suffices to prove the third decomposition. If we assume moreover that $\omega$ is orthogonal to $\bigstar_{h}$, then we already argued that $D_{h}^{*}\omega=0$. This implies $d_{h}^{*}\omega=0$, so that $\Delta_{h}f=0$, and the second decomposition is proved.

This discussion shows that the proof reduces to the following statement: any $\omega\in \Omega^1_{\ell^2}(\F)$ orthogonal to $\boxminus_{h}$ is of the form $d_{h}f$ for some $f\in \Omega^{0}(\F)$. 

Let us give ourselves a $1$-form $\omega\in \Omega^{1}_{\ell^{2}}(\F)$ that is orthogonal to $\boxminus_h$. The existence of $f\in \Omega^{0}(\F)$ such that $d_{h}f=\omega$ can be proved separately for each part of the decomposition $\omega=\omega_{\rH}+\omega_{\rH^{\perp}}$ along the canonical decomposition $\Omega^{1}(\F)=\Omega^{1}(\rH)\oplus \Omega^{1}(\rH^{\perp})$. In other words, it suffices to treat the cases where the local system is either flat, or fully twisted.

The flat case is easy, as it is possible to choose a basis of each fibre in such a way that all holonomies are the identity. In these bases, the function $f$ is just the discrete integral of $\omega$ along any path from a fixed origin.

Let us turn to the fully twisted case. We are looking for a section $f\in \Omega^{0}(\F)$ and for all $\alpha\in \Omega^{1}_{0}(\F)$, the equality $(d_{h} f,\alpha)=(\omega,\alpha)$ holds. Since $\alpha$ has finite support, we have $(d_{h} f,\alpha)=(f,d_{h}^{*}\alpha)$, and the equality is equivalent to
\[(f,d_{h}^{*}\alpha)=(\omega,\alpha).\]

The assumption that $\omega$ is orthogonal to $\boxminus_{h}$ implies that the kernel of the linear form
$(\omega, \cdot) : \Omega^{1}_{0}(\F)\to \C$ contains the kernel of the linear map  $d_{h}^{*}:\Omega^{1}_{0}(\F)\to \Omega^{0}_{0}(\F)$.
Therefore, there exists a linear form $L$ on $\Omega^{0}_{0}(\F)$ such that $(\omega, \alpha)=L(d_{h}^{*} \alpha)$ for all $\alpha \in \Omega^{1}_{0}(\F)$. 

As any linear form on $\Omega^{0}_{0}(\F)$, the linear form $L$ is represented by an element $f$ of $\Omega^{0}(\F)$. For every $\theta\in \Omega^{1}_{0}(\F)$, we have
\[(f,d_{h}^{*}\alpha)=L(d^{*}\alpha)=(\omega,\alpha)\]
and the result is proved.
\end{proof}

\begin{lemma} Let $(\F,h)$ be a fully twisted local system over $\rG$. Then 
\[d_{h}^{*}(\Omega^{1}_{0}(\F))\supseteq \Omega^{0}_{0}(\F).\]
\end{lemma}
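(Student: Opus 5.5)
The plan is to reduce to a single vertex and then build, for each loop based at that vertex, an explicit finitely supported $1$-form concentrated on the loop. By linearity, and since $\Omega^{0}_{0}(\F)$ is spanned by sections supported at a single vertex, it suffices to show the following: for every vertex $v$ and every $\xi\in\F_v$, the section $\delta_v\xi$, equal to $\xi$ at $v$ and $0$ elsewhere, lies in $d_h^*(\Omega^1_0(\F))$. We may also assume $\rG$ connected, treating each connected component separately, since $d_h^*$ respects connected components.

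The first step is to decompose $\xi$ using holonomies. Since $(\F,h)$ is fully twisted, Proposition \ref{prop:twistimage}, assertion 3, provides finitely many loops $\gamma_1,\ldots,\gamma_r$ based at $v$ such that $\sum_i \im(\hol_h(\gamma_i)-\id_{\F_v})=\F_v$. That proposition is stated for connected graphs, and its proof uses only \eqref{eq:Hperpv} and the finite dimensionality of $\F_v$, so it applies verbatim to infinite locally finite graphs. We may therefore write
\[\xi=\sum_{i=1}^r(\hol_h(\gamma_i)-\id_{\F_v})\eta_i\]
for some $\eta_i\in\F_v$. By linearity, it remains to show that for a single loop $\gamma=(e_1,\ldots,e_k)$ based at $v$ and a single $\eta\in\F_v$, the section $\delta_v(\hol_h(\gamma)-\id)\eta$ equals $d_h^*\alpha$ for some $\alpha$ supported on the edges of $\gamma$.

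The second step is the explicit construction, by parallel transport along $\gamma$. Set $\eta_0=\eta$ and $\eta_j=h_{e_j}\eta_{j-1}\in\F_{\ol{e_j}}$, so that $\eta_k=\hol_h(\gamma)\eta$. For each $j$, let $\alpha_j\in\Omega^1_0(\F)$ be the $1$-form supported on $\geom{e_j}$ with $\alpha_j(e_j)=x_{e_j}^{-1}h_{e_j,\ul{e_j}}\eta_{j-1}$ and $\alpha_j(e_j^{-1})=-\alpha_j(e_j)$. From \eqref{eq:d-star}, the form $\alpha_j$ contributes $h_{\ol{e_j},e_j}h_{e_j,\ul{e_j}}\eta_{j-1}=\eta_j$ at $\ol{e_j}$ and $-\eta_{j-1}$ at $\ul{e_j}$. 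Thus
\[d_h^*\alpha_j=\delta_{\ol{e_j}}\eta_j-\delta_{\ul{e_j}}\eta_{j-1}.\]
The sum $\alpha=\sum_j\alpha_j$ is finitely supported, and the sum of these contributions telescopes, since $\ol{e_j}=\ul{e_{j+1}}$. Hence
\[d_h^*\alpha=\delta_v\eta_k-\delta_v\eta_0=\delta_v(\hol_h(\gamma)-\id)\eta.\]
This is exactly the form $c$ used at the end of the section on local systems in the electrical network part, rescaled by the weights.

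The only points needing care are that repeated visits of $\gamma$ to a vertex or an edge cause no trouble, and that the twisted Poincaré input extends to infinite graphs. The first is harmless because the forms $\alpha_j$ are simply added, rather than defined edgewise. The second I expect to be the main (mild) obstacle: one must check that full twistedness, meaning $\ker d_h=0$ on $\Omega^0(\F)$, still gives $(\F_v)^{\Hol_{h,v}}=0$ via Proposition \ref{prop:evkerd}, whose proof is purely local and path-based, so that \eqref{eq:Hperpv} and hence Proposition \ref{prop:twistimage} hold on infinite graphs.
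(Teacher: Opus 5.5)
Your proof is correct, but it takes a different route from the paper's. The paper never writes down a preimage. Given $\varphi\in\Omega^0_0(\F)$, it takes loops $\gamma_1,\ldots,\gamma_r$ at a single vertex $v$ of the support (Proposition~\ref{prop:twistimage}) and encloses the support and these loops in a finite connected subgraph $S$. By the implication $4\Rightarrow 1$ of that proposition, the restriction of $(\F,h)$ to $S$ is fully twisted, so on the finite graph $S$ the codifferential is onto, being the adjoint of an injective map. The paper concludes because the codifferential of $S$ is the restriction of $d_h^*$ to forms supported in $S$. You instead decompose the value at each vertex along the images $\im(\hol_h(\gamma_i)-\id)$, and exhibit an explicit preimage of $\delta_v(\hol_h(\gamma)-\id)\eta$: the weighted parallel transport of $\eta$ along $\gamma$, whose codifferential telescopes. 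Your computation of $d_h^*\alpha_j$ is right, and repeated edges are indeed harmless. The paper's argument is shorter and purely soft (finite-dimensional duality), and it needs loops at only one vertex of the support. Yours needs loops at every vertex of the support, but it avoids the passage to a finite subgraph and the rank argument. In exchange it produces a preimage supported on the chosen loops, with explicit control of its norm through the $\eta_i$. Your construction is in fact the transpose of the path identity behind the twisted Poincar\'e lemma (Proposition~\ref{prop:twistagain}), since for every $f\in\Omega^0(\F)$ one has $(d_hf,\alpha)=\langle f(v),(\hol_h(\gamma)-\id)\eta\rangle_v$. Both proofs rely in the same way on Proposition~\ref{prop:twistimage} holding on an infinite connected graph. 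Your justification of this through Proposition~\ref{prop:evkerd} and \eqref{eq:Hperpv}, whose proofs are purely path-based, is the right one.
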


\begin{proof} Let $\varphi$ be an element of $\Omega^{0}_{0}(\F)$. Let $R\subset \rV$ be the support of $\varphi$. Let $v$ be an element of~$R$, and $\gamma_{1},\ldots,\gamma_{r}$ be $r$ loops given by Proposition \ref{prop:twistimage}. Let $S$ be a connected subgraph of $\rG$ containing $R$ and the loops $\gamma_{1},\ldots,\gamma_{r}$. Then by Proposition \ref{prop:twistimage} again, the restriction $(\F_{|S},h)$ is also fully twisted. Therefore, the codifferential of this local system over a finite graph is onto. Moreover, this codifferential coincides with the restriction to $\F_{S}$ of the codifferential of $\F$. Therefore, there exists a preimage of $\varphi$ by $d_{h}^{*}$, which moreover has finite support.
\end{proof}

\subsection{The space of harmonic functions with finite Dirichlet energy}\label{sec:dirichlet-harmonic}

Let us take a closer look at the space $\H_h$. Any horizontal section is harmonic and we would like to know when the inclusion $\ker d_{h}\subseteq \H_h$ is an equality.

This is true on a finite graph, but may not hold on an infinite graph. In any case, the question can be treated  separately on each summand of the canonical splitting of the local system into a flat and a fully twisted system (see Proposition \ref{prop:canosplit}).

In the classical case $h=1$, conditions are discussed in \cite[Chapters 9 and 10]{Lyons-Peres} for $\H_h$ to be equal to the constant functions. This is the case on all amenable graphs such as $\Z^d$. However, there are graphs for which $\H_h$ consists of more than just the constant functions: it is the case of all hyperbolic planar graphs, see \cite{Benjamini-Schramm} and references therein. For instance, on the bi-infinite binary tree, it is possible to work out a simple explicit example of an $f\in \Omega^0(\F)\setminus \Omega^{0}_{\ell^2}(\F)$ that is harmonic and such that $d_h f\in \Omega^{1}_{\ell^2}(\F)$.  

An easy observation is that if we consider a graph where $\D_1$ is non trivial, then any connection which admits horizontal sections will give rise to a non trivial $\D_h$.

In the fully twisted case, the question is to know whether the local system admits a non-zero harmonic function. We do not answer this question in full generality, but we will give a condition (satisfied by numerous examples) where this is the case. 

The first observation is that if a Poincaré inequality holds, then $\D_{h}=0$: harmonic functions must be in $\ell^{2}$, and are horizontal. 

In the next definition, we consider norms of elements of $\Omega^{0}(\F)$ and $\Omega^{1}(\F)$, allowing the value~$+\infty$.

\begin{definition}[Twisted Poincaré inequality] The local system $(\F,h)$ over the weighted graph $(\rG,\ul{x})$ is said to satisfy a twisted Poincaré inequality if there exists a a constant $C>0$ such that for all $f\in \Omega^0(\F)$ 
\[\lVert f \rVert^2 \le  C \lVert d_hf \rVert^2\, .\]
\end{definition}

A twisted Poincaré inequality can only hold on a fully twisted local system. Indeed, in a flat local system, there exists non-zero horizontal sections, which cannot satisfy the inequality for any $C$.

If a twisted Poincaré inequality is satisfied, then any $f\in \Omega^{0}(\F)$ such that $d_{h}f\in \Omega^{1}_{\ell^{2}}(\F)$ belongs to $\Omega^{0}_{\ell^{2}}$. Therefore, $d_{h}f$ is equal to $D_{h}f$. Moreover, the assumption implies that $D_h$ is injective and has closed range. In particular, its inverse is bounded on its range. 

\begin{proposition} \label{prop:HD=0}
If the local system $(\F,h)$ is fully twisted and satisfies a twisted Poincaré inequality over $(\rG,\ul{x})$, then $\H_{h}=\{0\}$.
\end{proposition}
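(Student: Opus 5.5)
The plan is to show that every $f\in\H_h$ lies in $\Omega^0_{\ell^2}(\F)$, is horizontal, and therefore vanishes because the local system is fully twisted.

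Let $f\in \H_h$, so $\lapp_h f=0$ and $d_hf\in\Omega^1_{\ell^2}(\F)$. By the twisted Poincaré inequality, $\|f\|^2\le C\|d_hf\|^2<\infty$. Hence $f\in\Omega^0_{\ell^2}(\F)$ and $d_hf=D_hf$, as noted just before the statement.

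Next we use harmonicity. Since $d_h^*d_hf=0$, formula \eqref{eq:d-starlambda} gives $D_h^*D_hf=\lambda^{-1}d_h^*d_hf=0$, so $L_hf=0$. Because $f\in\Omega^0_{\ell^2}(\F)$ and $D_h$ is bounded, we may pair with $f$:
\[0=(L_hf,f)=\|D_hf\|^2.\]
Thus $d_hf=D_hf=0$, and $f$ is a horizontal section.

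Finally, the system is fully twisted, so $\ker d_h=\{0\}$ and $f=0$. (The Poincaré inequality would give this directly as well: $\|f\|^2\le C\|d_hf\|^2=0$.) This shows $\H_h=\{0\}$.

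The only delicate step is justifying $(L_hf,f)=\|D_hf\|^2$ on the infinite graph. This is where membership $f\in\Omega^0_{\ell^2}(\F)$, supplied by the Poincaré inequality, is essential: it makes $D_h^*$ the genuine Hilbert adjoint of $D_h$ on $f$. Without it, $f$ would only lie in $\Omega^0(\F)$, and no such pairing would be available. The identification of $D_h^*D_hf$ with $\lambda^{-1}\lapp_hf$ is pointwise from \eqref{eq:d-starlambda} and poses no issue.
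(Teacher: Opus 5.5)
Your proof is correct and follows the paper's argument: the Poincaré inequality puts $f$ in $\Omega^0_{\ell^2}(\F)$, harmonicity gives $\|D_hf\|^2=(f,L_hf)=0$, and $f=0$ follows. The paper concludes by applying the Poincaré inequality a second time rather than using $\ker d_h=\{0\}$, and you mention both routes, so there is no real difference.
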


\begin{proof} Let us consider $f\in \H_{h}$. By definition, $d_h f\in \Omega^1_{\ell^2}(\F)$, so that the Poincaré inequality implies that $f\in \Omega^{0}_{\ell^{2}}(\F)$. Now, the harmonicity of $f$ implies
\[\|D_{h}f\|^{2}=(f,L_{h}f)=0,\]
so that $D_{h}f=0$, and therefore, by the Poincaré inequality again, $f=0$.
\end{proof}

We will now give a condition on the holonomy of the connection $h$ which implies that $(\F,h)$ satisfies a twisted Poincaré inequality. Let us introduce a notation. For a loop $\gamma=(e_{1},\ldots,e_{k})$ and an edge $e\in \rE$, we denote by
\[m(e,\gamma)=|\{i\in \{1,\ldots,k\}: e_{i}=e\}\]
the number of times where $\gamma$ crosses the oriented edge $e$. We also define
\[m([e],\gamma)=m(e,\gamma)+m(e^{-1},\gamma),\]
the number of times where $\gamma$ crosses the geometric edge $e$.

Moreover, we define the length of $\gamma$ as
\[\ell(\gamma)=\sum_{i=1}^{k}\frac{1}{x_{e_{i}}}=\sum_{e\in \gE}\frac{m([e],\gamma)}{x_{e}}.\]

\begin{definition}[Uniform holonomic spectral gap for local systems fully twisted along a loop]\label{def:geom-reg}
We say that the local system $(\F,h)$ over the weighted graph $(\rG,\ul{x})$ admits a {\em uniform holonomic spectral gap} if it is possible to choose, for every vertex $v$, a loop~$\gamma_{v}$ based at $v$ such that $\id_{\F_v}-\hol_h(\gamma_{v})$ is invertible, in such a way that the following conditions hold. 
\begin{enumerate}[1.]
\item There exists a constant $C_{1}>0$ such that for all $v\in \rV$, 
\[ \big\| \big(\id_{\F_v}-\hol_h(\gamma_{v})\big)^{-1}\big\|^{2} \leq \frac{C_{1}}{\ell(\gamma_{v})}.\]
\item There exists a constant $C_{2}>0$ such that for all $e\in \rE$,
\[\sum_{v\in \rV} m([e],\gamma_{v}) \lambda_{v}\leq C_{2}.\]
\end{enumerate}
\end{definition}

This assumption is for example satisfied on $\Z^d$ with conductances uniformly bounded above and away from zero, and periodic $h$ such that, along the boundary of any plaquette, the holonomy does not have ~$1$ as an eigenvalue.

We will propose a weaker condition in a moment, that also implies a Poincaré inequality, but can be verified by fully twisted local systems that are not fully twisted along a loop. 

\begin{proposition}\label{prop:bound}
Assume that the local system $(\F, h)$ is fully twisted along a loop and satisfies the assumption of uniform holonomic spectral gap over $(\rG,\ul{x})$.
Then $(\F,h)$ satisfies a twisted Poincaré inequality over $(\rG,\ul{x})$ with constant $C_{1}C_{2}$.
\end{proposition}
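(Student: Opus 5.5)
The plan is to start from the pointwise bound of Corollary~\ref{cor:bound}, applied at each vertex $v$ with the loop $\gamma_v=(e_1,\ldots,e_k)$ given by the uniform holonomic spectral gap. For $f\in\Omega^0(\F)$ this gives
\[\|f(v)\|\le \big\|(\id_{\F_v}-\hol_h(\gamma_v))^{-1}\big\|\sum_{i=1}^{k}\|d_hf(e_i)\|.\]
We square this and apply Cauchy--Schwarz with the weights $x_{e_i}^{-1}$, writing $\|d_hf(e_i)\| = x_{e_i}^{-1/2}\cdot x_{e_i}^{1/2}\|d_hf(e_i)\|$. This yields
\[\Big(\sum_{i=1}^{k}\|d_hf(e_i)\|\Big)^2\le \ell(\gamma_v)\sum_{i=1}^{k}x_{e_i}\|d_hf(e_i)\|^2=\ell(\gamma_v)\sum_{e\in\gE}m([e],\gamma_v)\,x_e\|d_hf(e)\|^2.\]
Here we use that $\|d_hf(e^{-1})\|=\|d_hf(e)\|$ to regroup the terms by geometric edges. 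Combining this with the first condition of Definition~\ref{def:geom-reg}, the factor $\ell(\gamma_v)$ cancels, and we obtain
\[\|f(v)\|^2\le C_1\sum_{e\in\gE}m([e],\gamma_v)\,x_e\|d_hf(e)\|^2.\]

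Next we multiply by $\lambda_v$, since the norm on $\Omega^0$ here is the twisted one \eqref{eq:SPOmega0lambda}, and sum over $v\in\rV$. All terms are nonnegative, so Tonelli lets us exchange the sums even when the values are infinite:
\[\|f\|^2=\sum_v\lambda_v\|f(v)\|^2\le C_1\sum_{e\in\gE}x_e\|d_hf(e)\|^2\sum_{v}m([e],\gamma_v)\lambda_v\le C_1C_2\sum_{e\in\gE}x_e\|d_hf(e)\|^2.\]
The last step uses the second condition of Definition~\ref{def:geom-reg}. Since $\|d_hf\|^2=\tfrac12\sum_{e\in\rE}x_e\|d_hf(e)\|^2=\sum_{e\in\gE}x_e\|d_hf(e)\|^2$, this is the twisted Poincaré inequality with constant $C_1C_2$.

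The main point requiring care is the bookkeeping of multiplicities and orientations. The Cauchy--Schwarz step must produce exactly $\ell(\gamma_v)$ and the multiplicities $m([e],\gamma_v)$ that appear in the hypotheses. One also has to check that the condition in Definition~\ref{def:geom-reg}, stated for $e\in\rE$, matches the sum over geometric edges; it does, because $m([e],\gamma)$ depends only on $[e]$. Finally, we should verify that no finiteness of $f$ is needed: the inequality is allowed to take the value $+\infty$, and every manipulation above is on nonnegative series, so it holds for all $f\in\Omega^0(\F)$.
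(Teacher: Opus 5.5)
Your proposal is correct and follows the paper's proof: you apply Corollary~\ref{cor:bound} at each vertex, use a weighted Cauchy--Schwarz inequality to produce $\ell(\gamma_v)$, cancel it with the first condition, then sum against $\lambda_v$ and exchange the sums to invoke the second condition. The only cosmetic difference is that you apply Cauchy--Schwarz edge by edge along the loop and regroup by geometric edges afterwards, whereas the paper regroups first and weights each term by $\sqrt{m([e],\gamma_v)\,x_e}$; this is the same inequality.
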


\begin{proof} For each vertex $v$, let $\gamma_{v}$ be a loop given by the assumption. By Corollary \ref{cor:bound},
\[\|f\|^{2}=\sum_{v\in \rV} \lambda_{v} \|f(v)\|^{2}\leq \sum_{v\in \rV} \lambda_{v} 
\big\| \big(\id_{\F_v}-\hol_h(\gamma_{v})\big)^{-1}\big\|^{2}
\Big(\sum_{e\in \gE} m([e],\gamma_{v}) \|d_{h}f(e)\|\Big)^{2}.\]
Let us apply the Cauchy--Schwarz inequality to the last sum, weighting each term by $\sqrt{m([e],\gamma_{v}) x_{e}}$. We find 
\begin{align*}
\|f\|^{2}&\leq \sum_{v\in \rV} \lambda_{v} \big\| \big(\id_{\F_v}-\hol_h(\gamma_{v})\big)^{-1}\big\|^{2} \ell(\gamma_{v}) \sum_{e\in \gE} m([e],\gamma_{v}) x_{e}\|d_{h}f(e)\|^{2}\\
&\leq C_{1}  \sum_{v\in \rV} \lambda_{v} \sum_{e\in \gE} m([e],\gamma_{v}) x_{e} \|d_{h}f(e)\|^{2}\\
&\leq C_{1}  \sum_{e\in \gE} \bigg(\sum_{v\in \rV}m([e],\gamma_{v})\lambda_{v}\bigg)  x_{e} \|d_{h}f(e)\|^{2}\\
&\leq C_{1} C_{2} \|d_{h}f\|^{2},
\end{align*}
which proves the result.
\end{proof}

\begin{definition}[Uniform holonomic spectral gap for fully twisted local systems]\label{def:geom-reg2}
We say that the local system $(\F,h)$ over the weighted graph $(\rG,\ul{x})$ admits a {\em uniform holonomic spectral gap} if it is possible to choose, for every vertex $v$, some loops $\gamma_{v,1},\ldots,\gamma_{v,r_{v}}$ based at $v$ such that 
\[\sum_{i=1}^{r} \im (\hol_h(\gamma_{v,i})-\id_{\F_v})=\F_{v},\]
in such a way that the following conditions hold.

For all $v\in \rV$ and $i\in \{1,\ldots,r_{v}\}$, we set $R_{v,i}= \im (\hol_h(\gamma_{v,i})-\id_{\F_v})$.

\begin{enumerate}[1.]
\item There exists a constant $C_{1}>0$ such that for all $v\in \rV$, 
\[\big\|\big((\id_{\F_v}-\hol_h(\gamma_{v,i}))_{|R_{v,i}}\big)^{-1}\big\|^{2} \leq \frac{C_{1}}{\ell(\gamma_{v,i})}. 
\]
\item There exists a constant $C_{2}>0$ such that for all $e\in \rE$,
\[\sum_{\substack{v\in \rV \\ i=1,\ldots,r_{v}}} m([e],\gamma_{v,i}) \lambda_{v}\leq C_{2}.\]
\item There exists a constant $C_{3}>0$ such that for all $v\in \rV$,
\[ \big\|\big(\proj{*}_{v}\proj{}_{v} \big)^{-1}\big\|\ \leq C_{3}.\]
\end{enumerate}
\end{definition}

This assumption is for example satisfied on $\Z^d$ with conductances uniformly bounded above and away from zero, and a periodic connection $h$ taking its values in $\SO(2d+1)$ such that, along the boundary of any plaquette, the holonomy is alternatively one of two rotations which have $1$ as simple eigenvalue, with different eigenspaces.

\begin{proposition}\label{prop:bound2}
Assume that the fully twisted local system $(\F, h)$ satisfies the assumption of uniform holonomic spectral gap over $(\rG,\ul{x})$. Then $(\F,h)$ satisfies a twisted Poincaré inequality over $(\rG,\ul{x})$ with constant $C_{1}C_{2}C_{3}$.
\end{proposition}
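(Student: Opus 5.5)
The plan mirrors the proof of Proposition \ref{prop:bound}, replacing Corollary \ref{cor:bound} by Corollary \ref{cor:bound2}. Fix $f\in\Omega^0(\F)$. For each vertex $v$, use the loops $\gamma_{v,1},\ldots,\gamma_{v,r_v}$ provided by the assumption, together with the associated map $\proj{}_v$. Write $\gamma_{v,i}=(e_{v,i,1},\ldots,e_{v,i,k_{v,i}})$ and set
\[a_{v,i}=\big\|\big((\id_{\F_v}-\hol_h(\gamma_{v,i}))_{|R_{v,i}}\big)^{-1}\big\| .\]
Corollary \ref{cor:bound2} then gives
\[\|f(v)\|\le \big\|(\proj{*}_v\proj{}_v)^{-1}\big\|\sum_{i=1}^{r_v} a_{v,i}\sum_{e\in\gE} m([e],\gamma_{v,i})\,\|d_hf(e)\| .\]
By condition 3, the prefactor is at most $C_3$.

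The next step squares this bound and applies Cauchy--Schwarz to the double sum over pairs $(i,e)$. I would weight each term by $\sqrt{m([e],\gamma_{v,i})x_e}$. The natural way to make the weights fit is to write each term as
\[\big(a_{v,i}\sqrt{m([e],\gamma_{v,i})/x_e}\big)\cdot\big(\sqrt{m([e],\gamma_{v,i})x_e}\,\|d_hf(e)\|\big).\]
The first factor, squared and summed over $e$, gives $a_{v,i}^2\,\ell(\gamma_{v,i})$, which is at most $C_1$ by condition 1. Summing over $i$ then produces a factor $r_v C_1$, and this is the delicate point. The stated constant $C_1C_2C_3$ carries no factor $r_v$, and the power of $C_3$ must also be tracked, since squaring yields $C_3^2$. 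I would try to absorb both by treating the map $\bigoplus_i R_{v,i}\to\F_v$ in a Hilbert-space way. The quantity $(\proj{*}_v\proj{}_v)^{-1}\proj{*}_v$ applied to the vector $(\proj{i}f(v))_i$ has norm at most $\|(\proj{*}_v\proj{}_v)^{-1/2}\|$ times the $\ell^2$ norm of that vector, so one can hope to square using $\sum_i\|\cdot\|^2$ rather than $(\sum_i\|\cdot\|)^2$. This removes the factor $r_v$, at the price of a factor $C_3$ rather than $C_3^2$. So the first thing to try is to bound
\[\|f(v)\|^2\le C_3\sum_i\|\proj{i}f(v)\|^2 ,\]
which follows from the identity $\sum_i\|\proj{i}f(v)\|^2=\langle f(v),\proj{*}_v\proj{}_vf(v)\rangle$. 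Each term is then bounded separately by $a_{v,i}^2\,\ell(\gamma_{v,i})\sum_e m([e],\gamma_{v,i})x_e\|d_hf(e)\|^2$, using Cauchy--Schwarz on a single loop.

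Finally, I multiply by $\lambda_v$, sum over $v$, and swap the sums over $(v,i)$ and $e$. The resulting coefficient of $x_e\|d_hf(e)\|^2$ is $\sum_{v,i} m([e],\gamma_{v,i})\lambda_v\le C_2$ by condition 2. This gives $\|f\|^2\le C_1C_2C_3\|d_hf\|^2$, as required.

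The main obstacle is the constant bookkeeping in the squaring step, namely getting $C_3$ to the first power and avoiding $r_v$. The lower bound $\langle u,\proj{*}_v\proj{}_vu\rangle\ge C_3^{-1}\|u\|^2$ is exactly what makes this work. It relies on the per-loop estimate from the proof of Proposition \ref{prop:twistagain}, namely $\proj{i}f(v)=((\id-\hol_h(\gamma_{v,i}))_{|R_{v,i}})^{-1}\proj{i}I^{\uparrow}_h(\gamma_{v,i},d_hf)$, rather than on the cruder Corollary \ref{cor:bound2}.
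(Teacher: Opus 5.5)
Your proof is correct, and it differs from the paper's in one substantive respect.

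The paper's proof is a one-line remark: repeat the argument of Proposition~\ref{prop:bound} with Corollary~\ref{cor:bound2} in place of Corollary~\ref{cor:bound}. Done literally, this squares the $\ell^{1}$-type bound of Corollary~\ref{cor:bound2}, which brings in the factor $\|(\proj{*}_v\proj{}_v)^{-1}\|^{2}\le C_3^{2}$. Cauchy--Schwarz over all pairs $(i,e)$ then produces $\sum_{i=1}^{r_v}a_{v,i}^{2}\,\ell(\gamma_{v,i})\le r_vC_1$. The resulting constant is something like $C_1C_2C_3^{2}\sup_v r_v$, which is not the one claimed and needs a uniform bound on $r_v$.

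You bypass Corollary~\ref{cor:bound2} and use three ingredients instead:
\begin{enumerate}
\item the quadratic lower bound $\proj{*}_v\proj{}_v\geq C_3^{-1}\id_{\F_v}$, that is, $\|f(v)\|^{2}\le C_3\sum_i\|\proj{i}f(v)\|^{2}$;
\item the per-loop identity $\proj{i}f(v)=\big((\id_{\F_v}-\hol_h(\gamma_{v,i}))_{|R_{v,i}}\big)^{-1}\proj{i}I^{\uparrow}_h(\gamma_{v,i},d_hf)$ from the proof of Proposition~\ref{prop:twistagain};
\item a Cauchy--Schwarz estimate on each loop separately.
\end{enumerate}
This gives exactly $C_1C_2C_3$, with no dependence on $r_v$. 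The paper's route buys brevity; yours is the one that actually delivers the constant in the statement.
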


\begin{proof} The proof is almost the same as that of Proposition \ref{prop:bound}, with the small difference that it relies on Corollary \ref{cor:bound2} instead of Corollary \ref{cor:bound}.
\end{proof}

\subsection{Recurrent weighted graphs}

In this section and the next, we will consider an infinite connected graph $\rG=(\rV,\rE)$ endowed with Euclidean weights $\ul{x}$. These weights allow one to define a nearest neighbor random walk on $\rG$, the probability of crossing of an edge $e$ being proportional to $x_e$. We say that the weighted graph $(\rG,\ul{x})$ is recurrent or transient according to the recurrence or transience of this random walk.

\begin{theorem} \label{thm:dh-rec}
For any local system $(\F,h)$ over a recurrent weighted graph $(\rG,\ul{x})$, the equality $\D_h=\{0\}$ holds.
\end{theorem}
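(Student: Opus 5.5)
The plan is to adapt the classical argument that recurrent graphs carry no non-constant harmonic Dirichlet functions. Two ingredients are needed: a family of cut-off functions of vanishing energy, which recurrence provides, and a radial truncation of $f$ that is compatible with the unitary connection. Let $f\in\H_h$, so that $\Delta_h f=0$ and $d_hf\in\Omega^1_{\ell^2}(\F)$. We want to show $d_hf=0$.

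\textbf{Step 1: the harmonicity identity.} For any finitely supported $\psi\in\Omega^0_0(\F)$, summation by parts is legitimate because only finitely many terms are non-zero. Harmonicity of $f$ then gives
\[\frac12\sum_{e\in\rE}x_e\langle d_h\psi(e),d_hf(e)\rangle_e=\sum_{v}\langle\psi(v),(d_h^*d_hf)(v)\rangle_v=0 .\]

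\textbf{Step 2: the cut-offs.} Since $(\rG,\ul{x})$ is recurrent, there is a sequence of finitely supported scalar functions $\varphi_n:\rV\to[0,1]$ with $\varphi_n\to1$ pointwise and scalar Dirichlet energy $\mathcal E(\varphi_n)=\frac12\sum_e x_e|\varphi_n(\ol e)-\varphi_n(\ul e)|^2\to0$. This is the standard characterisation of recurrence, as in \cite{Lyons-Peres}.

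\textbf{Step 3: the truncation.} For $M>0$, set $g_M(v)=f(v)\min(1,M/\|f(v)\|)$, which is the nearest-point projection of $f(v)$ onto the ball of radius $M$ in $\F_v$. Projection onto a ball is $1$-Lipschitz and commutes with isometries. Writing $d_hg(e)=h_{e,\ol e}(g(\ol e)-h_eg(\ul e))$, this gives $\|d_hg_M(e)\|\le\|d_hf(e)\|$ for every edge $e$. Moreover $\|g_M\|_\infty\le M$, and $g_M\to f$ pointwise as $M\to\infty$.

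\textbf{Step 4: the product rule and the limit in $n$.} Apply Step 1 with $\psi=\varphi_n^2g_M$. The twisted product rule reads
\[d_h(\varphi g)(e)=\varphi(\ol e)\,d_hg(e)+(\varphi(\ol e)-\varphi(\ul e))\,h_{e,\ul e}g(\ul e).\]
With this, the identity becomes
\[\frac12\sum_e x_e\varphi_n^2(\ol e)\langle d_hg_M(e),d_hf(e)\rangle=-\frac12\sum_e x_e\big(\varphi_n^2(\ol e)-\varphi_n^2(\ul e)\big)\langle h_{e,\ul e}g_M(\ul e),d_hf(e)\rangle .\]
Since $|\varphi_n^2(\ol e)-\varphi_n^2(\ul e)|\le2|\varphi_n(\ol e)-\varphi_n(\ul e)|$, Cauchy--Schwarz bounds the right-hand side by $2M\,\mathcal E(\varphi_n)^{1/2}\|d_hf\|$, which tends to $0$. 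On the left-hand side the summands are dominated by $x_e\|d_hf(e)\|^2$, which is summable, so dominated convergence as $n\to\infty$ gives
\[\sum_e x_e\langle d_hg_M(e),d_hf(e)\rangle=0\qquad\text{for every } M.\]

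\textbf{Step 5: the limit in $M$.} Letting $M\to\infty$, we have $d_hg_M(e)\to d_hf(e)$ for each edge, with the same domination as in Step 4. Hence $\|d_hf\|^2=0$, so $d_hf=0$ and $\D_h=\{0\}$.

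The main obstacle is that $f$ itself need not be bounded, or even in $\ell^2$. The naive test function $\varphi_n^2f$ would produce an error term involving $\|f\|_\infty$, which is not controlled by the energy of $\varphi_n$. The key point is the radial truncation of Step 3: it is non-expansive for the twisted differential precisely because the connection is unitary. It yields a bounded test function with controlled energy, which makes the two successive limits in $n$ and then in $M$ work.
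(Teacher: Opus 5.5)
Your argument is correct, and it follows a genuinely different route from the paper's.

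\textbf{The paper's route.} The paper sets $g(v)=\|f(v)\|_v$. From harmonicity and unitarity it gets that $g$ is sub-$1$-harmonic, a Kato-type inequality. From $\bigl|\,\|f(\ol e)\|-\|f(\ul e)\|\,\bigr|\le\|d_hf(e)\|$ it gets that $g$ has finite Dirichlet energy. It then invokes a theorem of Soardi: on a recurrent network, every subharmonic function of finite Dirichlet energy is constant. Finally, strict convexity of the spheres of $\F_v$ upgrades ``$\|f\|$ constant, with $f(v)$ a convex combination of the parallel-transported neighbouring values'' to horizontality.

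\textbf{Your route.} You run the classical capacity argument directly in the twisted setting. Recurrence provides finitely supported cut-offs $\varphi_n\to1$ with $\mathcal{E}(\varphi_n)\to0$. The radial truncation $g_M$ is non-expansive for $d_h$, because projection onto a ball commutes with the unitary transports $h_e$. This gives a bounded test section, so the boundary term in the twisted product rule is $O\bigl(M\,\mathcal{E}(\varphi_n)^{1/2}\|d_hf\|\bigr)$. The two dominated-convergence limits then give $\|d_hf\|^2=0$ directly, with no need for strict convexity.

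\textbf{What each buys.} The paper's proof is shorter and shows that the twisted problem reduces to a scalar one through $\|f\|$, at the cost of a black-box Liouville-type theorem. Yours is self-contained, relying only on the standard fact that recurrence means $1$ can be approximated in Dirichlet energy by finitely supported functions. In effect it proves that black box while carrying the connection along. Unitarity enters through the commutation of the truncation with $h$, rather than through the subharmonicity of $\|f\|$.
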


In the proof of this theorem and of the next, we will consider the trivial real rank $1$ local system over $(\rG,\ul{x})$, with holonomies equal to the identity. The sections of this vector bundle are simply real-valued functions on $\rV$, and the Laplacian of this local system is the usual Laplacian on scalar functions over $\rG$ determines by the weights $\ul{x}$. We denote it by $\Delta_1$. To avoid any confusion, we write {\em $1$-harmonic} when we mean {\em harmonic with respect to $\Delta_1$}. We keep the word {\em harmonic} for sections of $\F$, and this is relative to the operator $\Delta_h$.

\begin{proof} The equality $\D_h=\{0\}$ is equivalent to the fact that any function in $\H_h$ is horizontal, and it is under this form that we prove the result.

Let us consider a harmonic function $f\in \H_{h}$. For every vertex $v$, define $g(v)=\|f(v)\|_{v}$. Since~$f$ is harmonic, we have
\[g(v)=\bigg\| \sum_{i=1}^{n} \frac{x_{e_i}}{\lambda_v} h_{e_{i}}^{-1} f(\ol{e_{i}}) \bigg\| \leq \sum_{i=1}^{n} \frac{x_{e_i}}{\lambda_v} \|h_{e_{i}}^{-1} f(\ol{e_{i}})\|=\sum_{i=1}^{n} \frac{x_{e_i}}{\lambda_v} g(\ol{e_{i}}),\]
so that the scalar function $g$ is sub-$1$-harmonic on $(\rG,\ul{x})$. Moreover, the fact that $f$ has finite Dirichlet energy and the triangle inequality applied on each edge under the form
\[\big|\|f(\ol{e})\|_{\ol{e}}-\|f(\ul{e})\|_{\ul{e}}\big| \leq \|d_{h}f(e)\|_{e}\]
imply that $g$ also has finite Dirichlet energy. 

We now use the fact, proved in \cite[Thm. 3.34]{Soardi}, that on a recurrent weighted graph, any sub-$1$-harmonic real-valued function  with finite Dirichlet energy is constant. Therefore, $f$ has constant norm. 

Thus, at each vertex $v$, with outgoing edges $e_{1},\ldots,e_{n}$, the equality
\[f(v)=\sum_{i=1}^{n} \frac{x_{e_i}}{\lambda_v} h_{e_{i}}^{-1} f(\ol{e_{i}})\]
expresses $f(v)$ as a convex combination of vectors of the same norm as $f(v)$. The strict convexity of the spheres of $\F_v$ implies that all these vectors must be equal. Therefore, $f$ is horizontal. 
\end{proof}

\subsection{Transient weighted graphs}

We now give a result in the case where the weighted graph~$(\rG,\ul{x})$ is transient. 
Given a subset $B$ of $\rE$, we denote by $\rV(B)$ the set of endpoints of an edge of~$B$, and by $\rG\setminus B$ the graph $(\rV\setminus \rV(B), \rE\setminus B)$.
We also denote by $X=(X_{n})_{n\geq 0}$ the random walk on $(\rG,\ul{x})$, and by $\P_{u}$ the distribution of this walk started at a vertex $u$.

\begin{theorem} \label{thm:dh-trans}
Let $(\F,h)$ be a local system over a transient weighted graph $(\rG,\ul{x})$. Assume that the following properties hold.
\begin{enumerate}[\indent 1.]
\item (Liouville property) Any bounded scalar $1$-harmonic function on $(\rG,\ul{x})$ is constant.
\item (Non flatness) $(\F,h)$ is not flat over $\rG$.
\item (Almost flatness) There exists a finite subset $B$ of $\rE$ such that $\rG\setminus B$ is connected and the restriction of $(\F,h)$ to $\rG\setminus B$ is flat.
\end{enumerate}
Then $\D_h\neq \{0\}$. Moreover, if $(\F,h)$ is fully twisted, then $\dim \D_h$ is at least equal to the rank of~$\F$.
\end{theorem}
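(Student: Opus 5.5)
The plan is to build, for each vector $\xi\in\C^N$, a non-horizontal element of $\D_h$ by projecting the differential of a section that is ``constant equal to $\xi$ at infinity''. The tool is the splitting of Theorem \ref{thm:splitting}.

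\textbf{Construction.} By the almost flatness assumption and Corollary \ref{cor:flattrivial}, applied to the connected graph $\rG\setminus B$, we choose orthonormal bases of the fibres over $\rG\setminus B$ in which all the holonomies of $h$ are the identity. Through these bases we identify every fibre over $\rG\setminus B$ with $\C^N$.

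For $\xi\in\C^N$, define $f_\xi\in\Omega^0(\F)$ by $f_\xi(v)=\xi$ for $v\notin\rV(B)$ and $f_\xi(v)=0$ for $v\in\rV(B)$. Every edge with both endpoints outside $\rV(B)$ is an edge of $\rG\setminus B$, where the connection is trivial in our bases, so $d_hf_\xi$ vanishes there. Since $B$ is finite and the graph is locally finite, it follows that $d_hf_\xi\in\Omega^1_0(\F)$.

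\textbf{Producing an element of $\D_h$.} Set $\omega_\xi=\proj{(\bigstar_h)^\perp}\,d_hf_\xi$. For $\alpha\in\Omega^1_0(\F)\cap\ker d_h^*$, the pairing $(d_hf_\xi,\alpha)$ is a finite sum and equals $(f_\xi,d_h^*\alpha)=0$; hence $d_hf_\xi\perp\boxminus_h$. Since $\bigstar_h\perp\boxminus_h$, the projection $\omega_\xi$ is also orthogonal to $\boxminus_h$. By the second decomposition of Theorem \ref{thm:splitting}, $\omega_\xi\in\D_h$. The map $\xi\mapsto\omega_\xi$ is linear, so it remains to show that its kernel is exactly the set of $\xi$ extending to a horizontal section.

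\textbf{Main step: the kernel.} Suppose $\omega_\xi=0$, that is $d_hf_\xi\in\bigstar_h$. Choose $g_n\in\Omega^0_0(\F)$ with $d_hg_n\to d_hf_\xi$ in $\Omega^1_{\ell^2}(\F)$. On the edges of $\rG\setminus B$ the limit vanishes, so the finitely supported $\C^N$-valued functions $g_n|_{\rG\setminus B}$ have Dirichlet energy tending to $0$.

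I expect this to be the main obstacle: we need transience of $\rG\setminus B$, and the fact that on a transient weighted graph point evaluations are continuous with respect to the Dirichlet norm on the closure of finitely supported functions, a Green function bound. Removing finitely many edges from a transient connected graph keeps the infinite component transient. Once this is checked, $g_n\to0$ pointwise on $\rV\setminus\rV(B)$. This step, or the identification of the limit at infinity, is where I would try to invoke the Liouville hypothesis if the direct argument proves delicate: it rules out non-constant bounded $1$-harmonic profiles at infinity, so that ``$0$ at infinity'' is a meaningful boundary condition.

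Next, integrate $d_hg_n$ along finite paths from $\rG\setminus B$ into the finite set $\rV(B)$. This shows that $g_n(v)$ converges for each $v\in\rV(B)$, to a section $g$ supported on $\rV(B)$ with $d_hg=d_hf_\xi$. Then $f_\xi-g$ is horizontal and equals $\xi$ on $\rG\setminus B$. Conversely, if $\xi$ extends to a horizontal section $s$, then $d_hf_\xi=d_h(f_\xi-s)$ with $f_\xi-s$ finitely supported, so $\omega_\xi=0$. Thus $\ker(\xi\mapsto\omega_\xi)$ is the set of values on $\rG\setminus B$ of horizontal sections, a subspace of dimension $\dim\ker d_h$.

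\textbf{Conclusion.} By non-flatness, $\dim\ker d_h<N$, so some $\omega_\xi\neq0$, which gives $\D_h\neq\{0\}$. If $(\F,h)$ is fully twisted, the kernel is $\{0\}$, so $\xi\mapsto\omega_\xi$ is injective and $\dim\D_h\ge N$.
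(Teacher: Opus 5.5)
Your proof is correct, and it takes a genuinely different route from the paper's.

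The paper builds the harmonic section explicitly, as $f(u)=\E_u[\hol_h(X^{-1})(\xi)]$, the expected parallel transport of $\xi$ back from infinity along the random walk. Harmonicity is the Markov property, and non-horizontality comes from Lemma~\ref{lem:hitprob0} and strict convexity of spheres. The Liouville property is used, through Lemma~\ref{lem:finDir}, only to show that this $f$ has finite Dirichlet energy.

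You get membership in $\D_h$ for free from Theorem~\ref{thm:splitting}, by projecting the finitely supported form $d_hf_\xi$ onto $(\bigstar_h)^{\perp}$. All the work then goes into showing the projection is non-zero. You reduce this to the capacity bound $|g(v)|^2\le \mathcal{R}(v\leftrightarrow\infty)\,\mathcal{E}(g)$ (effective resistance to infinity times Dirichlet energy) for finitely supported $g$ on the transient network $\rG\setminus B$, followed by path integration into the finite set $\rV(B)$.

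What each route buys:
\begin{itemize}
\item Your route yields directly $\dim\D_h\ge N-\dim\ker d_h$. It never uses the Liouville hypothesis, since the two facts you isolate are consequences of transience alone, so the hedge in your main step can simply be dropped.
\item The paper's route gives, in exchange, an explicit probabilistic formula for an element of $\H_h$ and a description of its behaviour at infinity.
\end{itemize}

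One imprecision. The claim ``removing finitely many edges keeps the infinite component transient'' fails in general when several infinite components appear. What you need is only that $\rG\setminus B$, which is connected by hypothesis, is transient. This holds: let $K$ be the finite set of vertices at distance at most $1$ from $\rV(B)$. Transience of $\rG$ provides a vertex outside $K$ from which the walk on $\rG$ avoids $K$ forever with positive probability. Before hitting $K$, this walk has exactly the transitions of the walk on $\rG\setminus B$, so the walk on $\rG\setminus B$ avoids a vertex with positive probability and is therefore transient.
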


During the proof, we will use two elementary properties of the transient weighted graph $(\rG,\ul{x})$, one about the random walk itself, and the other about the usual $1$-Laplacian on scalar functions over $\rG$.

\begin{lemma} \label{lem:hitprob0}
Let $W$ be a finite subset of $\rV$. Let $T=\inf\{n\geq 0 : X_n\in W\}$ be the first hitting time of~$W$. For all $u\in \rV$, set $\eta(u)=\P_u(T<\infty)$. Then for all $x\in \rV$,
\[\lim_{n\to \infty} \eta(X_{n}) = 0\ \  \ \P_{x} \text{-a.s.}\]
\end{lemma}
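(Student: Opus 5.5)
The plan is to show that $\eta(X_n)$ is a bounded supermartingale whose almost sure limit must vanish, using transience to rule out any positive limit.

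\textbf{Step 1: supermartingale property.} The function $\eta$ takes values in $[0,1]$. It equals $1$ on $W$ and is $1$-harmonic off $W$: for $u\notin W$, conditioning on the first step gives $\eta(u)=\sum_{\ul e=u}\frac{x_e}{\lambda_u}\eta(\ol e)$. On $W$ we have $\eta=1\geq$ any average of values of $\eta$. Hence $\eta$ is $1$-superharmonic on all of $\rV$, and $(\eta(X_n))_{n\ge 0}$ is a bounded supermartingale under $\P_x$. It therefore converges $\P_x$-almost surely to a limit $\eta_\infty\in[0,1]$.

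\textbf{Step 2: identifying the limit.} Transience is what forces $\eta_\infty=0$. Since $W$ is finite and the walk is transient, $\P_x$-almost surely the walk visits $W$ only finitely many times. I would use the Markov property rather than harmonicity alone. For each $n$,
\[\eta(X_n)=\P_x\big(\exists m\ge n,\ X_m\in W \,\big|\, \mathcal F_n\big).\]
Set $A_n=\{\exists m\geq n, X_m\in W\}$. These events decrease in $n$, and their intersection $\{X \text{ visits } W \text{ infinitely often}\}$ has $\P_x$-probability $0$ by transience.

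\textbf{Step 3: conclusion, and the main obstacle.} The obstacle is that the conditioning $\sigma$-field also varies with $n$. To handle this, I fix $k$. For $n\ge k$ we have $A_n\subseteq A_k$, so
\[\eta(X_n)\le \E_x[\1_{A_k}\mid \mathcal F_n].\]
By Lévy's upward theorem, the right-hand side converges almost surely to $\E_x[\1_{A_k}\mid\mathcal F_\infty]=\1_{A_k}$, because $A_k$ is $\mathcal F_\infty$-measurable. Hence $\eta_\infty\le \1_{A_k}$ almost surely for every $k$. Letting $k\to\infty$ gives $\eta_\infty\le \1_{\bigcap_k A_k}=0$ almost surely. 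An alternative route is to note that $\E_x[\eta_\infty]=\lim \P_x(A_n)=0$ by dominated convergence, and then use $\eta_\infty\ge 0$. I would keep this as a cross-check.
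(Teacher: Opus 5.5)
Your proof is correct. Step 1 and the Markov identity $\eta(X_n)=\P_x(A_n\mid\mathcal F_n)$ are exactly what the paper uses. The paper then concludes by the argument you keep as a cross-check. It computes $\E_x[\eta(X_n)]=\P_x(A_n)\to 0$ and applies Fatou's lemma to the almost sure limit supplied by supermartingale convergence. A non-negative variable with zero expectation then vanishes almost surely.

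Your main route instead bounds $\eta(X_n)$ pathwise by $\E_x[\1_{A_k}\mid\mathcal F_n]$ and invokes L\'evy's upward theorem. This actually controls $\limsup_n \eta(X_n)$ by $\1_{A_k}$ directly, so Step 1 is superfluous for that route. You do not need the superharmonicity of $\eta$ or the supermartingale convergence theorem. Writing $\limsup_n\eta(X_n)$ in place of $\eta_\infty$ throughout, the conclusion follows from the Markov property and transience alone. The price is L\'evy's theorem instead of the more elementary Fatou step.

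Both arguments are complete. The paper's is slightly lighter in machinery once the supermartingale property is established.
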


\begin{proof} Let us fix a vertex $x\in \rV$.  The function $\eta$ is non-negative, super-$1$-harmonic on $W$ and harmonic outside $W$, hence super-$1$-harmonic everywhere. Thus, the sequence of random variables $(\eta(X_{n}))_{n\geq 0}$ is under $\P_{x}$ a non-negative super-martingale, and converges $\P_{x}$-almost surely. 

For all $n\geq 0$, using traditional notation, the Markov property of the random walk at time $n$ yields
\[\E_{x}[\eta(X_{n})]=\E_{x}\big[\E_{X_{n}}[\1_{\{T<\infty\}}]\big]=\E_{x}\big[\E_{x}[\1_{\{T<\infty\}}\circ \theta_{n}|\mathscr F_{n}]\big]=\P_{x}(\exists k\geq n : X_{k}\in W).\]
As $n$ tends to infinity, the sequence of events on the right-hand side decreases to the event where $X$ visits~$W$ infinitely often, which, since the random walk is transient and $W$ is finite, has probability $0$. An application of Fatou's lemma now gives
\[\E_{x}\big[\text{a.s.}\!\lim_{n\to \infty} \eta(X_{n})\big]\leq \liminf_{n\to \infty} \E_{x}[\eta(X_{n})]=0,\]
from which the result follows.
\end{proof}

\begin{lemma} \label{lem:finDir}
Let $W$ be a finite subset of $\rV$. Let $a$ be a scalar function on $\rG$. Assume that $a$ is bounded and satisfies $\Delta_{1}a(u)=0$ for all $u\in \rV\setminus W$. Then $a$ has finite Dirichlet energy.
\end{lemma}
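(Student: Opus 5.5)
The plan is to show that the Dirichlet energy $\frac12\sum_{e\in\rE}x_e\,|a(\ol e)-a(\ul e)|^2$ is finite by comparing it with $(a,\Delta_1 a)$. The two quantities differ only by a boundary term, and that boundary term should be controlled by transience together with the boundedness of $a$.

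\textbf{Step 1: finite truncations.} Fix an exhaustion of $\rG$ by finite connected vertex sets $V_1\subset V_2\subset\cdots$, each containing $W$. For each $n$, let $a_n$ be the function equal to $a$ on $V_n$ and to $0$ outside $V_n$. This avoids any summability issue. A direct summation by parts on the finite graph then gives
\[
\sum_{[e]\,:\,\ul e,\ol e\in V_n} x_e\,|a(\ol e)-a(\ul e)|^2
=\sum_{v\in V_n} a(v)\,\Delta_1 a(v)\;-\;\sum_{e\in\partial V_n} x_e\, a(\ul e)\bigl(a(\ol e)-a(\ul e)\bigr),
\]
where $\partial V_n$ is the set of oriented edges leaving $V_n$.

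\textbf{Step 2: the interior term.} Since $\Delta_1 a$ vanishes outside $W$, the first sum equals the fixed finite quantity $\sum_{v\in W}a(v)\Delta_1 a(v)$ for every $n$.

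\textbf{Step 3: the boundary term.} The second sum is the main obstacle, because the boundary edge weights need not be bounded. Instead of estimating it directly, I would use a decomposition of $a$.

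Using the hitting probability $\eta$ of Lemma \ref{lem:hitprob0} and the Green function $G$ of the transient walk, write $a=b+c$ with
\[
c(u)=\sum_{w\in W} G(u,w)\,\frac{\Delta_1 a(w)}{\lambda_w}\cdot(\text{sign convention}).
\]
Then $c$ has finite support in its Laplacian. Moreover $c$ has finite Dirichlet energy, because $G(\cdot,w)$ does: its energy equals $G(w,w)/\lambda_w$, which is finite precisely by transience.

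The remaining piece $b=a-c$ is $1$-harmonic everywhere. It is also bounded, since $c$ is bounded: $G(u,w)\le G(w,w)$.

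\textbf{Step 4: the harmonic part.} By the Liouville property assumed in Theorem \ref{thm:dh-trans}, $b$ is constant and so has zero energy. I expect this is the setting in which the lemma is used.

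Without Liouville, I would instead argue as follows. The function $b$ is bounded and harmonic, so $b(X_n)$ is a bounded martingale. Its energy equals the total quadratic variation $\E\bigl[\sum_n (b(X_{n+1})-b(X_n))^2\bigr]$ weighted appropriately, and this is bounded by $\sup b^2$ from orthogonality of martingale increments. That gives finite energy directly.

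\textbf{Conclusion.} The energy of $a$ is at most twice the sum of the energies of $b$ and $c$, hence finite. The martingale step is the one I would check carefully, since it requires summing the identity $\E_x\bigl[\sum_n(b(X_{n+1})-b(X_n))^2\bigr]\le 2\sup|b|^2$ against the Green measure to recover the edge-weighted energy.
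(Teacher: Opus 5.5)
Your Steps 3 and 4 are exactly the paper's proof, and they are correct. You subtract from $a$ the combination $c=\sum_{w\in W}\Delta_1 a(w)\,g_w$ with $g_w=G(\cdot,w)/\lambda_w$. Each $g_w$ satisfies $\Delta_1 g_w=\1_w$, is bounded, and has finite energy by transience; with the paper's conventions your ``sign convention'' is simply $+$. Then $a-c$ is bounded and $1$-harmonic, hence constant by the Liouville hypothesis of Theorem~\ref{thm:dh-trans}. That is indeed the setting in which the lemma is stated and used. Steps 1 and 2 play no role and can be deleted; the boundary term there also has the wrong sign.

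The fallback ``without Liouville'' is wrong and should be dropped, because the lemma itself is false without that hypothesis. Take a transient graph carrying a non-constant bounded harmonic function $a$, and take $W=\varnothing$; nothing forces $a$ to have finite energy. For instance, a Cayley graph of the lamplighter group $(\Z/2\Z)\wr\Z^3$ is amenable, so every harmonic function of finite Dirichlet energy on it is constant. Yet it is not Liouville, so its non-constant bounded harmonic functions all have infinite energy.

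The failure is exactly in the step you flagged. The martingale identity gives
$\E_u\big[\sum_{n}(b(X_{n+1})-b(X_n))^2\big]=\sum_{y}\frac{G(u,y)}{\lambda_y}\sum_{e:\,\ul{e}=y}x_e\,(b(\ol{e})-b(\ul{e}))^2\le\sup|b|^2$.
This controls only the energy weighted by $G(u,y)/\lambda_y=\P_y(\text{hit } u)\,G(u,u)/\lambda_u$. These weights are not bounded below on a transient graph; they tend to $0$ along the walk, as in Lemma~\ref{lem:hitprob0}. Averaging over starting points makes them bounded below only at the cost of infinite total mass. So no finite bound on the unweighted energy can come out of this argument.
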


\begin{proof} Since $(\rG,\ul{x})$ is transient, there exists, for every vertex $u$, a bounded scalar function $g_{u}$ on $\rG$ with finite Dirichlet energy and such that $\Delta_{1}g_{u}=\1_{u}$ (see for example \cite[Thm. 2.11 and Prop. 2.12]{Lyons-Peres}). Therefore, since the $1$-Laplacian of $a$ vanishes outside $W$, there exists a linear combination of $a$ and the functions $\{g_{u}, u\in W\}$ of the form $a(\cdot)+\sum_{u\in W} c_{u}g_{u}(\cdot)$ that is $1$-harmonic over $\rG$. This linear combination is also bounded, and by our assumption that~$(\rG,\ul{x})$ has the Liouville property, it is therefore constant. It follows that $a$ is equal to an affine combination of the finite collection of functions $\{g_{u}, u\in W\}$, and has therefore finite Dirichlet energy.
\end{proof}

\begin{proof}[Proof of Theorem \ref{thm:dh-trans}] Let us fix an arbitrary vertex $v$ of $\rV\setminus B$. Let $H$ denote the set of horizontal sections of $(\F,h)$ over~$\rG\setminus B$.  Since $\rG\setminus B$ is connected, the evaluation map $\ev_{v}:H\to \F_{v}$ is an isomorphism, so that $\dim H=N$.

Since $(\F,h)$ is not flat, there exists a loop $\gamma$ based at $v$ such that $\hol_{h}(\gamma)\neq \id_{\F_{v}}$. Therefore, there exists a vector $\xi\in \F_{v}$ such that $\hol_{h}(\gamma)(\xi)\neq \xi$. Let us fix such a vector $\xi$, that we identify with a horizontal section of $\F$ over $\rG\setminus B$.

Let us note that if $(\F,h)$ is fully twisted, then for every $\xi\in \F_{v}\setminus \{0\}$, there exists a loop $\gamma$ based at $v$ such that $\hol_{h}(\gamma)(\xi)\neq \xi$. Therefore, in this case, $\xi$ can be chosen to be any non-zero vector of $\F_{v}$, with an appropriate choice of $\gamma$.
 
Now that $\xi$ and $\gamma$ are fixed, let us define a harmonic section $f\in \Omega^{0}(\F)$. For this, let us define the random time $T=\inf\{n\geq 0 : \forall m\geq n, X_{m} \in \rV\setminus \rV(B)\}$. The transience of $(\rG,\ul{x})$ ensures that for all vertex $u\in \rV$, this random time is finite $\P_{u}$-almost surely. 

For every vertex $u\in \rV$, the random sequence $\big(\hol_{h}(X_{|[0,n]}^{-1})(\xi(X_{n}))\big)_{n\geq T}$ of elements of $\F_{u}$ is well defined $\P_{u}$-almost surely, and constant. We denote by  $\hol_{h}(X^{-1})(\xi)$ its limit, and we set
\[f(u)=\E_{u}\big[\hol_{h}(X^{-1})(\xi)\big].\]
We claim that $d_{h}f$ is a non-zero element of $\D_{h}$. To prove this, we will prove that $f$ is harmonic, not horizontal, not in $\Omega^{0}_{\ell^{2}}(\F)$, and with finite Dirichlet energy. 

$\sbullet$ The fact that $f$ is harmonic follows immediately from the Markov property at time $1$ of the random walk $X$. 

$\sbullet$ Let us prove that $f$ is not horizontal. For this, let us observe that Lemma \ref{lem:hitprob0} applied to $W=\rV(B)$ guarantees the existence of a sequence $(u_n)_{n\geq 0}$ of pairwise distinct vertices of $\rG$ such that $\eta(u_n)$ tends to $0$ as $n$ tends to infinity. In particular, the probability for the random walk started from $u_n$ of ever crossing any edge of $B$ tends to $0$, and therefore $\P_{u_n}(\hol_{h}(X^{-1})(\xi)=\xi)$ tends to $1$, as $n$ tends to infinity. 

Then, firstly, by dominated convergence, $\|f(u_n)-\xi\|$ tends to $0$ as $n$ tends to infinity. Since the sequence $(u_n)_{n\geq 0}$ eventually exits the finite set $\rV(B)$, we conclude that if $f$ were horizontal, it would have to coincide with $\xi$ on $\rG\setminus B$.

Secondly, for $n$ large enough, we have $u_n\notin \rV(B)$ and $\eta(u_n)<1$, so that the probability for the random walk started from $u_n$ of never crossing any edge of $B$ is positive. Let us use now the assumption that $\rG\setminus B$ is connected to find a path $p$ from $u$ to $v$ in $\rG\setminus B$. Then on the one hand, $\hol_{h}(p\gamma p^{-1})(\xi)\neq \xi$. On the other hand, the probability that the random walk issued from $u$ starts by going exactly along~$p\gamma p^{-1}$, and then does never cross any edge of $B$, is positive. Therefore, the distribution of $\hol_{h}(X^{-1})(\xi)$ under~$\P_{u}$ is a probability measure on the sphere of~$\F_{u}$ of radius $\|\xi\|$ that is not the Dirac mass at $\xi$. By the uniform convexity of this sphere, the average of this probability distribution has norm strictly smaller than $\xi$, and is therefore not equal to $\xi$. It follows that $f(u)\neq \xi$, and we conclude that $f$ is not horizontal.

$\sbullet$ The fact that $f$ does not belong to $\Omega^{0}_{\ell^{2}}(\F)$ follows immediately from the fact that $\xi\neq 0$ and $\|f-\xi\|$ tends to $0$ at infinity.

$\sbullet$ Let us finally prove that $f$ has finite Dirichlet energy, that is, that $d_{h}f$ belongs to $\Omega^{1}_{\ell^{2}}(\F)$. For this, let us construct $N$ sections $s_{1},\ldots,s_{N}$ of $\F$ that are horizontal on $\rG\setminus B$, and such that $(s_{1}(u),\ldots,s_{N}(u))$ is an orthonormal basis of~$\F_{u}$ for each vertex $u\in \rV$, 

With the help of these sections, let us write $f=f_{1}s_{1}+\ldots+f_{N}s_{N}$, where $f_{1},\ldots,f_{N}$ are $N$ scalar functions on $\rG$. Since $s_{1},\ldots,s_{N}$ are horizontal on the complement of a finite subset of~$\rE$, the function $f$ has finite Dirichlet energy with respect to the differential $d_{h}$ if and only if the functions $f_{1},\ldots,f_{N}$ have finite Dirichlet energy with respect to the usual untwisted differential on the space of real-valued functions on $\rG$.

Let us now observe that since the sections $s_{1},\ldots,s_{N}$ are horizontal on $\rG\setminus B$, we have there the identity $\Delta_{h}f=(\Delta_{1} f_{1})s_{1}+\ldots+(\Delta_{1}f_{N})s_{N}$. Therefore, each of the scalar functions $f_{1},\ldots,f_{N}$ is $1$-harmonic on~$\rG$, except at a finite subset of vertices. It follows from Lemma \ref{lem:finDir} that each of them has finite Dirichlet energy, and therefore that $f$ itself has finite Dirichlet energy.
\end{proof}

\subsection{The example of $\Z^d$}

Let us consider the example where $\rG$ is the hypercubic lattice $\Z^d$ for some integer $d\geq 1$, with the usual vertices and edges, and with constant conductances. 

In this case, it is a classical fact that any bounded harmonic function is constant (see for instance \cite[Thm. 9.24]{Lyons-Peres}). Since the space of differentials of bounded Dirichlet harmonic functions is dense in the space $\D_1$ of differentials of Dirichlet harmonic functions (see \cite[Ex. 9.44]{Lyons-Peres}), this space $\D_1$ is the null space. In particular, the free and wired uniform spanning forests are equal. However, Theorems \ref{thm:dh-rec} and \ref{thm:dh-trans} have the following consequence.

\begin{proposition} \label{prop:Zd}
Let $(\F,h)$ be a local system over the graph $\Z^d$, with constant Euclidean weights.
\begin{enumerate}[\indent 1.]
    \item If $d\leq 2$, then $\D_h=\{0\}$.
    \item If $d\geq 3$ and $(\F,h)$ is flat outside a finite box of $\Z^{d}$ without being flat over $\Z^{d}$, then $\D_{h}\neq \{0\}$.
\end{enumerate}
\end{proposition}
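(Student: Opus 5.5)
The plan is to derive both assertions from Theorems \ref{thm:dh-rec} and \ref{thm:dh-trans}, using the classical random-walk properties of $\Z^d$ with constant weights. The only real work is checking the three hypotheses of Theorem \ref{thm:dh-trans} in the second case.

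\emph{Assertion 1.} Let $d\leq 2$. By P\'olya's theorem, simple random walk on $\Z^d$ is recurrent. Since the Euclidean weights are constant, the walk on $(\Z^d,\ul{x})$ is exactly simple random walk, so the weighted graph is recurrent. Theorem \ref{thm:dh-rec} then gives $\D_h=\{0\}$ directly, whatever the local system.

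\emph{Assertion 2.} Let $d\geq 3$. Again by P\'olya's theorem, $(\Z^d,\ul{x})$ is transient, so it remains to verify the three hypotheses of Theorem \ref{thm:dh-trans}.
\begin{enumerate}[\indent 1.]
\item The Liouville property is the classical fact, already recalled above, that bounded harmonic functions on $\Z^d$ are constant \cite[Thm. 9.24]{Lyons-Peres}.
\item Non-flatness over $\Z^d$ is part of the assumption.
\item For almost flatness, let $\Lambda=\set{-R,R}^d$ be a box such that $(\F,h)$ is flat on the subgraph of $\Z^d$ with vertex set $\Z^d\setminus\Lambda$. Take $B$ to be the finite set of edges having at least one endpoint in a slightly larger box $\Lambda'=\set{-R-1,R+1}^d$. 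Then $\rG\setminus B$ has vertex set $\Z^d\setminus\rV(B)$, which is contained in the complement of $\Lambda'$. This graph is a subgraph of the graph outside $\Lambda$, and the restriction of a flat local system to a subgraph is still flat, since any loop in the subgraph is a loop in the larger graph, where its holonomy is trivial by Corollary \ref{cor:horizholo}.
\end{enumerate}

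The step I expect to be the main obstacle is the connectivity of $\rG\setminus B$, which is required in hypothesis 3. The complement of a cube in $\Z^d$ with $d\geq2$ is connected. To see this, join any vertex outside the cube to a point with a large first coordinate by monotone moves in coordinates that do not decrease the sup-norm. Then travel around along faces of a larger cube, which stays outside.

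One subtlety needs care. With the paper's definition, $\rG\setminus B$ removes all of $\rV(B)$, so $\rV(B)$ includes the outer neighbours of $\Lambda'$. The graph $\rG\setminus B$ is then precisely the induced graph on the complement of a cube of side $2R+5$, and that complement is connected by the argument above.

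With all three hypotheses in place, Theorem \ref{thm:dh-trans} gives $\D_h\neq\{0\}$.
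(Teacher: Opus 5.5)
Your proposal is correct and follows the paper's route exactly: the paper obtains assertion 1 from Theorem~\ref{thm:dh-rec} via recurrence of $\Z^d$ for $d\le 2$, and assertion 2 from Theorem~\ref{thm:dh-trans} by checking transience, the Liouville property, non-flatness and almost flatness. One harmless slip is that $\rV(B)$ is not the full cube $\set{-R-2,R+2}^d$, because points with at least two coordinates equal to $\pm(R+2)$ are not endpoints of edges of $B$; each such point is adjacent to a vertex outside that cube, so $\rG\setminus B$ is still connected and your conclusion stands.
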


A very simple case where the second assertion applies is that where the vector bundle is trivial, of rank $1$, and the connection is identically equal to $1$, except along one single edge, where it is equal to $-1$. Of course, in that case, the measure is not translation invariant, unlike the uniform spanning forest.

%%%%%%%%%%%%%%%%%%%%%%%%%%%%%%%%%%%%%%%%

\section{Wired and free DLSF}\label{sec:wired-free}

We are now ready to define two probability measures for determinantal linear spanning forests on infinite graphs. 
The justification for their names will come in the next paragraph about finite approximations.

\subsection{Abstract definition and stochastic comparison}

\begin{definition}The \new{wired determinantal linear spanning forest (WDLSF)}, resp. the \new{free determinantal linear spanning forest (FDLSF)}, is the determinantal probability measure on $\Omega^{1}(\F)$ associated to the splitting~$\sigma_{1}$ and to the subspace $\bigstar_{h}$, resp. to the subspace $\boxminus_{h}^{\perp}=\bigstar_{h}\oplus \D_h$.
\end{definition}

These two measures are related by the following stochastic inequality which is a specialization of a general result about determinantal probability measures~\cite{KL3}. 

\begin{proposition}\label{prop:wsf-fsf}
The wired determinantal linear spanning forest is stochastically dominated by the free determinantal linear spanning forest.
In particular, the trace of the wired DLSF is stochastically dominated by the trace of the free DLSF. Moreover, these measures are equal if and only if $\D_h=0$.
\end{proposition}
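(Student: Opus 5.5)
The plan is to reduce everything to the general monotonicity theorem for determinantal linear processes in \cite{KL3}. That theorem states: for a Hilbert space $E$ with an orthogonal splitting $\sigma$, and closed subspaces $H\subseteq H'$ of $E$, the DLP associated to $(E,H,\sigma)$ is stochastically dominated by the DLP associated to $(E,H',\sigma)$. Here domination is for the natural inclusion order on adapted subspaces, so that there is a coupling $(\Q,\Q')$ with $\Q\subseteq \Q'$ almost surely.

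The first step is to check that the two measures fit this framework. The WDLSF is the DLP attached to $(\Omega^1_{\ell^2}(\F),\bigstar_h,\sigma_1)$ and the FDLSF is the one attached to $(\Omega^1_{\ell^2}(\F),\boxminus_h^{\perp},\sigma_1)$. Theorem~\ref{thm:splitting} gives the orthogonal decomposition $\Omega^1_{\ell^2}(\F)=\bigstar_h\oplus\D_h\oplus\boxminus_h$, so $\boxminus_h^{\perp}=\bigstar_h\oplus\D_h\supseteq\bigstar_h$. The inclusion $H\subseteq H'$ therefore holds, and the KL3 domination result yields the first assertion directly.

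For the trace, take a coupling with $\Q\subseteq\Q'$. Then for every edge $e$ we have $\Q_e=\Q\cap\F_e\subseteq\Q'\cap\F_e=\Q'_e$, so $\dim\Q_e\le\dim\Q'_e$ coordinatewise. This gives the stochastic domination of $\dim_\gamma\Q$ by $\dim_\gamma\Q'$ in $\N^{\gE}$.

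For the equality statement, one direction is trivial: if $\D_h=0$, the two subspaces coincide and so do the measures. The converse is the only step needing care. I would use that a DLP on a split Hilbert space determines its defining subspace, through its correlation or projection kernel. Concretely, the mean projection identity, or equivalently the first-order marginal formula (the expected projection onto $\Q$), recovers the orthogonal projection $\proj{H}$ on each fibre-block. More robustly, the expectation of $\dim\Q_e$ equals $\Tr\big((\proj{H})^{\F_e}_{\F_e}\big)$.

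If the measures agree, these traces agree for every $e$. Summing gives
\[\sum_{e}\Tr\big((\proj{\boxminus_h^\perp}-\proj{\bigstar_h})^{\F_e}_{\F_e}\big)=0.\]
The operator $\proj{\boxminus_h^\perp}-\proj{\bigstar_h}$ equals $\proj{\D_h}$, which is positive. The vanishing of all its diagonal-block traces therefore forces $\proj{\D_h}=0$, that is, $\D_h=0$.

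The main obstacle is justifying this in infinite dimension. One must check that the KL3 results, stated there in the infinite-volume setting of \cite[Section 4.9]{KL3}, apply to closed subspaces of the Hilbert space $\Omega^1_{\ell^2}(\F)$ with its countable splitting into finite-dimensional fibres. One must also check that the expected-trace formula holds fibrewise, so that the positivity argument is legitimate.
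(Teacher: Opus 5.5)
Your proposal is correct and follows essentially the paper's route: the paper's proof is just an appeal to the stochastic domination property of determinantal measures \cite[Proposition 4.15]{KL3}, applied to the inclusion $\bigstar_h\subseteq\boxminus_h^{\perp}=\bigstar_h\oplus\D_h$. Your additional steps make explicit what the paper leaves implicit. These are the coordinatewise trace comparison under a monotone coupling, and the fibrewise identity $\E[\dim\Q'_e]-\E[\dim\Q_e]=\Tr\big((\proj{\D_h})^{\F_e}_{\F_e}\big)$, which forces the positive operator $\proj{\D_h}$ to vanish when the two measures agree.
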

\begin{proof}
The result is a corollary of the stochastic domination property for determinantal probability measures~\cite[Proposition 4.15]{KL3}.
\end{proof}

In particular, combining Proposition~\ref{prop:wsf-fsf} and Proposition~\ref{prop:HD=0}, under the assumption of uniform holonomic spectral gap (Definitions~\ref{def:geom-reg} and \ref{def:geom-reg2}), the free and wired determinantal linear spanning forest are equal. 

\begin{corollary}
Assume that $(\F,h)$ satisfies the uniform holonomic spectral gap on $(\rG,\ul{x})$, or more generally a Poincaré inequality. Then WDLSF and FDLSF are equal.
\end{corollary}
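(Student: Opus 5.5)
The plan is to reduce the statement to the equality criterion of Proposition~\ref{prop:wsf-fsf}: WDLSF and FDLSF coincide if and only if $\D_h=\{0\}$. It therefore suffices to show that a twisted Poincaré inequality forces $\D_h=\{0\}$. The case of a uniform holonomic spectral gap is then covered by Propositions~\ref{prop:bound} and~\ref{prop:bound2}. These provide a twisted Poincaré inequality with constant $C_1C_2$, respectively $C_1C_2C_3$, depending on whether we use Definition~\ref{def:geom-reg} or Definition~\ref{def:geom-reg2}.

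First I will check that a local system satisfying a twisted Poincaré inequality is fully twisted. If $f$ is a non-zero horizontal section, then $\|d_hf\|=0$. On the other hand $\|f\|^2=\sum_v\lambda_v\|f(v)\|^2>0$: by Proposition~\ref{prop:hor} the norm of $f$ is constant on each connected component, so $\|f\|^2$ is positive (possibly $+\infty$). This contradicts $\|f\|^2\le C\|d_hf\|^2=0$. Hence $\ker d_h=\{0\}$, and the hypotheses of Proposition~\ref{prop:HD=0} are satisfied. That proposition gives $\H_h=\{0\}$, hence $\D_h=d_h(\H_h)=\{0\}$.

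Second, I will apply Proposition~\ref{prop:wsf-fsf}. The WDLSF is the DLP associated with $\bigstar_h$ and the FDLSF the one associated with $\bigstar_h\oplus\D_h$, both with respect to the splitting $\sigma_1$. When $\D_h=\{0\}$ these two subspaces of $\Omega^1_{\ell^2}(\F)$ coincide, so the two determinantal measures are defined by the same data and are equal. This direction is immediate from the definitions and does not even need the stochastic domination.

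The only point needing care is the first step. Proposition~\ref{prop:HD=0} assumes full twistedness, and the Poincaré inequality is stated for all $f\in\Omega^0(\F)$ with the convention that norms may take the value $+\infty$. The argument above respects this convention, since only the positivity of $\|f\|^2$ is used, not its finiteness. Otherwise the corollary is a direct concatenation of earlier results, and I expect no real obstacle.
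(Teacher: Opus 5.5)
Your proposal is correct and follows the paper's own argument. The paper likewise combines Proposition~\ref{prop:wsf-fsf} with Proposition~\ref{prop:HD=0}, uses Propositions~\ref{prop:bound} and~\ref{prop:bound2} to pass from the uniform holonomic spectral gap to a Poincaré inequality, and also remarks that a Poincaré inequality can only hold on a fully twisted system. Your explicit check of full twistedness is a harmless addition, since the proof of Proposition~\ref{prop:HD=0} in fact only uses the Poincaré inequality.
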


By Theorem \ref{thm:dh-rec}, on any recurrent graph, we have $\WDLSF=\FDLSF$. By Proposition \ref{prop:Zd} however, for all $d\ge 3$, there exists infinitely many local systems on $\Z^d$ for which $\WDLSF\neq\FDLSF$.

When these measures coincide, we simply call DLSF the corresponding measure.

\subsection{Approximation by determinantal linear spanning forests on finite graphs}\label{subsubsec:approx}

Let us consider a sequence $\left(\rG_n=(\rV_n,\rE_n)\right)_{n\ge 1}$ of increasing subgraphs which exhaust the countably infinite graph $\rG$. Thus, for all $n\ge 1$, we have $\rE_n\subset \rE_{n+1}\subset \rE$ and $\rV_n\subset\rV_{n+1}\subset \rV$.

The following diagram sums up the situation.

\[\xymatrix{
\cdots \ar[r] & \Omega^{0}(\F_{|\rG_{n-1}}) \ar[r]^{\iota_{n}} \ar@<0.75ex>[d]^{d_{n-1}} \ar@{-->}[dr] & \Omega^{0}(\F_{|\rG_{n}}) \ar[r]^{\iota_{n+1}} \ar@<0.75ex>[d]^{d_{n}} \ar@{-->}[dr] & \Omega^{0}(\F_{|\rG_{n+1}}) \ar[r] \ar@<0.75ex>[d]^{d_{n+1}} & \cdots \\ 
\cdots  \ar[r] & \Omega^{1}(\F_{|\rG_{n-1}}) \ar[r]^{\epsilon_{n}} \ar@<0.75ex>[u]^{d^{*}_{n-1}} & \Omega^{1}(\F_{|\rG_{n}}) \ar[r]^{\epsilon_{n+1}} \ar@<0.75ex>[u]^{d^{*}_{n}}  & \Omega^{1}(\F_{|\rG_{n+1}}) \ar[r] \ar@<0.75ex>[u]^{d^{*}_{n+1}}  & \cdots
}\]

Define now for each integer $n$, the two spaces $\bigstar_n$ and $\boxminus_n$ as follows (on the diagram it corresponds to following the oblique dashed arrow, from above or below, respectively). 

\begin{definition}For all $n\ge 1$, let 
$\bigstar_n=d_{n}(\iota_n(\Omega^0(\F_{|\rG_{n-1}})))$ and $\boxminus_n=\varepsilon_n(\ker(d_{n-1}^*))$.
\end{definition}

These spaces are orthogonal and both subspaces of $\Omega^1(\F_{|\rG_{n}})$. However they do not generally span $\Omega^1(\F_{|\rG_{n}})$. This corresponds to the fact that from the point of view of the graph, one space is defined using a `wired' construction, and the other a `free' one, see Figure~\ref{fig:wired-free}. They differ from one another by a space of sections supported on the boundary, which may or may not vanish when we pass to infinity. 

\begin{figure}[!ht]
\centering
\includegraphics{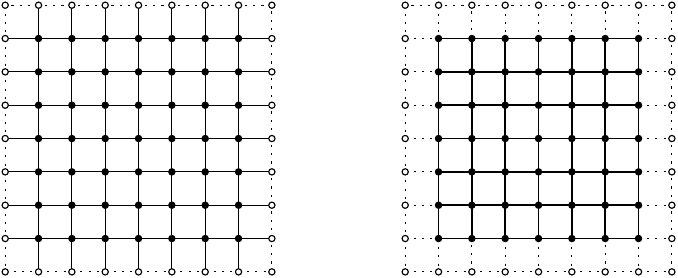}
\caption{\small Exhaustion of the square grid by concentric square boxes $(\rG_n)_{n\ge 1}$.
On both sides: the graph $\rG_n$ with the vertices of $\rV_{n-1}\subset\rV_{n}$ in black. 
On the left : wired boundary conditions. The space $\bigstar_n\subset \Omega^1(\F_{|\rG_{n}})$ is the space of differentials of sections vanishing on the white vertices.  On the right: free boundary conditions. The space $\boxminus_n\subset \Omega^1(\F_{|\rG_{n}})$ is the space of differentials of sections of $\Omega^{0}(\F_{|\rG_{n-1}})$, extended by $0$ on the edges of $\rG_{n}\setminus \rG_{n-1}$.}\label{fig:wired-free}
\end{figure}

The nice property is that these subspaces are increasing for inclusion and that the closure of their union are the spaces we defined earlier. 

\begin{proposition}
Consider an increasing sequence of subgraphs $(\rG_n)_{n\ge 1}$ which exhaust the infinite graph $\rG$. For all $n\ge 1$, we have
$\bigstar_n\subset\bigstar_{n+1}$ and $\boxminus_n\subset\boxminus_{n+1}$. Moreover, 
\[\bigstar_{h}=\overline{\bigcup_{n\geq 1}\bigstar_n} \  {\text and}\  \boxminus_{h}=\overline{\bigcup_{n\geq 1} \boxminus_n},\]
where the closures are with respect to the $\ell^{2}$-norms.
\end{proposition}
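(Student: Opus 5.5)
The plan is to identify the finite-volume spaces inside $\Omega^1_{\ell^2}(\F)$ and reduce everything to the definitions of $\bigstar_h$ and $\boxminus_h$. Two identifications are needed throughout.

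First, $\Omega^0(\F_{|\rG_{n-1}})$ is identified with the sections supported on $\rV_{n-1}$ via $\iota_n$. With this convention, $\bigstar_n$ is exactly $D_h(\{f\in\Omega^0_0(\F) : \operatorname{supp} f\subseteq \rV_{n-1}\})$, viewed inside $\Omega^1(\F_{|\rG_n})$. This uses one observation: if $f$ is supported on $\rV_{n-1}$, then $d_hf$ is supported on edges adjacent to $\rV_{n-1}$, and all such edges belong to $\rE_n$. This holds provided the exhaustion is such that every edge with an endpoint in $\rV_{n-1}$ lies in $\rE_n$, which is the situation of Figure~\ref{fig:wired-free}. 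I will state this mild hypothesis on the exhaustion explicitly, or otherwise interpret $\bigstar_n$ as the differentials computed in $\rG$ restricted to $\rG_n$.

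Second, $\Omega^1(\F_{|\rG_n})$ embeds in $\Omega^1_0(\F)$ by extension by zero. Under this embedding, $\boxminus_n$ is the set of finitely supported $1$-forms $\theta$ supported in $\rE_{n-1}$ such that $d^*_{n-1}\theta=0$. For such $\theta$ and any vertex $v$ of $\rG$, $(d_h^*\theta)(v)$ involves only edges of $\rE_{n-1}$ ending at $v$. It therefore vanishes for $v\notin\rV_{n-1}$, and it coincides with $(d^*_{n-1}\theta)(v)=0$ for $v\in \rV_{n-1}$. Hence $\boxminus_n=\{\theta\in\Omega^1_0(\F): \operatorname{supp}\theta\subseteq \rE_{n-1},\ D_h^*\theta=0\}$, since $D_h^*$ differs from $d_h^*$ only by the positive factor $1/\lambda_v$.

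With these descriptions, monotonicity is immediate. The support conditions $\operatorname{supp}f\subseteq\rV_{n-1}\subseteq \rV_n$ and $\operatorname{supp}\theta\subseteq \rE_{n-1}\subseteq\rE_n$ are increasing in $n$, and the embeddings $\epsilon_{n+1}$ and $\iota_{n+1}$ are compatible with extension by zero. Hence $\bigstar_n\subseteq \bigstar_{n+1}$ and $\boxminus_n\subseteq\boxminus_{n+1}$.

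For the limits, every $f\in\Omega^0_0(\F)$ has finite support, which is contained in some $\rV_{n-1}$ because the $\rV_n$ exhaust $\rV$. So $\bigcup_n\bigstar_n=D_h(\Omega^0_0(\F))$, and taking closures gives $\bigstar_h$ by definition and by the remark following that definition. Similarly, every $\theta\in\Omega^1_0(\F)\cap\ker D_h^*$ has finite support, which is contained in some $\rE_{n-1}$, so $\bigcup_n\boxminus_n=\Omega^1_0(\F)\cap\ker D_h^*$, whose closure is $\boxminus_h$.

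The main obstacle is the bookkeeping in the identification of $\boxminus_n$. One must check carefully that the codifferential of the subgraph $\rG_{n-1}$ agrees with the global $d_h^*$ on forms supported in $\rE_{n-1}$. This is the same observation already used before Theorem~\ref{theo:computerank}: $d_h^*$ restricted to $\Omega^1(A)$ coincides with $(d_h^A)^*$. Beyond that, the only remaining point is the convention on the exhaustion needed for $\bigstar_n$, namely that $\rG_n$ contains all edges incident to $\rV_{n-1}$.
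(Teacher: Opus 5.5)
Your argument is correct and follows the same route as the paper, which simply asserts that monotonicity and the two limit identities follow directly from the definitions. You supply the bookkeeping: you identify $\bigstar_n$ with $D_h$ of sections supported in $\rV_{n-1}$, and $\boxminus_n$ with the elements of $\Omega^1_0(\F)\cap\ker D_h^*$ supported in $\rE_{n-1}$. Your extra hypothesis that $\rE_n$ contains every edge incident to $\rV_{n-1}$ is genuinely needed for the $\bigstar$ statements and is only implicit in the paper (it holds for the concentric boxes of its figure); your fallback of restricting the global differential to $\rG_n$ would not remove this need. The $\boxminus$ statements, by contrast, hold for an arbitrary exhaustion.
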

\begin{proof}
The first assertion follows easily from the definition by looking at the diagram. The second follows easily from the definition of these infinite dimensional spaces.
\end{proof}

It now follows from the results on determinantal probability measures on countable splittings~\cite[Section 4.9]{KL3} that we have the following convergence statement. 

\begin{theorem}\label{thm:infinite-volume}
Given any increasing exhaustion of our graph, the WDLSF is the weak limit of the determinantal linear spanning forest on the wired subgraphs, and the FDLSF is the weak limit of the determinantal linear spanning forest on the free subgraphs.
\end{theorem}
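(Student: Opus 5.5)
The plan is to deduce Theorem \ref{thm:infinite-volume} from the general convergence result for determinantal linear processes on countable splittings \cite[Section 4.9]{KL3}. That result says: if a Hilbert space with a countable orthogonal splitting carries an increasing sequence of closed subspaces $H_n$, then the DLP associated with $H_n$ converges weakly to the DLP associated with $\overline{\bigcup_n H_n}$. Weak convergence here means convergence of finite-dimensional marginals, that is, of the joint law of $(\Q\cap\F_e)_{e\in A}$ for every finite $A\subseteq\gE$. I would first check that the Hilbert space $\Omega^1_{\ell^2}(\F)$, with the splitting $\sigma_1$ into the finite-dimensional fibres $\F_e$, fits this framework. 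This holds because $\rG$ is countable and locally finite, so $\gE$ is countable.

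Next I would identify the finite-volume measures with DLPs on this common space. For the wired subgraph, the $\DLSF$ on $\rG_n$ with wired boundary is $\DLP(\Omega^1(\F_{|\rG_n}),\bigstar_n,\sigma_1)$. Since $\Omega^1(\F_{|\rG_n})$ is an orthogonal summand of $\Omega^1_{\ell^2}(\F)$, this measure can be viewed as $\DLP(\Omega^1_{\ell^2}(\F),\bigstar_n,\sigma_1)$: adding the summands $\F_e$ for $e\notin\rE_n$, which are orthogonal to $\bigstar_n$, only forces $\Q\cap\F_e=0$ there. The same identification applies to the free case, where the finite-volume space is $\boxminus_n$. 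For the free case, however, the relevant subspace is $\boxminus_n^{\perp}$, the orthogonal complement taken inside $\Omega^1(\F_{|\rG_n})$. I would therefore use the duality $\DLP(E,H^\perp)$ versus $\DLP(E,H)$ from \cite{KL3}, under which $\Q\mapsto\Q^\perp$ intertwines the two, fibrewise.

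I would then apply the proposition just proved. The sequences $\bigstar_n$ and $\boxminus_n$ are increasing, with $\overline{\bigcup\bigstar_n}=\bigstar_h$ and $\overline{\bigcup\boxminus_n}=\boxminus_h$. In the wired case the convergence theorem immediately gives convergence to $\DLP(\bigstar_h)=\WDLSF$. In the free case it gives convergence of the complementary processes to $\DLP(\boxminus_h)$. Taking fibrewise orthogonal complements, which is a continuous operation on each finite-dimensional fibre, then yields convergence to $\DLP(\boxminus_h^\perp)$. This limit is $\FDLSF$, since $\boxminus_h^\perp=\bigstar_h\oplus\D_h$ by Theorem \ref{thm:splitting}.

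I expect the main obstacle to be the free case, specifically making precise that the free finite-volume $\DLSF$, which is defined through $\boxminus_n^{\perp}$ computed inside the finite space, matches the complement picture on the infinite space. One has to handle the edges outside $\rE_n$ consistently: in the complemented process they must carry full fibres. The cleanest approach is to work directly with decreasing subspaces $K_n=\boxminus_n^{\perp_{\ell^2}}$. These decrease to $\boxminus_h^\perp$, and their projections converge strongly. Then, if the general result of \cite[Section 4.9]{KL3} covers monotone sequences in either direction, or more generally strong convergence of projections, I would invoke it directly. It then remains only to observe that the restriction of $K_n$ to edges of $\rG_n$ gives back the finite free $\DLSF$.
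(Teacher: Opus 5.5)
Your proposal is correct and follows the paper's route. The paper deduces the theorem directly from the preceding proposition ($\bigstar_n\uparrow\bigstar_h$, $\boxminus_n\uparrow\boxminus_h$) combined with the convergence result for determinantal measures on countable splittings in \cite[Section 4.9]{KL3}. Your extra care in the free case (passing to the decreasing complements $\boxminus_n^{\perp}$, or using the duality $\Q\mapsto\Q^{\perp}$ and noting that edges outside $\rE_n$ do not affect finite-dimensional marginals) spells out a step the paper leaves implicit.
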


\subsection{Poincaré inequality and exponential decay of correlations}

Let us assume that $(\F,h)$ satisfies a Poincaré inequality on $(\rG,\ul{x})$, that is, that there exists a constant $C>0$ such that for all $f\in \Omega^0_{\ell^2}(\F)$, we have
\[(f, L_h f)=\lVert D_h f\rVert^2 \ge C \lVert f \rVert^2 \,.\]

Define $\Lambda=m^{\Omega^0(\F)}_{\ul{\lambda}}$ to be the operator acting by scaling by $\lambda_v$ on each fiber $\F_v$. 
Since $L_h=\id-P_h$, taking $C$ as above (which we may assume to be smaller than $1$), this implies that~$P_h$ has operator norm smaller than $1-C$ which is strictly smaller than $1$, and we can thus invert $L_h$ to obtain a Green section 
\[G_h \Lambda =L_h^{-1}=\sum_{n\ge 0} P_h^n,\]
where 
\[(P_h^n)_{u,v}=\sum_{\substack{p\colon u\to v\\ \vert p\vert=n}} \P(p) \hol_h(p)^{-1}.\]
See \cite{KL1} for a detail of this computation.

In particular, for any two vertices $u$ and $v$ at distance $m$, the coefficient $(u,v)$ of $P_h^n$ is zero if $n<m$. 
Hence 
\[\vert (G_h)_{u,v} \vert \lambda_v \le \sum_{n\ge m} \vert (P_h^n)_{u,v}\vert \le \sum_{n \ge m} \lVert P_h \rVert^n = \frac{\lVert P_h \rVert^m}{1-\lVert P_h \rVert}\,.\]

Hence the Green section at a pair of vertices $(u,v)$ decays exponentially with the graph-distance between $u$ and $v$.\footnote{Here and below, we use the graph distance, and not the weighted distance. Note that if the weights $\ul{x}$ are uniformly bounded above and away from zero, the graph-distance and the weighted distance are Lipschitz-equivalent.}
In particular, the correlations of the Gaussian Free Vector Field (see Section \ref{sec:GFVF}) decay at least exponentially with the graph-distance. 

Since $\proj{\bigstar_{\ell^2}}=D_h G_h D^*_h$, the correlation kernel of the DLSF also decays exponentially with the graph-distance. Let us record this result in the following proposition.

\begin{proposition}
If $(\F,h)$ satisfies a Poincaré inequality over $(\rG,\ul{x})$, then the WDLSF and FDLSF coincide, and the edge-edge correlation operators for the DLSF decay at least as fast as an exponential decay with the graph-distance. 
\end{proposition}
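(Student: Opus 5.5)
The plan is to combine the material already assembled just before the statement: the Poincaré inequality controls both the coincidence of the two infinite-volume measures and the decay of the Green section. I will carry out three steps in order.

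\textbf{Step 1: coincidence of the measures.} This will follow from earlier results. A Poincaré inequality (for all $f\in\Omega^0(\F)$, or at least for those with finite Dirichlet energy) forces $\H_h=\{0\}$ by the argument of Proposition~\ref{prop:HD=0}. Hence $\D_h=d_h(\H_h)=\{0\}$. Proposition~\ref{prop:wsf-fsf} then gives $\WDLSF=\FDLSF$, since $\boxminus_h^\perp=\bigstar_h\oplus\D_h=\bigstar_h$.

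One care point: the hypothesis is phrased for $f\in\Omega^0_{\ell^2}(\F)$. I would first observe that, since $\Omega^0_0(\F)$ is dense and the inequality extends, any $f\in\H_h$ can be handled. Concretely, I would use Theorem~\ref{thm:splitting}: $\D_h\subseteq\ker D_h^*$ is orthogonal to $\bigstar_h$. The Poincaré inequality makes $D_h$ injective with closed range, and $L_h$ invertible. If this extension is delicate, I will simply invoke the earlier Corollary, which covers the Poincaré case.

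\textbf{Step 2: exponential decay of the Green section.} Write $L_h=\id-P_h$, where $P_h=\Lambda^{-1}d_h^*\,(\text{weighted transport})$ is the twisted transition operator. The Poincaré inequality with constant $C\le1$ gives $\|P_h\|\le1-C<1$ in the $\lambda$-weighted norm. Then $L_h^{-1}=\sum_n P_h^n$ converges in operator norm.

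Since $P_h$ only moves one step, the $(u,v)$ block of $P_h^n$ vanishes for $n<\mathrm{dist}(u,v)=m$. Hence the $(u,v)$ block of $L_h^{-1}$ has norm at most $\|P_h\|^m/(1-\|P_h\|)$, up to factors $\sqrt{\lambda_u/\lambda_v}$ coming from the passage between weighted norms and blocks. These factors are harmless, or bounded if the weights are bounded.

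\textbf{Step 3: transfer to the correlation kernel.} The kernel of the DLSF is $\proj{\bigstar_h}=D_hL_h^{-1}D_h^*$, which is valid because the range of $D_h$ is closed. For edges $e,f$, the block $(\proj{\bigstar_h})_{e,f}$ is a finite sum of at most four terms of the form (connection isometries) $\times (L_h^{-1})_{u,w}\times$ (weights), with $u$ an endpoint of $e$ and $w$ an endpoint of $f$. Thus
\[\|(\proj{\bigstar_h})_{e,f}\|\le c\,\|P_h\|^{\mathrm{dist}(e,f)-1},\]
which is exponential decay. All determinantal correlations are determinants of such blocks, so the edge-edge correlations decay at least as fast.

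\textbf{Main obstacle.} I expect the hardest part to be the bookkeeping of the weighted norms, namely the $\lambda_v$ factors and the $x_e$ factors in the adjoint. A uniform constant $c$ needs some boundedness of these weights; otherwise the bound holds with weight-dependent prefactors. I would state the estimate with explicit $\sqrt{x_ex_f/\lambda\lambda}$ factors to avoid assuming more.
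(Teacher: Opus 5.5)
Your three steps follow the paper's own route: Proposition~\ref{prop:HD=0} with Proposition~\ref{prop:wsf-fsf}, then a Neumann series for $L_h^{-1}$, then $\proj{\bigstar_h}=D_hL_h^{-1}D_h^*$. However, two of the steps fail as written.

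In Step~2, the Poincaré inequality does not give $\|P_h\|\le 1-C$. Since $P_h=\id-L_h$ is self-adjoint, $L_h\ge C$ only yields $P_h\le(1-C)\id$. At the bottom of the spectrum you only know $P_h\ge-\id$, which comes from $\|D_h\|^2\le 2$.

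This can be sharp. Take the rank-one system on the triangular lattice with unit weights and $h_e=-1$ on every edge; it is fully twisted along every triangle. Then $P_h=-P$, and since $\mathrm{spec}(P)=[-\frac12,1]$ we get $L_h=\id+P\ge\frac12$. Yet $\|P_h\|=1$, and $\sum_n P_h^n$ diverges in norm.

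The paragraph preceding the statement asserts the same bound. It is harmless on bipartite graphs such as $\Z^d$: there, multiplication by the bipartition sign anticommutes with $P_h$, so the spectrum of $P_h$ is symmetric.

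The repair is to expand around the midpoint of $[C,2]\supseteq\mathrm{spec}(L_h)$. With $a=1+\frac{C}{2}$,
\[L_h^{-1}=a^{-1}\sum_{n\ge 0}\big(\id-a^{-1}L_h\big)^n,\qquad \big\|\id-a^{-1}L_h\big\|\le q:=\frac{2-C}{2+C}<1.\]
The operator $\id-a^{-1}L_h=(1-a^{-1})\id+a^{-1}P_h$ is still nearest-neighbour, so your vanishing-block argument gives exponential decay at rate $q$.

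Your worry about weight bookkeeping also disappears if you measure blocks in the fibre norms $\lambda_v\langle\cdot,\cdot\rangle_v$ and $x_e\langle\cdot,\cdot\rangle_e$, which are the relevant ones for the determinantal kernel. In these norms the blocks of $D_h$ and $D_h^*$ are bounded by $\sqrt{x_e/\lambda_v}\le 1$ (up to a factor $2$ for self-loops). So no boundedness assumption on the weights is needed.

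In Step~1, the extension from $\Omega^0_{\ell^2}(\F)$ is not just delicate; it is false. The $\ell^2$ inequality, and the resulting injectivity and closed range of $D_h$, say nothing about $\D_h$, which consists of differentials of sections that are not square-summable.

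For example, the trivial rank-one system on the $3$-regular tree satisfies $L_1\ge 1-\frac{2\sqrt{2}}{3}>0$ by nonamenability. Yet it carries nonconstant harmonic functions of finite Dirichlet energy, and its free and wired spanning forests differ.

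The coincidence of the two measures therefore needs the Poincaré inequality in the sense of the Definition in Section~\ref{sec:dirichlet-harmonic}: for all $f\in\Omega^0(\F)$, with norms allowed to be infinite. That is what Proposition~\ref{prop:HD=0} and the Corollary following Proposition~\ref{prop:wsf-fsf} use. Your fallback to that Corollary is the right argument, provided the hypothesis is read this way. This stronger version implies the $\ell^2$ one, which is all that Steps~2 and~3 require.
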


In view of Propositions \ref{prop:bound} and \ref{prop:bound2}, the conclusion of this proposition holds in particular when the local system satisfies a uniform holonomic spectral gap on $(\rG,\ul{x})$ in the sense of Definitions~\ref{def:geom-reg} and~\ref{def:geom-reg2}.

H\'elo\"{i}se Constantin already gave in~\cite{Constantin} another proof of this exponential decay, in the case of $\Z^d$ with a regular periodic connection, using an explicit Fourier transform of the correlation kernel.
Such an exponential decay of the Green function was also proven by \cite{Balaban} in the context of lattice gauge theory, under some assumptions. 

\begin{comment}
\section{Concluding remarks}

\begin{enumerate}[\indent 1.]
    \item Can one find a sampling algorithm in the spirit of Wilson's algorithm \cite{Wilson} or its bicircular-adapted modified version \cite{Kassel-Kenyon}? Can it provide a probabilistic interpretation for Theorem~\ref{thm:superposition-intro}?
    \item We introduced a polymatroidal stratification of the moduli space of local systems in Section \ref{sec:stratif}. Can we refine the limit theorem of Section~\ref{sec:infinitesimal-connections} to compactify the space of DLSF as a function of~$h$ at other limit points (other discontinuity points of the lower-semicontinuous rank function) than the trivial connection $h_0$ which we studied?
    \item Can one gather more geometric information on the loop-erased random walk, uniform spanning tree, or uniform CRSF, from the study of the partition function of \eqref{eq:law-crsf-n}?
    \item Peut-on étudier la fonction $w_h$ comme fonction sur les points entiers du polytope des bases ?
    \item Trouver un théorème de structure. 
    \item L'holonomie de fuite : je pense quand même que ce que j'avais écrit dans la version précédente de l'article fait un lien (avec le Mouton du Werner).
\end{enumerate}
\end{comment}

%%%%%%%%%%%%%%%%%%%%%%%%%%%%%%%%%%%%%%%%
%%%%%%%%%%%%%%%%%%%%%%%%%%%%%%%%%%%%%%%%

%%%%%%%%%%%%%%%%
%%%   BIBLIOGRAPHY   %%%
%%%%%%%%%%%%%%%%

%%%%%%%%
\newcommand{\arXiv}[1]{\href{http://arxiv.org/abs/#1}{arXiv:#1}}
%%%%%%%%
%\newcommand{\zbMath}{\href{http://www.zbmath.org/?q=#1}{ZB#1}}
%%%%%%%%%
\bibliographystyle{hmralphaabbrv}
\bibliography{dlsf}

%%%%%%%%%%%

\vfill 

\end{document}